\newtheorem{theorem}{Theorem}[section]
\newtheorem{lemma}[theorem]{Lemma}
\newtheorem{corollary}[theorem]{Corollary}
\newtheorem{proposition}[theorem]{Proposition}
\newtheorem{condition}{Condition}
\newenvironment{conditionbis}[1]
  {
   \begin{condition}}
  {\end{condition}}
\numberwithin{equation}{section}
\newcommand*{\abs}[1]{\left\lvert#1\right\rvert}
\newcommand*{\norm}[1]{\left\lVert#1\right\rVert}
\newcommand*{\pent}[1]{\left[#1\right]}
\newcommand*{\sachant}[2]{\left.#1 \,\middle|\,#2\right.}
\def\bb#1{\mathbb{#1}}
\def\geq{\geqslant}
\def\leq{\leqslant}
\newcommand\ee{\varepsilon}
\DeclareMathOperator{\dd}{d\!}
\DeclareMathOperator{\e}{e}
\DeclareMathOperator{\sh}{sh}
\DeclareMathOperator{\Id}{Id}
\begin{document}
\title[Conditioned affine Markov walks]{Limit theorems for affine Markov walks conditioned to stay positive}
\author{Ion Grama}
\curraddr[Grama, I.]{ Universit\'{e} de Bretagne-Sud, LMBA UMR CNRS 6205,
Vannes, France}
\email{ion.grama@univ-ubs.fr}

\author{Ronan Lauvergnat}
\curraddr[Lauvergnat, R.]{Universit\'{e} de Bretagne-Sud, LMBA UMR CNRS 6205,
Vannes, France}
\email{ronan.lauvergnat@univ-ubs.fr}

\author{\'Emile Le Page}
\curraddr[Le Page, \'E.]{Universit\'{e} de Bretagne-Sud, LMBA UMR CNRS 6205,
Vannes, France}
\email{emile.le-page@univ-ubs.fr}

\date{\today}
\subjclass[2000]{ Primary 60J05, 60J50, 60G50. Secondary 60J70, 60G42}
\keywords{Exit time, stochastic recursion, Markov chains, harmonic function}

\begin{abstract}
Consider the real Markov walk $S_n = X_1+ \dots+ X_n$ with increments $\left(X_n\right)_{n\geqslant 1}$ defined by a stochastic recursion starting at $X_0=x$. 
For a starting point $y>0$ denote by $\tau_y$ the exit time of the process $\left( y+S_n \right)_{n\geqslant 1}$ 
from the positive part of the real line.  
We investigate the asymptotic behaviour of the probability of the event $\tau_y \geqslant n$ 
and of the conditional law of $y+S_n$ given $\tau_y \geqslant n$ as $n \to +\infty$.
\end{abstract}

\maketitle

\section{Introduction}

Assume that the Markov chain $(X_n)_{n\geq 0}$ is defined by the stochastic recursion
\begin{equation}
	X_0=x \in \bb R, \qquad X_{n+1} = a_{n+1} X_n + b_{n+1}, \qquad n\geq 0,
\label{intr000}
\end{equation}
where $(a_i,b_i)_{i \geq 1}$ is a sequence of i.i.d.\ real random pairs satisfying $\bb E (\abs{a_1}^\alpha) = 1$ for some $\alpha >2$. 
Consider the Markov walk
$S_n=\sum_{k=1}^n X_k,$  $n\geq 1$.
Under a set of conditions ensuring the existence of the spectral gap of the transition operator of the Markov chain $\left( X_n \right)_{n\geq 0}$, it was established 
in Guivarc'h and Le Page \cite{guivarch_spectral_2008} that there exist constants $\mu$ and $\sigma>0$ such that, for any $t \in \bb R$,
\begin{equation}
\bb{P}_x \left(  \frac{S_n - n \mu }{\sigma\sqrt{n} }  \leq t    \right) \to\mathbf \Phi \left( t   \right) \quad  \text{as} \quad n\to +\infty,
\label{intr001}
\end{equation}
where $\mathbf \Phi $ is the standard normal distribution function and $\bb P_x $ is the probability measure generated by 
 $(X_n)_{n\geq 0}$ starting at $X_0=x$. There are easy expressions of $\mu$ and $\sigma$ in terms of law of the pair $(a,b)$: in particular $\mu = \frac{ \mathbb E b}{ 1-\mathbb E a}$.

For a starting point $y>0$, define the first time when the affine Markov walk $(y+S_n)_{n\geq 1}$ becomes 
non-positive by setting
	\[
	\tau_y = \min \{ k\geq 1,\,\, y+S_k \leq 0 \}.
\]
In this paper we complete upon the results in \cite{guivarch_spectral_2008} by determining the asymptotic of the probability $\bb P_x \left( \tau_y > n\right)$ and  proving a conditional version of the limit theorem \eqref{intr001} for the sum $y+S_n$, given the event $\{ \tau_y > n \}$ in the case when $\mu = 0.$ The main challenge in obtaining these asymptotics is to prove the existence of a positive harmonic function pertaining to the associated Markov chain $\left( X_n, y+S_n \right)_{n\geq 0}$. A positive harmonic function, say $V$, is defined as a positive solution of the equation $\textbf{Q}_+V=V$, where $\textbf{Q}_+$ is the restriction on $\bb R \times \bb R_+^*$ of the Markov transition kernel $\textbf{Q}$ of the chain $\left( X_n, y+S_n \right)_{n\geq 0}$.

From the more general results of the paper it follows that, under the same hypotheses that ensure the CLT 
(see Condition \ref{Mom001}   in Section \ref{sec Notations and results}), 
if the pair $(a,b)$ is such that $\bb P((a,b)\in (0,1) \times (0,C]  )>0$ and $\bb P((a,b)\in (-1,0)\times (0,C] )>0$, for some $C>0$, 
then
	\[
	\bb P_x \left( \tau_y > n \right)  \underset{n\to +\infty}{\sim} \frac{2V(x,y)}{\sqrt{2\pi n} \sigma}
\]
and
	\[
	\bb P_x \left( \sachant{ \frac{y+S_n}{\sigma \sqrt{n}} \leq t}{\tau_y > n} \right)  \underset{n\to +\infty}{\longrightarrow} \mathbf \Phi^+(t),
\]
where $\mathbf \Phi^+(t)= 1 - \e^{-t^2/2}$ is the Rayleigh distribution function.
In particular, the above mentioned results hold true if $a$ and $b$ are independent and $a$ is such that $\bb P(a\in (0,1))>0$ and $\bb P(a\in (-1,0))>0$. 
Less restrictive assumptions on the pair $(a,b)$  are formulated in our Section \ref{sec Notations and results}. 

The above mentioned results are in line with those already known in the literature for random walks with independent increments conditioned to stay in limited areas. We refer the reader to Iglehart \cite{iglehart_functional_1974}, Bolthausen \cite{bolthausen_functional_1976}, Doney \cite{doney_conditional_1985}, Bertoin and Doney \cite{bertoin_conditioning_1994}, Borovkov \cite{borovkov_asymptotic_2004-1,borovkov_asymptotic_2004}, Caravenna \cite{caravenna_local_2005}, Eichelsbacher and K\"{o}ning \cite{eichelsbacher_ordered_2008}, Garbit \cite{garbit_central_2009}, 
Denisov, Vatutin and Wachtel  \cite{denisov_local_2014}, Denisov and Wachtel \cite{denisov_conditional_2010,denisov_random_2015}. 
More general walks with increments forming a Markov chain have been considered by 
Presman \cite{presman_1967, presman_1969},
Varapoulos \cite{varopoulos_potential_1999,varopoulos_potential_2000}, Dembo \cite{dembo_persistence_2013}, 
Denisov and Wachtel \cite{denisov_exit_2015}  or Grama, Le Page and Peign\'e \cite{grama_conditional_2014}. 
In \cite{presman_1967, presman_1969} the case of sums of lattice random variables defined on finite regular Markov chains has been considered.
Varapoulos \cite{varopoulos_potential_1999, varopoulos_potential_2000} studied Markov chains with bounded increments and obtained lower and upper bounds for
the probabilities of the exit time from cones.   
Some  studies take advantage of additional properties: for instance     
in \cite{denisov_exit_2015} the Markov walk has a special integrated structure;
in \cite{grama_conditional_2014} the moments of $X_n$ are bounded by some constants not depending on the initial condition. However, to the best of our knowledge, the asymptotic behaviour of the probability $\bb P_x \left( \tau_y > n \right)$ 
in the case of the stochastic recursion (\ref{intr000}) has not yet been considered in the literature.

Note that the Wiener-Hopf factorization, which usually is employed in the case of independent random variables, 
cannot  be applied in a straightforward manner for  Markov chains.
Instead, to study the case of the stochastic recursion, we rely upon the developments in \cite{denisov_exit_2015},  \cite{denisov_random_2015} and \cite{grama_conditional_2014}. The main idea of the paper is given below. 
The existence of the positive harmonic function $V$ is linked to the construction of a martingale approximation 
for the Markov walk $\left(S_{n}\right)_{n\geq 1}$. While the harmonicity is inherently related to the martingale properties, the difficulty is to show that the approximating martingale is integrable at the exit time of the Markov walk $\left(y+S_{n}\right)_{n\geq 1}$.   
In contrast to \cite{denisov_random_2015} and \cite{grama_conditional_2014},
our proof of the existence of $V$ employs different techniques according to positivity or not of the values of $\bb E( a_1 )$. 
The constructed harmonic function allows to deduce the properties of the exit time and the conditional distribution of the Markov walk from those of the Brownian motion using a strong approximation result for Markov chains from Grama, Le Page and Peign\'{e} \cite{ion_grama_rate_2014}.  
The dependence of the constants on the initial state $X_0=x$ of the Markov chain $(X_n)_{n\geq 0}$ established there plays the essential role in our proof.

The technical steps of the proofs are as follows. We first deal with the case when the starting point of the Markov walk $(y+S_n)_{n\leq 0}$ is large: $y > n^{1/2-\ee},$ for some $\ee >0$.
When $y>0$ is arbitrary, the law of iterated logarithm ensures that the sequence $(\abs{y+S_k})_{1\leq k \leq n^{1-\ee}} $ will cross the level $n^{1/2-\ee}$ 
with high probability.
Then, by the Markov property, we are able to reduce the problem to a Markov walk with a large starting point $y'=y+S_{\nu_n}$, where 
$\nu_n$ is the first time  when the sequence $\abs{y+S_k}$ exceeds the level $n^{1/2-\ee}$.
The major difficulty, compared to \cite{denisov_random_2015} and \cite{grama_conditional_2014},  
is that, for the affine model under consideration,   
the sequence $\left(X_{\nu_n}\right)_{n\geq 1}$ is not bounded in $\bb L^1$.    
To overcome this we need a control of the moments of $X_n$ in function of the initial state $X_0=x$ and the lag $n.$   

We end this section by agreeing upon some basic notations. 
As from now and for the rest of this paper the symbols $c, c_{\alpha}, c_{\alpha,\beta},\dots$ denote positive constants depending only on their indices. All these constants are likely to change their values every \text{red}{occurrence}.
The indicator of an event $A$ is denoted by $\mathbbm 1_A.$
For any bounded measurable function $f$ on $\bb X=\bb R^d$,  $d = 1,2,$ 
random variable $X$ in $\bb X$ and event $A$, the integral $\int_{\bb X}  f(x) \bb P (X \in \dd x, A)$ means the expectation 
$\bb E\left( f(X); A\right)=\bb E \left(f(X) \mathbbm 1_A\right)$.  

\section{Notations and results}
\label{sec Notations and results}

Assume that on the probability space $(\Omega,\mathcal F,  \bb P)$ we are given a sequence of independent real random pairs $(a_i,b_i)$, $i\geq 1$,
of the same law as the generic random pair $(a,b)$. 
Denote by $\bb E$ the expectation pertaining to $\bb P$.
Consider the Markov chain $(X_n)_{n\geq 0}$  defined by the affine transformations
	\[
	X_{n+1} = a_{n+1} X_n + b_{n+1}, \qquad n\geq 0,
\]
where $X_0=x\in \bb R$ is a starting point. The partial sum process $(S_n)_{n\geq 0}$ defined by $S_n = \sum_{i=1}^{n} X_i$ for all $n\geq 1$ and $S_0=0$ will be called in the sequel
affine Markov walk. Note that $(S_n)_{n\geq 0}$ itself  is not a Markov chain, but the pair $(X_n,S_n)_{n\geq 0}$ forms a Markov chain.

For any $x \in \bb R$, denote by $\mathbf P (x,\cdot)$ the transition probability of $(X_n)_{n\geq 0}$. Introduce the transition operator
	\[
	\mathbf P f (x) = \int_{\bb R} f(x') \mathbf P (x,\dd x'),
\]
which is defined for any real bounded measurable function $f$ on $\bb R$. Denote by $\bb P_x$ and $\bb E_x$ the probability and the corresponding expectation generated by the finite dimensional distributions of $(X_n)_{n\geq 0}$ starting at $X_0=x$. Clearly, for any $x \in \bb R$ and $n\geq 1$,  we have 
$\mathbf{P}^n f \left( x\right) = \bb E_x \left(f\left( X_n \right)\right)$.

We make use of the 
following condition which ensures that the affine Markov walk satisfies the central limit theorem (\ref{intr001}) (c.f. \cite{guivarch_spectral_2008}): 
\begin{condition} The pair $(a,b)$ is such that:
\label{Mom001} 
\begin{enumerate}
\item There exists a constant $\alpha>2$ such that
$\bb E \left( \abs{a}^{\alpha} \right) < 1$ 
and 
$\bb E \left(\abs{b}^{\alpha} \right)  < +\infty.$
\item The random variable $b$ is non-zero with positive probability,  $\bb P (b\not=0) > 0$, and centred, $\bb E (b) = 0$.
\end{enumerate}
\end{condition}

Note that Condition \ref{Mom001} is weaker than the conditions required in \cite{guivarch_spectral_2008} in the special case $\alpha > 2$.
Nevertheless, using the same techniques as in \cite{guivarch_spectral_2008} it can be shown that, under Condition \ref{Mom001}, 
the Markov chain $(X_n)_{n\geq 0}$ has a unique invariant measure $\mathbf m$ and its 
partial sum $S_n$ satisfies the central limit theorem \eqref{intr001} with
\begin{equation}
  \label{defdemu}
\mu = \int_{\bb R} x \mathbf m (\dd x) = \frac{ \mathbb E (b)}{ 1-\mathbb E (a)} = 0 
\end{equation}
and
\begin{equation}
  \label{defdesigma}
	\sigma^2 = \int_{\bb R} x^2 \mathbf m (\dd x) + 2\sum_{k=1}^{\infty} \int_{\bb R} x \bb E_x (X_k) \mathbf m (\dd x)   
	= \frac{\bb E(b^2)}{1-\bb E(a^2)} \frac{1+\bb E(a)}{1-\bb E(a)} >0.
\end{equation}
Moreover, it is easy to see that under Condition \ref{Mom001} the Markov chain $(X_n)_{n\geq 0}$ has no fixed point: 
$\bb P \left( a x +b = x \right) < 1$, for any $x\in \bb R$.
Below we make use of a slightly refined result which gives the rate of convergence in the central limit theorem for $S_n$ with an explicit dependence of the constants on the initial value $X_0=x$ stated in Section \ref{Strong Approx}.

For any $y \in \bb R$ consider the affine Markov walk $\left(y+ S_n\right)_{n\geq 0}$ starting at $y$ and define its exit time
	\[
	\tau_y = \min \{ k \geq 1,\,\, y+S_k \leq 0 \}.
\]
Corollary \ref{Exitfinit} implies the finiteness of the stopping time $\tau_y$: under Condition \ref{Mom001}, it holds $\bb P_x \left(\tau_y <+\infty\right) = 1,$ for any $x\in \bb R$ and $y\in \bb R$.

The asymptotic behaviour of the probability $\bb P \left( \tau_y >n  \right) $ is determined by 
the harmonic function which we proceed to introduce.
For any $(x,y) \in \bb R \times \bb R$, denote by $\mathbf Q(x,y,\cdot )$ the transition probability of the Markov chain $(X_n,y+S_n)_{n\geq 0}.$  
The restriction of the measure $\mathbf Q(x,y,\cdot )$  on $\mathbb R\times \mathbb R^*_+$ is defined by
  \[
	\textbf{Q}_+(x,y,B) = \textbf{Q}(x,y,B)
\]
for any measurable set  $B$ on $\mathbb R\times \bb R_+^*$ and  for any $(x,y) \in \bb R \times \bb R$.  
Let $\mathscr D$ be a measurable set in $ \mathbb R\times \mathbb R$ containing $\mathbb R\times \bb R_+^*$.
For any measurable $\varphi: \mathscr D \rightarrow \bb R$ set $\textbf{Q}_+\varphi (x,y)=\int_{\mathbb R\times \bb R_+^*} \varphi(x',y') \textbf{Q}_+(x,y,\dd x' \times \dd y').$
A positive $\textbf{Q}_+$-harmonic function on $\mathscr D$ is any function $V: \mathscr D \rightarrow \bb R$ which satisfies
	\[
	\textbf{Q}_+V (x,y)  = V(x,y)>0, \quad \text{for any}\quad (x,y) \in \mathscr D.
\]

To ensure the existence of a positive harmonic function we need additional assumptions: 
\begin{condition}
\label{PosdeX1-1}  For all $x \in \bb R$ and $y>0$,
	\[
	 \bb P_x \left( \tau_y > 1 \right) = \bb P \left( a x + b > -y \right) > 0.
\]
\end{condition}

\begin{condition}
\label{PosdeX1-2}
For any  $x \in \bb R$ and $y>0$, there exists $p_0 \in (2,\alpha)$ such that for any constant $c>0$, there exists $n_0 \geq 1$ such that,
	 \[
	\bb P_x \left( \left( X_{n_0}, y+S_{n_0} \right) \in K_{p_0,c} \,,\, \tau_y > n_0 \right) > 0,
 \]
where
	\[
	K_{p_0, c} = \left\{ (x,y) \in \bb R \times \bb R_+^*, y \geq c \left( 1+ \abs{x}^{p_0} \right) \right\}.
\]
\end{condition}

Obviously Condition \ref{PosdeX1-1} is equivalent to $\bb P_x \left( X_1 > -y \right) = \bb P_x \left( \tau_y > 1 \right) > 0$ for any $x\in \bb R$ and $y>0$, which, in turn is equivalent to the fact that there exists $n_0 \geq 1$ such that $\bb P_x \left( \tau_y > n_0 \right) > 0$, for any $x \in \bb R$ and $y>0$. Therefore Condition \ref{PosdeX1-2} implies Condition \ref{PosdeX1-1}.
As a by-product, under either Condition \ref{PosdeX1-1} or Condition \ref{PosdeX1-2}, the event $\{ \tau_y >n \}$ is not empty. 

The existence of a harmonic function is guaranteed by the following theorem. For any $x \in \bb R$ 
consider the $\bb P_x$-martingale $(M_n, \mathscr{F}_n)_{n\geq 0}$ defined by
\begin{equation}
\label{mart001}
	M_n = S_n + \frac{\bb E(a)}{1-\bb E(a)} \left( X_n -x \right), \quad n\geq 0,
\end{equation}
with $(\mathscr{F}_n)_{n\geq 0}$ the natural filtration (we refer to Section \ref{Mart Approx} for details).

\begin{theorem}
\label{ExofHaFu}
Assume either Conditions \ref{Mom001}, \ref{PosdeX1-1} and  $\bb E(a) \geq 0$, or Conditions \ref{Mom001} and \ref{PosdeX1-2}.
\begin{enumerate}
\item For any $x \in \bb R$ and $y >0$, the random variable $M_{\tau_y}$ is integrable,
	\[
	\bb E_x \left( \abs{M_{\tau_y}} \right) < +\infty
\]
and the function
	\[
	V(x,y) = -\bb E_x \left( M_{\tau_y} \right), \quad x\in \bb R, \ y>0,
\]
is well defined on $\bb R \times \bb R_+^*$.
\item The function $V$ is positive and  $\textbf{Q}_+$-harmonic on $\bb R \times \bb R_+^*$: for any $x \in \bb R$ and $y >0$, 
	\[
	\textbf{Q}_+V(x,y) = V(x,y). 
\]
\item Moreover, the function $V$ has the following properties:
\begin{enumerate}
\item For any $x \in \bb R$, the function $V(x,.)$ is non-decreasing.
\item For any $\delta >0$, $p \in (2,\alpha)$,  $x\in \bb R$ and $y>0$,
\begin{align*}
V(x,y) &\geq \max\left(0,(1-\delta) y -  c_{p,\delta} \left(1+\abs{x}^{p}\right)  \right), \\
V(x,y) &\leq  \left( 1+ \delta \left(1+ \abs{x}^{p-1} \right)\right) y + c_{p,\delta}  \left(1+ \abs{x}^{p} \right).
\end{align*}
\item For any $x \in \bb R$, it holds $\underset{y\to +\infty}{\lim} \frac{V(x,y)}{y} = 1.$
\end{enumerate}
\end{enumerate}
\end{theorem}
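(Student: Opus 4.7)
The plan is to build $V$ from the stopped martingale $M$ of \eqref{mart001} and then read off its properties from the martingale identity. Since $\tau_y \wedge n$ is bounded and $M$ is a $\bb P_x$-martingale with $M_0 = 0$, the optional stopping theorem gives
\begin{equation*}
\bb E_x\!\left(M_{\tau_y}; \tau_y \leq n\right) = -\bb E_x\!\left(M_n; \tau_y > n\right) \quad \text{for every } n\geq 1.
\end{equation*}
Because $\tau_y < +\infty$ almost surely (Corollary \ref{Exitfinit}), $M_{\tau_y}\mathbbm 1_{\tau_y \leq n} \to M_{\tau_y}$ almost surely, so by monotone convergence applied separately to $M_{\tau_y}^+\mathbbm 1_{\tau_y\leq n}$ and $M_{\tau_y}^-\mathbbm 1_{\tau_y\leq n}$, it suffices to prove the uniform bound $\sup_n \bb E_x\!\left(|M_n|\mathbbm 1_{\tau_y > n}\right) < +\infty$ together with a one-sided estimate on, say, $\bb E_x(M_{\tau_y}^-;\tau_y\leq n)$. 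Passing to the limit in the identity then defines $V(x,y) = -\bb E_x(M_{\tau_y})$.

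The main obstacle is precisely this uniform estimate. Writing $M_n = S_n + \rho(X_n - x)$ with $\rho = \bb E(a)/(1-\bb E(a))$, the task splits into controlling $\bb E_x(|S_n|\mathbbm 1_{\tau_y > n})$ and $\bb E_x(|X_n|\mathbbm 1_{\tau_y > n})$. I would attack the first using the strong approximation of $S_n$ by $\sigma B_n$ from Section \ref{Strong Approx}, which yields $\bb P_x(\tau_y > n) = O(n^{-1/2})$; combined with a truncation of $|S_n|$ at level $n^{1/2+\ee}$ controlled via the $\alpha$-th moment $\bb E_x(|S_n|^\alpha) = O(n^{\alpha/2})$, a H\"older-type argument produces $\bb E_x(|S_n|\mathbbm 1_{\tau_y > n}) = O(1)$. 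For the $X_n$ term, I would lean on the moment bound $\bb E_x(|X_n|^p) \leq c_p(1+|x|^p)$ for $p < \alpha$ supplied by the contraction $\bb E|a|^\alpha < 1$. These estimates sit exactly at the edge of integrability, since $|M_n|$ typically grows like $\sqrt n$ on $\{\tau_y > n\}$ and the event has probability $O(n^{-1/2})$, so some extra leverage is needed. When $\bb E(a)\geq 0$, hence $\rho\geq 0$, the inequality $-M_{\tau_y} \geq y + \rho x - \rho X_{\tau_y}$ valid on $\{\tau_y \leq n\}$ furnishes a one-sided lower bound on $M_{\tau_y}^-$, which combined with Condition \ref{PosdeX1-1} closes the argument. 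Under Condition \ref{PosdeX1-2}, the strategy is instead to let the chain first run for the prescribed $n_0$ steps until $(X_{n_0}, y+S_{n_0}) \in K_{p_0,c}$, so that $y' \geq c(1+|x'|^{p_0})$; from such a starting state the estimates become uniform, and the conclusion follows via the Markov property.

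Once integrability is secured, parts (2) and (3) are largely consequences of the martingale identity. Harmonicity follows from the Markov property at time $1$ applied to $M_{\tau_y} = M_1 + (M_{\tau_y} - M_1)$: on $\{\tau_y > 1\}$ the increment $M_{\tau_y} - M_1$ is, conditionally on $\mathscr F_1$, the stopped martingale of the chain restarted at $(X_1, y+S_1)$, so its conditional expectation equals $-V(X_1, y+S_1)$; together with $\bb E_x(M_1) = 0$ this yields $V(x,y) = \bb E_x(V(X_1, y+S_1); \tau_y > 1) = \mathbf Q_+ V(x,y)$. Positivity is then inherited from the lower bound in (3)(b), which is strictly positive for $y$ large, and propagates to all $(x,y)$ by iterating the harmonicity relation until the chain reaches a state where the lower bound bites. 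The two-sided bounds in (3)(b) are obtained by tracking constants in the argument of Part (1): the lower bound from $-M_{\tau_y} \geq y + \rho x - \rho X_{\tau_y}$ plus the $p$-th moment estimate on $X_{\tau_y}$, and the upper bound from a dual estimate using the overshoot $-(y+S_{\tau_y})$. Monotonicity of $V(x,\cdot)$ comes from a coupling in $y$ (so that $\tau_y$ is non-decreasing in $y$), and (3)(c) falls out of (3)(b) by dividing by $y$ and sending $y\to +\infty$, then $\delta\to 0$.
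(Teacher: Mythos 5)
Your skeleton (optional stopping at $\tau_y\wedge n$, reduction to a uniform-in-$n$ bound for the killed martingale, harmonicity via the Markov property plus dominated convergence, monotonicity by monotone coupling in $y$, and $(3)(c)$ from $(3)(b)$) is the same as the paper's, but the decisive step — the uniform bound $\sup_n \bb E_x\left( y+\rho x+M_n \,;\, \tau_y>n \right)<+\infty$ — is not actually established, and the route you sketch for it does not work. First, the strong approximation of Proposition \ref{majdeA_k} carries an error of order $c_{p,\ee}n^{-\ee}(1+\abs{x})^p$, so at this stage of the argument it only yields $\bb P_x(\tau_y>n)=O(n^{-\ee})$; the bound $O(n^{-1/2})$ you invoke is (up to $n^{\ee}$) Lemma \ref{Sommedestempsdesurvie}/Theorem \ref{AsExTi}, which in the paper is derived \emph{after} and \emph{from} the integrability results of Section \ref{CMWI}, so using it here is circular. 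Second, even granting $\bb P_x(\tau_y>n)\leq c/\sqrt n$, the H\"older/truncation computation gives $\bb E_x(\abs{S_n};\tau_y>n)\leq n^{1/2+\ee}\,\bb P_x(\tau_y>n)+\bb E_x(\abs{S_n};\abs{S_n}>n^{1/2+\ee})=O(n^{\ee})$, never $O(1)$: the quantity sits exactly at the critical order, as you yourself note, and the "extra leverage" you gesture at (the one-sided bound $-M_{\tau_y}\geq y+\rho x-\rho X_{\tau_y}$ plus Condition \ref{PosdeX1-1}) is precisely where all the work lies. The paper's resolution is a bootstrap on the expectation \emph{without} absolute values: Lemma \ref{firstupperbound} gives $\bb E_x(z+M_n;\tau_y>n)\leq z+c\abs{x}+c_p n^{1/2-2\ee}$ using optional stopping together with the sign information $z+M_{\tau_y}\leq 0$ and $X_{\tau_y}/(1-\bb E(a))<z+M_{\tau_y}$ (Lemma \ref{Mtauyprop}); then Lemma \ref{intofthemartcond} restarts the walk at the crossing time $\nu_n^{\ee}$, exploits the submartingale property of $\left((z+M_n)\mathbbm 1_{\{\tau_y>n\}}\right)_{n\geq 0}$, the crossing-time estimate of Lemma \ref{concentnu}, and the iteration Lemma \ref{lemanalyse} applied to the non-decreasing sequence $u_n=\bb E_x(z+M_n;\tau_y>n)$ to upgrade $n^{1/2-2\ee}$ to a constant depending only on $(x,y)$. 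This recursive self-improvement is the key idea your proposal lacks.

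A second genuine gap concerns the case $\bb E(a)<0$. The facts $z+M_{\tau_y}\leq 0$ and $-M_{\tau_y}\geq y+\rho x-\rho X_{\tau_y}$ rely on $\rho\geq 0$ and fail when $\rho<0$, so the whole sign structure of your argument breaks there. The paper handles this by introducing the exit time $T_y$ of the martingale $(z+M_n)_{n\geq 0}$ itself, proving $\tau_y\leq T_y$ (Lemma \ref{MTyprop}), bounding the $T_y$-killed martingale (Lemmas \ref{firstupperboundcasneg}, \ref{intofthemartcondcasneg}), and then working with the auxiliary function $W(x,y)=-\bb E_x(M_{T_y})$, whose supermartingale property along $(X_n,y+S_n)$ killed at $\tau_y$ produces the lower bound $(1-\delta)y-c_{p,\delta}(1+\abs{x}^p)$ of $(3)(b)$. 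Condition \ref{PosdeX1-2} and the set $K_{p_0,c}$ enter only in the proof of \emph{positivity} (claim 3 of Lemma \ref{Invfunctpropcasnegpart2}), not of integrability: running the chain for $n_0$ steps into $K_{p_0,c}$ does not by itself make the killed-martingale estimates uniform, contrary to what you assert. (Relatedly, under Condition \ref{PosdeX1-1} with $\bb E(a)\geq 0$ the paper proves positivity directly from $V(x,y)\geq\max(0,y+\rho x)$ and one application of harmonicity, since "iterating until the lower bound bites" is not available under that condition alone.)
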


Using the harmonic function from the previous theorem, we obtain the asymptotic of the tail probability of the exit time $\tau_y$. 
\begin{theorem}
\label{AsExTi}
Assume either Conditions \ref{Mom001}, \ref{PosdeX1-1} and  $\bb E(a) \geq 0$, or Conditions \ref{Mom001} and \ref{PosdeX1-2}.
\begin{enumerate}
\item For any $p\in (2,\alpha)$, $x \in \bb R$ and $y>0$,
\[ \sqrt{n}\bb P_x \left( \tau_y > n \right) \leq c_p \left( 1+ y + \abs{x} \right)^p. \]
\item For any $x \in \bb R$ and $y>0$,
\[ \bb P_x \left( \tau_y > n \right)  \underset{n\to +\infty}{\sim} \frac{2V(x,y)}{\sqrt{2\pi n} \sigma}. \]
\end{enumerate}
\end{theorem}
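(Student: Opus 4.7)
The plan is to follow the strategy sketched in the introduction: first obtain the asymptotic (2) for large starting points via a strong Brownian approximation, then reduce the general case to the large-$y$ case using the Markov property at the stopping time
\[
\nu_n = \inf\{k \geq 1 : \abs{y+S_k} > n^{1/2-\ee}\},
\]
and finally identify the constant via the harmonicity of $V$. The bridge between $S_n$ and Brownian motion is the martingale $M_n$ from \eqref{mart001}: the strong approximation of \cite{ion_grama_rate_2014} applied to $(M_n)_{n\geq 0}$ produces a coupling with a standard Brownian motion with explicit dependence on $X_0=x$, and the discrepancy $M_n - S_n = \frac{\bb E(a)}{1-\bb E(a)}(X_n-x)$ is of lower order thanks to the uniform moment bound $\bb E_x(\abs{X_n}^p)\leq c_p(1+\abs{x}^p)$, obtained by iterating the recursion under Condition \ref{Mom001}.

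For Part (2), I first handle the intermediate regime $n^{1/2-\ee}\leq y \leq C\sqrt n$. Since
\[
\bb P\left(\inf_{s\leq n}(\sigma B_s + y) > 0\right) = 2\mathbf\Phi\bigl(y/(\sigma\sqrt n)\bigr) - 1 \underset{n\to+\infty}{\sim} \frac{2y}{\sigma\sqrt{2\pi n}},
\]
the strong approximation transfers this equivalent to $\bb P_x(\tau_y>n)$, the $L^p$-bounds on $X_n$ absorbing the error terms. For a general fixed $y>0$ I decompose
\[
\bb P_x(\tau_y > n) = \bb E_x\!\left[\bb P_{X_{\nu_n}}\bigl(\tau_{y+S_{\nu_n}} > n - \nu_n\bigr) \mathbbm 1_{\tau_y>\nu_n,\, \nu_n \leq n^{1-\ee'}}\right] + o(1/\sqrt n),
\]
the error on $\{\nu_n > n^{1-\ee'}\}$ being controlled by a bound of the form $\bb P_x(\max_{k\leq n^{1-\ee'}}\abs{S_k}\leq n^{1/2-\ee}) = o(1/\sqrt n)$, which follows from a Brownian small-ball comparison for appropriate $\ee'<2\ee$. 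On the indicated event $y+S_{\nu_n}\geq n^{1/2-\ee}$, so the intermediate-regime asymptotic applies, and $\nu_n = o(n)$ gives
\[
\sqrt n\, \bb P_x(\tau_y > n) \longrightarrow \frac{2}{\sigma\sqrt{2\pi}}\, \bb E_x\!\left[V(X_{\nu_n}, y+S_{\nu_n})\mathbbm 1_{\tau_y>\nu_n}\right] = \frac{2V(x,y)}{\sigma\sqrt{2\pi}},
\]
where in the limit one replaces the factor $y+S_{\nu_n}$ by $V(X_{\nu_n},y+S_{\nu_n})$ using Theorem \ref{ExofHaFu}(3c), and the final equality uses $\textbf{Q}_+$-harmonicity applied at the bounded stopping time $\nu_n\wedge n$. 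Part (1) follows from the same decomposition using the weaker intermediate-regime upper bound $\sqrt n\, \bb P_{x'}(\tau_{y'}>n)\leq c_p(y'+1+\abs{x'}^p)$ together with the overshoot estimate $y+S_{\nu_n} \leq n^{1/2-\ee} + \abs{X_{\nu_n}}$.

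The main obstacle is the $L^p$-control of $X_{\nu_n}$ and the uniform integrability required to pass to the limit above. In the affine model $X_n$ is itself potentially large, and the uniform bound $\bb E_x(\abs{X_n}^p)\leq c_p(1+\abs{x}^p)$ does not immediately control $\bb E_x(\abs{X_{\nu_n}}^p)$ at a random time; one needs a maximal version such as $\bb E_x\max_{k\leq n}\abs{X_k}^p \leq c_p n^{\delta}(1+\abs{x}^p)$ for small $\delta$, reconciled with the polynomial upper bound on $V$ from Theorem \ref{ExofHaFu}(3b). This tension between the growth of $V$ and the moments of $X_{\nu_n}$ is precisely what forces the exponent $p\in(2,\alpha)$ in Part (1).
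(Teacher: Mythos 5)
Your overall architecture (strong approximation, stopping at $\nu_n$, Markov property, identification of the constant through $V$) is indeed the paper's strategy, but the proposal has a genuine gap exactly at the point the paper identifies as the main difficulty: the sequence $X_{\nu_n}$ is not bounded in $\bb L^1$ (or $\bb L^p$), and everything you need after the decomposition at $\nu_n$ hinges on controlling it. Your replacement of $y+S_{\nu_n}$ by $V(X_{\nu_n},y+S_{\nu_n})$ via Theorem \ref{ExofHaFu}(3c) does not go through as stated: the limit $V(x,y)/y\to 1$ is pointwise in $x$ and not uniform, and the quantitative bounds (3b) give an error of the form $\delta\left(1+\abs{X_{\nu_n}}^{p-1}\right)\left(y+S_{\nu_n}\right)+c_{p,\delta}\left(1+\abs{X_{\nu_n}}^p\right)$ at the random time, whose expectation on $\{\tau_y>\nu_n\}$ cannot be dismissed without precisely the moment control you defer; crude bounds (e.g.\ $X_{\nu_n}<y+S_{\nu_n}$ on that event) produce terms of order $\bb E_x\left[(y+S_{\nu_n})^p;\tau_y>\nu_n\right]\gtrsim n^{(p-1)(1/2-\ee)}$, which blow up. The maximal inequality $\bb E_x\max_{k\leq n}\abs{X_k}^p\leq c_p n^{\delta}(1+\abs{x}^p)$ you invoke as the missing ingredient is neither proved (under Condition \ref{Mom001} one only gets exponents like $n^{p/\alpha}$, not an arbitrarily small $\delta$) nor shown to suffice for the uniform integrability you need, so the crux of the proof is left open.

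The paper resolves this by two devices you do not use. First, it works not at $\nu_n$ but at the delayed time $\nu_n^{\ee/6}=\nu_n+\pent{n^{\ee/6}}$: conditioning at $\nu_n$ and letting the chain run $\pent{n^{\ee/6}}$ more steps, the contraction $\bb E(\abs{a})^{\pent{n^{\ee/6}}}$ in Lemma \ref{MCM} wipes out the unbounded value $X_{\nu_n}$ up to an exponentially small factor (this is how $E_{21}$ in Lemma \ref{lemsurE2} and the term $J_1$ are tamed), while the overshoot contribution is controlled through Lemmas \ref{lemsurE1} and \ref{lemsurE3}. Second, the limiting constant is not identified by applying $\mathbf Q_+$-harmonicity at the random time; instead the paper writes $y+S_{\nu_n^{\ee/6}}=z+M_{\nu_n^{\ee/6}}-\rho X_{\nu_n^{\ee/6}}$, applies the optional stopping theorem to the martingale $(M_n)$, and passes to the limit by dominated convergence using the integrability of $M_{\tau_y}$ (Lemmas \ref{intdeMtau} and \ref{intdeMtaucasneg}), so that $E_2\to -\bb E_x(M_{\tau_y})=V(x,y)$ directly from the definition of $V$. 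This bypasses both the non-uniformity of (3c) and the need for any moment bound on $X_{\nu_n}$ itself. Your part (1) sketch inherits the same unresolved dependence on the moments of $X_{\nu_n}$, so as written the proposal does not constitute a complete proof.
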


\begin{corollary}
\label{MoofExTi}
Assume either Conditions \ref{Mom001}, \ref{PosdeX1-1} and  $\bb E(a) \geq 0$, or Conditions \ref{Mom001} and \ref{PosdeX1-2}. For any $p \in (2,\alpha)$, $x \in \bb R$, $y>0$ 
and $\gamma \in (0,1/2)$,
	\[
	\bb E_x \left( \tau_y^{\gamma} \right) \leq  c_{p,\gamma} (1+ y + \abs{x})^p       \qquad \text{and} \qquad \bb E_x \left( \tau_y^{1/2} \right) = +\infty.
\]
\end{corollary}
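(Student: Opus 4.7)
The plan is to obtain both assertions directly from Theorem \ref{AsExTi} by means of the elementary layer-cake representation
\[
\bb E_x \left( \tau_y^\gamma \right) = \gamma \int_0^\infty t^{\gamma - 1} \bb P_x \left( \tau_y > t \right) \dd t,
\]
valid for $\gamma > 0$. Since $\tau_y$ takes values in $\bb N^*$, one has $\bb P_x(\tau_y > t) = \bb P_x(\tau_y > \lfloor t \rfloor)$, so feeding the discrete tail bounds into this continuous integral is harmless (for $t \geq 2$, $\sqrt{\lfloor t \rfloor} \geq \sqrt{t/2}$).

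For the upper bound in the first claim, I split the integral at $t = 1$. The piece on $[0,1]$ contributes at most $1$. On $[1, +\infty)$, Theorem \ref{AsExTi}(1) yields $\bb P_x(\tau_y > t) \leq c_p (1 + y + |x|)^p \, t^{-1/2}$, so
\[
\gamma \int_1^\infty t^{\gamma - 1} \bb P_x(\tau_y > t) \dd t \;\leq\; c_{p,\gamma}\, (1 + y + |x|)^p \int_1^\infty t^{\gamma - 3/2} \dd t.
\]
The last integral is finite precisely when $\gamma < 1/2$, giving the bound $\bb E_x(\tau_y^\gamma) \leq c_{p,\gamma}(1 + y + |x|)^p$.

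For the divergence of $\bb E_x(\tau_y^{1/2})$, I appeal to the sharp asymptotic in Theorem \ref{AsExTi}(2). Since $V(x,y) > 0$, there exist $N_0 \geq 1$ and $c_0 > 0$ depending on $x, y$ such that $\bb P_x(\tau_y > t) \geq c_0 \, t^{-1/2}$ for every $t \geq N_0$. Plugging this lower bound into the layer-cake formula with $\gamma = 1/2$,
\[
\bb E_x \left( \tau_y^{1/2} \right) \;\geq\; \tfrac{1}{2} \int_{N_0}^\infty t^{-1/2} \bb P_x(\tau_y > t) \dd t \;\geq\; \tfrac{c_0}{2} \int_{N_0}^\infty \frac{\dd t}{t} \;=\; +\infty.
\]

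I do not anticipate any real obstacle: the whole content of the corollary is a routine accounting of moments from the two-sided tail information already contained in Theorem \ref{AsExTi}. The only mild technical care needed is the passage from integer-indexed tails to the continuous variable of integration, which is dealt with by the trivial rounding $\bb P_x(\tau_y > t) = \bb P_x(\tau_y > \lfloor t \rfloor)$ noted at the outset.
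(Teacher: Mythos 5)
Your proof is correct and follows essentially the same route as the paper: both arguments feed the tail bounds of Theorem \ref{AsExTi} (the uniform upper bound for finiteness when $\gamma<1/2$, and the exact asymptotic together with the positivity of $V$ for divergence at $\gamma=1/2$) into the Fubini/layer-cake representation of $\bb E_x(\tau_y^{\gamma})$, the only cosmetic difference being that the paper discretizes the integral into a sum over integers while you keep the continuous integral and handle the rounding $\bb P_x(\tau_y>t)=\bb P_x(\tau_y>\lfloor t\rfloor)$ explicitly.
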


Moreover, we prove that the Markov walk $\left(y+S_n\right)_{n\geq 0}$ conditioned to stay positive satisfies the following limit theorem.

\begin{theorem}
\label{AsCoRaWa} 
Assume either Conditions \ref{Mom001}, \ref{PosdeX1-1} and  $\bb E(a) \geq 0$, or Conditions \ref{Mom001} and \ref{PosdeX1-2}.
For any $x \in \bb R$, $y>0$ and $t>0$,
	\[
	\bb P_x \left( \sachant{ \frac{y+S_n}{\sigma \sqrt{n}} \leq t}{\tau_y > n} \right)  \underset{n\to +\infty}{\longrightarrow} \mathbf \Phi^+(t),
\]
where $\mathbf \Phi^+(t) = 1-\e^{-\frac{t^2}{2}}$ is the Rayleigh distribution function.
\end{theorem}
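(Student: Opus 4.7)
My plan is to express the conditional probability as the ratio
\[
\bb P_x\!\left(\sachant{\frac{y+S_n}{\sigma\sqrt{n}} \leq t}{\tau_y > n}\right)
= \frac{\bb P_x\!\left(y+S_n \leq t\sigma\sqrt{n},\,\tau_y > n\right)}{\bb P_x\!\left(\tau_y > n\right)},
\]
apply Theorem~\ref{AsExTi} to the denominator, and reduce the whole task to proving
\[
\bb P_x\!\left(y+S_n \leq t\sigma\sqrt{n},\,\tau_y > n\right)
\;\underset{n\to+\infty}{\sim}\; \frac{2V(x,y)}{\sigma\sqrt{2\pi n}}\,\mathbf\Phi^+(t).
\]
Following the scheme sketched in the introduction, I first treat this under the additional hypothesis $y > n^{1/2-\ee}$, and then reduce the general case to it via the stopping time $\nu_n = \min\{k \geq 1 : \abs{y+S_k} > n^{1/2-\ee}\}$. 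A law of iterated logarithm argument (or the CLT \eqref{intr001}) gives $\bb P_x(\nu_n > n^{1-\ee}) = o(1/\sqrt{n})$, so this regime contributes negligibly; on the complement, the strong Markov property rewrites the numerator as
\[
\bb E_x\!\left[\mathbbm 1_{\{\tau_y > \nu_n,\,\nu_n \leq n^{1-\ee}\}}\, g_{n-\nu_n}(X_{\nu_n},y+S_{\nu_n})\right], \quad g_m(x',y') = \bb P_{x'}\!\left(y'+S_m \leq t\sigma\sqrt{n},\,\tau_{y'} > m\right),
\]
and on $\{\tau_y > \nu_n\}$ one has $y+S_{\nu_n} > n^{1/2-\ee}$, i.e.\ a large restart.

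For such large restarts I invoke the strong approximation from \cite{ion_grama_rate_2014} (Section~\ref{Strong Approx}) to couple $(y'+S_k)_{k\leq m}$ with a Brownian motion $(y'+\sigma B_k)_{k\leq m}$. The joint law of the endpoint and infimum of Brownian motion is classical, and yields
\[
\bb P\!\left(y'+\sigma B_m \leq t\sigma\sqrt{n},\,\min_{s\leq m}(y'+\sigma B_s) > 0\right)
= \int_0^{t\sigma\sqrt{n}} \frac{1}{\sigma\sqrt{2\pi m}}\!\left[\e^{-\frac{(z-y')^2}{2\sigma^2 m}} - \e^{-\frac{(z+y')^2}{2\sigma^2 m}}\right] dz.
\]
Expanding the bracket when $y'/\sqrt{n}$ is small and using $m = n - \nu_n \sim n$ produces $g_{n-\nu_n}(x',y') \sim \frac{2y'}{\sigma\sqrt{2\pi n}}\,\mathbf\Phi^+(t)$. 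Plugging this back, combining with the $\mathbf Q_+$-harmonicity of $V$ applied to the stopped process (so that $\bb E_x[V(X_{\nu_n},y+S_{\nu_n});\,\tau_y > \nu_n] = V(x,y)$), and with $V(x',y')/y' \to 1$ as $y' \to +\infty$ from Theorem~\ref{ExofHaFu}(3)(c), one obtains $\bb E_x[(y+S_{\nu_n})\,\mathbbm 1_{\{\tau_y > \nu_n\}}] \sim V(x,y)$ and hence the claimed asymptotic for the numerator. The denominator asymptotic of Theorem~\ref{AsExTi} is the $t=+\infty$ version of the same computation.

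The main obstacle lies in controlling the error terms uniformly in the restart state $(X_{\nu_n},y+S_{\nu_n})$. The constants in the strong approximation of \cite{ion_grama_rate_2014} depend polynomially on the initial coordinate, and $(X_{\nu_n})_{n\geq 1}$ is not bounded in $\bb L^1$ in the affine setting -- this is the difficulty singled out in the introduction. To absorb these state-dependent errors I would need moment bounds of the form $\bb E_x(\abs{X_{\nu_n}}^p;\,\tau_y > \nu_n) \leq c_p(1+\abs{x}+y)^{q(p)}$, derived from the contractivity $\bb E(\abs{a}^\alpha) < 1$ of Condition~\ref{Mom001} combined with Corollary~\ref{MoofExTi}, together with the polynomial envelope for $V$ from Theorem~\ref{ExofHaFu}(3)(b). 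These estimates, plus standard dominated convergence, justify exchanging limit and expectation in the identification of the leading term, and complete the proof.
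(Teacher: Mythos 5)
Your overall architecture (ratio of probabilities, restart at $\nu_n$, strong approximation coupling, the Brownian formula of Proposition \ref{intBro}, denominator via Theorem \ref{AsExTi}) matches the paper's proof in Section \ref{As for cond Markov walk}. The genuine gap is in your identification of the leading constant. You pass from $\bb E_x\left[ V(X_{\nu_n},y+S_{\nu_n});\,\tau_y>\nu_n\right]=V(x,y)$ to $\bb E_x\left[(y+S_{\nu_n})\mathbbm 1_{\{\tau_y>\nu_n\}}\right]\sim V(x,y)$ by invoking $V(x',y')/y'\to 1$ as $y'\to+\infty$. But that limit holds for \emph{fixed} $x'$ and is not uniform in the first coordinate: by Theorem \ref{ExofHaFu}(3)(b) the error in replacing $V(x',y')$ by $y'$ is of size $\delta(1+\abs{x'}^{p-1})y' + c_{p,\delta}(1+\abs{x'}^{p})$, so you must control quantities such as $\bb E_x\left(\abs{X_{\nu_n}}^{p-1}(y+S_{\nu_n})\,;\,\tau_y>\nu_n\right)$ and show that $\bb E_x\left(\abs{X_{\nu_n}}^{p};\,\tau_y>\nu_n,\,\nu_n\leq n^{1-\ee}\right)$ is negligible (it must tend to $0$, not merely stay bounded, for the $c_{p,\delta}$-term to disappear after letting $\delta\to 0$). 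The moment bound you assert, $\bb E_x(\abs{X_{\nu_n}}^p;\tau_y>\nu_n)\leq c_p(1+\abs{x}+y)^{q(p)}$, is not proved, and Corollary \ref{MoofExTi} cannot deliver it — it bounds moments of $\tau_y$, not of the overshoot jump $X_{\nu_n}$. Controlling exactly this overshoot contribution is the hard technical content of the paper: Lemma \ref{lemsurE1} only gives a \emph{first}-moment bound of $y+S_{\nu_n}$ on the survival event, and Lemma \ref{lemsurE3} (through the lengthy $E_{31}$–$E_{36}$ analysis, which crucially exploits the quasi-independence of $X_{\nu_n}$ from $y+S_{\nu_n-\pent{n^\ee}}$) shows that the large-overshoot part vanishes; nothing in your sketch substitutes for that work.

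The paper avoids your uniformity problem altogether: it never compares $V(X_{\nu_n},y+S_{\nu_n})$ with $y+S_{\nu_n}$. Instead it identifies the constant through the martingale representation \eqref{MfSetX} and the \emph{definition} $V(x,y)=-\bb E_x(M_{\tau_y})$: in Lemma \ref{lemsurE2}, $E_2=\bb E_x(y+S_{\nu_n^{\ee/6}};\tau_y>\nu_n^{\ee/6},\nu_n^{\ee/6}\leq\pent{n^{1-\ee}})\to V(x,y)$ follows by Lebesgue dominated convergence from the integrability of $M_{\tau_y}$ (Lemmas \ref{intdeMtau} and \ref{intdeMtaucasneg}), with the $\rho X_{\nu_n^{\ee/6}}$ term killed by the contraction estimate of Lemma \ref{MCM} after the extra $\pent{n^{\ee/6}}$ steps — which is also why the paper restarts at $\nu_n^{\ee/6}=\nu_n+\pent{n^{\ee/6}}$ rather than at $\nu_n$ as you do. A secondary, lighter issue: your claim $\bb P_x(\nu_n>n^{1-\ee})=o(1/\sqrt n)$ "by the LIL or CLT" is itself a nontrivial estimate (the paper's Lemma \ref{concentnu}, proved by a block/Berry–Esseen iteration, with an $x$-dependent term); as stated it is an unproven assertion, though a repairable one. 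To complete your route you would need either to prove the vanishing $p$-th-moment overshoot bounds you postulate, or to switch to the paper's dominated-convergence identification via $M_{\tau_y}$.
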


Theorems  \ref{ExofHaFu},   \ref{AsExTi},  \ref{AsCoRaWa}  can be extended to some non-positive initial points $y$. Set
\[ \mathscr{D}^- := \left\{ (x,y) \in \bb R\times \bb R_-, \; \bb P_x \left( \tau_y > 1 \right) = \bb P \left( ax+b>-y \right) >0 \right\}. \]

\begin{theorem}
\label{initialpointneg}
Assume either Conditions \ref{Mom001}, \ref{PosdeX1-1} and  $\bb E(a) \geq 0$, or Conditions \ref{Mom001} and \ref{PosdeX1-2}.
\begin{enumerate}
\item For any $(x,y) \in \mathscr{D}^-$, the random variable $M_{\tau_y}$ is integrable and the function $V(x,y) = -\bb E_x \left( M_{\tau_y} \right)$, is well defined on $\mathscr{D}^-$.
\item The function $V$ is positive and  $\textbf{Q}_+$-harmonic on $\mathscr{D}= \mathscr{D}^- \cup \bb R \times \bb R_+^*$.
\item \begin{enumerate}
\item For any $(x,y) \in \mathscr{D}^-$,
\[ \sqrt{n}\bb P_x \left( \tau_y > n \right) \leq c_p \left( 1+ \abs{x} \right)^{p}. \]
\item For any $(x,y) \in \mathscr{D}^-$,
\[ \bb P_x \left( \tau_y > n \right)  \underset{n\to +\infty}{\sim} \frac{2V(x,y)}{\sqrt{2\pi n} \sigma}. \]
\end{enumerate} 
\item For any $(x,y) \in \mathscr{D}^-$ and $t>0$,
	\[
	\bb P_x \left( \sachant{ \frac{y+S_n}{\sigma \sqrt{n}} \leq t}{\tau_y > n} \right)  \underset{n\to +\infty}{\longrightarrow} 
	\mathbf \Phi^+(t).
\]
\end{enumerate}
\end{theorem}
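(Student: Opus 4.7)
The plan is to exploit the strong Markov property at time $1$ on the event $\{\tau_y > 1\}$, which has positive $\bb P_x$-probability by the very definition of $\mathscr{D}^-$. On this event $(X_1, y + X_1)$ lies in $\bb R \times \bb R_+^*$, and the remainder of the walk is an independent copy of the chain started from this new pair, so every conclusion should follow from Theorems \ref{ExofHaFu}, \ref{AsExTi} and \ref{AsCoRaWa} applied conditionally to $X_1$ and then integrated against the law of $X_1$ under $\bb P_x$.

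For parts (1) and (2), on $\{\tau_y = 1\}$ one simply has $M_{\tau_y} = M_1$, which is integrable, while on $\{\tau_y > 1\}$ one has the decomposition
\[
M_{\tau_y} = M_1 + M'_{\widetilde\tau_{y + X_1}},
\]
where $M'$ and $\widetilde\tau$ denote the martingale and exit time associated with the shifted sequence $(a_{i+1}, b_{i+1})_{i \geq 1}$. This reduces integrability to that of $M_{\tau_{y+X_1}}$ under $\bb P_{X_1}$, which holds pointwise on $\{\tau_y > 1\}$ by Theorem \ref{ExofHaFu}(1). Taking expectations in the decomposition, using $\bb E_x(M_1) = 0$ and conditioning on $\mathscr{F}_1$ then yields
\[
V(x,y) = -\bb E_x(M_{\tau_y}) = \bb E_x\bigl(V(X_1, y+X_1);\,\tau_y > 1\bigr) = \textbf{Q}_+ V(x,y),
\]
which simultaneously gives harmonicity on $\mathscr{D}$ and positivity of $V$ on $\mathscr{D}^-$ (the integrand is positive by Theorem \ref{ExofHaFu}(2) and the event has positive probability by definition).

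For parts (3) and (4), the analogous one-step Markov identity
\[
\bb P_x\bigl(\tau_y > n,\, y + S_n \in A\bigr) = \bb E_x\Bigl(\bb P_{X_1}\bigl(\tau_{y+X_1} > n-1,\, y + X_1 + S_{n-1} \in A\bigr);\,\tau_y > 1\Bigr)
\]
reduces each statement to passing $n \to \infty$ inside the outer expectation. The main obstacle is to furnish a $\bb P_x$-integrable dominating function, and it is here that the non-positivity of $y$ is crucial: on $\{\tau_y > 1\}$ one has $0 < y + X_1 \leq X_1 = |X_1|$, so Theorem \ref{AsExTi}(1) provides the pointwise bound
\[
\sqrt{n}\,\bb P_{X_1}\bigl(\tau_{y+X_1} > n-1\bigr) \leq c_p\bigl(1 + |X_1|\bigr)^p,
\]
whose $\bb P_x$-expectation is at most $c_p(1 + |x|)^p$ for $p < \alpha$ by the recursion $X_1 = ax + b$ and Condition \ref{Mom001}(1). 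Assertion (3)(a) is then immediate, and dominated convergence combined with Theorem \ref{AsExTi}(2) gives (3)(b) after identifying the limit with $2V(x,y)/(\sqrt{2\pi}\sigma)$ via the harmonicity identity above. Part (4) is proved in the same manner, using Theorem \ref{AsCoRaWa} and the continuity of $\mathbf{\Phi}^+$ (together with $\sqrt{n}/\sqrt{n-1} \to 1$) to identify the pointwise limit of the conditional distribution function.
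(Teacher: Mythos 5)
Your one-step Markov reduction is exactly the strategy of the paper, and your treatment of parts (3) and (4) is essentially the paper's proof: the Markov property at time $1$, the domination $\sqrt{n}\,\bb P_{X_1}(\tau_{y+X_1}>n-1)\leq c_p(1+|X_1|)^p$ coming from Theorem \ref{AsExTi}(1) together with $0<y+X_1\leq |X_1|$ on $\{\tau_y>1\}$ (this is where $y\leq 0$ is used), dominated convergence, and the identification of the limit through the harmonicity identity $V(x,y)=\bb E_x(V(X_1,y+S_1);\tau_y>1)$.

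There is, however, a genuine gap in your argument for parts (1) and (2). From the decomposition $M_{\tau_y}=M_1+M'_{\widetilde\tau_{y+X_1}}$ you claim integrability of $M_{\tau_y}$ "pointwise on $\{\tau_y>1\}$ by Theorem \ref{ExofHaFu}(1)". But Theorem \ref{ExofHaFu}(1) is purely qualitative: it asserts $\bb E_{x'}(|M_{\tau_{y'}}|)<+\infty$ for each fixed $(x',y')$, with no control of this quantity as a function of $(x',y')$. Pointwise finiteness of the conditional expectation $\bb E_{X_1}\bigl(|M_{\tau_{y+X_1}}|\bigr)$ does not imply that its $\bb P_x$-expectation is finite, and without that you can neither conclude $M_{\tau_y}\in\bb L^1(\bb P_x)$ nor justify the conditioning/Fubini step that produces $-\bb E_x(M_{\tau_y})=\bb E_x(V(X_1,y+S_1);\tau_y>1)$; so both the definition of $V$ on $\mathscr D^-$ and the harmonicity identity (on which your parts (3)(b) and (4) also rely) are left unproved. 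The paper closes precisely this point with quantitative bounds: when $\bb E(a)\geq 0$ it uses Lemma \ref{intofthemartcond} together with the sign information of Lemma \ref{Mtauyprop} (equivalently, the explicit bound $\bb E_{x'}(|M_{\tau_{y'}}|)\leq c_p(1+y'+|x'|)(1+|x'|)^{p-1}$ of Lemma \ref{intdeMtau}), which on $\{\tau_y>1\}$ gives a dominating function $c_p(1+|X_1|)^p$ integrable under $\bb P_x$; when $\bb E(a)<0$ it invokes Lemma \ref{intdeMtaucasneg}, whose bound $c_p(1+|y|+|x|^p)$ is already stated for all $y\in\bb R$ and settles part (1) directly. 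Your argument can be repaired by citing these quantitative estimates (which are not part of the statement of Theorem \ref{ExofHaFu}), but as written the appeal to Theorem \ref{ExofHaFu}(1) alone does not suffice.
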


Below we discuss two more restrictive assumptions which, however, are easier to verify than Conditions \ref{PosdeX1-1} and \ref{PosdeX1-2}, respectively.

\begin{conditionbis}{PosdeX1-1}
\label{CSPosdeX1-1}
The law of the pair $(a,b)$ is such that for all $C>0$,
	\[
	\bb P \left( b \geq C \abs{a} \right) > 0.
\]
\end{conditionbis}

\begin{conditionbis}{PosdeX1-2}
\label{CSPosdeX1-2}
There exists $C>0$ such that,
	\[
	\bb P \left( (a,b) \in (-1,0) \times (0,C] \right) > 0 \qquad \text{and} \qquad \bb P \left( (a,b) \in (0,1) \times (0,C] \right) > 0.
\]
\end{conditionbis}

It is straightforward that Condition \ref{CSPosdeX1-1} implies Condition \ref{PosdeX1-1}. This follows from the inequality
	\[
	\bb P \left( a x + b > -y \right) \geq \bb P \left( b \geq  C \abs{a} \right),
\]
with $C=\abs{x}$. The fact that Condition \ref{CSPosdeX1-2} implies Condition \ref{PosdeX1-2} is proved in the Appendix \ref{ComplementCond}.

Under Condition \ref{Mom001}, it is easy to see that Condition \ref{CSPosdeX1-2} is satisfied, for example, when random variables $a$ and $b$ are independent and $P \left( a \in (-1,0) \right) > 0$ and $P \left( a \in (0,1) \right) > 0$.

Note that, while Condition \ref{PosdeX1-2} implies Condition \ref{PosdeX1-1}, there is no link between Conditions \ref{CSPosdeX1-1} and \ref{CSPosdeX1-2}. Indeed, if $a$ and $b$ are independent, $a$ is non-negative and the support of $b$ contains $\bb R_+$, then Condition \ref{CSPosdeX1-1} holds true whereas Condition \ref{CSPosdeX1-2} does not. At the opposite, if $a$ and $b$ are independent $b$ bounded and support of $a$ equal to $\{ -1/2 \} \cup \{1/2\}$ then Condition \ref{CSPosdeX1-2} 
holds true whereas Condition \ref{CSPosdeX1-1} does not.

The outline of the paper is as follows. The martingale approximation $\left( M_n \right)_{n\geq 0}$ of the Markov walk $\left( S_n \right)_{n\geq 0}$ and some of its properties are given in Section \ref{Mart Approx}. In Section \ref{CMWI} we prove that the expectation of the killed Markov walk $( \left(y+S_n\right) \mathbbm 1_{\{ \tau_y >n \}} )_{n\geq 0}$ is bounded uniformly in $n$. This allows us to prove the existence of the harmonic function and establish some of its properties in Section \ref{Sec Harm Func}. With the help of the harmonic function and of a strong approximation result for Markov chains we prove Theorems \ref{AsExTi}, \ref{AsCoRaWa} and \ref{initialpointneg}, in Sections \ref{As for Exit Time}, \ref{As for cond Markov walk} and \ref{secproofofinitialpointneg} respectively. Section \ref{Appendix} is an appendix where we collect some results used in the proofs.

\section{Martingale approximation}
\label{Mart Approx}

In this section we approximate the Markov walk $\left(S_n \right)_{n\geq 0}$ by a martingale following Gordin \cite{gordin_central_1969}. 
We precede this construction by a lemma which shows that there is an exponential decay of the dependence of $X_n$ on the initial state $x=X_0$ as $n$ grows to infinity. This simple fact will be used repeatedly in the sequel.

\begin{lemma}
\label{MCM}
For all $p \in [1,\alpha]$, $x\in \bb R$, and $n\geq 0$,
	\[
	\bb E_x^{1/p} \left( \abs{X_n}^p \right) \leq c_p + \left( \bb E^{1/p} \left( \abs{a}^p \right) \right)^n \abs{x}  \leq c_p ( 1 + \abs{x} ).
\]
\end{lemma}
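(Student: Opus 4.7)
The plan is to proceed by induction on $n$, using Minkowski's inequality together with the independence of $(a_{n+1}, b_{n+1})$ from $X_n$ to turn the affine recursion into a geometric recursion for the $L^p$-norms.

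First I would observe that, under Condition \ref{Mom001}(1), Jensen's inequality gives $\bb E(\abs{a}^p) \leq \bb E(\abs{a}^\alpha)^{p/\alpha} < 1$ for every $p \in [1,\alpha]$, and $\bb E(\abs{b}^p) \leq \bb E(\abs{b}^\alpha)^{p/\alpha} < +\infty$. Set $\rho_p = \bb E^{1/p}(\abs{a}^p) \in [0,1)$ and $C_p = \bb E^{1/p}(\abs{b}^p) < +\infty$.

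Next, using the recursion $X_{n+1} = a_{n+1} X_n + b_{n+1}$, the fact that $(a_{n+1}, b_{n+1})$ is independent of $X_n$ under $\bb P_x$, and Minkowski's inequality, I would write
\[
\bb E_x^{1/p}\left(\abs{X_{n+1}}^p\right) \leq \bb E_x^{1/p}\left(\abs{a_{n+1} X_n}^p\right) + \bb E_x^{1/p}\left(\abs{b_{n+1}}^p\right) = \rho_p \, \bb E_x^{1/p}\left(\abs{X_n}^p\right) + C_p.
\]
Iterating this recursion from $X_0 = x$ yields
\[
\bb E_x^{1/p}\left(\abs{X_n}^p\right) \leq \rho_p^n \abs{x} + C_p \sum_{k=0}^{n-1} \rho_p^k \leq \rho_p^n \abs{x} + \frac{C_p}{1-\rho_p},
\]
which is the first claimed inequality with $c_p = C_p/(1-\rho_p)$. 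The second inequality follows immediately since $\rho_p^n \leq 1$, so $\bb E_x^{1/p}(\abs{X_n}^p) \leq c_p + \abs{x} \leq c_p'(1+\abs{x})$ after possibly enlarging the constant.

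There is no real obstacle: the only thing to check carefully is the passage $\bb E^{1/p}(\abs{a_{n+1} X_n}^p) = \rho_p \bb E_x^{1/p}(\abs{X_n}^p)$, which requires the independence of $a_{n+1}$ and $X_n$ (so that the expectation factorises) and the fact that $\rho_p < 1$ so the geometric series converges; both are guaranteed by Condition \ref{Mom001}.
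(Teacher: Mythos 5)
Your proof is correct and is essentially the paper's argument in inductive form: the paper unrolls the recursion into the explicit representation $X_n = \sum_{k=1}^{n} b_k \prod_{i=k+1}^n a_i + \prod_{i=1}^n a_i\, x$ and applies Minkowski's inequality together with independence once, whereas you iterate the same one-step Minkowski bound, and both computations reduce to the geometric series coming from $\bb E^{1/p}\left(\abs{a}^p\right)<1$. The factorisation $\bb E_x\left(\abs{a_{n+1}X_n}^p\right)=\bb E\left(\abs{a}^p\right)\bb E_x\left(\abs{X_n}^p\right)$ that you single out is indeed valid, since $X_n$ depends only on $x$ and $(a_i,b_i)_{i\leq n}$, so no gap remains.
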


\begin{proof}
Since $X_n = \sum_{k=1}^{n} \left(b_k \prod_{i=k+1}^n a_i\right) + \prod_{i=1}^n a_i x$, for $n\geq 1$, with the convention $\prod_{i=n+1}^n a_i =1$, we have by the Minkowski inequality and the independence of $(a_i,b_i)_{i\geq 1}$, 
\[
	\bb E_x^{1/p} \left( \abs{X_n}^p \right) \leq \sum_{k=1}^{n} \left(\bb E^{1/p} \left( \abs{b}^p \right) \bb E^{1/p} \left( \abs{a}^p \right)^{n-k}\right) + \bb E^{1/p} \left( \abs{a}^p \right)^n \abs{x}. 
\]
The conclusion of the lemma is thus a direct consequence of Condition \ref{Mom001}.
\end{proof}

Let $\Id(x) =x$, $x\in \bb R$ be the identity function on $\bb R$. The Poisson equation $u - \mathbf P u = \Id$ has a unique solution $\theta$, given by,
	\[
	\theta(x)=  \sum_{k=0}^{+\infty} \mathbf P^k \Id(x) = x + \sum_{k=1}^{+\infty} \mathbb E_x \left( X_k \right)	= x + \sum_{k=1}^{+\infty} \bb E(a)^k x = \frac{x}{1-\bb E(a)}.
\]
Using the function $\theta$, the process $(M_n)_{n\geq 0}$ defined in (\ref{mart001}) can be recast as
	\[
	M_0=0, \qquad M_n = \sum_{k=1}^{n} \theta \left(X_k\right) - \mathbf P \theta \left(X_{k-1}\right) = \sum_{k=1}^{n} \frac{X_k - \bb E(a) X_{k-1}}{1-\bb E(a)}, \quad n\geq 1.
\]
Consider the natural filtration $\left(\mathscr{F}_n\right)_{n \geq 0}$ with $\mathscr{F}_0$ the trivial $\sigma$-algebra and 
$\mathscr{F}_n$ the $\sigma$-algebra generated by $X_1,\,X_2,\dots,\,X_n.$
The fact that $(M_n,\mathscr{F}_n)_{n\geq 0}$ is indeed a $\bb P_x$-martingale, for any $x \in \bb R$,
is easily verified by the Markov property: $\bb E_x (\theta(X_{n+1})| \mathscr{F}_{n}) = \mathbf P \theta \left(X_{n}\right),$ for $n\geq 0.$

All over the paper we use the abbreviation 
\begin{equation}
	\rho= \frac{\bb E(a)}{1-\bb E(a)}.
\label{RHO-001}
\end{equation}
With this notation, for any $x \in \bb R$ and $y \in \bb R$, the Markov walk
$\left( y + S_n \right)_{n\geq 0}$ has the following martingale representation:
\begin{equation}
	\label{MfSetX}
	y + S_n = y + \rho x+M_n -\rho X_n, \quad n\geq 0.
\end{equation}

Define the sequence $(X_n^0)_{n \geq 0}$, by
\begin{equation}
	\label{defdeXn0}
	X_0^0=0 \qquad \text{and} \qquad X_n^0 = \sum^{n}_{k=1} b_k \prod^{n}_{i=k+1} a_i, \quad n \geq 1,
\end{equation}
with the convention $\prod^{n}_{i=k+1} a_i=1$ for $k=n$. The sequence $(X_n^0)_{n \geq 0}$ corresponds to the stochastic recursion starting at $0$. In the same line, we define $M_0^0=0$ and $M_n^0 = \sum_{k=1}^{n} \frac{X_k^0 - \bb E(a) X_{k-1}^0}{1-\bb E(a)}$, for all $n\geq 1$. It is easy to see that the process $\left( M_n^0,\mathscr{F}_n \right)_{n\geq 0}$ is a zero mean $\bb P_x$-martingale which is related to the martingale $\left( M_n\right)_{n\geq 0}$ by the identity
\begin{equation}
 \label{MDecenx} M_n =  M_n^0 + \Delta_n  x,
\end{equation}
where
	\[
\Delta_0=0 \qquad \text{and} \qquad 	
\Delta_n= \sum_{k=1}^{n} \frac{\prod_{i=1}^{k-1} a_i}{1-\bb E(a)} \left(a_k- \bb E(a)\right), \quad n \geq 1. 
\]

The following two lemmas will be used to control $\bb E_x ( \abs{M_n}^p )$.

\begin{lemma}
\ 
\label{aboutD}
\begin{enumerate}
\item The sequence $(\Delta_n)_{n\geq 0}$ is a centred martingale.
\item For all $p \in [1,\alpha)$ and $n \geq 0$,
	\[
	\bb E^{1/p} \left( \abs{\Delta_n}^p \right) \leq c_p.
\]
\end{enumerate}
\end{lemma}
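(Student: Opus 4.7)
\textbf{Plan for the proof of Lemma \ref{aboutD}.}

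First I would fix the filtration with respect to which $(\Delta_n)$ is a martingale. Since $\Delta_n$ depends on $a_1,\dots,a_n$ (not on $b_1,\dots,b_n$), the natural choice is $\mathcal G_n := \sigma\left( (a_i,b_i),\, 1\leq i \leq n \right)$ (with $\mathcal G_0$ trivial), which is finer than $\mathscr{F}_n$. Writing $\Delta_n = \sum_{k=1}^n d_k$ with
\[
d_k := \frac{\prod_{i=1}^{k-1} a_i}{1-\bb E(a)}\,(a_k - \bb E(a)),
\]
the factor $\prod_{i=1}^{k-1} a_i$ is $\mathcal G_{k-1}$-measurable while $a_k$ is independent of $\mathcal G_{k-1}$ with $\bb E(a_k - \bb E(a))=0$. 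Therefore $\bb E(d_k\mid \mathcal G_{k-1})=0$, which gives part (1) and in particular $\bb E(\Delta_n)=0$.

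For part (2), the first step is to notice that under Condition \ref{Mom001} one has $\bb E(\abs{a}^{\alpha})<1$, so Jensen's inequality yields
\[
\bb E(\abs{a}^{p}) \leq \bb E(\abs{a}^{\alpha})^{p/\alpha} < 1 \qquad \text{for all } p\in[1,\alpha).
\]
Set $r_p := \bb E(\abs{a}^p)<1$ and $C_p := \bb E(\abs{a-\bb E(a)}^p)$, which is finite since $p<\alpha$. By independence,
\[
\bb E(\abs{d_k}^p) = \frac{1}{\abs{1-\bb E(a)}^p}\, r_p^{\,k-1}\, C_p.
\]

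The plan is then to estimate $\bb E(\abs{\Delta_n}^p)$ by splitting on $p\geq 2$ and $p<2$. For $p\geq 2$, I apply the Burkholder--Davis--Gundy inequality to the martingale $(\Delta_n)$:
\[
\bb E(\abs{\Delta_n}^p) \leq c_p\, \bb E\!\left( \Big(\sum_{k=1}^n d_k^2\Big)^{p/2} \right),
\]
then Minkowski's inequality in $L^{p/2}$ applied to the non-negative summands $d_k^2$ gives
\[
\bb E\!\left( \Big(\sum_{k=1}^n d_k^2\Big)^{p/2} \right)^{2/p} \leq \sum_{k=1}^n \bb E(\abs{d_k}^p)^{2/p} \leq c_p\, C_p^{2/p}\, \sum_{k=1}^n r_p^{2(k-1)/p}.
\]
Since $r_p<1$, the geometric series is bounded by $1/(1-r_p^{2/p})$, yielding the desired $\bb E(\abs{\Delta_n}^p)^{1/p}\leq c_p$ uniformly in $n$. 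For $1\leq p<2$, I reduce to $p=2$ by Jensen's inequality: $\bb E(\abs{\Delta_n}^p)^{1/p}\leq \bb E(\abs{\Delta_n}^2)^{1/2}\leq c_2\leq c_p$.

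The computations are straightforward; the only substantive point is the observation $r_p<1$ for all $p\in[1,\alpha)$, which makes the series of $L^p$-norms of the increments geometrically summable. This is really the heart of the bound, and everything else is a standard martingale moment estimate.
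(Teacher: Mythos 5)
Your proof is correct, and it differs from the paper's mainly in the amount of machinery used. For part (1) the paper simply observes that, by the identity $M_n=M_n^0+\Delta_n x$, the sequence $(\Delta_n)$ is a difference of two martingales; your direct verification via the increments $d_k$, with the enlarged filtration $\mathcal G_n=\sigma\left((a_i,b_i),\,i\leq n\right)$, is if anything more careful, since $\Delta_n$ need not be measurable with respect to the natural filtration generated by $X_1,\dots,X_n$. For part (2) the paper's argument is shorter: it applies the Minkowski (triangle) inequality in $L^p$ directly to $\Delta_n=\sum_{k=1}^n d_k$, getting
$\bb E^{1/p}\left(\abs{\Delta_n}^p\right)\leq \frac{\bb E^{1/p}\left(\abs{a-\bb E(a)}^p\right)}{\abs{1-\bb E(a)}}\sum_{k=1}^n \left(\bb E\left(\abs{a}^p\right)\right)^{(k-1)/p}\leq c_p,$
using exactly the observation you single out as the heart of the matter, namely $\bb E\left(\abs{a}^p\right)\leq \bb E\left(\abs{a}^{\alpha}\right)^{p/\alpha}<1$, so the series of $L^p$-norms is geometric; this works for all $p\in[1,\alpha)$ at once, with no case split and no Burkholder inequality. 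Your route through Burkholder--Davis--Gundy plus Minkowski in $L^{p/2}$ for $p\geq 2$ and Jensen for $p<2$ is also valid (it is the same scheme the paper uses for $M_n^0$ in Lemma \ref{majmart0}, where it is genuinely needed because the increments there have only bounded, not geometrically decaying, norms), but here it is heavier than necessary. Both arguments yield the same uniform-in-$n$ bound.
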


\begin{proof}
The first claim follows from the fact that $\Delta_n$ is a difference of two martingales.
Using the Minkowski inequality for $1 \leq p < \alpha$, the independence of $(a_i)_{i\geq 1}$ and Condition \ref{Mom001} we obtain the second claim.
\end{proof}

Let us introduce the martingale differences: 
	\[
	\xi_k^0 = M_k^0 - M_{k-1}^0 = \frac{X_k^0 - \bb E(a) X_{k-1}^0}{1- \bb E(a)}, \quad k \geq 1.
\]

\begin{lemma}
\label{majmart0}
For all $p \in [1,\alpha)$ and $n \geq 0$,
	\[
	\bb E^{1/p} \left( \abs{\xi_n^0}^p \right) \leq c_p \qquad \text{and} \qquad \bb E^{1/p} \left( \abs{M_n^0}^p \right) \leq c_p \sqrt{n}.
\]
\end{lemma}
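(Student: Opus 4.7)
The plan is to first control a single martingale difference $\xi_k^0$ and then lift this bound to the whole sum by a standard martingale moment inequality.

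For the first assertion, I would start by rewriting $\xi_k^0$ via the stochastic recursion $X_k^0 = a_k X_{k-1}^0 + b_k$. This yields
\[
\xi_k^0 = \frac{(a_k - \bb E(a))\, X_{k-1}^0 + b_k}{1-\bb E(a)}.
\]
The key observation is that, by the definition \eqref{defdeXn0} of $X_{k-1}^0$, the random variable $X_{k-1}^0$ is a measurable function of $(a_i,b_i)_{1\leq i\leq k-1}$, hence independent of the pair $(a_k,b_k)$. Using the triangle inequality in $\bb L^p$, the independence of $a_k-\bb E(a)$ from $X_{k-1}^0$, and Lemma \ref{MCM} applied at $x=0$ (so that $\bb E^{1/p}(|X_{k-1}^0|^p) \leq c_p$), together with the moment bound on $|a|^p$ and $|b|^p$ from Condition \ref{Mom001}, I get $\bb E^{1/p}(|\xi_k^0|^p) \leq c_p$ uniformly in $k$.

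For the second assertion, I would exploit that $(M_n^0,\mathscr{F}_n)_{n\geq 0}$ is a martingale with increments $\xi_k^0$ uniformly bounded in $\bb L^p$. For $p\in[2,\alpha)$, Burkholder's inequality gives
\[
\bb E(|M_n^0|^p) \leq c_p\, \bb E\Bigl(\Bigl(\sum_{k=1}^n (\xi_k^0)^2\Bigr)^{p/2}\Bigr),
\]
and Hölder's inequality (or Jensen applied to the normalized sum) bounds the right-hand side by $c_p\, n^{p/2-1}\sum_{k=1}^n \bb E(|\xi_k^0|^p) \leq c_p\, n^{p/2}$. Taking $p$-th roots yields $\bb E^{1/p}(|M_n^0|^p)\leq c_p\sqrt{n}$. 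For $p\in[1,2)$, I would simply invoke Jensen's inequality to reduce to the case $p=2$ already treated.

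The only mildly subtle point is the independence step used in the first part: one must be careful that the factor $a_k-\bb E(a)$ multiplying $X_{k-1}^0$ is genuinely independent of it, which follows from the explicit formula for $X_{k-1}^0$. Everything else is a direct application of Minkowski's inequality, Lemma \ref{MCM}, and a classical $\bb L^p$ martingale inequality, and no new difficulty is expected.
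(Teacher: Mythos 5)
Your proposal is correct and follows essentially the same route as the paper: the increments are controlled via Lemma \ref{MCM} with $x=0$ (the paper applies Minkowski directly to $\xi_k^0=\frac{X_k^0-\bb E(a)X_{k-1}^0}{1-\bb E(a)}$, while your rewriting through the recursion and the independence of $a_k-\bb E(a)$ from $X_{k-1}^0$ is an equivalent, slightly more elaborate way to get the same uniform bound), and the sum is handled by Burkholder plus H\"older for $p\geq 2$ and Jensen for $p\in[1,2)$, exactly as in the paper.
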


\begin{proof}
For the increments $\xi_n^0$ we simply use Lemma \ref{MCM} with $x=0$. For the martingale $(M_n^0)_{n\geq 0}$, the upper bound is obtained by Burkholder inequality: for all $2 < p < \alpha$ and all $n\geq 1$,
	\[
	\bb E^{1/p} \left( \abs{M_n^0}^p \right) \leq c_p \bb E^{1/p} \left( \left( \sum_{k=1}^n \left(\xi_k^0\right)^2 \right)^{p/2} \right).
\]
By the H\"older inequality with the exponents $u=p/2>1$ and $v=\frac{p}{p-2}$, we obtain
	\[
	\bb E^{1/p} \left( \abs{M_n^0}^p \right) \leq c_p \bb E^{1/p} \left[ \left( \sum_{k=1}^n \left(\xi_k^0\right)^{2u} \right)^{\frac{p}{2u}} n^{\frac{p}{2v}} \right] \leq c_p n^{\frac{p-2}{2p}} \left( \sum_{k=1}^n c_p \right)^{1/p} = c_p \sqrt{n}.
\]
This proves the claim when $2 < p < \alpha$. When $1 \leq p \leq 2$ the assertion follows obviously using Jensen inequality.
\end{proof}

\begin{lemma}
\label{majmartx}
For all $p \in [1,\alpha)$ and $n \geq 0$,
	\[
	\bb E_x^{1/p} \left( \abs{M_n}^p \right) \leq c_p \left( \abs{x} + \sqrt{n} \right).
\]
\end{lemma}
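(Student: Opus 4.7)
The plan is to exploit the decomposition \eqref{MDecenx} of $M_n$ into an $x$-independent martingale $M_n^0$ plus a linear correction $\Delta_n x$, and then bound each piece separately using the preceding lemmas.

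First I would observe that both $M_n^0$ and $\Delta_n$ are measurable functions of $(a_i,b_i)_{i\geq 1}$ alone and do not involve the initial state $X_0=x$. Consequently their $\bb P_x$-law coincides with their $\bb P$-law, so that for any $p\in[1,\alpha)$,
\[
\bb E_x^{1/p}\!\left(\abs{M_n^0}^p\right)=\bb E^{1/p}\!\left(\abs{M_n^0}^p\right) \quad\text{and}\quad \bb E_x^{1/p}\!\left(\abs{\Delta_n}^p\right)=\bb E^{1/p}\!\left(\abs{\Delta_n}^p\right).
\]

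Next I would apply the Minkowski inequality to \eqref{MDecenx}, yielding
\[
\bb E_x^{1/p}\!\left(\abs{M_n}^p\right) \leq \bb E^{1/p}\!\left(\abs{M_n^0}^p\right) + \abs{x}\,\bb E^{1/p}\!\left(\abs{\Delta_n}^p\right).
\]
The first term is controlled by Lemma \ref{majmart0}, which gives $\bb E^{1/p}(\abs{M_n^0}^p)\leq c_p\sqrt{n}$, while the second factor is bounded by $c_p$ thanks to part (2) of Lemma \ref{aboutD}. Combining these two estimates delivers $\bb E_x^{1/p}(\abs{M_n}^p)\leq c_p(\abs{x}+\sqrt{n})$, which is exactly the claim.

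There is no real obstacle here: the work has been done upstream in the decomposition \eqref{MDecenx} and in Lemmas \ref{aboutD} and \ref{majmart0} (where the Burkholder/H\"older argument in the second lemma is the only non-trivial input). The present lemma is a one-line Minkowski bookkeeping exercise that upgrades the centred bound on $M_n^0$ to an $x$-dependent bound on $M_n$.
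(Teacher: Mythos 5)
Your proof is correct and follows exactly the paper's argument: decompose $M_n$ via \eqref{MDecenx}, apply Minkowski, and invoke claim 2 of Lemma \ref{aboutD} together with Lemma \ref{majmart0}. The extra remark that $M_n^0$ and $\Delta_n$ do not depend on $x$ is a harmless clarification of what the paper leaves implicit.
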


\begin{proof}
By the Minkowski inequality and equation (\ref{MDecenx}), for all $1\leq p < \alpha$, $x\in \bb R$ and $n\geq 1$,
	\[
	\bb E^{1/p}_x \left( \abs{M_n}^p \right) \leq \bb E^{1/p} \left( \abs{\Delta_n}^p \right) \abs{x} + \bb E^{1/p} \left( \abs{M_n^0}^p \right).
\]
Then, by the claim 2 of Lemma \ref{aboutD} and Lemma \ref{majmart0}, the result follows.
\end{proof}

\section{Integrability of the killed martingale}
\label{CMWI}

The goal of this section is to prepare the background to prove the integrability of the random variable 
$ M_{\tau_y},$ which is crucial for showing the existence of the harmonic function in Section \ref{Sec Harm Func}. We use different approaches depending on the sign on $\bb E(a)$: 
when $\bb E(a) \geq 0$, in Section \ref{case >0} we prove that the expectation of the martingale 
$(y+\rho x + M_n )_{n\geq 0}$ killed at $\tau_y$ is uniformly bounded in $n$,
while, when $\bb E(a) < 0$, in Section \ref{CMWIcasneg} we prove that the expectation of  the same martingale 
killed at $T_y$ is uniformly bounded in $n$,
where $T_y$ is the exit time of the martingale $\left( y+\rho x+M_n \right)_{n\geq 0}$.

\subsection{Preliminary results} \label{prelim rez}
We first state a result concerning the first time when the process $\left( \abs{y+S_n} \right)_{n\geq 1}$ 
(respectively $\left( \abs{y+\rho x+M_n} \right)_{n\geq 1}$) crosses the level $n^{1/2-2\ee}$. Introduce the following stopping times: for any $n\geq 1$, $\ee \in (0,1/2)$, $x\in \bb R$ and $y \in \bb R$,
	\[
	\nu_n = \nu_{n,\ee,y} = \min \left\{ k \geq 1,\,\, \abs{y+S_k} > n^{1/2-\ee} \right\}
\]
and
	\[ v_n = v_{n,\ee,x,y} = \min \left\{ k \geq 1,\,\, \abs{y+\rho x+M_k} > n^{1/2-\ee} \right\}.
\]

\begin{lemma}
\label{concentnu}
Let $p \in (2,\alpha)$. There exists $\ee_0>0$ such that for any $\ee \in (0,\ee_0]$, $\delta>0$, $x \in \bb R$, $y>0$ and $n \geq 1$,
\[
	\bb P_x \left( \nu_n > \delta n^{1-\ee}  \right) \leq \frac{c_{p,\ee,\delta}}{ n^{p/2-p\ee} } + \e^{-c_{p,\ee,\delta} n^{1-2\ee}} \abs{x}^p
\]
and
\[
	\bb P_x \left( v_n > \delta n^{1-\ee} \right) \leq \frac{c_{p,\ee,\delta}}{ n^{p/2-p\ee} } + \e^{-c_{p,\ee,\delta} n^{1-2\ee}} \abs{x}^p.
\]
\end{lemma}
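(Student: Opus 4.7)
The two bounds are proved by the same scheme; I focus on the one for $\nu_n$, the argument for $v_n$ being identical with $(S_k)$ replaced by the martingale $(M_k)$ and using Lemmas \ref{majmart0} and \ref{majmartx} for moment control. The heuristic is standard: on $\{\nu_n > \delta n^{1-\ee}\}$ the walk $(y+S_k)$ stays in a window of width $2 n^{1/2-\ee}$ for $\asymp n^{1-\ee}$ steps, whereas the CLT fluctuation scale over this interval is $\asymp n^{(1-\ee)/2}$, larger by a factor $n^{\ee/2}$; iterating an anti-concentration estimate over blocks of length $\asymp n^{1-2\ee}$ should yield decay faster than any polynomial in $n$.

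First I would decouple the $|x|^p$-dependence. Set $N := \lceil n^{1-2\ee} \rceil$ and split according to whether $|X_N| \leq n^{1/2-\ee}$. By Markov's inequality combined with Lemma \ref{MCM},
\[
\bb P_x\bigl(|X_N| > n^{1/2-\ee}\bigr) \leq \frac{\bb E_x(|X_N|^p)}{n^{p(1/2-\ee)}} \leq \frac{c_p}{n^{p(1/2-\ee)}} + c_p\, \e^{-c n^{1-2\ee}} |x|^p,
\]
because $(\bb E(|a|^p))^N = \e^{-c n^{1-2\ee}}$ with $c = -\log \bb E(|a|^p) > 0$ by Condition \ref{Mom001}. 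This already produces both terms of the target bound. On the complementary event $\{|X_N| \leq n^{1/2-\ee}\}$, the Markov property at time $N$ reduces the problem to a starting state $(x',y') = (X_N, y+S_N)$ with $|x'|, |y'| \leq n^{1/2-\ee}$, polynomially bounded and independent of the original $|x|$.

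For such a polynomially bounded initial state, I would subdivide the remaining interval $[N, \delta n^{1-\ee}]$ into $J \asymp n^\ee$ sub-blocks of length $\asymp n^{1-2\ee}$ and iterate via the Markov property at each block boundary. The core one-block estimate is: for any $(\tilde x, \tilde y)$ with $|\tilde y| \leq n^{1/2-\ee}$,
\[
\bb P_{\tilde x}\bigl(|\tilde y + S_N| \leq n^{1/2-\ee}\bigr) \leq \eta + \psi_N(\tilde x), \qquad \eta \in (0,1),
\]
where $\eta$ comes from the CLT (the fluctuation scale $\sigma \sqrt{N} \asymp n^{1/2-\ee}$ matches the window size, so the limiting Gaussian places mass at most some $\eta < 1$ in any window of this width) and $\psi_N(\tilde x)$ is a Berry--Esseen-type rate with explicit $\tilde x$-dependence, supplied by the strong approximation of \cite{ion_grama_rate_2014} referenced in Section \ref{Strong Approx}. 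Iterating $J$ times gives a product $\lesssim \eta^J = \e^{-c n^\ee}$, which is $\leq c_{p,\ee,\delta}/n^{p(1/2-\ee)}$ for large $n$; the accumulated errors $\psi_N(X_{jN})$ are controlled by Lemma \ref{MCM}, which keeps the moments of $X_{jN}$ uniformly bounded in $j$ after the first block.

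The main obstacle is establishing the one-block anti-concentration bound uniformly in the starting state $\tilde x$ with a polynomial (rather than exponential) dependence on the initial state, so that the iteration closes across the $J \asymp n^\ee$ blocks. This quantitative CLT with explicit $x$-dependence is non-standard for Markov chains and is precisely the content of the strong-approximation refinement of \cite{ion_grama_rate_2014} invoked here; its interplay with Lemma \ref{MCM} to control moments along the chain is what allows one to decouple the polynomial decay rate from the exponential-in-$n^{1-2\ee}$ error carrying the $|x|^p$ factor.
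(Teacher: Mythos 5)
Your opening reduction of the $\abs{x}^p$-dependence and the block-iteration skeleton are in the spirit of the paper's proof, but the way you close the iteration has a genuine gap: the per-block error cannot produce the stated rate $n^{-(p/2-p\ee)}$. In your scheme the one-block estimate is $\eta + \psi_N(\tilde x)$ with $\psi_N(\tilde x)\asymp N^{-\ee_0}(1+\abs{\tilde x})^p$ coming from the Berry--Esseen bound of Corollary \ref{BerEss}, whose rate is only $N^{-\ee_0}$ for a small $\ee_0$ (not $N^{-1/2}$ or anything comparable to $N^{-p/2}$). On the survival event the states $X_{jN}$ at the block boundaries are not uniformly bounded (only their moments are), so $\psi_N(X_{jN})$ cannot be absorbed into a constant $\eta'<1$; it must be carried additively through the iteration, giving $p_J \le \eta^{J} + \sum_{j}\eta^{J-1-j}\,\bb E_x\bigl(\psi_N(X_{jN})\bigr) \lesssim \e^{-c n^{\ee}} + N^{-\ee_0}$, i.e.\ a final bound of order $n^{-\ee_0(1-2\ee)}$ --- far weaker than $n^{-(p/2-p\ee)}$. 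This loss is not cosmetic: the lemma is used later (e.g.\ in (\ref{BoundJ0-002}) and in the bound of $J_1$ in Lemma \ref{intofthemartcond}) precisely because its rate beats $n^{-1/2}$, which forces the exponent $p/2-p\ee>1/2$; a small unspecified exponent would break those arguments.

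The paper's proof avoids exactly this obstacle by never invoking a quantitative CLT with starting-point dependence inside a block. Working with the martingale, it writes the block increment as $M_l = M_l^0 + \Delta_l x'$ and $M_l^0 = S_l^0 + \rho X_l^0$, where $S_l^0$, $X_l^0$, $\Delta_l$ are built from the chain started at $0$ and hence do not depend on the current state $x'$. The anti-concentration step is then $\sup_{y'}\bb P\left( \abs{y'+S_l^0}\le 3c\,n^{1/2-\ee} \right)$, to which Corollary \ref{BerEss} is applied with $x=0$, so its error is $x'$-free and is absorbed, together with the Gaussian window mass (made small through the tunable factor $b$ in the block length $l=\pent{b^2\delta n^{1-2\ee}}$), into a single constant $q<1$. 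The $x'$-dependent pieces ($\Delta_l x'$ and $\rho X_l^0$) are handled by Markov's inequality with $p$-th moments (Lemmas \ref{aboutD} and \ref{MCM}), producing additive errors of the correct order $c_p(1+\abs{x'}^p)/n^{p/2-p\ee}$; iterating and applying Lemma \ref{MCM} at the block boundaries converts these into the stated bound, the $\abs{x}^p$ term picking up the factor $\e^{-c n^{1-2\ee}}$ because the relevant time lags are at least of order $n^{1-2\ee}$. To make your scheme work you would have to perform this same splitting inside each block, at which point it becomes the paper's argument; as written, the "uniform anti-concentration with polynomial $\tilde x$-dependence" you appeal to does not exist at the required rate.
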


\begin{proof}
With $\ee <\min(1/2,\ee_0)$, where $\ee_0$ is defined in Corollary \ref{BerEss} and  $b>0$ a constant to be chosen below, let $l=\pent{b^2 \delta  n^{1-2\ee}}$, $K=\pent{n^\ee/b^2}$ and for any $m\geq 1$, $x\in \bb R$ and $y \in \bb R$, with $z=y+\rho x$,
\[
A_m(x,y)= \left\{ \max_{1\leq k \leq m} \abs{z+M_{kl}} \leq (1+2\abs{\rho}) n^{1/2-\ee} \right\}.
\]
Note that by the martingale representation (\ref{MfSetX}), we have for any $k \geq 2$, $\abs{z+M_k}$ $= \abs{y+S_k + \rho (y+S_k) - \rho (y+S_{k-1})} \leq (1+\abs{\rho}) \abs{y+S_k} + \abs{\rho} \abs{y+S_{k-1}}$. Then, choosing $n$ large enough to have $l \geq 2$,
\begin{align*}
	\bb P_x \left( \nu_n > \delta n^{1-\ee} \right) &= \bb P_x \left( \underset{1\leq k \leq \pent{\delta n^{1-\ee}}}{\max} \abs{y+S_k} \leq n^{1/2-\ee} \right) \\
	&\leq \bb P_x \left( \underset{2\leq k \leq \pent{\delta n^{1-\ee}}}{\max} \abs{z+M_k} \leq (1+2\abs{\rho}) n^{1/2-\ee} \right) \\
	&\leq \bb P_x \left( A_K(x,y) \right).
\end{align*}
Moreover, we have also,
\[
\bb P_x \left( v_n > \delta n^{1-\ee} \right) \leq \bb P_x \left( A_K(x,y) \right).
\]
Since $(X_n,y+S_n)_{n\geq 0}$ is a Markov chain,
\begin{align}
	\bb P_x \left( A_K(x,y) \right) &= \int_{\bb R^2} \bb P_{x'} \left( A_1(x',y') \right) \nonumber\\
	&\hspace{2cm} \times \bb P_{x} \left( X_{(K-1)l} \in \dd x' \,,\, y+S_{(K-1)l} \in \dd y' \,,\, A_{K-1}(x,y) \right).
	\label{pK001}
\end{align}
We use the decomposition (\ref{MDecenx}) to write that, with $c=1+2\abs{\rho}$,
\begin{align*}
\bb P_{x'}	\left(A_1(x',y')\right) \leq& \; \bb P_{x'} \left( \abs{z'+M_l^0} \leq 2cn^{1/2-\ee} \,,\, \abs{\Delta_l x'} \leq cn^{1/2-\ee} \right) \\
&+ \bb P_{x'} \left( \abs{\Delta_l x'} > cn^{1/2-\ee}\right).
\end{align*}
Using (\ref{MfSetX}) with $x=0$, we have $M_n^0=S_n^0+\rho X_n^0$. By the Markov inequality,
\begin{align*}
\bb P_{x'}  \left( A_1(x',y')\right) \leq & \; \bb P_{x'} \left( \abs{z'+S_l^0} \leq 3cn^{1/2-\ee} \,,\, \abs{\rho} \abs{X_l^0} \leq cn^{1/2-\ee} \right)  \\
   & + \bb P_{x'} \left( \abs{\rho} \abs{X_l^0} > cn^{1/2-\ee}\right) + c_p\frac{\bb E \left( \abs{\Delta_l}^p \right)}{ n^{p/2-p\ee} } \abs{x'}^p.
\end{align*}
Since $S^0_l$ does not depend on $x'$, using Lemma \ref{MCM} and the claim 2 of Lemma \ref{aboutD}, we obtain
\[
\bb P_{x'} \left(A_1(x',y')\right) \leq \underset{y'\in \bb R}{\sup}\, \bb P \left( \abs{y'+S_l^0} \leq 3cn^{1/2-\ee} \right) + \frac{c_p \left( 1+\abs {x'}^p \right)}{ n^{p/2-p\ee} }.
\]
Inserting this bound in (\ref{pK001}), it follows that
\begin{align*}
\bb P_x \left( A_K(x,y) \right) \leq&\; \bb P_x \left( A_{K-1}(x,y) \right) \underset{y'\in \bb R}{\sup}\, \bb P \left( \abs{y'+S_l^0} \leq 3cn^{1/2-\ee} \right) \\
&+ \frac{c_p}{ n^{p/2-p\ee} } \left( 1+ \bb E_{x} \left( \abs{X_{(K-1)l}}^p \right) \right).
\end{align*}
Set $r_n=\frac{3cn^{1/2-\ee}}{\sqrt{l}}$. Denote by $\bb B_{\frac{ -y'}{\sqrt{l}}} (r_n)$ the closed ball centred 
in $\frac{-y'}{\sqrt{l}}$ of radius $r_n$. The rate of convergence in the central limit theorem from Corollary \ref{BerEss} (applied with $x=0$) implies that,
	\[
	\underset{y'\in \bb R}{\sup}\, \bb P \left( \frac{S_l^0}{\sqrt{l}} \in \bb B_{\frac{-y'}{\sqrt{l}}} (r_n) \right) \leq 
	\underset{y'\in \bb R}{\sup}\, \int_{\bb B_{\frac{-y'}{\sqrt{l}}} (r_n)} \e^{-\frac{u^2}{2\sigma^2}} \frac{\dd u}{\sqrt{2\pi} \sigma} 
	+ 2\frac{c_{p,\ee}}{l^\ee}.
\]
Moreover,
	\[
	\underset{y'\in \bb R}{\sup}\, \int_{\bb B_{\frac{-y'}{\sqrt{l}}} (r_n)} \e^{-\frac{u^2}{2\sigma^2}} \frac{\dd u}{\sqrt{2\pi} \sigma}  
		  \leq \frac{2r_n}{\sqrt{2\pi}\sigma}
		\leq \frac{c_\delta}{b}.
\]
Let $q<1$.  With $b$ large enough in the definition of $l$, we have $2\frac{c_{p,\ee}}{l^\ee} \leq \frac{q}{2}$, $\frac{c_\delta}{b} \leq \frac{q}{2}$ and thus
\[
\underset{y'\in \bb R}{\sup}\, \bb P \left( \frac{S_l^0}{\sqrt{l}} \in \bb B_{\frac{-y'}{\sqrt{l}}} (r_n) \right) \leq q < 1.
\]
Iterating, we get
	\[
	\bb P_x \left( A_K(x,y) \right) \leq q^{K-1} \bb P_x \left( A_1(x,y) \right) + \frac{c_p}{ n^{p/2-p\ee} } \sum_{k=0}^{K-2} q^k \left( 1+ \bb E_{x} \left( \abs{X_{(K-1-k)l}}^p \right) \right).
\]
Using 
the fact that
$q^{K-1} \bb P_x \left( A_1 (x,y) \right) \leq q^{K-1} = \frac{1}{q} \e^{-\pent{n^\ee/b^2} \ln(1/q)} \leq \frac{c_{p,\ee,\delta}}{ n^{p/2-p\ee} }$, 
Lemma \ref{MCM} 
and the fact that $(K-1-k)l \geq c_{\ee,\delta} n^{1-2\ee}$ for all $0 \leq k \leq K-2$,
we finally obtain
	\[
	\bb P_x \left( A_K(x,y) \right) \leq \frac{c_{p,\ee,\delta}}{ n^{p/2-p\ee} } + \e^{-c_{p,\ee,\delta} n^{1-2\ee}} \abs{x}^p.
\]

\end{proof}

\subsection{Integrability of the killed martingale: the case $\bb E(a) \geq 0$}
\label{case >0}
The difficulty in proving that the expectation $\bb E_x  ( y+\rho x + M_n\, ; \, \tau_y > n )$ is integrable lies in the fact that whereas the killed Markov walk $\left( y + S_n\right) \mathbbm 1_{ \left\{ \tau_y > n \right\} }$ is non-negative, the random variable 
$ \left( y+\rho x + M_n\right) \mathbbm 1_{ \left\{ \tau_y > n \right\}  }$ may be not. 
In the case when $\bb E(a) \geq 0$ we take advantage of the properties presented in the next lemma.
\begin{lemma}\ 
\label{Mtauyprop}
\begin{enumerate}
	\item For all $x \in \bb R$ and $y>0$,
		\[
		y+\rho x+M_{\tau_y} \leq 0, \quad   \bb P_x\text{-a.s.}
	\]
	\item For all $x \in \bb R$ and $y>0$,
		\[
		\frac{X_{\tau_y}}{1-\bb E(a)} < y + \rho x+M_{\tau_y}, \quad \bb P_x\text{-a.s.}
	\]
	\item For all $x \in \bb R$ and $y>0$, the sequence $\left( (y+\rho x +M_n) \mathbbm{1}_{\{\tau_y>n\}} \right)_{n\geq 0}$ is a submartingale with respect to $\bb P_x$.
\end{enumerate}
\end{lemma}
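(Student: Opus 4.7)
The plan is to exploit the martingale representation (\ref{MfSetX}), namely $y + S_n = y + \rho x + M_n - \rho X_n$, which evaluated at $\tau_y$ yields the crucial identity
\[
y + \rho x + M_{\tau_y} = (y+S_{\tau_y}) + \rho X_{\tau_y}.
\]
Note also that by the definition of $\tau_y$ we always have $y+S_{\tau_y}\leq 0$ while $y+S_{\tau_y-1}>0$ (with the convention $S_0=0$ and $y>0$ when $\tau_y=1$), so that
\[
X_{\tau_y}=(y+S_{\tau_y})-(y+S_{\tau_y-1})<0 \quad \bb P_x\text{-a.s.}
\]
For Part 1, I would combine the above two displays: since $\bb E(a)\geq 0$ implies $\rho\geq 0$, the term $\rho X_{\tau_y}$ is non-positive, so $y+\rho x+M_{\tau_y}=(y+S_{\tau_y})+\rho X_{\tau_y}\leq 0$.

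For Part 2, the approach is pure algebra using $\rho-\frac{1}{1-\bb E(a)}=\frac{\bb E(a)-1}{1-\bb E(a)}=-1$:
\[
y+\rho x+M_{\tau_y}-\frac{X_{\tau_y}}{1-\bb E(a)}
= (y+S_{\tau_y})+\rho X_{\tau_y}-\frac{X_{\tau_y}}{1-\bb E(a)}
= y+S_{\tau_y}-X_{\tau_y}=y+S_{\tau_y-1}>0,
\]
which gives the strict inequality immediately.

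For Part 3, I would condition on $\mathscr{F}_n$ and use the decomposition $\mathbbm{1}_{\{\tau_y>n+1\}}=\mathbbm{1}_{\{\tau_y>n\}}\bigl(1-\mathbbm{1}_{\{y+S_{n+1}\leq 0\}}\bigr)$ together with the $\bb P_x$-martingale property $\bb E_x(y+\rho x+M_{n+1}\mid\mathscr{F}_n)=y+\rho x+M_n$. This gives
\[
\bb E_x\bigl((y+\rho x+M_{n+1})\mathbbm{1}_{\{\tau_y>n+1\}}\,\bigl|\,\mathscr{F}_n\bigr)
=\mathbbm{1}_{\{\tau_y>n\}}(y+\rho x+M_n)-\mathbbm{1}_{\{\tau_y>n\}}R_n,
\]
where $R_n=\bb E_x\bigl((y+\rho x+M_{n+1})\mathbbm{1}_{\{y+S_{n+1}\leq 0\}}\,\bigl|\,\mathscr{F}_n\bigr)$. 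It then suffices to show $\mathbbm{1}_{\{\tau_y>n\}}R_n\leq 0$. The key step, which mirrors Part 1 at time $n+1$, is that on the event $\{\tau_y>n\}\cap\{y+S_{n+1}\leq 0\}$ we still have $y+S_n>0$, hence $X_{n+1}=(y+S_{n+1})-(y+S_n)<0$, and consequently $y+\rho x+M_{n+1}=(y+S_{n+1})+\rho X_{n+1}\leq 0$ (using $\rho\geq 0$). The integrand under $R_n$ is therefore non-positive on the relevant set, giving the submartingale inequality.

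All three parts are essentially routine once the identity $y+\rho x+M_{\tau_y}=(y+S_{\tau_y})+\rho X_{\tau_y}$ is in place; no step presents a real obstacle, but the argument does rely crucially on the sign condition $\bb E(a)\geq 0$, which is why the case $\bb E(a)<0$ is treated separately in Section \ref{CMWIcasneg}.
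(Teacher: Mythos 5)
Your proof is correct and follows essentially the same route as the paper: claims 1 and 2 come from the representation $y+\rho x+M_{\tau_y}=(y+S_{\tau_y})+\rho X_{\tau_y}$ (equivalently $z+M_n=y+S_{n-1}+\frac{X_n}{1-\bb E(a)}$) combined with $X_{\tau_y}<0$, $\rho\geq 0$ and $y+S_{\tau_y-1}>0$, and claim 3 uses the martingale property of $(M_n)$ together with the non-positivity of $z+M_{n+1}$ on the exit event, exactly as in the paper. The only cosmetic difference is that you split $\mathbbm{1}_{\{\tau_y>n+1\}}=\mathbbm{1}_{\{\tau_y>n\}}(1-\mathbbm{1}_{\{y+S_{n+1}\leq 0\}})$ and re-derive claim 1 at time $n+1$, whereas the paper writes $\mathbbm{1}_{\{\tau_y>n+1\}}=1-\mathbbm{1}_{\{\tau_y=n+1\}}-\mathbbm{1}_{\{\tau_y\leq n\}}$ and cites claim 1 directly.
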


\begin{proof}
\textit{Claim 1.} 
Let, for brevity,  $z=y+\rho x.$ 
Since, by the definition of $\tau_y$,
	\[
	X_{\tau_y} = y+S_{\tau_y} - (y + S_{\tau_y-1}) <0,
\]
it follows from (\ref{MfSetX}) and the bound $\bb E(a) \geq 0$ that $z+M_{\tau_y} \leq y+ S_{\tau_y} \leq 0$.

\textit{Claim 2.} Rewrite the martingale representation (\ref{MfSetX}) in the form
\begin{equation}
\label{MetSn-1}
	z+M_n = y + S_{n-1} + \frac{X_n}{1-\bb E(a)}.
\end{equation}
So, at the exit time $\tau_y$,
	\[
	\frac{X_{\tau_y}}{1-\bb E(a)} = z+M_{\tau_y} - \left( y + S_{\tau_y-1} \right) < z+M_{\tau_y}.
\]

\textit{Claim 3.} Using the first claim and the fact that $(M_n)_{n\geq 0}$ is a martingale,
\begin{align*}
	\bb E_x\left( \sachant{z+M_{n+1} \,;\, \tau_y>n+1}{\mathscr{F}_n} \right) &= z+M_{n} - \bb E_x\left( \sachant{z+M_{\tau_y} \,;\, \tau_y=n+1}{\mathscr{F}_n} \right)\\
	&\hspace{2.27cm} - \bb E_x\left( \sachant{z+M_{n+1}}{\mathscr{F}_n} \right)\mathbbm{1}_{\{\tau_y\leq n\}} \\
	&\geq (z+M_n)\mathbbm{1}_{\{\tau_y> n\}}.
\end{align*}
\end{proof}

In the next lemma we obtain a first bound for the expectation of the killed martingale $( (y + \rho x + M_n) \mathbbm 1_{\{ \tau_y > n \}} )_{n\geq 0}$ 
which is of order $n^{1/2-2\ee}$, for some $\ee >0$. Using a recurrent procedure we improve it subsequently to a bound not depending on $n$. 

\begin{lemma}
\label{firstupperbound}
Let $p\in (2,\alpha)$. For any $\ee \in (0, \frac{p-2}{4p})$, $x\in \bb R$, $y>0$ and $n \in \bb N$, we have
	\[ 
	\bb E_x\left( y + \rho x +M_n \,;\, \tau_y>n \right) \leq y+\rho x + c \abs{x} + c_p n^{1/2-2\ee}.
\]
\end{lemma}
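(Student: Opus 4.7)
My plan is to combine the martingale structure of $(M_n)_{n\geq 0}$ with the pathwise bounds on $M_{\tau_y}$ furnished by Lemma~\ref{Mtauyprop}.

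First, I apply optional stopping at the bounded stopping time $\tau_y \wedge n$: since $(M_n, \mathscr{F}_n)_{n\geq 0}$ is a $\bb P_x$-martingale with $M_0 = 0$, this gives $\bb E_x(M_n \,;\, \tau_y > n) = -\bb E_x(M_{\tau_y} \,;\, \tau_y \leq n)$, hence
\[
\bb E_x(y+\rho x + M_n \,;\, \tau_y > n) = (y+\rho x)\bb P_x(\tau_y > n) - \bb E_x(M_{\tau_y} \,;\, \tau_y \leq n).
\]
Claim~1 of Lemma~\ref{Mtauyprop} gives $-(y+\rho x + M_{\tau_y}) \geq 0$ on $\{\tau_y \leq n\}$, and Claim~2 combined with $X_{\tau_y}<0$ upgrades this to $0 \leq -(y+\rho x + M_{\tau_y}) < |X_{\tau_y}|/(1-\bb E(a))$. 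Integrating over $\{\tau_y\leq n\}$ and recombining, I obtain
\[
\bb E_x(y+\rho x + M_n \,;\, \tau_y > n) \leq (y+\rho x) + \frac{1}{1-\bb E(a)}\,\bb E_x(|X_{\tau_y}| \,;\, \tau_y \leq n).
\]

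The remaining task is to prove $\bb E_x(|X_{\tau_y}| \,;\, \tau_y \leq n) \leq c|x| + c_p\, n^{1/2-2\ee}$. The central idea is the additive decomposition $X_k = X_k^0 + (\prod_{i=1}^k a_i)\,x$ (obtained by iterating the affine recursion), which yields $|X_{\tau_y}| \leq |X_{\tau_y}^0| + |x|\prod_{i=1}^{\tau_y}|a_i|$. The $|x|$-part is controlled via the independence of the $(a_i)$ together with $\bb E(|a|) \leq \bb E(|a|^\alpha)^{1/\alpha} < 1$ (Jensen's inequality):
\[
\sum_{k=1}^n \bb E_x\!\left(\textstyle\prod_{i=1}^k |a_i| \,;\, \tau_y = k\right) \leq \sum_{k=1}^{\infty} \bb E(|a|)^k \leq c,
\]
with $c$ independent of $p$. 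For the $X^0$-part, Lemma~\ref{MCM} applied at $x=0$ gives $\bb E(|X_k^0|^p) \leq c_p$ uniformly in $k$, and H\"older's inequality with exponents $p$ and $p/(p-1)$ yields
\[
\sum_{k=1}^n \bb E_x(|X_k^0| \,;\, \tau_y = k) \leq \left(\sum_{k=1}^n \bb E(|X_k^0|^p)\right)^{\!1/p} \bb P_x(\tau_y \leq n)^{(p-1)/p} \leq c_p\, n^{1/p}.
\]
The assumption $\ee \leq (p-2)/(4p)$ is exactly the condition $1/p \leq 1/2 - 2\ee$, giving $n^{1/p} \leq n^{1/2 - 2\ee}$ and closing the estimate.

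The main subtlety is that applying H\"older directly via Lemma~\ref{MCM}, which provides only $\bb E_x(|X_k|^p) \leq c_p(1+|x|)^p$, would produce an unwanted factor $(1+|x|)\,n^{1/p}$ that is too large; the additive split $X_k = X_k^0 + (\prod a_i)\,x$ is what decouples the $x$-dependence (absorbed into a summable geometric series independent of $n$ and $p$) from the $n$-dependence (controlled by Lemma~\ref{MCM} at zero).
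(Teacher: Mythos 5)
Your proof is correct and follows essentially the same route as the paper: optional stopping together with claim 2 of Lemma \ref{Mtauyprop} (and $X_{\tau_y}<0$) to reduce the problem to bounding $\bb E_x\left( \abs{X_{\tau_y}} \,;\, \tau_y \leq n \right)$, then the decomposition $X_k = X_k^0 + x\prod_{i=1}^k a_i$ with a geometric series for the $x$-part and Lemma \ref{MCM} at $x=0$ for the $X^0$-part. The only variation is in the last estimate: where you apply H\"older to $\sum_{k=1}^n \bb E_x\left( \abs{X_k^0} \,;\, \tau_y = k \right)$ to get $c_p n^{1/p}$, the paper truncates $\max_{1\leq k\leq n}\abs{X_k^0}$ at level $n^{1/2-2\ee}$ and uses the Markov inequality, both steps needing exactly the same constraint $\ee < \frac{p-2}{4p}$.
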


\begin{proof}
By the Doob optional stopping theorem and the claim 2 of Lemma \ref{Mtauyprop}, with $z=y + \rho x,$
	\[
	\bb E_x\left( z+M_n \,;\, \tau_y >n \right) \leq z -\bb E_x\left( \frac{X_{\tau_y}}{1-\bb E(a)}  \,;\, \tau_y \leq n \right).
\]
Note that $X_{n} = \prod_{i=1}^n a_i x + X_n^0$, with $X_n^0$ given by (\ref{defdeXn0}). Then, with $\ee \in(0,1/4),$
\begin{align*}
	\bb E_x&\left( z+M_n \,;\, \tau_y >n \right) \\
	\leq\; & z +c\sum_{k=1}^n \prod_{i=1}^{k} \bb E\left( \abs{a_i} \right) \abs{x} + c\bb E_x\left( \abs{X_{\tau_y}^0}  \,;\, \tau_y \leq n \,,\, \underset{1 \leq k \leq n}{\max} \abs{X_k^0} \leq n^{1/2-2\ee}\right) \\
	& + c\bb E_x\left( \abs{X_{\tau_y}^0}  \,;\, \tau_y \leq n \,,\, \underset{1 \leq k \leq n}{\max} \abs{X_k^0} > n^{1/2-2\ee}\right).
\end{align*}
By the Markov inequality, for $2<p<\alpha$,
	\[
	\bb E_x\left( z+M_n \,;\, \tau_y >n \right) \leq z +c\sum_{k=1}^n \bb E^k\left(\abs{a}\right) \abs{x} + c n^{1/2-2\ee} + c\bb E_x\left( \frac{\underset{1 \leq k \leq n}{\max} \abs{X_k^0}^p}{n^{\frac{p-1}{2} (1-4\ee)  }} \right).
\]
By Lemma \ref{MCM} (with $x=0$),
	\[
	\bb E_x\left( z+M_n \,;\, \tau_y >n \right) \leq z +c \abs{x} + c n^{1/2-2\ee} + c_p\frac{n}{n^{\frac{p-1}{2} (1-4\ee)  }} .
\]
Choosing $\ee$ small enough to have ${\frac{p-1}{2} (1-4\ee)  } > 1/2 + 2\ee$, concludes the proof.
\end{proof}

Now we give an improvement of Lemma \ref{firstupperbound} which establishes a bound of the expectation of the killed martingale $((y+\rho x+M_n) \mathbbm 1_{\{\tau_y > n \}})_{n\geq 0}$ depending only on the starting values $x, y$. 

\begin{lemma}
\label{intofthemartcond}
For any $\delta >0$, $p \in (2,\alpha)$, $x\in \bb R,$ $y>0$ and $n \geq 0$,
\begin{align*}
\bb E_x \left( y + \rho x+M_n \,;\, \tau_y > n \right) &\leq \left( 1+c_{p} \delta \left(1+\abs{x}\right)^{p-1} \right) y + c_{p,\delta}\left(1+\abs{x}\right)^{p}. 
\end{align*}
Moreover, with $\delta=1$, for any $p \in (2,\alpha)$, $x\in \bb R,$ $y>0$ and $n \geq 0$,
\begin{align*}
\bb E_x \left( y + \rho x+M_n \,;\, \tau_y > n \right) \leq c_{p}\left( 1+y+\abs{x} \right)\left( 1+\abs{x} \right)^{p-1}.
\end{align*}

\end{lemma}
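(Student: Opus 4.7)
Write $z=y+\rho x$. The strategy is to bootstrap Lemma~\ref{firstupperbound} using the Markov property at the crossing time $\nu_n$: on the event $\{\tau_y>\nu_n\}$ one has $y+S_{\nu_n}>n^{1/2-\varepsilon}$, so the additive error $c_p m^{1/2-2\varepsilon}$ produced by Lemma~\ref{firstupperbound} applied to the shifted walk of length $m=n-\nu_n$ turns into a \emph{multiplicative} error of size $c_p n^{-\varepsilon}(y+S_{\nu_n})$, which has exactly the shape needed to match the desired $(1+c_p\delta(1+\abs{x}|)^{p-1})y$ coefficient.

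\smallskip

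\emph{Step 1 (splitting at $\nu_n$).} Fix $\varepsilon$ small enough that Lemmas~\ref{firstupperbound} and \ref{concentnu} apply and that $(p-1)(1/2-\varepsilon)>1/2$, and set $N:=\lfloor\delta n^{1-\varepsilon}\rfloor$. I would decompose
\[
\bb E_x(z+M_n;\tau_y>n) \;=\; \bb E_x(z+M_n;\tau_y>n,\nu_n\leq N) \;+\; \bb E_x(z+M_n;\tau_y>n,\nu_n>N).
\]
For the second term, H\"older's inequality combined with Lemma~\ref{majmartx} (which yields $\bb E_x(\abs{z+M_n}^p)^{1/p}\leq c_p(\abs{z}+\sqrt n)$) and Lemma~\ref{concentnu} gives a bound of order $c_{p,\delta}(1+\abs x)^p$ uniformly in $n$.

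\smallskip

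\emph{Step 2 (Markov at $\nu_n$).} On $\{\tau_y>\nu_n,\,\nu_n\leq N\}$, the Markov property, together with the identity $z+M_n=(y+S_{\nu_n})+\rho X_{\nu_n}+(M_n-M_{\nu_n})$ from (\ref{MfSetX}), yields
\[
\bb E_x[(z+M_n)\mathbbm 1_{\tau_y>n}\mid\mathscr F_{\nu_n}] = \varphi(X_{\nu_n},\,y+S_{\nu_n},\,n-\nu_n),\qquad \varphi(x',y',m):=\bb E_{x'}(y'+\rho x'+M_m;\tau_{y'}>m).
\]
Applying Lemma~\ref{firstupperbound} to $\varphi$ and using $y+S_{\nu_n}>n^{1/2-\varepsilon}$ to bound $c_p(n-\nu_n)^{1/2-2\varepsilon}\leq c_p n^{-\varepsilon}(y+S_{\nu_n})$, one obtains
\[
\varphi(X_{\nu_n},y+S_{\nu_n},n-\nu_n) \leq (y+S_{\nu_n})\bigl(1+c_p n^{-\varepsilon}\bigr)+c(1+\abs{X_{\nu_n}}).
\]

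\smallskip

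\emph{Step 3 (control of $\bb E_x(y+S_{\nu_n};\tau_y>\nu_n,\nu_n\leq N)$).} Writing $y+S_{\nu_n}=(z+M_{\nu_n})-\rho X_{\nu_n}$, the submartingale property of $Y_k:=(z+M_k)\mathbbm 1_{\tau_y>k}$ from Lemma~\ref{Mtauyprop}(3) and Doob's optional stopping applied at the bounded stopping time $\nu_n\wedge N$ give
\[
\bb E_x(Y_{\nu_n};\nu_n\leq N) \leq \bb E_x(Y_N) \leq z+c\abs x+c_p N^{1/2-2\varepsilon}
\]
by Lemma~\ref{firstupperbound}; combined with $\bb E_x(\abs{X_{\nu_n}})\leq c(1+\abs x)$ from Lemma~\ref{MCM}, this controls $\bb E_x(y+S_{\nu_n};\tau_y>\nu_n,\nu_n\leq N)$ by $y+c(1+\abs x)+c_p N^{1/2-2\varepsilon}$.

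\smallskip

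\emph{Step 4 (assembly and iteration).} Combining the three pieces, the multiplicative factor $1+c_p n^{-\varepsilon}$ produces exactly the term $c_p\delta(1+\abs x)^{p-1}y$ on the right-hand side (absorbing $c_p n^{-\varepsilon}$ into $\delta(1+\abs x)^{p-1}$ is legitimate since $n^{-\varepsilon}\to0$, while the finitely many small-$n$ cases are handled directly by Lemma~\ref{firstupperbound} and absorbed into $c_{p,\delta}(1+\abs x)^p$). \emph{The main obstacle} is the residual $N^{1/2-2\varepsilon}=\delta^{1/2-2\varepsilon} n^{(1-\varepsilon)(1/2-2\varepsilon)}$, still $n$-dependent but with strictly smaller exponent than the starting one. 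I expect to handle it by iterating Steps~1--3: each pass shrinks the residual exponent by a factor $1-\varepsilon$, and after finitely many iterations the exponent drops below $\varepsilon$, at which point it gets swallowed by the multiplicative correction produced in Step~2. The moreover statement, with $\delta=1$, follows by the trivial algebraic bound $(1+c_p(1+\abs x)^{p-1})y+c_p(1+\abs x)^p\leq c_p(1+y+\abs x)(1+\abs x)^{p-1}$.
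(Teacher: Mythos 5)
Your overall strategy --- split at the crossing time, apply Lemma \ref{firstupperbound} after the Markov property, and convert its additive error into a multiplicative one via $y+S_{\nu_n}>n^{1/2-\ee}$ --- is the same as the paper's, but two steps fail as written. First, in Steps 2--3 you need $\bb E_x\left( \abs{X_{\nu_n}} \,;\, \tau_y>\nu_n \,,\, \nu_n\le N \right)\le c\left(1+\abs{x}\right)$ and you invoke Lemma \ref{MCM}; that lemma bounds moments only at deterministic times, and at the random time $\nu_n$ no such bound is available --- the paper explicitly identifies the failure of $\bb L^1$-boundedness of $X_{\nu_n}$ as the major difficulty of the affine model. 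The paper's proof circumvents it by stopping at the delayed time $\nu_n^\ee=\nu_n+\pent{n^\ee}$: applying the Markov property at $\nu_n$ and Lemma \ref{MCM} over the extra $\pent{n^\ee}$ steps, the coefficient of $\abs{X_{\nu_n}}$ becomes $\bb E^{\pent{n^\ee}}\left(\abs a\right)\le\e^{-cn^\ee}$, and (after first disposing of the case $y> n^{1/2-\ee}$, so that in the remaining case $X_{\nu_n}>0$ and $z+M_{\nu_n}>n^{1/2-\ee}$ on $\{\tau_y>\nu_n\}$) the surviving contributions are folded into $\frac{c_p}{n^\ee}\left(z+M_{\nu_n}\right)$. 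Without this device your bounds in Steps 2 and 3 are unjustified.

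Second, the iteration in Step 4 does not close. A residual $c_p n^{\gamma}$ with any $\gamma>0$ cannot be ``swallowed by the multiplicative correction'' $(1+c_p n^{-\ee})$: it diverges with $n$ while the target right-hand side is $n$-independent, and once expectations have been taken there is no factor $y+S_{\nu_n}$ left with which to convert it multiplicatively. What actually works (and is what the paper does) is to use the submartingale property of $\left((z+M_k)\mathbbm 1_{\{\tau_y>k\}}\right)_{k\ge 0}$ to replace the random time by the deterministic time $\pent{n^{1-\ee}}$, which yields the genuine recursion $u_n\le\left(1+c_p n^{-\ee}\right)u_{\pent{n^{1-\ee}}}+C_{p,\ee}(x,y)\,n^{-\ee}$ for the non-decreasing sequence $u_n=\bb E_x\left(z+M_n \,;\, \tau_y>n\right)$; iterating this down to a fixed $n_f$ takes a number of steps growing with $n$, and the boundedness of the accumulated product of factors and of the summed errors is precisely the content of Lemma \ref{lemanalyse}, the choice $n_f\ge\delta^{-1/\ee}$ giving the stated constants, with small $n$ treated directly via Lemma \ref{majmartx}. (A smaller, fixable point: your optional-stopping inequality $\bb E_x\left(Y_{\nu_n};\nu_n\le N\right)\le\bb E_x\left(Y_N\right)$ drops the term $\bb E_x\left(Y_N;\nu_n>N\right)$, which is not sign-definite when $\rho>0$ and must be estimated separately, as the paper does for its term $J_3$.)
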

\begin{proof}
Let $\ee \in (0, \ee_1]$, where $\ee_1=\min \left(\ee_0, \frac{p-2}{4p}   \right)$ and $\ee_0$ is defined in Lemma \ref{concentnu}. 
Set $z= y+\rho x$. 
Assume first that $y> n^{1/2-\ee}$. From Lemma \ref{firstupperbound}, we deduce that,
	\[
	\bb E_x\left( y+\rho x+M_n \,;\, \tau_y>n \right) \leq y+\rho x + c \abs{x} + c_p n^{1/2-2\ee} \leq (1+c_p n^{-\ee}) y + c \abs{x},
\]
which proves the lemma when $y > n^{1/2-\ee}$ and $n $ is larger than $\delta^{-1/\ee}$.

Now, we turn to the case $0< y \leq n^{1/2-\ee}$. Introduce the following stopping time:
	\[
	\nu_n^\ee = \nu_n+\pent{n^\ee}.
\]
We have the following obvious decomposition: 
\begin{align}
 \bb E_x&\left( z+M_n \,;\, \tau_y>n \right) \nonumber\\
&= \underbrace{\bb E_x\left( z+M_n \,;\, \tau_y>n \,,\, \nu_n^\ee > \pent{n^{1-\ee}} \right)}_{=:J_1} + \underbrace{\bb E_x\left( z+M_n \,;\, \tau_y>n \,,\, \nu_n^\ee \leq \pent{n^{1-\ee}} \right)}_{=:J_2}. \label{J1et2001}
\end{align}

\textit{Bound of $J_1$.} Using the H\"older inequality for $1 < p < \alpha$, Lemma \ref{majmartx} and Lemma \ref{concentnu}, we have
	\[
	J_1 \leq c_{p,\ee} \sqrt{n}  \left( 1+ y + \abs{x} \right) \frac{\left(1+\abs{x} \right)^{p-1}}{n^ {(p-1)(\frac{1}{2}-\ee)}}.
\]
As $\ee < \frac{p-2}{4p}$, denoting $C_{p,\ee}(x,y)= c_{p,\ee}\left( 1+ y + \abs{x} \right)\left(1+\abs{x} \right)^{p-1}$, for all $n \geq 1$,
\begin{equation}
 \label{J2002} J_1 \leq \frac{C_{p,\ee}(x,y)}{n^\ee}.
\end{equation}

\textit{Bound of $J_2$.} Using the martingale representation (\ref{MfSetX}) for the Markov walk $(y+S_n)_{n\geq 1}$, by the Markov property,
\begin{align*}
	J_2 &= \sum_{k=1}^{\pent{n^{1-\ee}}} \int_{\bb R \times \bb R_+^*} E_{x'} \left( y' + \rho x' + M_{n-k} \,;\, \tau_{y'} > n-k \right)\\
	&\hspace{4cm}	\times \bb P_{x} \left( X_{\nu_n^\ee} \in \dd x' \,,\, y+S_{\nu_n^\ee} \in \dd y' \,,\, \tau_y > \nu_n^\ee \,,\, \nu_n^\ee =k \right).
\end{align*} 
By Lemma \ref{firstupperbound},
\begin{align*}
	J_2 \leq\; &\bb E_{x} \left( z+M_{\nu_n^\ee}   + c\abs{X_{\nu_n+\pent{n^{\ee}}}} + c_p n^{1/2-2\ee} \,;\, \tau_y > \nu_n^\ee \,,\, \nu_n^\ee \leq \pent{n^{1-\ee}} \right).
\end{align*}
For the term $z+M_{\nu_n^\ee}$, we use the fact that $((z+M_n) \mathbbm{1}_{\{\tau_y>n\}} )_{n\geq 0}$ is a submartingale (claim 3 of Lemma \ref{Mtauyprop}), while for the term $c\abs{X_{\nu_n+\pent{n^{\ee}}}}$ we use the Markov property at $\nu_n$ and Lemma \ref{MCM}. This gives
\begin{align*}
	J_2 \leq\; & \bb E_x \left( z+M_{\pent{n^{1-\ee}}} \,;\, \tau_y > \pent{n^{1-\ee}} \,,\, \nu_n^\ee \leq \pent{n^{1-\ee}} \right) \\
	& + c_p \bb E_{x} \left( n^{1/2-2\ee} + \bb E^{\pent{n^{\ee}}} \left(\abs{a} \right) \abs{X_{\nu_n}} \,;\, \tau_y > \nu_n \,,\, \nu_n \leq \pent{n^{1-\ee}} \right).
\end{align*}
Since $0< y \leq n^{1/2-\ee}$ and $\nu_n$ is the first time when $(\abs{y+S_n})_{n \geq 1}$ exceeds $n^{1/2-\ee}$, the jump $X_{\nu_n} $ is necessarily positive on the event $\{ \tau_y> \nu_n \}$. 
  Therefore, under the condition $\bb E(a) \geq 0$, by the representation (\ref{MfSetX}) we have $z+M_{\nu_n} > n^{1/2-\ee}$. Using the last bound, we obtain
\begin{align*}
J_2 \leq\; & \bb E_x \left( z+M_{\pent{n^{1-\ee}}} \,;\, \tau_y > \pent{n^{1-\ee}} \,,\, \nu_n^\ee \leq \pent{n^{1-\ee}} \right) \\
&+ c_p \bb E_{x} \left( \frac{z+M_{\nu_n}}{n^{\ee}} \,;\, \tau_y > \nu_n \,,\, \nu_n \leq \pent{n^{1-\ee}} \right) + \e^{-c_p n^\ee} \bb E_{x} \left( \abs{X_{\nu_n}} \,;\, \nu_n \leq \pent{n^{1-\ee}} \right).
\end{align*}
Again, using the fact that $\left((z+M_n) \mathbbm{1}_{\{\tau_y>n\}} \right)_{n \geq 0}$ is a submartingale and Lemma \ref{MCM},
we bound $J_2$ as follows,
\begin{align}
 J_2 \leq\; &\left(1+ \frac{c_p}{n^\ee} \right) \bb E_x \left( z+M_{\pent{n^{1-\ee}}} \,;\, \tau_y > \pent{n^{1-\ee}} \right) + \e^{-c_{p,\ee} n^\ee} n^{1-\ee} \left( 1+ \abs{x} \right) \nonumber\\
&-\underbrace{\bb E_x \left( \left(z+M_{\pent{n^{1-\ee}}}\right)\left( \mathbbm{1}_{\left\{\nu_n^\ee > \pent{n^{1-\ee}} \right\}} + \frac{c_p}{n^\ee} \mathbbm{1}_{\left\{\nu_n > \pent{n^{1-\ee}} \right\}} \right) \,;\, \tau_y > \pent{n^{1-\ee}} \right)}_{=:J_3}. \label{petitJ3}
\end{align}
We bound $J_3$ in a same manner as $J_1$,
	\[
	\abs{J_3} \leq c_{p,\ee} \sqrt{\pent{n^{1-\ee}}} \left( 1 + y + \abs{x} \right) c_{p,\ee} \frac{\left(1+\abs{x} \right)^{p-1}}{n^{\frac{p-1}{2}-(p-1)\ee}} \leq \frac{C_{p,\ee}(x,y)}{n^{\ee}}.
\]
Inserting this bound in (\ref{petitJ3}) and using (\ref{J2002}) and (\ref{J1et2001}) we find that
\[
 \bb E_x\left( z+M_n \,;\, \tau_y>n \right) \leq \left(1+ \frac{c_p}{n^\ee} \right) \bb E_x \left( z+M_{\pent{n^{1-\ee}}} \,;\, \tau_y > \pent{n^{1-\ee}} \right) 
 + \frac{C_{p,\ee}(x,y)}{n^\ee}.
\]
Since $( (z + M_n) \mathbbm 1_{\left\{\tau_y >n\right\}} )_{n\geq 0}$ is a submartingale, 
the sequence $u_n = \bb E_x\left( z+M_n \,;\, \tau_y>n \right)$ is non-decreasing.
By Lemma \ref{lemanalyse} used with $\alpha=c_p$, $\beta = C_{p,\ee}(x,y)$ and $\gamma=0$ it follows that
\[
\bb E_x\left( z+M_n \,;\, \tau_y>n \right) \leq \left( 1+ \frac{c_{p,\ee}}{n_f^\ee} \right) \bb E_x \left( z+M_{n_f} \,;\, \tau_y > n_f \right)
+ \frac{C_{p,\ee} (x,y)}{n_f^\ee}.
\]
By Lemma \ref{majmartx} and the fact that $z=y+\rho x$, we have
\begin{align*}
\bb E_x\left( z+M_n \,;\, \tau_y>n \right) &\leq \left( 1+\frac{c_{p,\ee}}{n_f^{\ee}} \right) y + c_{p,\ee} \sqrt{n_f} + c_{p,\ee} \abs{x} \\
 &  + \frac{c_{p,\ee}}{n_f^{\ee}} \left( 1+y+\abs{x}\right)\left(1+\abs{x}\right)^{p-1} \\
 &\leq  \left( 1+\frac{c_{p,\ee} \left(1+\abs{x}\right)^{p-1}}{n_f^{\ee}} \right) y + c_{p,\ee,n_f}\left(1+\abs{x}\right)^{p}.
\end{align*}
Choosing $n_f \geq \delta^{-1/\ee} $ concludes the proof of the lemma when $n\geq \delta^{-1/\ee}$.

Now, when $n\leq \delta^{-1/\ee}$, a bound of $\bb E_x \left( z+M_n \,;\, \tau_y > n \right)$ is obtained immediately from Lemma \ref{majmartx}: since $z=y+\rho x$, for any $y>0$,
\begin{align*}
	\bb E_x \left( z+M_n \,;\, \tau_y > n \right) &\leq y + c \abs{x} + \bb E_x \left( \abs{M_n} \right) \leq y + c \abs{x} + c \sqrt{n} \leq y + c_{\delta} \left( 1+\abs{x} \right),
\end{align*}
and we conclude that the lemma holds true for any $n \in \bb N$.

\end{proof}

We can now transfer the bound provided by Lemma \ref{intofthemartcond} to the Markov walk $(y+S_n)_{n\geq 0}$.
\begin{corollary}
\label{intofrandwalk}
For any $p \in (2,\alpha)$, $x\in \bb R,$ $y>0$ and $n\in \bb N$,
	\[
	\bb E_x \left( y+S_n \,;\, \tau_y > n \right) \leq c_p \left( 1+y+\abs{x} \right)\left( 1+\abs{x} \right)^{p-1}.
\]
\end{corollary}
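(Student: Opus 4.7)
The plan is to reduce the corollary to the bound on the killed martingale already obtained in Lemma~\ref{intofthemartcond} (with $\delta=1$), by exploiting the martingale representation \eqref{MfSetX}. Indeed, writing $z = y + \rho x$, that representation yields the identity
\[
y + S_n \;=\; (y + \rho x + M_n) \;-\; \rho X_n,
\]
so on the event $\{\tau_y > n\}$ we have
\[
\bb E_x \bigl( y + S_n \,;\, \tau_y > n \bigr)
\;=\; \bb E_x \bigl( y + \rho x + M_n \,;\, \tau_y > n \bigr)
\;-\; \rho\, \bb E_x \bigl( X_n \,;\, \tau_y > n \bigr).
\]

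For the first term, Lemma~\ref{intofthemartcond} (second statement, $\delta = 1$) directly gives the bound
\[
\bb E_x \bigl( y + \rho x + M_n \,;\, \tau_y > n \bigr)
\;\leq\; c_p \bigl(1 + y + \abs{x}\bigr)\bigl(1 + \abs{x}\bigr)^{p-1}.
\]
For the second term, irrespective of the sign of $\rho$, one has the trivial estimate
\[
\abs{\rho\, \bb E_x \bigl( X_n \,;\, \tau_y > n \bigr)}
\;\leq\; \abs{\rho}\, \bb E_x \bigl(\abs{X_n}\bigr)
\;\leq\; c\,(1 + \abs{x})
\]
by Lemma~\ref{MCM}, since $\rho$ is a constant depending only on the law of~$a$ through \eqref{RHO-001}.

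Combining the two bounds and absorbing the lower-order contribution $c(1+\abs{x})$ into $c_p(1+y+\abs{x})(1+\abs{x})^{p-1}$ (using $p > 2$ and $y > 0$, so that $(1+\abs{x})^{p-1} \geq 1$ and $1 + y + \abs{x} \geq 1$) yields the claim. There is no real obstacle here: all the work has already been done in Lemma~\ref{intofthemartcond}, and this corollary is a clean consequence that transfers the estimate from the killed martingale $(y+\rho x + M_n)\mathbbm{1}_{\{\tau_y > n\}}$ back to the killed Markov walk $(y+S_n)\mathbbm{1}_{\{\tau_y > n\}}$, modulo the harmless $L^1$-control of $X_n$ provided by Lemma~\ref{MCM}.
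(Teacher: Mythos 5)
Your proposal is correct and follows exactly the paper's own (very terse) argument: decompose via the martingale representation \eqref{MfSetX}, apply the second bound of Lemma \ref{intofthemartcond} (the $\delta=1$ case) to the killed martingale term, and control the remaining $\rho X_n$ term in $L^1$ by Lemma \ref{MCM}. Nothing is missing.
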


\begin{proof}
Using equation (\ref{MfSetX}), the results follows from Lemma \ref{intofthemartcond} and Lemma \ref{MCM}.
\end{proof}

\subsection{Integrability of the killed martingale: the case $\bb E(a)<0$}
\label{CMWIcasneg}

We adapt the mainstream of the proof for the case $\bb E(a) \geq 0$ given in the previous section, highlighting the details that have to be modified.
 
In the discussion preceding Lemma \ref{Mtauyprop}, we noted that  $\left( y+ \rho x + M_n\right) \mathbbm 1_{ \left\{ \tau_y > n \right\}  }$ may not be positive.  
In the case $\bb E(a)<0$,
we overcome this by introducing the exit time of the martingale $\left( y+\rho x+M_n \right)_{n\geq 0}$: for any $y \in \bb R$,
	\[
	T_y = \min \{ k \geq 1,\,\, y+\rho x+M_k \leq 0 \}.
\]
By Corollary \ref{Exitfinit} we have $\bb P_x \left(T_y < +\infty\right) = 1$ for any $x \in \bb R.$     
The main point is to show the integrability of $y+\rho x+M_{T_y}$. Under the assumption  $\bb E(a)<0$  we have $\tau_y \leq T_y$, which 
together with the fact $\left( \abs{y+\rho x+M_n} \right)_{n\geq 0}$ is a submartingale, implies that $y+\rho x+M_{\tau_y}$ is integrable.

\begin{lemma}
\label{MTyprop} \ 
\begin{enumerate}
	\item For all $x \in \bb R$ and $y>0$,
	  \[
		\tau_y \leq T_y \quad \bb P_x \text{-a.s.}
	\]
	\item For all $x \in \bb R$ and $y \in \bb R$, the sequence $\left( (y+\rho x+M_n) \mathbbm{1}_{\{T_y>n\}} \right)_{n\geq 0}$ is a submartingale with respect to $\bb P_x$.
\end{enumerate}
\end{lemma}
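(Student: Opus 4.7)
The proof will split naturally along the two claims. For claim 1 I will rely on a single algebraic identity that converts $y+\rho x+M_k$ into a manifestly positive combination of $y+S_{k-1}$ and $y+S_k$; for claim 2 I will invoke the generic killed-martingale argument.

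The plan for claim 1 is to start from the recast martingale representation (\ref{MetSn-1}), namely $y+\rho x+M_k=y+S_{k-1}+X_k/(1-\bb E(a))$, and substitute $X_k=(y+S_k)-(y+S_{k-1})$. Grouping terms yields the key identity
\[
y+\rho x+M_k\;=\;-\rho\,(y+S_{k-1})\;+\;\frac{y+S_k}{1-\bb E(a)}.
\]
In the regime $\bb E(a)<0$ of this subsection we have $\rho<0$, so both coefficients $-\rho$ and $1/(1-\bb E(a))$ are strictly positive. On the event $\{\tau_y>n\}$ the walk satisfies $y+S_j>0$ for every $0\leq j\leq n$ (recall $y>0$), and the identity then forces $y+\rho x+M_k>0$ for every $1\leq k\leq n$. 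This proves $\{\tau_y>n\}\subseteq\{T_y>n\}$ for all $n$, i.e.\ $\tau_y\leq T_y$ $\bb P_x$-a.s.

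For claim 2 I will use that $(y+\rho x+M_n,\mathscr{F}_n)_{n\geq 0}$ is a $\bb P_x$-martingale with integrable entries (by Lemma \ref{majmartx}), while $\{T_y>n\}$ is $\mathscr{F}_n$-measurable. Setting $Y_n=(y+\rho x+M_n)\mathbbm{1}_{\{T_y>n\}}$ and decomposing $\mathbbm{1}_{\{T_y>n+1\}}=\mathbbm{1}_{\{T_y>n\}}-\mathbbm{1}_{\{T_y=n+1\}}$, the martingale property delivers
\[
\bb E_x\!\left(Y_{n+1}\,\middle|\,\mathscr{F}_n\right)\;=\;Y_n\;-\;\bb E_x\!\left((y+\rho x+M_{n+1})\mathbbm{1}_{\{T_y=n+1\}}\,\middle|\,\mathscr{F}_n\right).
\]
Since $y+\rho x+M_{n+1}\leq 0$ on $\{T_y=n+1\}$ by definition of $T_y$, the subtracted conditional expectation is non-positive, so $\bb E_x(Y_{n+1}|\mathscr{F}_n)\geq Y_n$, which is the desired submartingale property.

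Neither step is really an obstacle; the only bit of insight is the rewriting of $y+\rho x+M_k$ in claim 1, after which the sign assumption $\bb E(a)<0$ does all the work. This is consistent with the remark preceding the lemma: in contrast to the $\bb E(a)\geq 0$ case of Section \ref{case >0} where $(y+\rho x+M_n)\mathbbm{1}_{\{\tau_y>n\}}$ need not have a definite sign, here passing to the slightly larger stopping time $T_y$ restores positivity automatically, and this is precisely what the identity above quantifies.
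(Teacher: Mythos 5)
Your proof is correct and follows essentially the same route as the paper: claim 1 rests on the same two representations (\ref{MfSetX}) and (\ref{MetSn-1}) together with $\rho<0$, only packaged contrapositively via the identity $y+\rho x+M_k=-\rho\,(y+S_{k-1})+\frac{y+S_k}{1-\bb E(a)}$ instead of the paper's case split on the sign of $X_{T_y}$ at the time $T_y$, and claim 2 is the same killed-martingale computation using $y+\rho x+M_{T_y}\leq 0$. No gaps.
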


\begin{proof} \textit{Claim 1.}
We note that when $T_y > 1$, by (\ref{MfSetX}) and (\ref{MetSn-1}), with $z=y+\rho x$,
\begin{align*}
	y+S_{T_y} &= z + M_{T_y} - \rho X_{T_y} \leq - \rho X_{T_y}, \\
	y+S_{T_y-1} &= z + M_{T_y} - \frac{X_{T_y}}{1-\bb E(a)} \leq - \frac{X_{T_y}}{1-\bb E(a)}.
\end{align*}
Since $\rho <0 $, according to the positivity or non-positivity of $X_{T_y}$, we have respectively $y+S_{T_y} \leq 0$ or $y+S_{T_y-1} \leq 0$. When $T_y = 1$ and $y>0$ we have $X_1 <0$ and so $\tau_y = 1 = T_y$.

\textit{Claim 2.} In a same manner as in the proof of the claim 3 of Lemma \ref{Mtauyprop}, the claim 2 is a consequence of the fact that $z+M_{T_y} \leq 0$ and that $(M_n)_{n\geq 0}$ is a martingale.
\end{proof}

The following lemma is similar to Lemma \ref{firstupperbound} but with $T_y$ replacing $\tau_y$.
\begin{lemma}
\label{firstupperboundcasneg}
Let $p \in (2,\alpha)$. For any $\ee \in (0, \frac{p-2}{4p})$, $x\in \bb R$, $y>-\rho x$ and $n \geq 0$, we have
	\[
	\bb E_x\left( y+\rho x+M_n \,;\, T_y>n \right) \leq y+ \rho x + c \abs{x} + c_p n^{1/2-2\ee}.
\]
\end{lemma}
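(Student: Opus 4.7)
Set $z = y + \rho x$, which is positive by hypothesis. I will follow the template of the proof of Lemma \ref{firstupperbound}, replacing the role played there by Claim 2 of Lemma \ref{Mtauyprop} (which relied on $\bb E(a) \geq 0$) with an overshoot bound that exploits the very definition of $T_y$.

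First, by Doob's optional stopping theorem applied to the $\bb P_x$-martingale $(M_k)_{k\geq 0}$ at the bounded stopping time $n\wedge T_y$ (the requisite integrability being provided by Lemma \ref{majmartx}), I obtain
\[
\bb E_x(z + M_n\,;\, T_y > n) = z - \bb E_x(z + M_{T_y}\,;\, T_y \leq n).
\]
Since $z + M_{T_y} \leq 0$ by the definition of $T_y$, it remains to bound the non-negative quantity $-(z + M_{T_y})$ from above. By minimality of $T_y$ one has $z + M_{T_y - 1} > 0$ (the case $T_y = 1$ being covered by $z > 0$), and writing $\xi_{T_y} := M_{T_y} - M_{T_y - 1} = \frac{X_{T_y} - \bb E(a)\, X_{T_y - 1}}{1 - \bb E(a)}$ one concludes
\[
0 \leq -(z + M_{T_y}) \leq -\xi_{T_y} \leq \abs{\xi_{T_y}} \leq c\bigl(\abs{X_{T_y}} + \abs{X_{T_y - 1}}\bigr).
\]

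The remainder of the argument is essentially a transcription of the proof of Lemma \ref{firstupperbound}. Using the definition (\ref{defdeXn0}) I decompose $X_k = \prod_{i=1}^k a_i \cdot x + X_k^0$. The $\prod a_i \cdot x$-contribution on $\{T_y \leq n\}$ is bounded by $c\abs{x}$ after summing the geometric series $\sum_k \bb E(\abs{a})^k$, which is finite by Condition \ref{Mom001}. For the $X_k^0$-part I split the expectation on $\{T_y \leq n\}$ according to whether $\max_{1\leq k \leq n} \abs{X_k^0} \leq n^{1/2 - 2\ee}$ or not: on the favourable event, $\abs{X_{T_y}^0} + \abs{X_{T_y - 1}^0} \leq 2 n^{1/2 - 2\ee}$; on its complement, Markov's inequality combined with Lemma \ref{MCM} applied at initial point $0$ yields a bound of order $c_p\, n / n^{(p-1)(1/2 - 2\ee)}$, which is absorbed into $c_p n^{1/2 - 2\ee}$ precisely because $\ee < (p-2)/(4p)$.

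The only genuine departure from Lemma \ref{firstupperbound} is the overshoot bound above: in that earlier lemma Claim 2 of Lemma \ref{Mtauyprop} provided the estimate $-(z + M_{\tau_y}) \leq -X_{\tau_y}/(1-\bb E(a))$ directly in terms of the single variable $X_{\tau_y}$, whereas here I am forced to use the martingale increment $\xi_{T_y}$, which brings both $X_{T_y}$ and $X_{T_y - 1}$ into play. Since both are treated identically by the decomposition $X_k = \prod a_i \cdot x + X_k^0$, this costs at most a multiplicative constant; I foresee no substantive obstacle beyond careful bookkeeping of constants.
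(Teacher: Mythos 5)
Your argument is correct and coincides with the paper's own proof: the same overshoot bound $0 \geq z+M_{T_y} \geq \xi_{T_y}$ (obtained from the minimality of $T_y$ and $z>0$), the same application of Doob's optional stopping theorem at $T_y\wedge n$, and the same decomposition $X_k = \prod_{i=1}^k a_i\, x + X_k^0$ with the split on $\{\max_{1\leq k\leq n}\abs{X_k^0} \leq n^{1/2-2\ee}\}$ and the Markov inequality on the complement, with the exponent condition $\ee < \frac{p-2}{4p}$ used exactly as in the paper. No gap; the only unremarked detail is that when $T_y=1$ the term $X_{T_y-1}=x$ is covered directly by the $c\abs{x}$ contribution, which is the same bookkeeping the paper handles via the bound $\abs{X_{T_y}}+\abs{X_{T_y-1}} \leq 2\max_{1\leq k\leq n}\abs{X_k}+\abs{x}$.
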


\begin{proof} Note that $z=y + \rho x >0$. Since at the exit time $T_y$ we have $0 \geq z + M_{T_y} \geq \xi_{T_y} = \frac{X_{T_y}- \bb E(a) X_{T_y-1}}{1-\bb E(a)}$, by the Doob optional stopping theorem,
	\[
	\bb E_x\left( z+M_n \,;\, T_y >n \right) \leq z + c \bb E_x\left( \abs{X_{T_y}} + \abs{X_{T_y-1}}  \,;\, T_y \leq n \right).
\]
Since  $\abs{X_{T_y}} + \abs{X_{T_y-1}} \leq  2 \max_{1 \leq k \leq n} \abs{X_k} +\abs{x}$ on $\left\{T_y\leq n \right\}$,  following the proof of Lemma \ref{firstupperbound},
\begin{align*}
	 \bb E_x \left( z+M_n \,;\, T_y >n \right)   \leq z & + c \left(1+  \sum_{k=1}^n \prod_{i=1}^k \bb E \left( \abs{a_i} \right)\right) \abs{x} 
	\\ & +c n^{1/2-2\ee}   \bb P\left( \underset{1 \leq k \leq n}{\max} \abs{X_{k}^0} \leq n^{1/2-2\ee} \right) \\
	&+ c \bb E\left( \underset{1 \leq k \leq n}{\max} \abs{X_{k}^0}  \,;\, \underset{1 \leq k \leq n}{\max} \abs{X_{k}^0} > n^{1/2-2\ee} \right) \\
	 \leq  z &+ c \abs{x} + c_p n^{1/2-2\ee}.
\end{align*}
\end{proof}

\begin{lemma}
\label{intofthemartcondcasneg}
Let $p \in (2,\alpha)$. There exists $\ee_1>0$ such that for any  $\ee \in (0, \ee_1),$ $x\in \bb R$, $y\in \bb R$, $n\geq 0 $ and $2\leq n_f\leq n$,
\begin{align*}
\bb E_x \left( y+\rho x+M_n \,;\, T_y > n \right) &\leq \left( 1+ \frac{c_{p,\ee}}{n_f^\ee} \right) \max(y,0) + c_{p,\ee} \abs{x} + c_{p,\ee} \sqrt{n_f} + \e^{-c_{p,\ee} n_f^{\ee}} \abs{x}^p \\
&\leq c_p \left( 1+\max(y,0)+\abs{x}^p \right).
\end{align*}
\end{lemma}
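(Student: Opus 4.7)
My plan is to mirror the proof of Lemma \ref{intofthemartcond}, with three systematic substitutions: $\tau_y$ becomes $T_y$, the Markov-walk crossing time $\nu_n$ becomes the martingale crossing time $v_n$, and Lemma \ref{Mtauyprop}, Lemma \ref{firstupperbound} are replaced by Lemma \ref{MTyprop}, Lemma \ref{firstupperboundcasneg}. Set $z=y+\rho x$, fix $p\in(2,\alpha)$, and take $\ee_1=\min(\ee_0,(p-2)/(4p))$ where $\ee_0$ is the constant from Lemma \ref{concentnu}. Fix $\ee\in(0,\ee_1)$.

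First I would dispose of the regime $z>n^{1/2-\ee}$ by a direct application of Lemma \ref{firstupperboundcasneg}:
\[
\bb E_x\left(z+M_n\,;\,T_y>n\right)\leq z+c\abs{x}+c_p n^{1/2-2\ee}\leq\left(1+c_p n^{-\ee}\right)\max(z,0)+c\abs{x}.
\]
The crude estimate $\max(z,0)\leq\max(y,0)+\abs{\rho x}$ then absorbs the second summand into the $\abs{x}$-term and produces the announced bound in this regime (with $n_f$ set equal to $n$).

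In the complementary regime $z\leq n^{1/2-\ee}$, I would set $v_n^\ee=v_n+\pent{n^\ee}$ and decompose
\[
\bb E_x(z+M_n\,;\,T_y>n)=J_1+J_2,
\]
according to whether $v_n^\ee>\pent{n^{1-\ee}}$ or $v_n^\ee\leq\pent{n^{1-\ee}}$. The term $J_1$ is handled by H\"older's inequality combined with Lemma \ref{majmartx} and the second bound of Lemma \ref{concentnu}, which also delivers a residue of the form $\e^{-c_{p,\ee}n^{1-2\ee}}\abs{x}^p$. For $J_2$, I would apply the Markov property at $v_n^\ee$ and invoke Lemma \ref{firstupperboundcasneg} on the post-$v_n^\ee$ trajectory. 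The key structural simplification relative to the proof of Lemma \ref{intofthemartcond} is that, on $\{T_y>v_n\}$, one has both $z+M_{v_n}>0$ and $\abs{z+M_{v_n}}>n^{1/2-\ee}$ by the very definition of $v_n$, hence automatically
\[
z+M_{v_n}>n^{1/2-\ee},
\]
with no sign assumption on $\bb E(a)$ required. This allows me to dominate the term $c\abs{X_{v_n+\pent{n^\ee}}}$ arising from Lemma \ref{firstupperboundcasneg} by $(c/n^\ee)(z+M_{v_n})$ modulo an exponentially small residue provided by Lemma \ref{MCM}. The submartingale property from Lemma \ref{MTyprop}(2) then lets me replace $v_n^\ee$ by $\pent{n^{1-\ee}}$, yielding the self-similar inequality
\[
u_n\leq\left(1+\frac{c_{p,\ee}}{n^\ee}\right)u_{\pent{n^{1-\ee}}}+\frac{C_{p,\ee}(x,y)}{n^\ee}+\e^{-c_{p,\ee}n^{1-2\ee}}\abs{x}^p,
\]
for $u_n:=\bb E_x(z+M_n\,;\,T_y>n)$, the sequence $(u_n)$ being non-decreasing in $n$ by Lemma \ref{MTyprop}(2).

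To close the argument I would invoke the analytical Lemma \ref{lemanalyse} to eliminate the $n$-dependence and replace $n$ by an arbitrary $2\leq n_f\leq n$ on the right-hand side, and then estimate $u_{n_f}\leq\abs{z}+c_{p,\ee}(\abs{x}+\sqrt{n_f})\leq\max(y,0)+c_{p,\ee}(\abs{x}+\sqrt{n_f})$ via Lemma \ref{majmartx}; the second displayed inequality of the statement then follows from the first by choosing $n_f$ of bounded size and using $\abs{x}\leq 1+\abs{x}^p$. The main obstacle is the careful propagation of the polynomial $\abs{x}^p$ factor through the recursion so that the residue inherited from Lemma \ref{concentnu} ends up as $\e^{-c_{p,\ee}n_f^\ee}\abs{x}^p$ in the final bound, which relies on the specific form of Lemma \ref{lemanalyse} exactly as in the proof of Lemma \ref{intofthemartcond}.
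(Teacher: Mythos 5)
Your overall architecture (decomposition at $v_n^\ee$, Lemma \ref{firstupperboundcasneg} applied after the Markov property, the submartingale property of Lemma \ref{MTyprop}, then Lemma \ref{lemanalyse}) is the paper's, but your treatment of $J_1$ misses the one point where this proof genuinely differs from that of Lemma \ref{intofthemartcond}, and as written your recursion does not give the stated bound. If $J_1=\bb E_x\left( z+M_n \,;\, T_y>n \,,\, v_n^\ee > \pent{n^{1-\ee}} \right)$ is bounded by a direct H\"older inequality with Lemmas \ref{majmartx} and \ref{concentnu}, the factor $\bb E_x^{1/p}\left( \abs{z+M_n}^p \right)\leq \abs{z}+c_p\left(\abs{x}+\sqrt{n}\right)$ produces an additive error of the form $C_{p,\ee}(x,y)/n^{\ee}$ carrying $y$ (and products $y\left(1+\abs{x}\right)^{p-1}$) — exactly what appears in your displayed recursion. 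After Lemma \ref{lemanalyse} this leaves a term of order $\max(y,0)\left(1+\abs{x}\right)^{p-1}/n_f^{\ee}$ (and, if one keeps $\abs{z}$, a term $\abs{y}/n_f^{\ee}$ when $y<0$), and neither is dominated by $\left(1+c_{p,\ee}/n_f^{\ee}\right)\max(y,0)+c_{p,\ee}\abs{x}+c_{p,\ee}\sqrt{n_f}+\e^{-c_{p,\ee}n_f^{\ee}}\abs{x}^p$, nor even by $c_p\left(1+\max(y,0)+\abs{x}^p\right)$ (take $y=t^p$, $\abs{x}=t$, $n_f$ fixed). This is why the paper's bound of $J_1$ is \emph{not} the same as in Lemma \ref{intofthemartcond}: it first applies the Markov property at $m_\ee=\pent{n^{1-\ee}}-\pent{n^{\ee}}$, uses that $\abs{z+M_{m_\ee}}\leq n^{1/2-\ee}$ on $\{v_n>m_\ee\}$ to remove all dependence on $z$ (hence on $y$), and uses Lemma \ref{MCM} so that the surviving $x$-dependence sits only in an exponentially damped term, giving $J_1\leq c_{p,\ee}/n^{\frac{p-1}{2}-(p-1)\ee}+\e^{-c_{p,\ee}n^{1-2\ee}}\abs{x}^p$; only then can Lemma \ref{lemanalyse} be applied with $\beta=c_{p,\ee}$ and $\gamma=\abs{x}^p$, which is what the statement requires. (Within your own set-up one could rescue $J_1$ by using that $(z+M_n)\mathbbm 1_{\{T_y>n\}}\geq 0$ together with $z\leq n^{1/2-\ee}$ in your second regime, so that $z$ may be replaced by $n^{1/2-\ee}$ before H\"older; but you do not say this, and the obstacle you flag — propagating $\abs{x}^p$ — is not the actual crux, the elimination of the $y$-dependence is.)

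Two secondary points. First, the case split $z\gtrless n^{1/2-\ee}$ is unnecessary here, unlike in Lemma \ref{intofthemartcond}: on $\{T_y>v_n\}$ one has $z+M_{v_n}>n^{1/2-\ee}$ whatever the size of $z$, and the killed process $(z+M_n)\mathbbm 1_{\{T_y>n\}}$ is nonnegative, so the paper derives the recursive inequality for every $n\geq 2$ at once; with your split the recursion only holds in the second regime, and Lemma \ref{lemanalyse}, which needs it along the whole chain $n,\pent{n^{1-\ee}},\dots$ down to $n_f$, cannot be invoked verbatim. Second, the final estimate $u_{n_f}\leq\abs{z}+c_{p,\ee}(\abs{x}+\sqrt{n_f})\leq\max(y,0)+c_{p,\ee}(\abs{x}+\sqrt{n_f})$ is false for $y<0$, since $\abs{z}$ can be arbitrarily large while $\max(y,0)=0$; use instead the positivity of the integrand on $\{T_y>n_f\}$ to get $u_{n_f}\leq\max(z,0)+\bb E_x\left(\abs{M_{n_f}}\right)\leq\max(y,0)+\abs{\rho}\abs{x}+c\left(\abs{x}+\sqrt{n_f}\right)$.
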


\begin{proof}
We proceed as in the proof of Lemma \ref{intofthemartcond}. Set $\ee_1=\min \left(\ee_0, \frac{p-2}{4p} \right)$, where $\ee_0$ is defined in Lemma \ref{concentnu}. Let $\ee \in (0, \ee_1]$.
With $z=y+\rho x$ and $v_n^\ee = v_n + \pent{n^\ee}$, we have
\begin{align}
	\bb E_x\left( z+M_n \,;\, T_y>n \right) =\; &\underbrace{\bb E_x\left( z+M_n \,;\, T_y>n \,,\, v_n^\ee > \pent{n^{1-\ee}} \right)}_{=:J_1} \nonumber \\
	& +\underbrace{\bb E_x\left( z+M_n \,;\, T_y>n \,,\, v_n^\ee \leq \pent{n^{1-\ee}} \right)}_{=:J_2}. \label{J1et2001casneg}
\end{align}

\textit{Bound of $J_1$.}
Let $m_{\ee} = \pent{n^{1-\ee}}-\pent{n^\ee}$.  Since on $\{v_n> m_{\ee}\}$ it holds $z'=z+M_{m_\ee} \leq n^{1/2-\ee}$, by the Markov property we write that
	\[
	J_1 \leq n^{1/2-\ee} \bb P_x\left( v_n > m_\ee \right) + \int_{\bb R} \bb E_{x'} \left( \abs{M_{n-m_{\ee}}} \right) \bb P_x \left( X_{m_\ee} \in \dd x' \,,\, v_n > m_\ee \right).
\]
By Lemma \ref{majmartx} and the H\"older inequality,
\begin{align*}
	J_1 &\leq n^{1/2-\ee} \bb P_x\left( v_n > m_\ee \right) + \bb E_x\left( c \left( \sqrt{n-m_\ee} + \abs{X_{m_\ee}} \right) \,;\, v_n > m_\ee \right)\\
	&\leq c n^{1/2} \bb P_x\left( v_n > m_\ee \right) + \bb E_x^{1/p} \left( \abs{X_{m_\ee}}^p \right) \bb P_x^{1/q} \left( v_n > m_\ee \right).
\end{align*}
By Lemma \ref{MCM} and Lemma \ref{concentnu} (since $m_{\ee} \geq n^{1-\ee}/c_\ee$),
\begin{equation}
 \label{J2002casneg}
 J_1 \leq \frac{c_{p,\ee}}{ n^{\frac{p-1}{2}-(p-1)\ee} } + \e^{-c_{p,\ee} n^{1-2\ee}} \abs{x}^p.
\end{equation}

\textit{Bound of $J_2$.}  
Repeating the arguments used for bounding the term $J_2$ in Lemma \ref{intofthemartcond}, by the Markov property and Lemma \ref{firstupperboundcasneg}, we get
\begin{align*}
	J_2 &\leq \bb E_{x} \left( z+M_{v_n^\ee} + c \abs{X_{v_n^\ee}} + c_p n^{1/2-2\ee} \,;\, T_y > v_n^\ee \,,\, v_n^\ee \leq \pent{n^{1-\ee}} \right).
\end{align*}
Using the claim 2 of Lemma \ref{MTyprop} and Lemma \ref{MCM},
\begin{align*}
	J_2 \leq\; & \bb E_x \left( z+M_{\pent{n^{1-\ee}}} \,;\, T_y > \pent{n^{1-\ee}} \,,\, v_n^\ee \leq \pent{n^{1-\ee}} \right) \\
	&+ \bb E_{x} \left( c_p n^{1/2-2\ee} \,;\, T_y > v_n \,,\, v_n \leq \pent{n^{1-\ee}} \right) + \e^{-c_\ee n^\ee} \bb E_{x} \left( \abs{X_{v_n}} \,;\, v_n \leq \pent{n^{1-\ee}} \right).
\end{align*}
On the event $\{ T_y> v_n \}$, we have $n^{1/2-\ee} < z+M_{v_n}$. Hence
\begin{align*}
	J_2 \leq\; & \bb E_x \left( z+M_{\pent{n^{1-\ee}}} \,;\, T_y > \pent{n^{1-\ee}} \,,\, v_n^\ee \leq \pent{n^{1-\ee}} \right) \\
	&+ c_p \bb E_{x} \left( \frac{z+M_{v_n}}{n^{\ee}} \,;\, T_y > v_n \,,\, v_n \leq \pent{n^{1-\ee}} \right) + \e^{-c_\ee n^\ee} \bb E_{x} \left( \abs{X_{v_n}} \,;\, v_n \leq \pent{n^{1-\ee}} \right).
\end{align*}
Coupling this with (\ref{J2002casneg}) and (\ref{J1et2001casneg}) and using again the claim 2 of Lemma \ref{MTyprop}, we obtain that
\begin{align*}
  \bb E_x\left( z+M_n \,;\, T_y>n \right) &\leq \left(1+ \frac{c_p}{n^\ee} \right) \bb E_x \left( z+M_{\pent{n^{1-\ee}}} \,;\, T_y > \pent{n^{1-\ee}} \right) \\
	&\qquad + \frac{c_{p,\ee}}{ n^{\frac{p-1}{2}-(p-1)\ee} } + \e^{-c_{p,\ee} n^{\ee}} \abs{x}^p.
\end{align*}
Since $( (z+M_n) \mathbbm 1_{\{T_y>n\}} )_{n\geq 0}$ is a submartingale (claim 2 of Lemma \ref{MTyprop}), the sequence
$u_n = \bb E  (z+M_n) \mathbbm 1_{\{T_y>n\}}$ is non-decreasing. 
By Lemma \ref{lemanalyse} with $\alpha=c_p$, $\beta=c_{p,\ee}$, $\gamma=\abs{x}^p$ and $\delta=c_{p,\ee}$, we write that
\[
\bb E_x\left( z+M_n \,;\, T_y>n \right) \leq \left(1+ \frac{c_{p,\ee}}{n_f^\ee} \right) \bb E_x \left( z+M_{n_f} \,;\, T_y > n_f \right) + \frac{c_{p,\ee}}{ n_f^\ee } + \e^{-c_{p,\ee} n_f^{\ee}} \abs{x}^p.
\]
Using Lemma \ref{majmartx} and the fact that $z=y+\rho x$, we obtain that
	\[
	\bb E_x\left( z+M_n \,;\, T_y>n \right) \leq \left( 1+ \frac{c_{p,\ee}}{n_f^\ee} \right) \max(y,0) + c_{p,\ee} \abs{x} + c_{p,\ee} \sqrt{n_f} + \e^{-c_{p,\ee} n_f^{\ee}} \abs{x}^p.
\]
\end{proof}

\begin{corollary}
\label{intofrandwalkcasneg}
Let $p \in (2,\alpha)$. For any $x\in \bb R$, $y>0$ and $n\in \bb N$,
	\[
	\bb E_x \left( y+S_n \,;\, \tau_y > n \right) \leq c_p \left( 1+y+\abs{x}^p \right).
\]
\end{corollary}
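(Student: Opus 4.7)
The plan is to mirror the strategy used for Corollary~\ref{intofrandwalk} (the case $\bb E(a)\geq 0$), replacing the submartingale property of $((z+M_n)\mathbbm 1_{\{\tau_y>n\}})_{n\geq 0}$ with the corresponding statement for $T_y$ and using Lemma~\ref{intofthemartcondcasneg} instead of Lemma~\ref{intofthemartcond}. Concretely, I would use the martingale representation \eqref{MfSetX} to decompose
\[
\bb E_x\left( y+S_n \,;\, \tau_y>n \right) = \bb E_x\left( y+\rho x + M_n \,;\, \tau_y>n \right) - \rho \, \bb E_x\left( X_n \,;\, \tau_y>n \right),
\]
and then bound each term separately.

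For the martingale term, the first step is to pass from $\tau_y$ to $T_y$. By claim~1 of Lemma~\ref{MTyprop} we have $\tau_y \leq T_y$, $\bb P_x$-a.s., so $\{\tau_y>n\}\subset\{T_y>n\}$. On the event $\{T_y>n\}$ the quantity $y+\rho x+M_n$ is (strictly) positive by definition of $T_y$, hence
\[
0 \leq \bb E_x\left( y+\rho x + M_n \,;\, \tau_y>n \right) \leq \bb E_x\left( y+\rho x + M_n \,;\, T_y>n \right).
\]
Applying Lemma~\ref{intofthemartcondcasneg} (with, say, $n_f$ fixed or simply its ``qualitative'' upper bound $c_p(1+\max(y,0)+\abs{x}^p)$) and using $y>0$ gives
\[
\bb E_x\left( y+\rho x + M_n \,;\, \tau_y>n \right) \leq c_p \left( 1 + y + \abs{x}^p \right).
\]

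For the remaining term, I simply estimate in absolute value:
\[
\left| \rho\, \bb E_x\left( X_n \,;\, \tau_y>n \right) \right| \leq \abs{\rho}\, \bb E_x\left( \abs{X_n} \right) \leq c \left( 1 + \abs{x} \right),
\]
where the last inequality uses Lemma~\ref{MCM} (with $p=1$) and the fact that $\abs{\rho}$ is a constant depending only on the law of $a$. Summing the two bounds and absorbing the linear term in $\abs{x}$ into $\abs{x}^p$ yields the announced inequality. I do not anticipate any substantial obstacle here: all of the hard work—the integrability of the killed martingale and the switch to the auxiliary exit time $T_y$—has been done in Lemma~\ref{intofthemartcondcasneg} and Lemma~\ref{MTyprop}; the present corollary is just the transfer from $M_n$ back to $S_n$ via \eqref{MfSetX}.
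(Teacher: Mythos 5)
Your proposal is correct and follows essentially the same route as the paper: the paper also decomposes $y+S_n$ via \eqref{MfSetX}, uses claim 1 of Lemma \ref{MTyprop} to pass from $\{\tau_y>n\}$ to the larger event $\{T_y>n\}$, and then invokes Lemma \ref{intofthemartcondcasneg} (together with Lemma \ref{MCM} for the $\rho X_n$ term). You merely spell out the step the paper leaves implicit, namely that the enlargement of the event is legitimate because $y+\rho x+M_n>0$ on $\{T_y>n\}$.
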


\begin{proof}
By (\ref{MfSetX}) and the claim 1 of Lemma \ref{MTyprop}, we have
$$
\bb E_x \left( y+S_n \,;\, \tau_y > n \right) =  \bb E_x \left( y+\rho x+M_n \,;\,  T_y \geq \tau_y > n \right) - \bb E_x \left( \rho X_n \,;\, \tau_y > n \right). 
$$
The result follows from Lemma \ref{intofthemartcondcasneg}.
\end{proof}

\section{Existence of the harmonic function}
\label{Sec Harm Func}
In this section we prove Theorem \ref{ExofHaFu}. We split the proof into two parts according to the values of $\bb E(a) $.

\subsection{Existence of the harmonic function: the case $\bb E(a) \geq 0$}
\label{Sec Harm Func positif}

We start with the following assertion.
\begin{lemma}
\label{intdeMtau}
For any $x \in \bb R$ and $y>0$, the random variable $M_{\tau_y}$ is integrable. Moreover, for any $p \in (2,\alpha)$,
  \[
	\bb E_x \left( \abs{M_{\tau_y}} \right) \leq c_p \left( 1+y+\abs{x} \right)\left( 1+\abs{x} \right)^{p-1}.
\]
\end{lemma}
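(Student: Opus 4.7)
The plan is to bound $\bb E_x(|M_{\tau_y}|)$ by first controlling the overshoot $z + M_{\tau_y}$ (where $z := y + \rho x$), and then using the triangle inequality $|M_{\tau_y}| \leq |z| + |z + M_{\tau_y}|$ together with $|z| \leq y + c|x|$ to absorb the deterministic contribution into the target bound.

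The core step is an optional stopping argument. I would apply the Doob optional stopping theorem to the $\bb P_x$-martingale $(M_n)_{n\geq 0}$ at the bounded stopping time $\tau_y \wedge n$, giving $\bb E_x(M_{\tau_y\wedge n}) = 0$. Splitting the expectation according to whether $\tau_y \leq n$ or $\tau_y > n$ yields
\[
\bb E_x(z + M_{\tau_y};\,\tau_y \leq n) + \bb E_x(z + M_n;\,\tau_y > n) = z.
\]
Now I would invoke claim 1 of Lemma \ref{Mtauyprop} (this is precisely the place where the assumption $\bb E(a) \geq 0$ enters), which says $z + M_{\tau_y} \leq 0$ $\bb P_x$-a.s. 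Consequently $|z + M_{\tau_y}| = -(z + M_{\tau_y})$, so rearranging the previous identity gives
\[
\bb E_x(|z + M_{\tau_y}|;\,\tau_y \leq n) = \bb E_x(z + M_n;\,\tau_y > n) - z.
\]

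At this point Lemma \ref{intofthemartcond} (applied with $\delta = 1$) provides the essential uniform bound $\bb E_x(z + M_n;\,\tau_y > n) \leq c_p (1+y+|x|)(1+|x|)^{p-1}$, independent of $n$. Corollary \ref{Exitfinit} guarantees $\bb P_x(\tau_y < +\infty) = 1$, and since the integrand on the left is non-negative, monotone convergence as $n \to +\infty$ yields $\bb E_x(|z + M_{\tau_y}|) \leq c_p(1+y+|x|)(1+|x|)^{p-1}$. Combining with the triangle inequality and absorbing $|z|$ into the dominant term gives the claimed estimate for $\bb E_x(|M_{\tau_y}|)$.

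I expect no real obstacle here: the lemma is essentially a repackaging of Lemma \ref{intofthemartcond}, where all the genuine analytic difficulty (the concentration estimate for $\nu_n$, the recursive inequality solved via Lemma \ref{lemanalyse}) has already been overcome. The only subtlety is the use of the one-sided information $z + M_{\tau_y} \leq 0$ to upgrade a one-sided truncated-expectation bound into a full $L^1$ control of the overshoot.
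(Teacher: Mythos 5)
Your argument is correct and is essentially the paper's own proof: optional stopping at $\tau_y\wedge n$, the sign information $z+M_{\tau_y}\leq 0$ from claim 1 of Lemma \ref{Mtauyprop}, the uniform bound of Lemma \ref{intofthemartcond} with $\delta=1$, and monotone convergence via $\bb P_x(\tau_y<+\infty)=1$ from Corollary \ref{Exitfinit}. The only difference is cosmetic (you absorb $\abs{z}$ at the end rather than at the start), so nothing further is needed.
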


\begin{proof}
Let $z=y+\rho x$. Using the claim 1 of Lemma \ref{Mtauyprop} and the Doob optional stopping theorem, we have
\begin{align*}
 \bb E_x \left( \abs{M_{\tau_y}} \,;\, \tau_y \leq n \right) &\leq -\bb E_x \left( z+M_n \,;\, \tau_y \leq n \right) + y + \rho  \abs{x}\\
 &= \bb E_x \left( z+M_n \,;\, \tau_y > n \right)  - z + y + \rho  \abs{x}.
\end{align*}
By second bound in  Lemma \ref{intofthemartcond}, for all $n \geq 0$,
	\[
	\bb E_x \left( \abs{M_{\tau_y}} \,;\, \tau_y \leq n \right) \leq c_p \left( 1+y+\abs{x} \right)\left( 1+\abs{x} \right)^{p-1} =: C_p(x,y).
\]
Since $\left( \{ \tau_y \leq n \} \right)_{n \geq 1}$ is a non-decreasing sequence of events and  $\bb P_x \left(\tau_y < +\infty\right)=1$ for any $x \in \bb R$ (by Corollary \ref{Exitfinit}), the result follows by the Lebesgue monotone convergence theorem.
\end{proof}

Now, the claim 1 of Theorem \ref{ExofHaFu} concerning the existence of the function $V$ is a direct consequence of the previous lemma:
\begin{corollary}
For any $x \in \bb R$ and $y > 0$, the following function is well defined
	\[
	V(x,y) =  -\bb E_x \left( M_{\tau_y} \right).
\]
\end{corollary}

The following two propositions prove the claims 2 and 3 of Theorem \ref{ExofHaFu} under Conditions \ref{Mom001}, \ref{PosdeX1-1} and $\bb E (a) \geq 0$. 
\begin{proposition}
\ 
\label{Invfunctprop}
\begin{enumerate}
\item For any $x\in \bb R$ and $y>0$,
\begin{align*}
V(x,y) = \underset{n\to +\infty}{\lim} \bb E_x \left( y+\rho x +M_n \,;\, \tau_y > n \right) = \underset{n\to +\infty}{\lim} \bb E_x \left( y+S_n \,;\, \tau_y > n \right).
\end{align*}
\item For any $x \in \bb R$, the function $V(x,.)$ is non-decreasing.
\item For any $\delta >0$, $p \in (2,\alpha)$, $x\in \bb R$ and $y>0$,
	\[
	\max(0,y+\rho x) \leq V(x,y) \leq \left( 1+c_{p} \delta \left(1+\abs{x}\right)^{p-1} \right) y + c_{p,\delta}\left(1+\abs{x}\right)^{p}.
\]
\item For any $x \in \bb R$,
	\[
	\underset{y\to +\infty}{\lim} \frac{V(x,y)}{y} = 1.
\]
\end{enumerate}
\end{proposition}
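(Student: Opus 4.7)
The plan is to derive Claim 1 as the key representation and then obtain the three remaining claims as short consequences.

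For Claim 1, I apply the Doob optional stopping theorem to the bounded stopping time $\tau_y \wedge n$, which gives $\bb E_x(M_{\tau_y \wedge n}) = 0$ and hence
\[
\bb E_x\left(M_n \,;\, \tau_y > n\right) = -\bb E_x\left(M_{\tau_y}\,;\, \tau_y \leq n\right).
\]
The integrability of $M_{\tau_y}$ from Lemma \ref{intdeMtau}, combined with $\bb P_x(\tau_y < +\infty) = 1$ (Corollary \ref{Exitfinit}), allows me to pass to the limit on the right by dominated convergence to obtain $-\bb E_x(M_{\tau_y}) = V(x,y)$. Adding $(y+\rho x)\bb P_x(\tau_y > n) \to 0$ yields the first equality. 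For the second equality, I invoke the martingale representation \eqref{MfSetX} and show that $\bb E_x(X_n \,;\, \tau_y > n) \to 0$ via the H\"older bound
\[
\abs{\bb E_x\left(X_n \,;\, \tau_y > n\right)} \leq \bb E_x^{1/p}\left(\abs{X_n}^p\right) \bb P_x^{1 - 1/p}\left(\tau_y > n\right),
\]
whose first factor is uniformly bounded in $n$ by Lemma \ref{MCM} and whose second factor tends to zero.

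For Claim 2, I use the $y+S_n$ representation from Claim 1. For $0 < y_1 \leq y_2$, the inclusion $\{\tau_{y_1} > n\} \subset \{\tau_{y_2} > n\}$ and the positivity of $y_2 + S_n$ on $\{\tau_{y_2} > n\}$ yield the pointwise inequality $(y_2+S_n)\mathbbm{1}_{\{\tau_{y_2}>n\}} \geq (y_2+S_n)\mathbbm{1}_{\{\tau_{y_1}>n\}} \geq (y_1+S_n)\mathbbm{1}_{\{\tau_{y_1}>n\}}$; taking expectations and passing to the limit gives $V(x,y_2) \geq V(x,y_1)$.

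For Claim 3, the upper bound follows immediately from Claim 1 and the first inequality of Lemma \ref{intofthemartcond}. For the lower bound, the non-negativity of $y+S_n$ on $\{\tau_y > n\}$ combined with Claim 1 gives $V(x,y) \geq 0$. The inequality $V(x,y) \geq y+\rho x$ then comes from Claim 3 of Lemma \ref{Mtauyprop}: since $((y+\rho x+M_n)\mathbbm{1}_{\{\tau_y>n\}})_{n\geq 0}$ is a submartingale, the map $n \mapsto \bb E_x((y+\rho x+M_n)\mathbbm{1}_{\{\tau_y>n\}})$ is non-decreasing and equals $y+\rho x$ at $n=0$ (since $\tau_y \geq 1$ a.s.\ and $M_0=0$), so the limit inherits this bound. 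Combining the two lower bounds yields $V(x,y) \geq \max(0, y+\rho x)$. Finally, Claim 4 is a direct corollary: dividing the upper bound of Claim 3 by $y$, letting $y \to +\infty$, and then $\delta \to 0^+$ gives $\limsup_{y\to+\infty} V(x,y)/y \leq 1$, while $V(x,y) \geq y+\rho x$ (applicable once $y > -\rho x$) gives $\liminf_{y\to+\infty} V(x,y)/y \geq 1$. I do not foresee any serious obstacle: all the delicate technical work has been packaged into Lemma \ref{intdeMtau} and Lemma \ref{intofthemartcond}, and the proposition reduces to their careful assembly.
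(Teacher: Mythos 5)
Your proposal is correct and takes essentially the same route as the paper: optional stopping together with the integrability of $M_{\tau_y}$ (Lemma \ref{intdeMtau}) and dominated convergence for Claim 1, the Cauchy--Schwarz/H\"older bound with Lemma \ref{MCM} for the second equality, monotonicity in $y$ for Claim 2, Lemma \ref{intofthemartcond} for the upper bound, and the $y\to+\infty$, $\delta\to 0$ limit for Claim 4. The only cosmetic difference is in the lower bound $V(x,y)\geq y+\rho x$: you use the submartingale property (claim 3 of Lemma \ref{Mtauyprop}) and the monotonicity of $n\mapsto\bb E_x\left((y+\rho x+M_n)\mathbbm 1_{\{\tau_y>n\}}\right)$ starting from the value $y+\rho x$ at $n=0$, while the paper uses $y+\rho x+M_{\tau_y}\leq 0$ (claim 1 of the same lemma) directly; the two arguments rest on the same fact and are equally valid.
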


\begin{proof} We use the notation  $z= y+\rho x$.

\textit{Claim 1.} Since, by Lemma \ref{intdeMtau}, $M_{\tau_y}$ is integrable, we have by the Lebesgue dominated convergence theorem,
	\[
	\bb E_x \left( z+M_n \,;\, \tau_y > n \right) = z - \bb E_x \left( z+M_{\tau_y} \,;\, \tau_y \leq n \right) \underset{n\to +\infty}{\longrightarrow} -\bb E_x \left( M_{\tau_y} \right) = V(x,y).
\]
To prove the second equality of the claim 1 we use Lemma \ref{MCM} and the fact that $\tau_y <+\infty$:
	\[
	\abs{\bb E_x \left( X_n \,;\, \tau_y > n \right)} \leq \bb E_x^{1/2} \left( \abs{X_n}^2 \right) \sqrt{ \bb P_x \left( \tau_y > n \right) } \leq c_2 \left(1+\abs{x}\right) \sqrt{ \bb P_x \left( \tau_y > n \right) } \underset{n\to +\infty}{\longrightarrow} 0.
\]
Using (\ref{MfSetX}), we obtain the claim 1.
 
\textit{Claim 2.} If $y_1\leq y_2$, then $\tau_{y_1} \leq \tau_{y_2}$ and
	\[
	\bb E_x \left( y_1+S_n \,;\, \tau_{y_1} > n \right) \leq \bb E_x \left( y_2+S_n \,;\, \tau_{y_1} > n \right) \leq \bb E_x \left( y_2+S_n \,;\, \tau_{y_2} > n \right).
\]
Taking the limit as $n\to +\infty$ we get the claim 2.

\textit{Claim 3.} The upper bound follows from
the claim 1 and Lemma \ref{intofthemartcond}. On the event $\{ \tau_y > n \}$, we obviously have $y+S_n>0$ and so by claim 1, $V(x,y) \geq 0$. Moreover, since $z+M_{\tau_y} \leq 0$ (by claim 1 of Lemma \ref{Mtauyprop}), we have, by claim 1,
	\[
	V(x,y) = z - \underset{n\to +\infty}{\lim} \mathbb E_x \left( z+M_{\tau_y} \,;\, \tau_y \leq n \right) \geq z,
\]
which proves the lower bound.

\textit{Claim 4.} By the claim 3, for all $\delta >0$, $x\in \bb R$,
	\[
	1 \leq \underset{y\to+\infty}{\rm liminf} \frac{V(x,y)}{y} \leq \underset{y\to+\infty}{\rm limsup} \frac{V(x,y)}{y} \leq \left( 1+c_{p} \delta \left(1+\abs{x}\right)^{p-1} \right).
\]
Letting $\delta \to 0$, we obtain the claim 4.
\end{proof}

We now prove that $V$ is harmonic on $\bb R \times \bb R_+^*$.

\begin{proposition}
\label{HaetposdeV}
\noindent
\begin{enumerate}
\item The function $V$ is $\bf{Q_+}$-harmonic on $\bb R \times \bb R_+^*$: for any $x\in \bb R$ and $y>0$,
	\[
	\mathbf{Q_+}V(x,y) = V(x,y).
\]
\item The function $V$ is positive on $\bb R \times \bb R_{+}^*$.
\end{enumerate}
\end{proposition}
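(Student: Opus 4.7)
The plan is to establish harmonicity by invoking the Markov property at time one in the representation $V(x,y) = -\bb E_x(M_{\tau_y})$, and then to deduce positivity from a single iteration of the resulting identity, combined with the lower bound $V(\cdot,\cdot) \geq \max(0,\cdot+\rho\,\cdot)$ from claim 3 of Proposition~\ref{Invfunctprop}.

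For harmonicity, the key structural fact is that the increment $M_{1+k}-M_1 = \sum_{j=2}^{1+k}(X_j-\bb E(a)X_{j-1})/(1-\bb E(a))$ is the analogue of $M_k$ built from the time-shifted Markov chain $(X_{1+k})_{k\geq 0}$, and the corresponding shifted exit time is $\tau_y-1$ on $\{\tau_y>1\}$, because $(y+X_1)+\sum_{j=2}^{1+k} X_j = y+S_{1+k}$. The Markov property therefore yields $\bb E_x(M_{\tau_y}-M_1 \mid \mathscr F_1) = -V(X_1, y+X_1)$ on $\{\tau_y>1\}$. Integrability of $M_{\tau_y}$ (Lemma~\ref{intdeMtau}) justifies all manipulations of conditional expectation. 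Taking expectations, splitting $\bb E_x(M_{\tau_y}) = \bb E_x(M_{\tau_y};\tau_y=1)+\bb E_x(M_{\tau_y};\tau_y>1)$ with $M_{\tau_y}=M_1$ on $\{\tau_y=1\}$, and using $\bb E_x(M_1)=0$ produces $-\bb E_x(M_{\tau_y}) = \bb E_x(V(X_1,y+X_1);\tau_y>1) = \mathbf Q_+V(x,y)$.

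For positivity, on $\{\tau_y>1\}$ we have $y+X_1>0$, so by the lower bound and the identity $1+\rho = 1/(1-\bb E(a))$,
\[
V(X_1,y+X_1) \geq \max\!\Bigl(0,\,(y+X_1)+\rho X_1\Bigr) = \max\!\Bigl(0,\, y + \tfrac{X_1}{1-\bb E(a)}\Bigr).
\]
The right-hand side is strictly positive precisely when $X_1 > -y(1-\bb E(a))$, and since $\bb E(a)\geq 0$ this event is contained in $\{X_1>-y\} = \{\tau_y>1\}$. Inserting into the harmonicity identity yields
\[
V(x,y) \geq \bb E_x\!\left(\Bigl(y+\tfrac{X_1}{1-\bb E(a)}\Bigr)\mathbbm 1_{\{X_1>-y(1-\bb E(a))\}}\right),
\]
and this lower bound is strictly positive because Condition~\ref{Mom001} forces $1-\bb E(a)>0$ (so $y(1-\bb E(a))>0$) and Condition~\ref{PosdeX1-1} then guarantees $\bb P_x(X_1>-y(1-\bb E(a)))>0$, while the integrand is strictly positive on that event.

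The main obstacle is the harmonicity step, where one must cleanly translate the Markov property into the conditional-expectation identity on $\{\tau_y>1\}$. This hinges on recognising that both the exit time $\tau_y-1$ and the martingale increment $M_{\tau_y}-M_1$ are exactly the corresponding quantities for the shifted chain started at $(X_1, y+X_1)$. Once this identification is secured, the rest reduces to taking expectations and invoking the $\bb P_x$-martingale property of $M$; positivity then falls out of the pointwise lower bound together with Condition~\ref{PosdeX1-1}.
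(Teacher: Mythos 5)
Your proof is correct, and for the harmonicity part it takes a genuinely different route from the paper. The paper proves claim 1 by working with the finite-horizon quantities $V_n(x,y)=\bb E_x\left( y+S_n \,;\, \tau_y>n\right)$: the Markov property gives $V_{n+1}(x,y)=\bb E_x\left( V_n(X_1,y+S_1)\,;\,\tau_y>1\right)$, the uniform bound of Corollary \ref{intofrandwalk} supplies the integrable dominating function $c_p\left(1+y+S_1+\abs{X_1}\right)\left(1+\abs{X_1}\right)^{p-1}$, and the identity $\mathbf{Q_+}V=V$ follows by dominated convergence together with $V_n\to V$ (claim 1 of Proposition \ref{Invfunctprop}). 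You instead apply the Markov property at time one directly to the stopped martingale, identifying $M_{\tau_y}-M_1$ on $\{\tau_y>1\}$ with the stopped martingale of the shifted chain started at $\left(X_1,y+S_1\right)$ and its exit time $\tau_y-1$, so that $\bb E_x\left(\sachant{M_{\tau_y}-M_1}{\mathscr F_1}\right)=-V\left(X_1,y+S_1\right)$ there; combined with $\bb E_x\left(M_1\right)=0$ and the integrability of $M_{\tau_y}$ (Lemma \ref{intdeMtau}) this yields the same identity. What each approach buys: yours bypasses Corollary \ref{intofrandwalk} and the limiting/domination argument entirely, at the cost of a slightly delicate application of the Markov property to an integrable functional of the trajectory depending on the $\mathscr F_1$-measurable parameter $y+S_1$ (a point you should spell out, e.g.\ by truncation or a monotone class argument); the paper's route keeps all conditioning at fixed finite times, which is more pedestrian but mechanically safer. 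For claim 2 the two arguments are essentially identical: both insert the lower bound $V(x',y')\geq \max\left(0,y'+\rho x'\right)$ into the harmonicity identity and invoke Condition \ref{PosdeX1-1} (the paper restricts to $\left\{X_1>-y/\left(2(1+\rho)\right)\right\}$ to get the explicit bound $\tfrac{y}{2}\bb P_x\left(X_1>-y/\left(2(1+\rho)\right)\right)$, while you use the full event $\left\{X_1>-y\left(1-\bb E(a)\right)\right\}$, correctly noting that $\bb E(a)\geq 0$ makes it a subset of $\{\tau_y>1\}$).
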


\begin{proof}
\textit{Claim 1.}
Denote for brevity $V_n(x,y) = \bb E_x \left( y+S_n \,;\, \tau_y > n \right)$. For all $x\in \bb R$, $y>0$ and  $n \geq 1$, by the Markov property,
	\[
	V_{n+1}(x,y) = \bb E_x \left( V_n(X_1,y+S_1) \,;\, \tau_y > 1 \right).
\]
By Corollary \ref{intofrandwalk}, we see that the quantity $V_n(X_1,y+S_1)$ is dominated by the random variable $c_p \left( 1+y+S_1+\abs{X_1} \right)\left( 1+ \abs{X_1} \right)^{p-1}$ which is integrable with respect to $\bb E_x$. Consequently, by the Lebesgue dominated convergence theorem and the claim 1 of Proposition \ref{Invfunctprop},
	\[
	V(x,y) = \bb E_x \left( V(X_1,y+S_1) \,;\, \tau_y > 1 \right) = \textbf{Q}_+V(x,y),
\]
where by convention, $V(x,y)\mathbbm 1_{\{y > 0\}} = 0$ if $y \leq 0$ and $x \in \bb R.$

\textit{Claim 2.} Fix $x\in \bb R$ and $y>0$. Using the claim 1 and the fact that $V$ is non-negative on $\bb R \times \bb R_+^*$ (claim 3 of Lemma \ref{Invfunctprop}) we write
	\[
	V(x,y) \geq \bb E_x \left( V(X_1,\, y+S_1) \,;\, \tau_y > 1 \,,\, X_1 > \frac{-y}{2(1+\rho)} \right).
\]
By the lower bound of the claim 3 of Lemma \ref{Invfunctprop} and (\ref{MfSetX}),
	\[
	V(x,y) \geq \bb E_x \left( y+ (1+\rho) X_1 \,;\, \tau_y > 1 \,,\, X_1 > \frac{-y}{2(1+\rho)} \right) \geq \frac{y}{2} \bb P_x \left( X_1 > \frac{-y}{2(1+\rho)} \right).
\]
By Condition \ref{PosdeX1-1}, we conclude that, $V(x,y) > 0$ for any $x\in \bb R$ and $y>0$.
\end{proof}

\subsection{Existence of the harmonic function: the case $\bb E(a)<0$}
\label{Sec Harm Func cas neg}

In this section we prove the harmonicity and the positivity of the function $V$ in the case $\bb E(a)<0.$
The following assertion is the analogue of Lemma \ref{intdeMtau}.
\begin{lemma}
\label{intdeMtaucasneg}
The random variables $M_{T_y}$ and $M_{\tau_y}$ are integrable.
\begin{enumerate}
\item	For any $x \in \bb R$ and $y \in \bb R$,
	\[
	\bb E_x \left( \abs{M_{T_y}} \right) \leq c_p \left( 1+\abs{y}+\abs{x}^p \right).
\]
\item	For any $x\in \bb R$ and $y\in \bb R$,
	\[
	\bb E_x \left( \abs{M_{\tau_y}} \right) \leq c_p \left( 1+\abs{y}+\abs{x}^p \right).
\]
\end{enumerate}
\end{lemma}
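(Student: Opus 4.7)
The plan is to parallel the strategy of Lemma \ref{intdeMtau}, but adapted to the case $\bb E(a) < 0$ where the key sign property $y + \rho x + M_{\tau_y} \leq 0$ from Lemma \ref{Mtauyprop} is no longer available. Instead, I would exploit Lemma \ref{MTyprop}, which provides both the comparison $\tau_y \leq T_y$ and the fact that $z + M_{T_y} \leq 0$ at the martingale's own exit time $T_y$, where throughout $z = y + \rho x$. The proof splits naturally: first I would control $M_{T_y}$ directly using the integrability estimate already established in Lemma \ref{intofthemartcondcasneg}, then transfer the bound to $M_{\tau_y}$ via the submartingale property of $(|M_n|)_{n\geq 0}$.

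For Claim 1, I would apply the Doob optional stopping theorem to the martingale $(M_n)$ at the bounded stopping time $T_y \wedge n$ to obtain $\bb E_x(-M_{T_y}; T_y \leq n) = \bb E_x(M_n; T_y > n)$. The triangle inequality $|M_{T_y}| \leq |z + M_{T_y}| + |z| = -(z + M_{T_y}) + |z|$, valid because $z + M_{T_y} \leq 0$ by definition of $T_y$, converts this identity into a bound for $\bb E_x(|M_{T_y}|; T_y \leq n)$ in terms of $|z|$ and $\bb E_x(|M_n|; T_y > n)$. On $\{T_y > n\}$ we have $z + M_n > 0$, hence $|M_n| \leq (z + M_n) + |z|$, and the right-hand side is handled by Lemma \ref{intofthemartcondcasneg} to give a bound of order $c_p(1 + |y| + |x|^p)$ uniformly in $n$. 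Since $\bb P_x(T_y < +\infty) = 1$ by Corollary \ref{Exitfinit}, the Lebesgue monotone convergence theorem yields the stated estimate.

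For Claim 2, I would use that $(|M_n|)_{n\geq 0}$ is a submartingale (by Jensen's inequality applied to the martingale $(M_n)$) and invoke the optional sampling theorem at the bounded stopping times $\tau_y \wedge n \leq T_y \wedge n$, obtaining $\bb E_x(|M_{\tau_y \wedge n}|) \leq \bb E_x(|M_{T_y \wedge n}|)$. Splitting the right-hand side according to whether $T_y \leq n$ or $T_y > n$, both terms are of order $c_p(1 + |y| + |x|^p)$ by the ingredients assembled for Claim 1. Fatou's lemma, combined with $\tau_y < +\infty$ $\bb P_x$-a.s.\ (again Corollary \ref{Exitfinit}), passes the bound to $\bb E_x(|M_{\tau_y}|)$.

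The main technical hurdle will be the careful sign accounting when introducing absolute values: unlike the case $\bb E(a) \geq 0$, where $z + M_{\tau_y}$ was a non-positive quantity whose expectation could be bounded directly via Lemma \ref{intofthemartcond}, here the martingale may fluctuate on either side of $-z$ between $\tau_y$ and $T_y$, and the triangle inequality must be deployed to exploit the sign information available at $T_y$ rather than at $\tau_y$. Once this is handled, no new estimate is needed beyond the submartingale comparison and the bounds of Section \ref{CMWIcasneg}.
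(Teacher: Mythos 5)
Your proposal is correct and follows essentially the same route as the paper: Claim 1 mirrors the proof of Lemma \ref{intdeMtau} with Lemma \ref{intofthemartcondcasneg} replacing Lemma \ref{intofthemartcond}, exploiting $z+M_{T_y}\leq 0$ at $T_y$ together with optional stopping and the finiteness of $T_y$ from Corollary \ref{Exitfinit}; Claim 2 uses exactly the paper's comparison $\tau_y\wedge n\leq T_y\wedge n$ with the submartingale $\left(\abs{M_n}\right)_{n\geq 0}$ before passing to the limit. The only cosmetic difference is your use of Fatou's lemma where the paper invokes monotone convergence, which is immaterial.
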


\begin{proof} 
\textit{Claim 1.} The proof is similar to that of Lemma \ref{intdeMtau} using Lemma \ref{intofthemartcondcasneg} instead of Lemma \ref{intofthemartcond}  and the fact that by Corollary \ref{Exitfinit} we have $\bb P_x \left(T_y < +\infty \right)=1,$ $x\in \bb R$.

\textit{Claim 2.} By the claim 1 of Lemma \ref{MTyprop}, we have $\tau_y \wedge n \leq T_y \wedge n$. Since $\left( \abs{M_n} \right)_{n\geq 0}$ is a submartingale,
with $z=y+\rho x$,
\begin{align*}
	\bb E_x \left( \abs{M_{\tau_y}} \,;\, \tau_y \leq n \right) \leq \bb E_x \left( \abs{M_{\tau_y\wedge n}} \right) \leq \bb E_x \left( \abs{M_{T_y\wedge n}} \right) \leq 2\abs{z} + 2\bb E_x \left( \abs{M_{T_y}} \,;\, T_y \leq n \right). 
\end{align*}
The Lebesgue monotone convergence theorem implies the claim 2.
\end{proof}
As a direct consequence of this corollary we have:
\begin{corollary}
For any $x \in \bb R$ and $y > 0$, the following function is well defined
	\[
	V(x,y) =  -\bb E_x \left( M_{\tau_y} \right).
\]
\end{corollary}
Consider the function
	\[
	W(x,y) = -\mathbb E_x \left( M_{T_y} \right),
\]
which will be used in the proof of the positivity of the function $V$ on $\bb R \times \bb R_+^*$.  
By Corollary \ref{intdeMtaucasneg}, the function $W$ is well defined on $\bb R \times \bb R$.

\begin{proposition}
\ 
\label{InvfunctpropW}
\begin{enumerate}
\item For any $x\in \bb R$ and $y \in \bb R$,
	\[
	W(x,y) = \underset{n\to +\infty}{\lim} \bb E_x \left( y+\rho x+M_n \,;\, T_y > n \right).
\]
\item For any $x \in \bb R$, the function $W(x,.)$ is non-decreasing.
\item For any $p \in (2,\alpha)$, there exists $\ee_1>0$ such that for any $\ee \in (0, \ee_1]$, $n_f \geq 2$, $x\in \bb R$ and $y\in \bb R$,
	\[
	\max(0,y+\rho x) \leq W(x,y) \leq \left( 1+ \frac{c_{p,\ee}}{n_f^\ee} \right) \max(y,0) + c_{p,\ee} \abs{x} + c_{p,\ee} \sqrt{n_f} + \e^{-c_{p,\ee} n_f^{\ee}} \abs{x}^p. 
\]
\item For any $x \in \bb R$, 
	\[
	\underset{y\to +\infty}{\lim} \frac{W(x,y)}{y} = 1.
\]
\item For any $x \in \bb R$ and $y \in \bb R$, 
	\[
	W(x,y) = \bb E_x \left( W\left( X_1, y+S_1 \right) \,;\, T_y > 1 \right),
\]
and $\left( W( X_n, y+S_n) \mathbbm 1_{\{T_y > n\}} \right)_{n\geq 0}$ is a martingale.
\end{enumerate}
\end{proposition}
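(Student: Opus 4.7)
The five claims mirror Proposition \ref{Invfunctprop} with the martingale exit time $T_y$ in place of the walk exit time $\tau_y$, and the proofs follow the same scheme. Throughout I use the abbreviation $z = y + \rho x$.

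\emph{Claims 1--2.} Doob's optional stopping theorem applied to the bounded stopping time $T_y \wedge n$ gives $\bb E_x(M_n; T_y > n) = -\bb E_x(M_{T_y}; T_y \leq n)$; using the integrability of $M_{T_y}$ (Lemma \ref{intdeMtaucasneg}), the a.s.\ finiteness $\bb P_x(T_y < +\infty) = 1$ (Corollary \ref{Exitfinit}), and $\bb P_x(T_y > n) \to 0$, one passes to the limit to obtain claim 1. For claim 2, if $y_1 \leq y_2$, then $z_2 + M_k \leq 0$ implies $z_1 + M_k \leq 0$, so $T_{y_1} \leq T_{y_2}$ $\bb P_x$-a.s.; hence $\{T_{y_1}>n\} \subseteq \{T_{y_2}>n\}$ and on this event $z_2 + M_n \geq z_1 + M_n > 0$. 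Monotonicity then passes to the limit provided by claim 1.

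\emph{Claims 3--4.} The upper bound in claim 3 is exactly Lemma \ref{intofthemartcondcasneg} combined with claim 1 (letting $n \to +\infty$ for fixed $n_f$). For the lower bound, $W(x,y) \geq 0$ because $z + M_n > 0$ on $\{T_y > n\}$, and $W(x,y) \geq z$ because $z + M_{T_y} \leq 0$ by definition of $T_y$. Claim 4 is then obtained by dividing the sandwich of claim 3 by $y$, letting $y \to +\infty$, and finally $n_f \to +\infty$.

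\emph{Claim 5.} The identity \eqref{MfSetX} with $n=1$ rewrites as $z + M_1 = (y+S_1) + \rho X_1$, so setting $\tilde M_k := M_{k+1} - M_1$ one gets
\[ z + M_{k+1} = (y + S_1) + \rho X_1 + \tilde M_k, \qquad k \geq 0. \]
Conditionally on $\mathscr F_1$, the process $(\tilde M_k)_{k \geq 0}$ has the law of the martingale of the chain $(X_n)_{n \geq 0}$ started at $X_1$, and on $\{T_y > 1\}$ the shifted exit time $T_y - 1$ coincides with the exit time of this fresh chain starting from $(X_1, y+S_1)$. By the Markov property and the definition of $W$,
\[ \bb E_x\bigl(\mathbbm 1_{\{T_y > 1\}}(M_{T_y} - M_1) \,\big|\, \mathscr F_1\bigr) = -\mathbbm 1_{\{T_y > 1\}} W(X_1, y + S_1), \]
where the integrability required to apply the Markov property is supplied by the upper bound of claim 3 together with Lemma \ref{MCM}. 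Taking expectation and using $\bb E_x(M_1) = 0$ yields $W(x,y) = \bb E_x(W(X_1, y+S_1); T_y > 1)$. The martingale property of $\left( W(X_n, y+S_n) \mathbbm 1_{\{T_y > n\}} \right)_{n \geq 0}$ is then obtained by applying this harmonicity identity to the chain restarted at time $n$, using the Markov property together with the $\mathscr F_n$-measurability of $\mathbbm 1_{\{T_y > n\}}$.

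The main obstacle is the bookkeeping in claim 5 required to identify the shifted martingale $\tilde M$ and the shifted exit time with the corresponding objects of the fresh chain restarted at $X_1$, and to verify the integrability of $W(X_1, y+S_1)$ so as to legitimately interchange expectation and the Markov property.
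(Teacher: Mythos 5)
Your proposal is correct and follows essentially the same route as the paper: claims 1--4 are proved exactly as in Proposition \ref{Invfunctprop} (optional stopping plus dominated convergence using the integrability of $M_{T_y}$ from Lemma \ref{intdeMtaucasneg}, monotonicity of $T_y$ in $y$, the upper bound from Lemma \ref{intofthemartcondcasneg}, and the sandwich argument for the limit in $y$ then $n_f$). The only deviation is claim 5: the paper, following the proof of claim 1 of Proposition \ref{HaetposdeV}, works with the prelimit functions $W_n(x,y)=\bb E_x\left( y+\rho x+M_n \,;\, T_y>n\right)$, applies the Markov property at time $1$ and passes to the limit by dominated convergence (the domination coming from Lemma \ref{intofthemartcondcasneg} and Lemma \ref{MCM}), whereas you apply the strong Markov property at time $1$ directly to the stopped martingale $M_{T_y}$; both variants rest on the same integrability inputs, so yours is an equally valid, essentially equivalent argument.
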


\begin{proof}
The proof is very close to that of Proposition \ref{Invfunctprop}. The upper bound of the claim 3 is obtained taking the limit as $n\to +\infty$ in Lemma \ref{intofthemartcondcasneg}. We prove the claim 4 taking the limit as $y\to +\infty$ and then as $n_f \to +\infty$ in the inequality of the claim 3. The proof of the claim 5 is the same as that of the claim 1 of Proposition \ref{HaetposdeV}.
\end{proof}

Moreover, we have the following proposition.
\begin{proposition}
\label{Invfunctpropcasnegpart1}\ 
\begin{enumerate}
\item For any $x\in \bb R$ and $y >0$,
	\[
	V(x,y) = \underset{n\to +\infty}{\lim} \bb E_x \left( y+\rho x+M_n \,;\, \tau_y > n \right) = \underset{n\to +\infty}{\lim} \bb E_x \left( y+S_n \,;\, \tau_y > n \right).
\]
\item For any $x \in \bb R$, the function $V(x,.)$ is non-decreasing.
\item For any $p \in  (2,\alpha),$ $\delta >0$, $x\in \bb R$ and $y >0$,
	\[
	0 \leq V(x,y) \leq W(x,y) \leq  \left( 1+ c_p \delta \right) y + c_{p,\delta} \left( 1+ \abs{x}^p \right).
\]
\item The function $V$ is $\bf{Q_+}$-harmonic on $\bb R \times \bb R_+^*$: for any $x\in \bb R$ and $y>0$,
	\[
	\mathbf{Q_+}V(x,y) =V(x,y)
\]
and $\left( V(X_n,y+S_n) \mathbbm 1_{\{\tau_y > n \}} \right)_{n\geq 0}$ is a martingale.
\end{enumerate}
\end{proposition}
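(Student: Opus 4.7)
The plan is to mirror the proof of Proposition \ref{Invfunctprop} (the case $\bb E(a) \geq 0$), substituting in each step the counterparts developed for $\bb E(a) < 0$: Lemma \ref{intdeMtaucasneg} for the integrability of $M_{\tau_y}$, Corollary \ref{intofrandwalkcasneg} for a uniform bound on $\bb E_x(y + S_n; \tau_y > n)$, and Proposition \ref{InvfunctpropW} for the comparison with $W$.

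For claim 1, Lemma \ref{intdeMtaucasneg} provides the integrability of $M_{\tau_y}$. Optional stopping at the bounded time $\tau_y \wedge n$ yields $\bb E_x(M_n;\tau_y>n) = -\bb E_x(M_{\tau_y};\tau_y\leq n)$, and Lebesgue's dominated convergence (using that $\tau_y<+\infty$ $\bb P_x$-a.s.\ by Corollary \ref{Exitfinit}) gives the limit $-\bb E_x(M_{\tau_y}) = V(x,y)$. Adding $(y+\rho x)\bb P_x(\tau_y > n) \to 0$ yields the first equality. The second equality follows from (\ref{MfSetX}) once one checks that $\bb E_x(X_n;\tau_y>n)\to 0$, which is immediate from Cauchy--Schwarz, Lemma \ref{MCM} and $\bb P_x(\tau_y>n)\to 0$. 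Claim 2 is then identical to claim 2 of Proposition \ref{Invfunctprop}: for $0 < y_1 \leq y_2$ one has $\tau_{y_1} \leq \tau_{y_2}$ and $\bb E_x(y_1+S_n;\tau_{y_1}>n) \leq \bb E_x(y_2+S_n;\tau_{y_2}>n)$; passing to the limit via the second form of claim 1 gives the monotonicity.

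For claim 3, the lower bound $V(x,y) \geq 0$ is immediate from claim 1 because $y+S_n>0$ on $\{\tau_y>n\}$. The main obstacle is the comparison $V(x,y) \leq W(x,y)$: on $\{\tau_y > n\}$ the integrand $y+\rho x+M_n$ need not have a definite sign (recall $\rho < 0$), so the two expectations cannot be compared by a pointwise inequality. My key observation is that, by claim 1 of Lemma \ref{MTyprop}, $\{\tau_y>n\} \subset \{T_y>n\}$, and on the difference set $\{\tau_y\leq n < T_y\}$ the very definition of $T_y$ forces $y+\rho x+M_n > 0$. Therefore
\[
\bb E_x(y+\rho x+M_n ; T_y>n) \geq \bb E_x(y+\rho x+M_n ; \tau_y>n),
\]
and passing to the limit via claim 1 here and claim 1 of Proposition \ref{InvfunctpropW} yields $V \leq W$. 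The stated upper bound then follows from claim 3 of Proposition \ref{InvfunctpropW} by choosing $n_f$ large enough to absorb $c_{p,\ee}/n_f^\ee$ into $c_p\delta$ and the remaining $\abs{x}$, $\sqrt{n_f}$ and $\e^{-c_{p,\ee}n_f^\ee}\abs{x}^p$ terms into $c_{p,\delta}(1+\abs{x}^p)$.

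For claim 4, the argument parallels claim 1 of Proposition \ref{HaetposdeV}: with $V_n(x,y) = \bb E_x(y+S_n;\tau_y>n)$, the Markov property at time $1$ gives $V_{n+1}(x,y) = \bb E_x(V_n(X_1, y+S_1);\tau_y>1)$, and dominated convergence as $n\to+\infty$ yields $\mathbf{Q}_+ V(x,y) = V(x,y)$. The required dominating random variable is now supplied by Corollary \ref{intofrandwalkcasneg}, which gives $V_n(X_1, y+S_1) \leq c_p(1+y+S_1+\abs{X_1}^p)$, $\bb P_x$-integrable by Lemma \ref{MCM}; the convention $V(x,y)=0$ for $y \leq 0$ takes care of the region $\{y+S_1\leq 0\}$. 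The martingale property of $(V(X_n,y+S_n)\mathbbm 1_{\{\tau_y>n\}})_{n\geq 0}$ is then the standard consequence of the Markov property conditioned on $\mathscr{F}_n$ combined with $\mathbf{Q}_+$-harmonicity.
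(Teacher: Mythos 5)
Your proposal is correct and follows essentially the same route as the paper, which proves the upper bound $V\leq W$ via $\tau_y\leq T_y$ and leaves claims 1, 2, 4 and the lower bound of claim 3 to the analogy with Propositions \ref{Invfunctprop}, \ref{InvfunctpropW} and \ref{HaetposdeV} that you carry out explicitly (using Lemma \ref{intdeMtaucasneg} and Corollary \ref{intofrandwalkcasneg} in place of their $\bb E(a)\geq 0$ counterparts). Your "key observation" that $y+\rho x+M_n>0$ on $\{\tau_y\leq n<T_y\}$ is exactly the implicit justification of the paper's one-line comparison $\bb E_x(z+M_n;\tau_y>n)\leq \bb E_x(z+M_n;T_y>n)$.
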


\begin{proof}
The proofs of the claims 1, 2, 4 and of the lower bound of the claim 3, being similar to that of the previous proposition and of the Proposition  \ref{Invfunctprop}, is left to the reader. 
The upper bound of the claim 3 is a consequence of the fact that $\tau_y \leq T_y$ (claim 1 of Lemma \ref{MTyprop}): with $z=y+\rho x$,
\begin{align*}
	V(x,y) &= \underset{n\to +\infty}{\lim} \bb E_x \left( z+M_n \,;\, \tau_y > n \right) \\
	&\leq \underset{n\to +\infty}{\lim} \bb E_x \left( z+M_n \,;\, T_y > n \right) = W(x,y).
\end{align*}
\end{proof}

Our next goal is to prove that $V(x,y) \geq \max\left(0,(1-\delta) y -  c_{p,\delta} \left(1+\abs{x}^{p}\right)  \right)$ from which we will deduce the positivity of $V$. For this we make appropriate adjustments to the proof of Lemmas \ref{firstupperbound} and Lemma \ref{intofthemartcond} where the submartingale $((y+\rho x+M_n) \mathbbm 1_{\{T_y > n}\})_{n\geq 0}$ will be replaced by the supermartingale $( W\left(X_n,y+S_n\right) \mathbbm 1_{ \{\tau_y > n\} } )_{n\geq 0}$.
Instead of upper bounds in  Lemmas \ref{firstupperbound} and Lemma \ref{intofthemartcond} the following two lemmas establish lower bounds. 

\begin{lemma}
\label{firstlowerboundcasneg}
For any $p \in (2,\alpha)$, there exists $\ee_1>0$ such that for any $\ee \in (0, \ee_1]$, $x\in \bb R$, $y>0$ and $n \in \bb N$,
	\[
	\bb E_x\left( W(X_n,y+S_n) \,;\, \tau_y>n \right) \geq W(x,y) - c_{p,\ee} n^{1/2-2\ee} - c_{p,\ee} \abs{x}^p.
\]
\end{lemma}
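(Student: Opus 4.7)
The plan is to rearrange the identity obtained by optional stopping of the martingale $\left(W(X_n,y+S_n)\mathbbm 1_{\{T_y>n\}}\right)_{n\geq 0}$ (claim~5 of Proposition~\ref{InvfunctpropW}) at the bounded stopping time $\tau_y \wedge n$. Since $\tau_y \leq T_y$ (claim~1 of Lemma~\ref{MTyprop}), on the event $\{\tau_y > n\}$ the indicator $\mathbbm 1_{\{T_y > \tau_y \wedge n\}}$ equals $1$, while on $\{\tau_y \leq n\}$ it becomes $\mathbbm 1_{\{T_y > \tau_y\}}$. Thus
\[
W(x,y) = \bb E_x\bigl(W(X_n,y+S_n)\,;\,\tau_y>n\bigr) + \bb E_x\bigl(W(X_{\tau_y},y+S_{\tau_y})\mathbbm 1_{\{T_y>\tau_y\}}\,;\,\tau_y\leq n\bigr).
\]
Since $W\geq 0$ (claim~3 of Proposition~\ref{InvfunctpropW}), dropping the indicator $\mathbbm 1_{\{T_y>\tau_y\}}$ in the last term only increases it, and hence
\[
\bb E_x\bigl(W(X_n,y+S_n)\,;\,\tau_y>n\bigr) \geq W(x,y) - \bb E_x\bigl(W(X_{\tau_y},y+S_{\tau_y})\,;\,\tau_y\leq n\bigr).
\]
So the task reduces to proving
\[
\bb E_x\bigl(W(X_{\tau_y},y+S_{\tau_y})\,;\,\tau_y\leq n\bigr) \;\leq\; c_{p,\ee}\,n^{1/2-2\ee} + c_{p,\ee}\,|x|^p.
\]

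To bound this, observe that $y+S_{\tau_y}\leq 0$, so $\max(y+S_{\tau_y},0)=0$ in claim~3 of Proposition~\ref{InvfunctpropW}. Choosing the free parameter there as $n_f = \pent{n^{1-4\ee}}$ gives $\sqrt{n_f}\leq c\,n^{1/2-2\ee}$ and an exponentially small prefactor $\e^{-c_{p,\ee}n_f^\ee}$ in front of the $p$-th power term:
\[
W(X_{\tau_y},y+S_{\tau_y}) \leq c_{p,\ee}\,|X_{\tau_y}| + c_{p,\ee}\,n^{1/2-2\ee} + \e^{-c_{p,\ee} n^{(1-4\ee)\ee}}|X_{\tau_y}|^p.
\]

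It remains to control $\bb E_x(|X_{\tau_y}|\,;\,\tau_y\leq n)$ and $\bb E_x(|X_{\tau_y}|^p\,;\,\tau_y\leq n)$, which I would do exactly as in the proof of Lemma~\ref{firstupperboundcasneg}: split $X_k = \prod_{i=1}^k a_i\cdot x + X_k^0$ with the sequence $(X_k^0)$ from \eqref{defdeXn0}. The contribution of the $x$-part is bounded by $|x|\,\bb E(\sum_k \prod_{i\leq k}|a_i|) \leq c\,|x|$ using $\bb E(|a|)<1$. For the $X_k^0$-part, apply the decomposition
\[
\bb E_x\bigl(|X_{\tau_y}^0|\,;\,\tau_y\leq n\bigr) \leq n^{1/2-2\ee} + \bb E\bigl(\max_{1\leq k\leq n}|X_k^0|\,;\,\max_{1\leq k\leq n}|X_k^0|>n^{1/2-2\ee}\bigr),
\]
and bound the second term via Markov's inequality together with $\bb E(|X_k^0|^p)\leq c_p$ (Lemma~\ref{MCM}, $x=0$) and a union bound on $\bb P(\max_k|X_k^0|>t)\leq c_p n/t^p$; for $\ee$ sufficiently small in terms of $p>2$, this yields $c_{p,\ee}\,n^{1/2-2\ee}$. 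The crude bound $\bb E_x(|X_{\tau_y}|^p\,;\,\tau_y\leq n)\leq c_p(|x|^p+n)$ suffices for the last term, because the exponential prefactor $\e^{-c_{p,\ee}n^{(1-4\ee)\ee}}$ absorbs the factor $n$ (it even tends to $0$) while leaving a harmless bound $c_{p,\ee}|x|^p$ on the $|x|^p$ contribution.

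The main obstacle is precisely this last point: we cannot afford any polynomial growth in $n$ from the $p$-th moment of $X_{\tau_y}$, so the strategy is to balance the choice of the auxiliary parameter $n_f$ in the bound on $W$ from Proposition~\ref{InvfunctpropW} so that $\sqrt{n_f}$ stays of order $n^{1/2-2\ee}$ while $\e^{-c\,n_f^\ee}$ decays super-polynomially. The choice $n_f = \pent{n^{1-4\ee}}$ achieves this balance, and assembling the estimates then yields the claimed lower bound for every $n\geq 1$ (the case $n=0$ being trivial as $\tau_y\geq 1$).
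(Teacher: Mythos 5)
Your proposal is correct and follows essentially the same route as the paper: optional stopping of the martingale $\left(W(X_n,y+S_n)\mathbbm 1_{\{T_y>n\}}\right)_{n\geq 0}$ at $\tau_y\wedge n$ combined with $\tau_y\leq T_y$, then the upper bound of claim 3 of Proposition \ref{InvfunctpropW} with $\max(y+S_{\tau_y},0)=0$ and $n_f=\pent{n^{1-4\ee}}$, and finally the control of $\bb E_x\left(\abs{X_{\tau_y}}\,;\,\tau_y\leq n\right)$ and of the $p$-th moment term exactly as in Lemma \ref{firstupperbound}. The only cosmetic deviations (dropping the indicator $\mathbbm 1_{\{T_y>\tau_y\}}$ via positivity of $W$, and using a tail/union bound instead of Markov applied to $\max_k\abs{X_k^0}^p$) do not change the estimates.
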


\begin{proof}
By the claim 1 of Lemma \ref{MTyprop} and the claim 5 of Lemma \ref{InvfunctpropW}, as in the proof of Lemma \ref{firstupperbound},
	\[
	\bb E_x\left( W(X_n,y+S_n) \,;\, \tau_y>n \right) = W(x,y)	- \bb E_x\left( W(X_{\tau_y},y+S_{\tau_y}) \,;\, T_y>\tau_y \,,\, \tau_y \leq n \right).
\]
Using the claim 3 of Proposition \ref{InvfunctpropW} and the fact that $y+S_{\tau_y} \leq 0$,
\begin{align*}
	\bb E_x&\left( W(X_{\tau_y},y+S_{\tau_y}) \,;\, T_y>\tau_y \,,\, \tau_y \leq n \right) \leq \\
	&\hspace{4cm} \bb E_x\left( c_{p,\ee} \abs{X_{\tau_y}} + c_{p,\ee} \sqrt{n_f} + \e^{-c_{p,\ee} n_f^{\ee}} \abs{X_{\tau_y}}^p \,;\, \tau_y \leq n \right).
\end{align*}
Taking $n_f = \pent{n^{1-4\ee}}$, the end of the proof is the same as the proof of Lemma \ref{firstupperbound}.
\end{proof}

\begin{lemma}
\label{lowerboundcasneg}
For any $p\in (2,\alpha)$ there exists $\ee_1 >0$ such that for any $\ee \in (0, \ee_1]$, $n_f \geq 2$, $x\in \bb R$ and $y>0$,
	\[
	\bb E_x\left( W\left(X_n,y+S_n\right) \,;\, \tau_y>n \right) \geq y \left( 1 - \frac{c_{p,\ee}}{n_f^\ee} \right)  - c_{p,\ee} n_f^2 \left( 1+\abs{x}^p \right).
\]
\end{lemma}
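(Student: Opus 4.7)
The proof mirrors Lemma \ref{intofthemartcond}, with lower bounds replacing upper bounds, and $W(X_n,y+S_n)$ playing the role of the submartingale $z+M_n$. I would fix $p \in (2,\alpha)$, set $\ee_1 = \min(\ee_0, \frac{p-2}{4p})$ with $\ee_0$ from Lemma \ref{concentnu}, and take $\ee \in (0, \ee_1]$. First I would treat the case $y > n^{1/2-\ee}$: Lemma \ref{firstlowerboundcasneg} together with the lower bound $W(x,y) \geq y + \rho x$ from claim 3 of Proposition \ref{InvfunctpropW} gives the result directly, once the error $c_{p,\ee} n^{1/2-2\ee}$ and the $\abs{\rho}\abs{x}$ term are absorbed into $y\cdot c_{p,\ee}/n_f^\ee + c_{p,\ee} n_f^2(1+\abs{x}^p)$ using the implicit constraint $n_f \leq n$ and the hypothesis $y > n^{1/2-\ee}$.

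For $0 < y \leq n^{1/2-\ee}$, I would introduce $\nu_n^\ee = \nu_n + \pent{n^\ee}$ and use the non-negativity of $W$ to write
\[
\bb E_x\left( W(X_n, y+S_n) \,;\, \tau_y > n \right) \geq \bb E_x\left( W(X_n, y+S_n) \,;\, \tau_y > n \,,\, \nu_n^\ee \leq \pent{n^{1-\ee}} \right).
\]
The strong Markov property at $\nu_n^\ee$ combined with Lemma \ref{firstlowerboundcasneg} reduces matters to lower bounding
\[
I := \bb E_x\left( W(X_{\nu_n^\ee}, y+S_{\nu_n^\ee}) \,;\, \tau_y > \nu_n^\ee \,,\, \nu_n^\ee \leq \pent{n^{1-\ee}} \right),
\]
modulo additive errors $c_{p,\ee} n^{1/2-2\ee}$ and $c_{p,\ee}\bb E_x(\abs{X_{\nu_n^\ee}}^p; \ldots)$. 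The latter is handled by conditioning at $\nu_n$ and applying Lemma \ref{MCM}: the contraction factor $\bb E(\abs{a}^p)^{\pent{n^\ee}}$ kills the $\abs{x}^p$-contribution, leaving a bounded residual.

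The heart of the argument is to relate $I$ back to $W(x,y)$. Here I would invoke the martingale identity for $W(X_n, y+S_n)\mathbbm 1_{\{T_y > n\}}$ (claim 5 of Proposition \ref{InvfunctpropW}) at the bounded stopping time $\nu_n^\ee \wedge \pent{n^{1-\ee}}$, yielding
\[
W(x,y) = \bb E_x\left( W(X_{\nu_n^\ee}, y+S_{\nu_n^\ee}) \,;\, T_y > \nu_n^\ee \,,\, \nu_n^\ee \leq \pent{n^{1-\ee}} \right) + R,
\]
where $R$ is controlled via the upper bound in claim 3 of Proposition \ref{InvfunctpropW}, H\"older's inequality, Lemma \ref{MCM}, and the concentration estimate of Lemma \ref{concentnu} for $\nu_n$, giving $R \leq c_{p,\ee}(1 + y + \abs{x}^p)/n^\ee$. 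The passage from $T_y > \nu_n^\ee$ to $\tau_y > \nu_n^\ee$ (using $\tau_y \leq T_y$) costs a term $\bb E_x(W(X_{\tau_y}, y+S_{\tau_y}); T_y > \tau_y, \tau_y \leq \nu_n^\ee)$, which I would bound via the Markov property at $\tau_y$ and claim 3 of Proposition \ref{InvfunctpropW}: since $y + S_{\tau_y} \leq 0$, one gets $W(X_{\tau_y}, y+S_{\tau_y}) \leq c_{p,\ee}\abs{X_{\tau_y}} + c_{p,\ee}\sqrt{n_f} + \e^{-c_{p,\ee} n_f^\ee}\abs{X_{\tau_y}}^p$, and the moments of $X_{\tau_y}$ on $\{\tau_y \leq \pent{n^{1-\ee}}\}$ are controlled by Lemma \ref{MCM}.

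Assembling everything and using $W(x,y) \geq y + \rho x \geq y - \abs{\rho}\abs{x}$, the bounds close as $\bb E_x(W(X_n,y+S_n); \tau_y > n) \geq y(1 - c_{p,\ee}/n_f^\ee) - c_{p,\ee} n_f^2(1 + \abs{x}^p)$ by a judicious choice of the free parameter $n_f$ appearing inside claim 3 of Proposition \ref{InvfunctpropW}. The main obstacle is the careful bookkeeping of all error terms through the multiple Markov-property applications; the looser $n_f^2$ factor (compared to $\sqrt{n_f}$ in Lemma \ref{intofthemartcondcasneg}) is precisely what provides enough slack to absorb the cascade of moment bounds coming from successive conditionings at $\nu_n$, $\nu_n^\ee$, and $\tau_y$.
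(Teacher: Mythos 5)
Your treatment of the regime $y>n^{1/2-\ee}$ is fine, but in the main regime $0<y\leq n^{1/2-\ee}$ there is a genuine gap: the additive error $c_{p,\ee}\,n^{1/2-2\ee}$ that you pick up when applying Lemma \ref{firstlowerboundcasneg} after the Markov property at $\nu_n^{\ee}$ is never dealt with, and a second error of the same order is hidden in your $T_y$-to-$\tau_y$ correction, since $\bb E_x\left( \abs{X_{\tau_y}} \,;\, \tau_y \leq \pent{n^{1-\ee}} \right)$ is not $O(1)$ via Lemma \ref{MCM} alone (it grows like $n^{1/2-2\ee}$ after the usual truncation of $\max_k\abs{X_k^0}$). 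For fixed $n_f$ these errors grow with $n$ while the right-hand side of the lemma does not, and since here $y\leq n^{1/2-\ee}$ you cannot hide $n^{1/2-2\ee}$ inside $y\,c_{p,\ee}/n_f^{\ee}$ either. The factor $n_f^2$ cannot provide the slack you invoke, because $n_f$ is free of $n$; in the paper's proof $n_f^2$ arises only at the very last step, from $\bb P_x\left( \tau_y>n_f \right)\geq 1-c\,n_f^2\left(1+\abs{x}\right)/y$, not from absorbing a cascade of moment bounds. Moreover, your correction term $\bb E_x\left( W(X_{\tau_y},y+S_{\tau_y}) \,;\, T_y>\tau_y \,,\, \tau_y\leq \nu_n^{\ee}\wedge\pent{n^{1-\ee}} \right)$ is essentially the quantity the lemma is meant to control, so bounding it crudely by claim 3 of Proposition \ref{InvfunctpropW} plus Lemma \ref{MCM} cannot close the argument.

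The two missing devices, which constitute the paper's proof, are the following. First, the $n^{1/2-2\ee}$-errors occur only on events contained in $\{\tau_y>v_n\}$, where $z+M_{v_n}>n^{1/2-\ee}$ (with $z=y+\rho x$); they are therefore converted into $\frac{c_{p,\ee}}{n^{\ee}}\,\bb E_x\left( z+M_{v_n} \,;\, T_y>v_n \,,\, v_n\leq\pent{n^{1-\ee}} \right)$, which is bounded uniformly in $n$ by $c_{p,\ee}\left(1+y+\abs{x}^p\right)$ thanks to the submartingale property of $(z+M_k)\mathbbm 1_{\{T_y>k\}}$ (claim 2 of Lemma \ref{MTyprop}) and Lemma \ref{intofthemartcondcasneg}; this yields a relative error $c_{p,\ee}\left(1+y+\abs{x}^p\right)/n^{\ee}$ rather than an absolute one. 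Second, instead of returning directly to $W(x,y)$ through the $T_y$-martingale identity, the paper uses the supermartingale property of $W(X_k,y+S_k)\mathbbm 1_{\{\tau_y>k\}}$ to push the stopped expectation to the deterministic time $\pent{n^{1-\ee}}$, obtaining the recursive inequality $u_n\geq u_{\pent{n^{1-\ee}}}-c_{p,\ee}\left(1+y+\abs{x}^p\right)/n^{\ee}$ for $u_n=\bb E_x\left( W(X_n,y+S_n) \,;\, \tau_y>n \right)$, iterates it via Lemma \ref{lemanalyse2} down to the fixed time $n_f$, and only then extracts the factor $y$ from $\bb E_x\left( z+M_{n_f} \,;\, \tau_y>n_f \right)\geq y\,\bb P_x\left( \tau_y>n_f \right)-c\left(\sqrt{n_f}+\abs{x}\right)$. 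Without these two steps your assembly does not produce a bound that is uniform in $n$, so the proposal as written does not prove the lemma.
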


\begin{proof}
The proof is similar to that of Lemma \ref{intofthemartcond}. With $v_n^\ee = v_n+\pent{n^{\ee}}$, we have
	\[
	J_0 = \bb E_x \left( W\left(X_n,y+S_n\right) \,;\, \tau_y>n \right) \geq \bb E_x \left( W\left(X_n,y+S_n\right) \,;\, \tau_y>n \,,\, v_n^\ee \leq \pent{n^{1-\ee}} \right).
\]
Using the Markov property, Lemma \ref{firstlowerboundcasneg} and the fact that $n-v_n^\ee \leq n$,
	\[
	J_0 \geq \; \bb E_{x} \left( W\left( X_{v_n^\ee},y+S_{v_n^\ee} \right) -c_{p,\ee} n^{1/2-2\ee} - c_{p,\ee} \abs{X_{v_n^\ee}}^p \,;\, \tau_y > v_n^\ee \,,\, v_n^\ee \leq \pent{n^{1-\ee}} \right).
\]
By the claim 1 of Lemma \ref{MTyprop}, on $\{ \tau_y > v_n \}$ we have $z+M_{v_n} > n^{1/2-\ee}$, where $z=y+\rho x$. 
From this 
and the fact that $\left( W\left(X_n, y+S_n \right) \mathbbm{1}_{\{ \tau_y>n\}} \right)_{n\geq 1}$ is a supermartingale, 
as in the the bound of the the term $J_2$ of Lemma \ref{intofthemartcond}, 
we obtain that
\begin{align}
	J_0 \geq \; &\bb E_{x} \left( W\left( X_{\pent{n^{1-\ee}}},y+S_{\pent{n^{1-\ee}}} \right) \,;\, \tau_y > \pent{n^{1-\ee}} \right) \nonumber \\
	&- \bb E_{x} \left( W\left( X_{\pent{n^{1-\ee}}},y+S_{\pent{n^{1-\ee}}} \right) \,;\, \tau_y > \pent{n^{1-\ee}} \,,\, v_n^\ee > \pent{n^{1-\ee}} \right) \label{Bound-J0-001} \\
	&-\frac{c_{p,\ee}}{n^{\ee}} \bb E_{x} \left( z+M_{v_n} \,;\, T_y > v_n \,,\, v_n \leq \pent{n^{1-\ee}} \right) - \e^{-c_{p,\ee} n^\ee} \left( 1 + \abs{x}^p \right). \nonumber
\end{align}
Using the claim 3 of Proposition \ref{InvfunctpropW} with $n_f=n$ and the martingale representation (\ref{MfSetX}), the absolute value of the second term in the r.h.s.\ of (\ref{Bound-J0-001}) 
does not exceed
\begin{align*}
	& c_{p,\ee} \bb E_{x} \left( z+M_{\pent{n^{1-\ee}}} + \sqrt{n} + \abs{X_{\pent{n^{1-\ee}}}}     +\e^{-c_{p,\ee} n^\ee} \abs{X_{\pent{n^{1-\ee}}}}^p      \,;   \right. \\
	&\hspace{8cm} \left. \, \tau_y > \pent{n^{1-\ee}} \,,\, v_n^\ee > \pent{n^{1-\ee}} \right).
\end{align*}
Since $\left( (z+M_n) \mathbbm 1 _{\{ T_y > n\} } \right)_{n\geq 0}$ is a submartingale, by claim 2 of Lemma \ref{MTyprop},  the absolute value of the third term is less than 
\begin{align*}
	\frac{c_{p,\ee}}{n^{\ee}} \bb E_{x} \left( z+M_n \,;\, T_y > n \right).
\end{align*}
These bounds imply
\begin{align}
	J_0 \geq \; &\bb E_{x} \left( W\left( X_{\pent{n^{1-\ee}}},y+S_{\pent{n^{1-\ee}}} \right) \,;\, \tau_y > \pent{n^{1-\ee}} \right) \nonumber \\
	&- c_{p,\ee} \bb E_{x} \left( z+M_{\pent{n^{1-\ee}}} + \sqrt{n} + \abs{X_{\pent{n^{1-\ee}}}}   \,;\, \tau_y > \pent{n^{1-\ee}} \,,\, v_n^\ee > \pent{n^{1-\ee}} \right) \nonumber \\
	&-\e^{-c_{p,\ee} n^\ee} \bb E_{x} \left( \abs{X_{\pent{n^{1-\ee}}}}^p \,;\, \tau_y > \pent{n^{1-\ee}} \,,\, v_n^\ee > \pent{n^{1-\ee}} \right) \label{Bound-J0-003} \\
	&-\frac{c_{p,\ee}}{n^{\ee}} \bb E_{x} \left( z+M_n \,;\, T_y > n \right) - \e^{-c_{p,\ee} n^\ee} \left( 1 + \abs{x}^p \right). \nonumber
\end{align}
Using the Markov property with the intermediate time $m_\ee= \pent{n^{1-\ee}} - \pent{n^\ee}$, Lemmas  \ref{majmartx} and \ref{MCM}
and the fact that $v_n^\ee = v_n+\pent{n^\ee}$, 
the absolute value of the second term in the r.h.s. of (\ref{Bound-J0-003}) is bounded by
\begin{equation*}
c_{p,\ee}\bb E_{x} \left( \abs{   z+M_{m_\ee}  } + c n^{\ee/2}  + c \abs{X_{m_\ee}} + \sqrt{n} + c (1+ \abs{X_{m_\ee} })  \,;\, \tau_y > m_\ee \,,\, v_n > m_\ee \right),
\label{bound-term2-001}
\end{equation*}
which, in turn, using the fact that $z+M_{m_\ee} \leq n^{1/2-\ee}$ on $\{ v_n > m_\ee \}$, is less than
	\[
	c_{p,\ee}\bb E_{x} \left( \sqrt{n}  + \abs{X_{m_\ee}} \,;\, \tau_y > m_\ee \,,\, v_n > m_\ee \right).
\]
The absolute value of the third term in the r.h.s.\ of (\ref{Bound-J0-003}) is bounded using Lemma \ref{MCM} by
$\e^{-c_{p,\ee} n^\ee} \left( 1 + \abs{x}^p \right).$
The fourth term is bounded by Lemma \ref{intofthemartcondcasneg}.  Collecting these bounds, we obtain
\begin{align}
	J_0 \geq\; & \bb E_{x} \left( W\left( X_{\pent{n^{1-\ee}}},y+S_{\pent{n^{1-\ee}}} \right) \,;\, \tau_y > \pent{n^{1-\ee}} \right) \nonumber \\
	&- c_{p,\ee} \bb E_{x} \left( \sqrt{n} + \abs{X_{m_\ee}} \,;\, \tau_y > m_\ee \,,\, v_n > m_\ee \right) -\frac{c_{p,\ee}}{n^{\ee}} \left( 1+y+\abs{x}^p \right).
\label{Bound-J0-005}
\end{align}
Coupling the H\"older inequality with Lemma \ref{MCM} and Lemma \ref{concentnu},
we find that the second term in the r.h.s.\ of (\ref{Bound-J0-005})  does not exceed
\[
c_{p,\ee} \left( \sqrt{n} + \bb E_{x}^{1/p} \left( \abs{X_{m_\ee}}^p \right) \right) \bb P_x^{1/q} \left( v_n > \frac{n^{1-\ee}}{c_\ee} \right) \leq
c_{p,\ee} \left( \sqrt{n} + \abs{x} \right) \frac{c_{p,\ee} \left( 1 + \abs{x} \right)^{p-1}}{ n^{\frac{p-1}{2}-(p-1)\ee} }.
\]
Implementing this into (\ref{Bound-J0-005}),
	\[
	J_0 \geq \bb E_{x} \left( W\left( X_{\pent{n^{1-\ee}}},y+S_{\pent{n^{1-\ee}}} \right) \,;\, \tau_y > \pent{n^{1-\ee}} \right) -\frac{c_{p,\ee}}{n^{\ee}} \left( 1+y+\abs{x}^p \right).
\]
Since $\left(W\left(X_n,y+S_n\right) \mathbbm{1}_{\{\tau_y>n\}} \right)_{n\geq 1}$ is a supermartingale, Lemma \ref{lemanalyse2} implies that
\[
J_0 \geq \bb E_{x} \left( W\left( X_{n_f},y+S_{n_f} \right) \,;\, \tau_y > n_f \right) -\frac{c_{p,\ee}}{n_f^{\ee}} \left( 1+y+\abs{x}^p \right).
\]
Using the lower bound of the claim 3 of Proposition \ref{InvfunctpropW} and Lemma \ref{majmartx}, we deduce that
	\[
	\bb E_x \left( W\left(X_n,y+S_n\right) \,;\, \tau_y>n \right) \geq y \bb P_{x} \left( \tau_y > n_f \right) - y \frac{c_{p,\ee}}{n_f^\ee} 
	- c_{p,\ee} \sqrt{n_f} - c_{p,\ee} \abs{x}^p.
\]
Now, when $y\to +\infty$, one can see that $\bb P_{x} \left( \tau_y > n_f \right) \to 1$: more precisely,
	\[
	\bb P_x \left( \tau_y > n_f \right) \geq \bb P_x \left( \underset{1 \leq k \leq n_f}{\max} \abs{X_k} < \frac{y}{n_f} \right) \geq 1 - c \frac{n_f^2 \left( 1+\abs{x} \right)}{y}.
\]
Finally,
\begin{align*}
	\bb E_x &\left( W\left(X_n,y+S_n\right) \,;\, \tau_y>n \right) \geq y \left( 1 - \frac{c_{p,\ee}}{n_f^\ee} \right)  - c_{p,\ee} n_f^2 \left( 1+\abs{x}^p \right).
\end{align*}
\end{proof}

Under Condition \ref{PosdeX1-2} we use Lemma \ref{lowerboundcasneg} to prove that $V$ is positive on $\bb R \times \bb R_+^*$.
\begin{lemma}
\label{Invfunctpropcasnegpart2}\ 
\begin{enumerate}
\item For any $\delta > 0$, $p \in (2,\alpha)$, $x \in \bb R$, $y>0$,
	\[
	V(x,y) \geq (1-\delta)y - c_{p,\delta} \left( 1+\abs{x}^p \right).
\]
\item For any $x \in \bb R$,
	\[
	\underset{y \to +\infty}{\lim} \frac{V(x,y)}{y} = 1.
\]
\item The function $V$ is positive on $\bb R \times \bb R_+^*$.
\end{enumerate}
\end{lemma}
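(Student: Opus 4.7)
The plan is to prove claim~(1) first, as a quantitative lower bound on $V$, and then derive claims~(2) and~(3) from it together with results already established in Proposition~\ref{Invfunctpropcasnegpart1} and Proposition~\ref{InvfunctpropW}. The core idea for claim~(1) is to let $n\to +\infty$ in Lemma~\ref{lowerboundcasneg} and to compare the resulting limit with the upper bound for $W$ from claim~3 of Proposition~\ref{InvfunctpropW}.

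Since $W$ is non-negative and $\mathbf{Q}_+$-superharmonic on $\bb R\times \bb R_+^*$ (which follows from claim~5 of Proposition~\ref{InvfunctpropW} together with the inclusion $\{\tau_y>n\}\subseteq \{T_y>n\}$ from claim~1 of Lemma~\ref{MTyprop}), the sequence $(W(X_n,y+S_n)\mathbbm{1}_{\{\tau_y>n\}})_{n\geq 0}$ is a non-negative supermartingale, so the limit $\ell:=\lim_n \bb E_x(W(X_n,y+S_n)\,;\, \tau_y>n)$ exists in $[0,+\infty)$. From Lemma~\ref{lowerboundcasneg}, $\ell \geq y\bigl(1 - c_{p,\ee}/n_f^\ee\bigr) - c_{p,\ee}n_f^2(1+\abs{x}^p)$. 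Conversely, plugging the pointwise upper bound for $W$ into the expectation and using $y+S_n>0$ on $\{\tau_y>n\}$ gives
\begin{align*}
\bb E_x(W(X_n,y+S_n)\,;\,\tau_y>n) \leq\;& \left(1+\frac{c_{p,\ee}}{n_f^\ee}\right)\bb E_x(y+S_n\,;\,\tau_y>n) + c_{p,\ee}\bb E_x(\abs{X_n}\,;\,\tau_y>n)\\
&+ c_{p,\ee}\sqrt{n_f}\,\bb P_x(\tau_y>n) + \e^{-c_{p,\ee}n_f^\ee}\bb E_x(\abs{X_n}^p\,;\,\tau_y>n).
\end{align*}
By claim~1 of Proposition~\ref{Invfunctpropcasnegpart1}, the main term converges to $(1+c_{p,\ee}/n_f^\ee)V(x,y)$; the three remainder terms vanish via H\"older's inequality combined with the moment bounds of Lemma~\ref{MCM} (with some exponent in $(p,\alpha)$) and $\bb P_x(\tau_y>n)\to 0$ (Corollary~\ref{Exitfinit}). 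Hence $\ell \leq (1+c_{p,\ee}/n_f^\ee)V(x,y)$. Dividing and choosing $n_f$ large enough that $\frac{1-c_{p,\ee}/n_f^\ee}{1+c_{p,\ee}/n_f^\ee}\geq 1-\delta$ then yields claim~(1).

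Claim~(2) follows by sandwiching: claim~(1) gives $\liminf_{y\to +\infty}V(x,y)/y\geq 1-\delta$, whereas the upper bound of claim~3 of Proposition~\ref{Invfunctpropcasnegpart1} gives $\limsup_{y\to +\infty}V(x,y)/y\leq 1+c_p\delta$; letting $\delta\to 0$ concludes. For claim~(3), the plan is to combine the harmonicity identity $V(x,y)=\bb E_x(V(X_n,y+S_n)\,;\,\tau_y>n)$ (claim~4 of Proposition~\ref{Invfunctpropcasnegpart1}) with Condition~\ref{PosdeX1-2}. Fix $(x,y)$, let $p_0\in(2,\alpha)$ be furnished by Condition~\ref{PosdeX1-2}, and apply claim~(1) with $p=p_0$ and $\delta=1/2$ to get $V(x',y')\geq (c/2-c_{p_0})(1+\abs{x'}^{p_0})$ on $K_{p_0,c}$, which is strictly positive once $c>2c_{p_0}$. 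For this~$c$, Condition~\ref{PosdeX1-2} supplies $n_0\geq 1$ with $\bb P_x((X_{n_0},y+S_{n_0})\in K_{p_0,c},\,\tau_y>n_0)>0$, and evaluating the harmonicity identity at~$n_0$ forces $V(x,y)>0$.

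The main obstacle is claim~(1): one must simultaneously control the remainder terms in the upper bound of $W$ via the moment estimates for $X_n$, and balance the parameters~$\ee$ and $n_f$ so that the multiplicative error $c_{p,\ee}/n_f^\ee$ becomes smaller than~$\delta$ while the additive error $c_{p,\ee}n_f^2(1+\abs{x}^p)$ can be absorbed into a single constant $c_{p,\delta}(1+\abs{x}^p)$. This two-parameter balancing, together with the fact that we work with the ``wrong'' stopping time $\tau_y$ while exploiting a function $W$ tailored to the exit time $T_y$ of the martingale, is what makes the proof delicate in the regime $\bb E(a)<0$.
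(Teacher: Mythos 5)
Your proof is correct, and for claims (2) and (3) it is the same argument as the paper's. For claim (1) you take a mildly different route in the step that transfers the lower bound on $\bb E_x\left( W(X_n,y+S_n)\,;\,\tau_y>n\right)$ to $V$. The paper writes $\bb E_x\left( z+M_n\,;\,\tau_y>n\right) \geq \bb E_x\left( z+M_n\,;\,T_y>n\right) - \bb E_x\left( W(X_n,y+S_n)\,;\,T_y>n\,,\,\tau_y\leq n\right)$ (using $z+M_n\leq W(X_n,y+S_n)$, the lower bound of claim 3 of Proposition \ref{InvfunctpropW}), and then invokes the martingale property of $\left( W(X_n,y+S_n)\mathbbm{1}_{\{T_y>n\}}\right)_{n\geq 0}$ so that in the limit the term $W(x,y)$ cancels exactly against $\lim_n\bb E_x\left( z+M_n\,;\,T_y>n\right)=W(x,y)$; this gives $V(x,y)\geq \lim_n \bb E_x\left( W(X_n,y+S_n)\,;\,\tau_y>n\right)$ with no loss, and Lemma \ref{lowerboundcasneg} concludes. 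You instead bound that limit $\ell$ from above by $\left(1+c_{p,\ee}/n_f^{\ee}\right)V(x,y)$ by inserting the pointwise upper bound for $W$ (claim 3 of Proposition \ref{InvfunctpropW}) and identifying $V(x,y)=\lim_n\bb E_x\left( y+S_n\,;\,\tau_y>n\right)$ from claim 1 of Proposition \ref{Invfunctpropcasnegpart1}, the remainder terms vanishing by H\"older, Lemma \ref{MCM} (with an exponent up to $\alpha$) and $\bb P_x\left(\tau_y>n\right)\to 0$. This costs a harmless multiplicative factor, absorbed by taking $n_f$ large, and in exchange you never need the exact martingale cancellation at this stage (the supermartingale property you derive — or even a liminf — suffices for the existence of $\ell$). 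Both routes rest on the same key input, Lemma \ref{lowerboundcasneg}, and yield the same final bound, so your version is a self-contained and valid variant; the remaining claims (2) and (3), via the sandwich with the upper bound of claim 3 of Proposition \ref{Invfunctpropcasnegpart1} and via Condition \ref{PosdeX1-2} with $\delta=1/2$, $c>2c_{p_0,\delta}$ in the harmonicity identity, match the paper exactly.
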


\begin{proof}
\textit{Claim 1.} Using the claim 1 of Lemma \ref{MTyprop} and the claims 3 and 5 of Proposition \ref{InvfunctpropW}, with $z=y+\rho x$, we write
\begin{align*}
	\bb E_x &\left( z + M_n \,;\, \tau_y >n \right)\\
	&\geq \bb E_x \left( z + M_n \,;\, T_y >n \right) - \bb E_x \left( W(X_n,y+S_n) \,;\, T_y >n \,,\, \tau_y \leq n \right) \\
	&= \bb E_x \left( z + M_n \,;\, T_y >n \right) - W(x,y) + \bb E_x \left( W(X_n,y+S_n) \,;\, \tau_y > n \right).
\end{align*}
Using Lemma \ref{lowerboundcasneg}, the claim 1 of Proposition \ref{InvfunctpropW} and the claim 1 of Proposition \ref{Invfunctpropcasnegpart1}, we obtain
	\[
	V(x,y) \geq y \left( 1 - \frac{c_{p,\ee}}{n_f^\ee} \right)  - c_{p,\ee} n_f^2 \left( 1+\abs{x}^p \right).
\]
Taking $n_f$ large enough, the claim 1 is proved.

\textit{Claim 2.} Taking the limit as $y \to +\infty$ and as $\delta \to 0$ in the claim 1, we obtain first that $\underset{y \to +\infty}{\rm liminf}V(x,y)/y \geq 1$. By the claim 3 of Proposition \ref{Invfunctpropcasnegpart1}, 
we obtain also that $\underset{y \to +\infty}{\rm limsup} V(x,y)/y \leq 1$.

\textit{Claim 3.} Fix $x\in \bb R$, $y>0$ and $\delta_0>0$. By Condition \ref{PosdeX1-2}, or Condition \ref{CSPosdeX1-2} 
(see Section \ref{ComplementCond}), 
there exists $p_0 \in (2,\alpha)$ such that for any $c>0$ there exists $n_0 \geq 1$ such that $\bb P_x \left( \left( X_{n_0}, y+S_{n_0} \right) \in K_{p_0, c} \,,\, \tau_y > n_0 \right) > 0$. Thus, using the claim 4 of Proposition \ref{Invfunctpropcasnegpart1}, 
	\[
	V(x,y) \geq \bb E_x \left( V(X_{n_0},y+S_{n_0}) \,;\, \left( X_{n_0}, y+S_{n_0} \right) \in K_{p_0, c} \,,\, \tau_y > n_0 \right).
\]
Using the claim 1 with $p=p_0$ and $\delta =1/2$ and choosing the constant $c=2c_{p_0,\delta}+2\delta_0$, there exists $n_0$ such that
	\[
	V(x,y) \geq \delta_0 \bb P_x \left( \left( X_{n_0}, y+S_{n_0} \right) \in K_{p_0, c} \,,\, \tau_y > n_0 \right) > 0.
\]
\end{proof}

\section{Asymptotic for the exit time}
\label{As for Exit Time}

The aim of this section is to prove Theorem \ref{AsExTi}. The asymptotic for the exit time of the Markov walk $(y+S_n)_{n\geq 0}$ will be deduced from the asymptotic of the exit time for the Brownian motion in Corollary \ref{exittimeforB} using the functional approximation in Proposition \ref{majdeA_k}.

\subsection{Auxiliary statements}

We start by proving an analogue of Corollaries \ref{intofrandwalk} and \ref{intofrandwalkcasneg}, 
where $n$ is replaced by the stopping time $\nu_n$.
\begin{lemma}
\label{lemsurE1}
For any $p \in (2,\alpha)$, there exists $\ee_0 >0$ such that for any $\ee \in (0, \ee_0]$, $x \in \bb R$, $y>0$ and $n \geq 1$,
	\[
	E_1 = \bb E_x \left( y+S_{\nu_n} \,;\, \tau_y > \nu_n \,,\, \nu_n \leq \pent{n^{1-\ee}} \right) \leq c_{p,\ee} (1+ y + \abs{x} )(1+\abs{x})^{p-1}.
\]
\end{lemma}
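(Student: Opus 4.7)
The plan is to exploit the martingale representation \eqref{MfSetX},
\[
 y+S_{\nu_n}=y+\rho x+M_{\nu_n}-\rho X_{\nu_n},
\]
together with a sign control on the overshoot $X_{\nu_n}$ and the submartingale property of the killed process. On the event $A:=\{\tau_y>\nu_n,\ \nu_n\leq\pent{n^{1-\ee}}\}$, when $\nu_n\geq 2$ the minimality in the definition of $\nu_n$ forces $|y+S_{\nu_n-1}|\leq n^{1/2-\ee}$, while $\tau_y>\nu_n-1$ forces $y+S_{\nu_n-1}>0$ and $\tau_y>\nu_n$ together with $|y+S_{\nu_n}|>n^{1/2-\ee}$ forces $y+S_{\nu_n}>n^{1/2-\ee}$. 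Hence
\[
 0<X_{\nu_n}=(y+S_{\nu_n})-(y+S_{\nu_n-1})\leq y+S_{\nu_n}\quad \text{on } A\cap\{\nu_n\geq 2\},
\]
and the remaining piece $\{\nu_n=1\}$ contributes at most $y+\bb E_x(|X_1|)\leq c(1+y+|x|)$ by Lemma \ref{MCM}, which is absorbed trivially into the desired bound.

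Plugging the martingale representation into $E_1$ gives
\[
 E_1=\bb E_x(y+\rho x+M_{\nu_n}\,;A)-\rho\,\bb E_x(X_{\nu_n}\,;A).
\]
When $\bb E(a)\geq 0$, so $\rho\geq 0$, the second summand is non-positive on $A\cap\{\nu_n\geq 2\}$ and is simply discarded. When $\bb E(a)<0$, the bound $X_{\nu_n}\leq y+S_{\nu_n}$ yields $-\rho\,\bb E_x(X_{\nu_n};A)\leq|\rho|\,E_1$; the inequality $|\rho|=|\bb E(a)|/(1-\bb E(a))<1/2$, which follows from $|\bb E(a)|\leq\bb E^{1/\alpha}(|a|^\alpha)<1$ under Condition \ref{Mom001}, then allows one to rearrange and absorb $|\rho|E_1$ into the left-hand side. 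In either case the task reduces, up to a harmless constant, to bounding $\bb E_x(y+\rho x+M_{\nu_n};A)$ by $c_{p,\ee}(1+y+|x|)(1+|x|)^{p-1}$.

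For this last step, set $T=\nu_n\wedge\pent{n^{1-\ee}}$ and apply Doob's optional stopping theorem. If $\bb E(a)\geq 0$, one uses the submartingale $((y+\rho x+M_k)\mathbbm{1}_{\{\tau_y>k\}})_{k\geq 0}$ from claim 3 of Lemma \ref{Mtauyprop}; if $\bb E(a)<0$, one first enlarges the indicator via the inclusion $\{\tau_y>\nu_n\}\subseteq\{T_y>\nu_n\}$ from claim 1 of Lemma \ref{MTyprop} and then invokes the submartingale $((y+\rho x+M_k)\mathbbm{1}_{\{T_y>k\}})_{k\geq 0}$ from claim 2 of the same lemma. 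Decomposing $\bb E_x((y+\rho x+M_T);\tau_y>T)$ according to whether $T=\nu_n$ or $T=\pent{n^{1-\ee}}$ then produces
\[
 \bb E_x(y+\rho x+M_{\nu_n};A)\leq \bb E_x(y+\rho x+M_{\pent{n^{1-\ee}}};\tau_y>\pent{n^{1-\ee}})+|R|,
\]
where the main term is controlled by Lemma \ref{intofthemartcond} (respectively Lemma \ref{intofthemartcondcasneg}), and $R=\bb E_x(y+\rho x+M_{\pent{n^{1-\ee}}}\,;\,\tau_y>\pent{n^{1-\ee}},\,\nu_n>\pent{n^{1-\ee}})$.

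The remainder $R$ is estimated by H\"older's inequality with conjugate exponents $(p,p/(p-1))$: Lemma \ref{majmartx} gives $\bb E_x^{1/p}(|y+\rho x+M_{\pent{n^{1-\ee}}}|^p)\leq c_p(y+|x|+n^{(1-\ee)/2})$, while Lemma \ref{concentnu} gives $\bb P_x(\nu_n>\pent{n^{1-\ee}})\leq c_{p,\ee}n^{-(p/2-p\ee)}+\e^{-c_{p,\ee}n^{1-2\ee}}|x|^p$. The main obstacle, and really the only non-routine step, is to verify that the resulting $n$-exponent $\tfrac{1-\ee}{2}-(p-1)(\tfrac{1}{2}-\ee)=\tfrac{2-p}{2}+(p-\tfrac{3}{2})\ee$ is strictly negative for $p\in(2,\alpha)$ and $\ee$ sufficiently small, so that $|R|$, and hence $E_1$, admits the claimed bound. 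Everything else is a rearrangement of the arguments already used to prove Lemma \ref{intofthemartcond}.
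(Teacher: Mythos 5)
Your proposal is correct and follows essentially the same route as the paper's proof: positivity of the overshoot ($0<X_{\nu_n}\leq y+S_{\nu_n}$ away from $\{\nu_n=1\}$) plus absorption of $-\rho X_{\nu_n}$ (using $\abs{\rho}<1$) to reduce to the martingale term, then stopping at $\nu_n\wedge\pent{n^{1-\ee}}$ with the event $\{\nu_n>\pent{n^{1-\ee}}\}$ handled by H\"older's inequality together with Lemmas \ref{majmartx} and \ref{concentnu}. The only cosmetic difference is that you control the main term via optional stopping of the killed submartingale and Lemma \ref{intofthemartcond} (resp.\ Lemma \ref{intofthemartcondcasneg}), whereas the paper centres the plain martingale $(M_n)$ and invokes the integrability of $M_{\tau_y}$ from Lemma \ref{intdeMtau} (resp.\ Lemma \ref{intdeMtaucasneg}); these two devices are interchangeable here.
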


\begin{proof} When $\tau_y > \nu_n>1$, we note that
\begin{equation}
\label{Xnunstrictpos}
0< X_{\nu_n} < y +S_{\nu_n}.
\end{equation}
Therefore, using the martingale representation (\ref{MfSetX}), we have $y+S_{\nu_n} \leq z+M_{\nu_n} + \max(0,-\rho) X_{\nu_n},$ 
with $z=y +\rho x,$
 and so
\[ 0<y+S_{\nu_n} \leq \max\left( 1,1-\bb E(a) \right) \left( z+M_{\nu_n} \right) = c \left( z+M_{\nu_n} \right). \]
Consequently,
\begin{align}
	E_1 \leq\; & c\left(1+y+\abs{x}\right) + c\bb E_x \left( z + M_{\nu_n} \,;\, \tau_y > \nu_n \,,\, 1<\nu_n \leq \pent{n^{1-\ee}} \right) \nonumber\\
	\leq\; & c\left(1+y+\abs{x}\right) + \underbrace{c\bb E_x \left( z + M_{\nu_n} \,;\, \tau_y > \nu_n \,,\, \nu_n \leq \pent{n^{1-\ee}} \right)}_{E_1'}. \label{E1'001}
\end{align}
Now, denoting $\nu_n \wedge \pent{n^{1-\ee}}= \min (\nu_n, \pent{n^{1-\ee}})$, we write
\begin{align*}
	E_1'	= \; &c\bb E_x \left( z + M_{\nu_n \wedge \pent{n^{1-\ee}}} \right) - c\bb E_x \left( z + M_{\nu_n \wedge \pent{n^{1-\ee}}} \,;\, \tau_y \leq  \nu_n \wedge \pent{n^{1-\ee}} \right) \\
	&- c\bb E_x \left( z + M_{\pent{n^{1-\ee}}} \,;\, \tau_y > \pent{n^{1-\ee}} \,,\, \nu_n > \pent{n^{1-\ee}} \right).
\end{align*} 
Since $\left( M_n \right)_{n\geq 0}$ is a centred martingale, 
using Lemma \ref{intdeMtau}  when $\bb E (a) \geq 0$ and Lemma \ref{intdeMtaucasneg} when $\bb E (a) < 0$, Lemmas \ref{majmartx}, \ref{concentnu} and H\"older inequality, we obtain
	\[
	E_1' \leq c_{p,\ee} (1+ y + \abs{x} )(1+\abs{x})^{p-1}.
\]
Implementing this into (\ref{E1'001}), it concludes the proof.
\end{proof}

Now, we can prove an upper bound of order $1/n^{1/2-c\ee}$ of the probability of survival $\bb P_x \left( \tau_y > n \right)$.
\begin{lemma}
\label{Sommedestempsdesurvie}
For any $p \in (2,\alpha)$, there exists $\ee_0>0$ such that for any $\ee \in (0, \ee_0]$, $x\in \bb R$, $y>0$ and $n\geq 1$,
	\[
	\bb P_x \left( \tau_y > n \right) \leq c_{p,\ee} \frac{(1+ y + \abs{x})(1+\abs{x})^{p-1}}{n^{1/2- \ee}}.
\]
Moreover, summing these bounds, we have
	\[
	\sum_{k=1}^{\pent{n^{1-\ee}}} \bb P_x \left( \tau_y > k \right) \leq c_{p,\ee} (1+ y + \abs{x})(1+\abs{x})^{p-1} n^{1/2+ \ee}.
\]
\end{lemma}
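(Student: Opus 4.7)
The plan is to combine Lemma \ref{lemsurE1} with the concentration estimate for the first crossing time $\nu_n$ from Lemma \ref{concentnu}. The key observation is that on the event $\{\tau_y > \nu_n\}$ the quantity $y+S_{\nu_n}$ is positive, and by definition of $\nu_n$ satisfies $\abs{y+S_{\nu_n}} > n^{1/2-\ee}$, hence $y+S_{\nu_n} > n^{1/2-\ee}$. This will convert the $\bb L^1$ bound of Lemma \ref{lemsurE1} into a probability bound of order $n^{-(1/2-\ee)}$ via the Markov inequality.

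For the first assertion, I would first reduce $\bb P_x(\tau_y > n)$ to $\bb P_x(\tau_y > \pent{n^{1-\ee}})$, since $\pent{n^{1-\ee}} \leq n$. Then I decompose
\[
\bb P_x \bigl( \tau_y > \pent{n^{1-\ee}} \bigr) \leq \bb P_x \bigl( \tau_y > \nu_n \,,\, \nu_n \leq \pent{n^{1-\ee}} \bigr) + \bb P_x \bigl( \nu_n > \pent{n^{1-\ee}} \bigr).
\]
The first summand is bounded by $n^{-(1/2-\ee)} E_1$ using the Markov inequality and the observation above; then Lemma \ref{lemsurE1} controls $E_1$ by $c_{p,\ee}(1+y+\abs{x})(1+\abs{x})^{p-1}$. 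The second summand is bounded via Lemma \ref{concentnu} by $c_{p,\ee}/n^{p/2-p\ee} + \e^{-c_{p,\ee} n^{1-2\ee}}\abs{x}^p$; shrinking $\ee$ so that $p/2 - p\ee \geq 1/2-\ee$ (which requires $\ee \leq 1/2$, comfortably compatible with the $\ee_0$ of Lemmas \ref{concentnu} and \ref{lemsurE1}), the polynomial term fits inside the required bound, and the exponential term is absorbed using $\abs{x}^p \leq (1+\abs{x})^p$ and $\e^{-c n^{1-2\ee}} n^{1/2-\ee} = O(1)$ (with the small-$n$ range handled trivially since the probability is at most $1$).

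For the summed bound, I would apply the first assertion to each $k \in \{1,\dots,\pent{n^{1-\ee}}\}$ and estimate
\[
\sum_{k=1}^{\pent{n^{1-\ee}}} \bb P_x(\tau_y > k) \leq c_{p,\ee}(1+y+\abs{x})(1+\abs{x})^{p-1} \sum_{k=1}^{\pent{n^{1-\ee}}} k^{-(1/2-\ee)}.
\]
A routine integral comparison gives $\sum_{k=1}^{N} k^{-(1/2-\ee)} \leq c_\ee N^{1/2+\ee}$, and applying this with $N=\pent{n^{1-\ee}}$ yields $n^{(1-\ee)(1/2+\ee)} \leq n^{1/2+\ee}$, which is precisely the asserted bound.

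The only slightly delicate point is the matching of the exponents in the first step: we need $\ee$ small enough that the tail estimate from Lemma \ref{concentnu} dominates $n^{-(1/2-\ee)}$ and that the exponential term in $\abs{x}^p$ can be absorbed uniformly in $n\geq 1$. Once $\ee_0$ is chosen accordingly (taking the minimum of the thresholds from Lemmas \ref{concentnu} and \ref{lemsurE1} and requiring $\ee_0 < (p-1)/(2p)$, say), everything else is a direct assembly of the ingredients already at hand.
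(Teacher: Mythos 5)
Your proposal is correct and follows essentially the same route as the paper: both reduce $\bb P_x(\tau_y>n)$ to the event that $\nu$ occurs before a time of order $n^{1-\ee}$, use that $y+S_{\nu}>$ (crossing level) on $\{\tau_y>\nu\}$ together with the Markov inequality and Lemma \ref{lemsurE1}, and control the complementary event by Lemma \ref{concentnu}, then sum for the second assertion. The only difference is cosmetic: the paper rescales time by taking $k=\pent{n^{1/(1-\ee)}}$ so that $\pent{k^{1-\ee}}\approx n$, whereas you instead discard the survival beyond $\pent{n^{1-\ee}}$ and work at scale $n$, which yields the same bound.
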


\begin{proof}
Taking $k=\pent{n^{\frac{1}{1-\ee}}}$, we write
	\[
	\bb P_x \left( \tau_y > n \right) \leq \bb E_x \left( \frac{y+S_{\nu_k}}{k^{1/2-\ee}} \,;\, \tau_y > \nu_k \,,\, \nu_k \leq \pent{k^{1-\ee}} \right) + \bb P_x \left( \nu_k > \frac{k^{1-\ee}}{c_\ee} \right).
\]
Using Lemma \ref{lemsurE1} and Lemma \ref{concentnu}, the claim follows.
\end{proof}

Before to proceed with the proof of Theorem \ref{AsExTi}, we need two additional technical lemmas. Recall the notation 
$\nu_n^{\ee/6} = \nu_n + \pent{n^{\ee/6}}$.
\begin{lemma}
\label{lemsurE2}
There exists $\ee_0 >0$ such that for any $\ee \in (0, \ee_0]$, $x \in \bb R$ and $y>0$,
	\[
	E_2 = \bb E_x \left( y+S_{\nu_n^{\ee/6}} \,;\, \tau_y > \nu_n^{\ee/6} \,,\, \nu_n^{\ee/6} \leq \pent{n^{1-\ee}} \right) \underset{n\to +\infty}{\longrightarrow} V(x,y).
\]
\end{lemma}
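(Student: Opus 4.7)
The plan is to exploit the $\textbf{Q}_+$-harmonicity of $V$ (claim 1 of Proposition \ref{HaetposdeV} when $\bb E(a)\geq 0$, and claim 4 of Proposition \ref{Invfunctpropcasnegpart1} when $\bb E(a)<0$), which makes the process $(V(X_n,y+S_n)\mathbbm{1}_{\{\tau_y>n\}})_{n\geq 0}$ a $\bb P_x$-martingale. Applying the optional stopping theorem at the bounded stopping time $T_n = \nu_n^{\ee/6} \wedge \pent{n^{1-\ee}}$ gives the decomposition $V(x,y) = A_n + B_n$, where
\begin{align*}
A_n &= \bb E_x \left( V(X_{\nu_n^{\ee/6}}, y+S_{\nu_n^{\ee/6}}) \,;\, \tau_y > \nu_n^{\ee/6} \,,\, \nu_n^{\ee/6} \leq \pent{n^{1-\ee}} \right), \\
B_n &= \bb E_x \left( V(X_{\pent{n^{1-\ee}}}, y+S_{\pent{n^{1-\ee}}}) \,;\, \tau_y > \pent{n^{1-\ee}} \,,\, \nu_n^{\ee/6} > \pent{n^{1-\ee}} \right).
\end{align*}
It thus suffices to prove that $B_n \to 0$ and that $A_n - E_2 \to 0$.

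The term $B_n$ is controlled by the upper bound $V(x',y') \leq c_p(1+y'+\abs{x'}^p)$ from claim 3 of Theorem \ref{ExofHaFu}, combined with H\"older's inequality, Lemma \ref{MCM}, Corollaries \ref{intofrandwalk} and \ref{intofrandwalkcasneg} for the moments of $X_k$ and $y+S_k$ on $\{\tau_y>k\}$, and the concentration estimate $\bb P_x(\nu_n > \pent{n^{1-\ee}} - \pent{n^{\ee/6}}) = o(1)$ of Lemma \ref{concentnu}; this yields $B_n = o(1)$ for $\ee$ chosen small enough.

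To treat $A_n - E_2 = \bb E_x( V(X', y') - y' \,;\, \tau_y > \nu_n^{\ee/6} \,,\, \nu_n^{\ee/6} \leq \pent{n^{1-\ee}} )$ with $X' = X_{\nu_n^{\ee/6}}$ and $y' = y+S_{\nu_n^{\ee/6}}$, fix $\eta>0$ and split the expectation along $\{\abs{X'}\leq R, y' \geq M\}$ versus its complement. On the good set, combining the two-sided bounds of claim 3 of Theorem \ref{ExofHaFu} yields $\abs{V(x',y') - y'} \leq \eta y'$ uniformly in $\abs{x'}\leq R$ provided $M = M(R,\eta)$ is large enough; the contribution is therefore at most $\eta \, c_{p,\ee}(1+y+\abs{x})(1+\abs{x})^{p-1}$ via the analogue of Lemma \ref{lemsurE1} at time $\nu_n^{\ee/6}$, proved by the same argument supplemented by Lemma \ref{MCM} over the additional $\pent{n^{\ee/6}}$ steps. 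On the bad set use $\abs{V(x',y')-y'} \leq c_p(1+y'+\abs{x'}^p)$: the event $\{y+S_{\nu_n^{\ee/6}} < M\} \cap \{\tau_y > \nu_n\}$ has vanishing probability since $y+S_{\nu_n} > n^{1/2-\ee} \gg M$ on $\{\tau_y > \nu_n\}$ whereas the increment $S_{\nu_n^{\ee/6}} - S_{\nu_n}$ has $L^p$-norm of order $\sqrt{n^{\ee/6}}$; likewise $\bb P_x(\abs{X_{\nu_n^{\ee/6}}} > R)$ is made arbitrarily small by choosing $R$ large, because the Markov property at $\nu_n$ combined with Lemma \ref{MCM} gives $\bb E_x(\abs{X_{\nu_n^{\ee/6}}}^p \mid \mathscr F_{\nu_n}) \leq c_p + \bb E(\abs{a}^p)^{\pent{n^{\ee/6}}}\abs{X_{\nu_n}}^p$ with $\bb E(\abs{a}^p) < 1$ (which follows from log-convexity of $p\mapsto \log \bb E(\abs{a}^p)$ and Condition \ref{Mom001}).

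Taking the limits successively as $n\to\infty$, then $R, M \to \infty$, then $\eta \to 0$ produces $A_n - E_2 \to 0$ and hence $E_2 \to V(x,y)$. The main obstacle is coordinating these four limits while keeping the prefactors from diverging: one must upgrade the pointwise convergence $V(x',y')/y' \to 1$ of claim 3 of Theorem \ref{ExofHaFu} to uniform convergence on compact sets of $x'$ (this is readily deduced from the quantitative two-sided bounds of the same claim), and the geometric decay factor $\bb E(\abs{a}^p)^{\pent{n^{\ee/6}}}$ must dominate the contribution of $\abs{X_{\nu_n}}$, which may itself reach order $n^{1/2-\ee}$ on $\{\tau_y > \nu_n\}$.
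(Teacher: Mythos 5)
Your strategy is genuinely different from the paper's. The paper never touches the harmonicity or the asymptotics of $V$ here: it writes $y+S_{\nu_n^{\ee/6}}=z+M_{\nu_n^{\ee/6}}-\rho X_{\nu_n^{\ee/6}}$ via \eqref{MfSetX}, kills the $X$-term using the Markov property at $\nu_n$, the exponential decay in Lemma \ref{MCM} and Lemma \ref{lemsurE1}, and then identifies the limit of the martingale term with $-\bb E_x(M_{\tau_y})=V(x,y)$ by optional stopping, the integrability of $M_{\tau_y}$ (Lemmas \ref{intdeMtau}, \ref{intdeMtaucasneg}) and dominated convergence, since $\nu_n^{\ee/6}\geq\pent{n^{\ee/6}}\to+\infty$. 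Your route — optional stopping of the martingale $\left(V(X_n,y+S_n)\mathbbm 1_{\{\tau_y>n\}}\right)_{n\geq0}$ at $\nu_n^{\ee/6}\wedge\pent{n^{1-\ee}}$, plus the uniform-on-compacts asymptotics $V(x',y')\sim y'$ — is legitimate in outline (there is no circularity, Theorem \ref{ExofHaFu} is available at this stage, and your good-set argument with $\delta=\delta(R,\eta)$, $M=M(R,\eta)$ is correct), but it trades the paper's soft limit argument for quantitative control of $V$ on rare events, which is exactly where the low-moment hypotheses bite.

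The concrete gaps are in the rare-event estimates. First, the bound you use off the good set, $\abs{V(x',y')-y'}\leq c_p(1+y'+\abs{x'}^p)$, is not what Theorem \ref{ExofHaFu} gives when $\bb E(a)\geq0$: the available upper bound is $V(x',y')\leq\left(1+\delta(1+\abs{x'}^{p-1})\right)y'+c_{p,\delta}(1+\abs{x'}^p)$ (equivalently $c_p(1+y'+\abs{x'})(1+\abs{x'})^{p-1}$ from Lemma \ref{intofthemartcond}), and the cross term $y'\abs{x'}^{p-1}$ cannot be absorbed into $c_p(1+y'+\abs{x'}^p)$. Second, with this cross term present, the direct H\"older argument you propose for $B_n$ does not close: splitting $y+S_N\leq\abs{z}+\abs{M_N}+\abs{\rho}\abs{X_N}$ and applying H\"older with exponents compatible with Lemma \ref{majmartx} ($<\alpha$ moments of $M_N$) and Lemma \ref{MCM} ($\leq\alpha$ moments of $X_N$) leaves at best a factor $\bb P_x(\nu_n>\cdot)^{1-p/\alpha}$ from Lemma \ref{concentnu}, and $\sqrt{n}\,n^{-(p/2)(1-p/\alpha)}\to0$ requires $p^2-\alpha p+\alpha<0$, which has no solution $p\in(2,\alpha)$ when $2<\alpha\leq4$. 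The same difficulty resurfaces on the bad set $\{\abs{X'}>R\}$, where the tail exponent must be chosen strictly below $\alpha$ because products such as $(y+S_{\nu_n})\abs{X_{\nu_n}}^{\alpha}$ exceed the available moments (recall $X_{\nu_n}$, hence $y+S_{\nu_n}$, is only controlled in $\bb L^1$ on $\{\tau_y>\nu_n\}$ via $E_1$); and your estimate for $\{y'<M\}$ ignores that the conditional $\bb L^p$-norm of $S_{\nu_n^{\ee/6}}-S_{\nu_n}$ carries an $\abs{X_{\nu_n}}$ term which can itself be of order $n^{1/2-\ee}$. All of these can be repaired by the paper's device of reconditioning at an intermediate time ($\nu_n$, or $\pent{n^{1-\ee}}-\pent{n^{\ee/6}}$, where $\abs{y+S}\leq n^{1/2-\ee}$ is known) so that large powers of $X$ only appear multiplied by $\e^{-cn^{\ee/6}}$ and crude polynomial bounds, exactly as in the treatment of $E_{21}$, $J_1$ and of the second term in \eqref{Bound-J0-003}; but as written these steps would fail under the standing assumption that $\alpha$ may be arbitrarily close to $2$, so the proof is not complete without them.
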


\begin{proof} Using the martingale approximation (\ref{MfSetX}),
\begin{align}
	E_2 =\;&\underbrace{- \rho \bb E_x \left( X_{\nu_n^{\ee/6}} \,;\, \tau_y > \nu_n^{\ee/6} \,,\, \nu_n^{\ee/6} \leq \pent{n^{1-\ee}} \right)}_{=: E_{21}} \nonumber\\
	\label{decompositiondeE2}
	& \qquad +\underbrace{\bb E_x \left( z+M_{\nu_n^{\ee/6}} \,;\, \tau_y > \nu_n^{\ee/6} \,,\, \nu_n^{\ee/6} \leq \pent{n^{1-\ee}} \right)}_{=: E_{22}}.
\end{align}

\textit{Bound of $E_{21}$.} By the Markov property, Lemma \ref{MCM} and the fact that $(y+S_{\nu_n})/n^{1/2-\ee} > 1$,
\begin{align*}
	\abs{E_{21}} \leq\; &c \bb E_x \left( 1+ \e^{-c n^{\ee/6}} \abs{X_{\nu_n}} \,;\, \tau_y > \nu_n  \,,\, \nu_n \leq \pent{n^{1-\ee}} \right)\\
	\leq \; &\frac{c}{n^{1/2-\ee}} E_1+ \e^{-c n^{\ee/6}} \sum_{k=1}^{\pent{n^{1-\ee}}} \bb E_x \left( \abs{X_{k}} \right).
\end{align*}
By Lemma \ref{lemsurE1}, we obtain
\begin{equation}
	\label{BoundE22}
	\abs{E_{21}} \leq c_{p,\ee} \frac{(1+ y + \abs{x} )(1+\abs{x})^{p-1}}{n^{1/2-\ee}}.
\end{equation}

\textit{Bound of $E_{22}$.} 
We proceed in the same way as for bounding  $E_1'$ defined in (\ref{E1'001}):
\begin{align*}
	E_{22} = z - &\bb E_x \left( z+M_{\tau_y} \,;\, \tau_y \leq \nu_n^{\ee/6} \wedge \pent{n^{1-\ee}} \right) \\
	- &\bb E_x \left( z+M_{\nu_n^{\ee/6} \wedge \pent{n^{1-\ee}}} \,;\, \tau_y > \nu_n^{\ee/6} \wedge \pent{n^{1-\ee}} \,,\, \nu_n^{\ee/6} > \pent{n^{1-\ee}} \right).
\end{align*}
By the H\"older inequality, Lemma \ref{majmartx} and Lemma \ref{concentnu},
\begin{equation}
	\label{majdeE22}
	E_{22} \leq z - \bb E_x \left( z+M_{\tau_y} \,;\, \tau_y \leq \nu_n^{\ee/6} \wedge \pent{n^{1-\ee}} \right) + c_{p,\ee} \frac{(1+ y + \abs{x} )(1+\abs{x})^{p-1}}{n^{\frac{p-2}{2}-c_p\ee}}.
\end{equation}
Since $\nu_n^{\ee/6} \geq \pent{n^{\ee/6}} \to +\infty$ as $n\to +\infty$ and $M_{\tau_y}$ is integrable (according to Lemmas \ref{intdeMtau} and \ref{intdeMtaucasneg}), by the Lebesgue dominated convergence we deduce  that
	\[
	\underset{n \to +\infty}{\lim} E_{22} = -\bb E_x \left(M_{\tau_y} \right) = V(x,y).
\]
Coupling this with equations (\ref{decompositiondeE2}) and (\ref{BoundE22}), we conclude that $E_2 \underset{n\to +\infty}{\longrightarrow} V(x,y)$.
\end{proof}

\begin{lemma}
\label{lemsurE3}
There exists $\ee_0 >0$ such that for any $\ee \in (0, \ee_0]$, $x \in \bb R$ and $y>0$,
\begin{align*}
	E_3 &= \bb E_x \left( y+S_{\nu_n^{\ee/6}} \,;\, y+S_{\nu_n^{\ee/6}} > n^{1/2-\ee/6} \,,\, \tau_y > \nu_n^{\ee/6} \,,\, \nu_n^{\ee/6} \leq \pent{n^{1-\ee}} \right) \underset{n\to +\infty}{\longrightarrow} 0.
\end{align*}
\end{lemma}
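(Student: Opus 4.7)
The threshold $n^{1/2-\ee/6}$ is strictly larger than the typical order $n^{1/2-\ee}$ of $y+S_{\nu_n^{\ee/6}}$ on the survival event $A_n := \{\tau_y > \nu_n^{\ee/6},\, \nu_n^{\ee/6} \leq \pent{n^{1-\ee}}\}$, so $E_3$ corresponds to a large deviation contribution that should vanish. My plan is to exploit this via Markov's inequality in the form
\[
E_3 \leq \frac{1}{n^{(p-1)(1/2-\ee/6)}} \bb E_x\!\left( (y+S_{\nu_n^{\ee/6}})^p;\, A_n \right)
\]
for some $p \in (2,\alpha)$, and then bound the right-hand side by $C(x,y)\,n^{(p-1)(1/2-\ee)+o(\ee)}$. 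Substituted back, this gives
$E_3 \leq C(x,y)\, n^{-(p-1)\cdot 5\ee/6 + o(\ee)} \to 0$.

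To control the $p$-th moment, I decompose
\[
y+S_{\nu_n^{\ee/6}} = Y_1 + Y_2, \qquad Y_1 := y+S_{\nu_n}, \qquad Y_2 := \sum_{k=1}^{\pent{n^{\ee/6}}} X_{\nu_n+k},
\]
and treat the two pieces separately. For $Y_1$, I would use the key inequality $Y_1 \leq c(z + M_{\nu_n})$ on $\{\tau_y > \nu_n > 1\}$ (established in the proof of Lemma \ref{lemsurE1}) together with Doob's maximal inequality applied to the martingale $(z+M_n)_{n\geq 0}$ and Lemma \ref{majmartx}, then refine the resulting bound by H\"older's inequality combined with the probability estimate $\bb P_x(A_n) \leq C(x,y)/n^{1/2-\ee}$ (itself a consequence of Markov's inequality applied to $E_1$ of Lemma \ref{lemsurE1}, since $Y_1 > n^{1/2-\ee}$ on $A_n$). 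For $Y_2$, by the strong Markov property at $\nu_n$ and Lemma \ref{majmartx} one has $\bb E_{x'}(|Y_2|^p)^{1/p} \leq c_p(|x'|+\sqrt{m})$ with $m=\pent{n^{\ee/6}}$; integrating against the law of $X_{\nu_n}$ restricted to $A_n$ and using $|X_{\nu_n}| \leq Y_1$ on $\{\tau_y > \nu_n > 1\}$ reduces the $Y_2$ contribution to one of the same order as the $Y_1$ contribution.

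The main obstacle is obtaining the moment bound with the sharp exponent. Doob's maximal inequality by itself gives only $\bb E_x(Y_1^p; \nu_n \leq \pent{n^{1-\ee}}) \leq C(x,y) n^{(1-\ee)p/2}$, whose exponent strictly exceeds the target $(p-1)(1/2-\ee/6)$; the decisive saving must come from interpolating this moment bound with the small-probability estimate $\bb P_x(A_n) \leq C(x,y) n^{-(1/2-\ee)}$. The appropriate H\"older exponents have to be tuned so that the polynomial gain from the probability bound compensates the extra growth of the higher moments and leaves the net exponent $(p-1)(1/2-\ee)$; the estimate on $Y_2$ then adds only a negligible factor $m^{p/2} = n^{\ee p/12}$, which is absorbed by taking $\ee$ small enough.
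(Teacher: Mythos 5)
Your opening reduction is fine: the Markov inequality $E_3 \leq n^{-(p-1)(1/2-\ee/6)}\,\bb E_x\bigl((y+S_{\nu_n^{\ee/6}})^p;A_n\bigr)$ and the estimate $\bb P_x(A_n)\leq C(x,y)\,n^{-(1/2-\ee)}$ (from Lemma \ref{lemsurE1}) are correct. But the step you yourself flag as the main obstacle --- getting $\bb E_x\bigl((y+S_{\nu_n^{\ee/6}})^p;A_n\bigr)\lesssim n^{(p-1)(1/2-\ee)+o(\ee)}$ by interpolating a Doob-type maximal moment bound with $\bb P_x(A_n)$ --- cannot be made to work for any choice of H\"older exponents. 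The chain only has moments of order $\alpha$, so in $\bb E_x(Y^p\mathbbm 1_{A_n})\leq \bb E_x(Y^{pr})^{1/r}\,\bb P_x(A_n)^{1/s}$ you are forced to take $pr\leq\alpha$, hence $1/s\leq 1-p/\alpha$; and every available bound for $\bb E_x\bigl(\max_{k\leq\pent{n^{1-\ee}}}\abs{y+S_k}^{q}\bigr)^{1/q}$ (equivalently for $z+M_k$ via Lemma \ref{majmartx} and Doob) is of the diffusive order $n^{(1-\ee)/2}$, and cannot be improved. The interpolation therefore yields at best $n^{(1-\ee)p/2-(1/2-\ee)(1-p/\alpha)}=n^{(p-1)/2+p/(2\alpha)+O(\ee)}$, which exceeds the divisor $n^{(p-1)(1/2-\ee/6)}$ by the fixed power $n^{p/(2\alpha)-c\ee}$; your bound on $E_3$ then diverges instead of tending to $0$, and tuning $r,s$ only makes it worse. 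The same obstruction appears if you replace $Y_1$ by $c(z+M_{\nu_n})$.

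The missing ingredient is the big-jump structure of the overshoot, which is how the paper argues. Since $\abs{y+S_{\nu_n-1}}\leq n^{1/2-\ee}$ when $\nu_n>1$, the event $\{y+S_{\nu_n}>n^{1/2-\ee/2}\}$ (to which the paper first reduces, paying the error terms $E_{32},E_{33},E_{34}$ for passing from $\nu_n^{\ee/6}$ back to $\nu_n$ --- your $Y_1+Y_2$ splitting plays a similar role) forces a single huge increment $X_{\nu_n}>n^{1/2-\ee/2}/c_{\ee}$. The paper then decouples this increment from the survival event by applying the Markov property over a lag $\pent{n^\ee}$ and writing $X_{\pent{n^\ee}}=\prod_{i}a_i\,x'+X^0_{\pent{n^\ee}}$: the $x'$-part is exponentially contracted and the Markov inequality for $X^0_{\pent{n^\ee}}$ gives a probability $\lesssim n^{-p/2+c_p\ee}$ for such a jump; summing over $k\leq\pent{n^{1-\ee}}$ against $\bb P_x(\tau_y>k-\pent{n^\ee})$, whose sum is $\lesssim n^{1/2+\ee}$ by Lemma \ref{Sommedestempsdesurvie}, gives $E_{36}\lesssim n^{-(p-2)/2+c_p\ee}$ and finally $E_3=O(n^{-\ee/6})$. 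This conditional one-big-jump-plus-decoupling estimate is what replaces your interpolation; maximal-moment bounds are blind to it because they live at the scale $n^{(1-\ee)/2}$, far above the threshold, and with only $\alpha$ finite moments H\"older cannot recover the gap. Without this argument your proof does not close.
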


\begin{proof} The first step of the proof consists in proving that we can replace the time $\nu_n^{\ee/6}$ in the definition of $E_3$ by the time $\nu_n$. More precisely, we shall prove that the following bound holds true: 
\begin{align}
	E_3 \leq \; & c n^{\ee/6} \underbrace{\bb E_x \left( y+S_{\nu_n} \,;\, y+S_{\nu_n} > n^{1/2-\ee/2} \,,\,  \tau_y > \nu_n \,,\, \nu_n \leq \pent{n^{1-\ee}} \right)}_{=:E_{31}} \nonumber\\
	    & +  c_{p,\ee}\frac{(1+ y + \abs{x} )(1+\abs{x})^{p-1}}{n^{\ee/6}}.
	    \label{conclusionpremierpas}
\end{align}
To this end, we bound $E_3$ as follows:
\begin{align}
	E_3	\leq & E_{31} + \underbrace{\bb E_x \left( \abs{S_{\nu_n^{\ee/6}}-S_{\nu_n}} \,;\, y+S_{\nu_n} > n^{1/2-\ee/2} \,;\, \tau_y > \nu_n \,,\, \nu_n \leq \pent{n^{1-\ee}} \right)}_{=:E_{32}} \nonumber\\
	+&\bb E_x \left( y+S_{\nu_n} \,;\, y+S_{\nu_n} \leq n^{1/2-\ee/2} \,,\, y+S_{\nu_n^{\ee/6}} > n^{1/2-\ee/6} \,,\, \right. \nonumber\\
	\label{decompositiondeE3}
	&\underbrace{\hspace{8cm} \left. \tau_y > \nu_n \,,\, \nu_n \leq \pent{n^{1-\ee}} \right)}_{=:E_{33}} \\
	+&\bb E_x \left( \abs{S_{\nu_n^{\ee/6}}-S_{\nu_n}} \,;\, y+S_{\nu_n} \leq n^{1/2-\ee/2} \,,\, y+S_{\nu_n^{\ee/6}} > n^{1/2-\ee/6} \,,\, \right. \nonumber\\
	&\underbrace{\hspace{8cm} \left. \tau_y > \nu_n \,,\, \nu_n \leq \pent{n^{1-\ee}} \right)}_{=:E_{34}}. \nonumber
\end{align}

\textit{Bound of $E_{32}$.} By the Markov property and Lemma \ref{MCM},
\begin{align*}
	E_{32} &\leq \int_{\bb R \times \bb R_+^*} \bb E_{x'} \left( \abs{S_{\pent{n^{\ee/6}}}} \right) \bb P_x \left( X_{\nu_n} \in \dd x' \,,\, y+S_{\nu_n} \in \dd y' \,,\, \right.\\
	&\hspace{6cm} \left. y+S_{\nu_n} > n^{1/2-\ee/2} \,,\, \tau_y > \nu_n \,,\, \nu_n \leq \pent{n^{1-\ee}} \right) \\
	&\leq \bb E_x \left( c n^{\ee/6} \left( 1 + \abs{X_{\nu_n}} \right) \,;\, y+S_{\nu_n} > n^{1/2-\ee/2} \,,\, \tau_y > \nu_n \,,\, \nu_n \leq \pent{n^{1-\ee}} \right).
\end{align*}
If $\tau_y > \nu_n >1 $, by (\ref{Xnunstrictpos}), we have $\abs{X_{\nu_n}} = X_{\nu_n} < y+S_{\nu_n}$. 
Using this bound when $\nu_n>1$ and the Markov inequality when $\nu_n=1$,
\begin{align}
	E_{32} \leq\; & \bb E_x \left( c n^{\ee/6} \left( 1 + \abs{X_1} \right) \,;\, y+X_1 > n^{1/2-\ee/2} \,,\, \nu_n =1 \right) + c n^{\ee/6} E_{31} \nonumber\\
	\leq\; & c \frac{(1+y+\abs{x})(1+\abs{x})}{n^{1/2-c\ee}} + c n^{\ee/6} E_{31}. \label{BoundE32}
\end{align}

\textit{Bound of $E_{33}$.} By the Markov property,
\begin{align*}
	E_{33} &\leq \int_{\bb R \times \bb R_+^*} y' \bb P_{x'} \left( y'+S_{\pent{n^{\ee/6}}} > n^{1/2-\ee/6} \right) \bb P_x \left( X_{\nu_n} \in \dd x' \,,\, y+S_{\nu_n} \in \dd y'  \,,\, \right. \\
	&\hspace{6cm} \left. y+S_{\nu_n} \leq n^{1/2-\ee/2} \,,\, \tau_y > \nu_n \,,\, \nu_n \leq \pent{n^{1-\ee}} \right).
\end{align*}
When $y' \leq n^{1/2-\ee/2}$,  by the Markov inequality, we have,
	\[
	\bb P_{x'} \left( y'+S_{\pent{n^{\ee/6}}} > n^{1/2-\ee/6} \right) \leq \bb P_{x'} \left( \abs{S_{\pent{n^{\ee/6}}}} > \frac{n^{1/2-\ee/6}}{c_{\ee}} \right) \leq \frac{c_{\ee}n^{\ee/6} \left(1 + \abs{x'} \right)}{n^{1/2-\ee/6}}.
\]
On the event $\{ y+S_{\nu_n} \leq n^{1/2-\ee/2} \,,\, \tau_y > \nu_n \}$, we obviously have $x'=X_{\nu_n} \leq n^{1/2-\ee/2}$. 
From these bounds, using the positivity of $X_{\nu_n}$ for $\nu_n >1$, see (\ref{Xnunstrictpos}), we obtain
	\[
	E_{33} \leq \bb E_x \left( \left( y+S_1 \right) \frac{c_{\ee}\left( 1 + \abs{X_1} \right)}{n^{1/2-\ee/3}} \,;\, \nu_n = 1 \right) + \frac{c_{\ee}}{n^{\ee/2-\ee/3}} E_1.
\]
By Lemma \ref{lemsurE1}, we obtain
\begin{equation}
	\label{BoundE33}
	E_{33} \leq c_{p,\ee}\frac{(1+y+\abs{x})(1+\abs{x})^{p-1}}{n^{\ee/6}}.
\end{equation}

\textit{Bound of $E_{34}$.} Again, by the Markov property,
\begin{align*}
	E_{34} \leq\;& \int_{\bb R \times \bb R_+^*} \bb E_{x'} \left( \abs{S_{\pent{n^{\ee/6}}}} \,;\, y'+S_{\pent{n^{\ee/6}}} > n^{1/2-\ee/6} \right) \bb P_x \left( X_{\nu_n} \in \dd x' \,,\, \right.\\
	&\hspace{2cm} \left. y+S_{\nu_n} \in \dd y' \,,\, y+S_{\nu_n} \leq n^{1/2-\ee/2} \,,\, \tau_y > \nu_n \,,\, \nu_n \leq \pent{n^{1-\ee}} \right).
\end{align*}
When $y' \leq n^{1/2-\ee/2}$, we have
	\[
	\bb E_{x'} \left( \abs{S_{\pent{n^{\ee/6}}}} \,;\, y'+S_{\pent{n^{\ee/6}}} > n^{1/2-\ee/6} \right) \leq \bb E_{x'} \left( \frac{c_{\ee}^{p-1}\abs{S_{\pent{n^{\ee/6}}}}^p}{n^{\frac{p-1}{2}-(p-1)\ee/6}} \right) \leq c_{p,\ee} \frac{\left(1+\abs{x'}\right)^p}{n^{\frac{p-1}{2}-c_p\ee}}.
\]
Then, using Lemma \ref{MCM},
\begin{align*}
	E_{34} &\leq \frac{c_{p,\ee}}{n^{\frac{p-1}{2}-c_p \ee}} + \frac{c_{p,\ee}}{n^{\frac{p-1}{2}-c_p \ee}} \sum_{k=1}^{\pent{n^\ee}} \bb E_x \left(  \abs{X_k}^p \right) + \frac{c_{p,\ee}}{n^{\frac{p-1}{2}-c_p \ee}} \sum_{k=\pent{n^\ee}+1}^{\pent{n^{1-\ee}}} \bb E_x \left(  \abs{X_k}^p \,;\, \tau_y > k \right) \\
	&\leq \frac{c_{p,\ee}\left(  1+\abs{x}^p \right)}{n^{\frac{p-1}{2}-c_p \ee}} + \frac{c_{p,\ee}}{n^{\frac{p-1}{2}-c_p \ee}} \sum_{k=1}^{\pent{n^{1-\ee}}-\pent{n^{\ee}}} \bb E_x \left( 1 + \e^{-c_{p} n^{\ee}} \abs{X_{k}}^p \,;\, \tau_y > k \right) \\
	&\leq \frac{c_{p,\ee}\left(  1+\abs{x}^p \right)}{n^{\frac{p-1}{2}-c_p \ee}} + \e^{-c_{p,\ee} n^{\ee}} \left(  1+\abs{x}^p \right) + \frac{c_{p,\ee}}{n^{\frac{p-1}{2}-c_p \ee}} \sum_{k=1}^{\pent{n^{1-\ee}}} \bb P_x \left( \tau_y > k \right).
\end{align*}
Using the second bound in Lemma \ref{Sommedestempsdesurvie}, and taking $\ee>0$ small enough, we obtain
\begin{equation}
	\label{BoundE34}
	E_{34} \leq c_{p,\ee}\frac{(1+ y + \abs{x} )(1+\abs{x})^{p-1}}{n^{\frac{p-2}{2}-c_p \ee}} \underset{n\to +\infty}{\longrightarrow} 0.
\end{equation}
Inserting (\ref{BoundE32}), (\ref{BoundE33}) and (\ref{BoundE34}) in (\ref{decompositiondeE3}), we conclude the proof of 
\eqref{conclusionpremierpas}.

\textit{Bound of $c n^{\ee/6} E_{31}$.} Note that, when $\nu_n>1$ and $y+S_{\nu_n} > n^{1/2-\ee/2}$, we have	$X_{\nu_n} = y+S_{\nu_n} - y+S_{\nu_n-1} > n^{1/2-\ee/2} - n^{1/2-\ee} \geq \frac{n^{1/2-\ee/2}}{c_{\ee}}$. Consequently,
\begin{align}
	c n^{\ee/6} E_{31} \leq\;& \underbrace{c n^{\ee/6} \bb E_x \left( y+S_{\nu_n} \,;\,  \nu_n \leq \pent{n^{\ee}} \right)}_{=: E_{35}} \nonumber\\
	&+\underbrace{c n^{\ee/6} \bb E_x \left( y+S_{\nu_n} \,;\, X_{\nu_n} > \frac{n^{1/2-\ee/2}}{c_{\ee}} \,,\, \tau_y > \nu_n \,,\, \pent{n^{\ee}} < \nu_n \leq \pent{n^{1-\ee}} \right)}_{=: E_{36}}.
	\label{decomposiotiondeE31'}
\end{align}

\textit{Bound of $E_{35}$.}  Using the definition of $\nu_n$, the Markov inequality and Lemma \ref{MCM},
\begin{align}
	E_{35} &\leq c n^{\ee/6} \bb E_x \left( \underset{k \leq \pent{n^{\ee}}}{\max} \abs{y+S_k} \,;\, \underset{k\leq \pent{n^{\ee}}}{\max} \abs{y+S_k} > n^{1/2-\ee} \right) \nonumber\\
	\label{BoundE35}
	&\leq \frac{c_p \left(1+y+\abs{x}\right)^2}{n^{1/2-c_p\ee}}.
\end{align}

\textit{Bound of $E_{36}$.}
The idea is based on the observation that, according to the first bound in Lemma \ref{MCM}, the random variables $y+S_{\nu_n-\pent{n^{\ee}}}$ and $X_{\nu_n}$ are "almost" independent. In this line, summing over the values of $\nu_n$ and bounding the indicators $\mathbbm 1_{\{ \nu_n = k\}}$ by 1, we write
\begin{align*}
	E_{36} \leq &c n^{\ee/6} \sum_{k=\pent{n^{\ee}} +1 }^{\pent{n^{1-\ee}}} \bb E_x \left( y+S_{k-\pent{n^{\ee}}} \,;\, X_k > \frac{n^{1/2-\ee/2}}{c_{\ee}} \,,\, \tau_y > k \right) \\
	&+c n^{\ee/6} \sum_{k=\pent{n^{\ee}} +1}^{\pent{n^{1-\ee}}} \bb E_x \left( \abs{S_k - S_{k-\pent{n^{\ee}}}} \,;\, X_k > \frac{n^{1/2-\ee/2}}{c_{\ee}} \,,\, \tau_y > k \right).
\end{align*}
By the Markov property,
\begin{align}
E_{36} \leq\; &c n^{\ee/6} \sum_{k=\pent{n^{\ee}}+1}^{\pent{n^{1-\ee}}} \int_{\bb R \times \bb R_+^*} y' \bb P_{x'} \left( X_{\pent{n^{\ee}}} > \frac{n^{1/2-\ee/2}}{c_{\ee}} \right) \nonumber\\
& \hspace{3cm} \times \bb P_x \left( X_{k-\pent{n^{\ee}}} \in \dd x' \,,\, y+S_{k-\pent{n^{\ee}}} \in \dd y' \,,\, \tau_y > k-\pent{n^{\ee}} \right) \nonumber \\
	&+c n^{\ee/6} \sum_{k=\pent{n^{\ee}}+1}^{\pent{n^{1-\ee}}} \bb E_x \left( n^{\ee} \underset{k-\pent{n^{\ee}} \leq i \leq k}{\max} \abs{X_i} \,;\, X_k > \frac{n^{1/2-\ee/2}}{c_{\ee}} \,,\, \tau_y > k \right). \label{EEE_001}
\end{align}
Recall that, under $\{ X_0=x' \}$ by (\ref{defdeXn0}),  $X_{\pent{n^{\ee}}}= \prod_{i=1}^{\pent{n^\ee}} a_i x' + X_{\pent{n^{\ee}}}^0$. 
Then, since $a_i$'s are independent and identically distributed, by claim 1 of Condition \ref{Mom001} and Lemma \ref{MCM}, 
\begin{align}
	\bb P_{x'} \left( X_{\pent{n^{\ee}}} > \frac{n^{1/2-\ee/2}}{c_{\ee}} \right) &\leq \bb P \left( \prod_{i=1}^{\pent{n^\ee}} a_i x' > \frac{n^{1/2-\ee/2}}{2c_{\ee}} \right) + \bb P \left( \abs{{X_{\pent{n^{\ee}}}^0}} > \frac{n^{1/2-\ee/2}}{2c_{\ee}} \right) \nonumber \\
	&\leq \e^{-c_{\ee}n^{\ee}}\abs{x'} + \frac{c_{p,\ee}}{n^{\frac{p}{2}-c_p\ee}}.
	\label{EEE_002}
\end{align}
Inserting (\ref{EEE_002}) into (\ref{EEE_001}) and using Cauchy-Schwartz inequality, by Corollaries \ref{intofrandwalk} and \ref{intofrandwalkcasneg},
\begin{align*}
	E_{36} \leq \; &\sum_{j=1}^{\pent{n^{1-\ee}}}
\left(\e^{-c_p n^{\ee}}\bb E_x^{1/2} \left( \abs{y+S_j}^2 \right)\bb E_x^{1/2} \left( \abs{X_j}^2 \right) + \frac{c_{p,\ee}}{n^{\frac{p}{2}-c_p\ee}} (1+ y + \abs{x} )(1+\abs{x})^{p-1}\right) \\
	&+c n^{\ee+\ee/6} \sum_{k=\pent{n^{\ee}}+1}^{\pent{n^{1-\ee}}} \bb E_x \left( \frac{\underset{k-\pent{n^{\ee}}\leq i \leq k}{\max} \abs{X_i}^p}{n^{\frac{p-1}{2}-c_p\ee}} \,;\, \tau_y > k-\pent{n^\ee} \right).
\end{align*}
Using the decomposition (\ref{MfSetX}) and Lemmas \ref{MCM} and \ref{majmartx}
\begin{align*}
	E_{36} \leq \; &c_{p,\ee} \frac{(1+ y + \abs{x} )(1+\abs{x})^{p-1}}{n^{\frac{p-2}{2}-c_p\ee}} \\
	&+ \frac{c_p}{n^{\frac{p-1}{2}-c_p\ee}} \sum_{k=\pent{n^{\ee}}+1}^{\pent{n^{1-\ee}}} \bb E_x \left( n^{\ee} \left( 1+ \abs{X_{k-\pent{n^\ee}}}^p \right) \,;\, \tau_y > k-\pent{n^\ee} \right).
\end{align*}
Re-indexing $j=k-\pent{n^\ee}$, after some elementary transformations, we get
\begin{align*}
	E_{36} \leq \; 	& c_{p,\ee} \frac{(1+ y + \abs{x} )(1+\abs{x})^{p-1}}{n^{\frac{p-2}{2}-c_p\ee}} + \frac{c_p}{n^{\frac{p-1}{2}-c_p\ee}} \sum_{j=1}^{\pent{n^{1-\ee}}} \bb P_x \left( \tau_y > j \right) \\
	&+ \frac{c_p}{n^{\frac{p-1}{2}-c_p\ee}} \sum_{j=1}^{\pent{n^{\ee}}} \bb E_x \left( \abs{X_j}^p \right) + \frac{c_p}{n^{\frac{p-1}{2}-c_p\ee}} \sum_{j=\pent{n^{\ee}}+1}^{\pent{n^{1-\ee}}} \bb E_x \left( \abs{X_j}^p \,;\, \tau_y > j-\pent{n^\ee} \right).
\end{align*}
Again using the Markov property, Lemma \ref{MCM} and Lemma \ref{Sommedestempsdesurvie}, we have
\begin{align*}
	E_{36} \leq \; &c_{p,\ee} \frac{(1+ y + \abs{x})(1+\abs{x})^{p-1}}{n^{\frac{p-2}{2}-c_p\ee}} + \frac{c_p}{n^{\frac{p-1}{2}-c_p\ee}} \sum_{j=1}^{\pent{n^{1-\ee}}} \bb P_x \left( \tau_y > j \right) \\
	&+  e^{-c_p n^\ee} \sum_{j=1}^{\pent{n^{1-\ee}}} \bb E_x \left( \abs{X_j}^p \,;\, \tau_y > j \right) \\
	\leq \; &c_{p,\ee} \frac{(1+ y + \abs{x})(1+\abs{x})^{p-1}}{n^{\frac{p-2}{2}-c_p\ee}}.
\end{align*}
Inserting this bound and (\ref{BoundE35}) into (\ref{decomposiotiondeE31'}), we obtain 
	\[
	c n^{\ee/6} E_{31} \leq \frac{ c_{p,\ee} \left( 1+y+\abs{x} \right)^p }{n^{\frac{p-2}{2}-c_p\ee}}.
\]
Together with (\ref{conclusionpremierpas}), this  bound implies that
\begin{equation}
	\label{belmajdeE3}
	E_3 \leq \frac{c_p \left( 1+y+\abs{x} \right)^p}{n^{\ee/6}} \underset{n\to +\infty}{\longrightarrow} 0.
\end{equation}
\end{proof}

\subsection{Proof of the claim 2 of theorem \ref{AsExTi}}
\label{claim2thasexti}

Introducing the stopping time $\nu_n^{\ee/6} = \nu_n + \pent{n^{\ee/6}}$, we have
\begin{equation}
	\bb P_x \left( \tau_y > n \right) = \bb P_x \left( \tau_y > n \,,\, \nu_n^{\ee/6} \leq \pent{n^{1-\ee}} \right) + \bb P_x \left( \tau_y > n \,,\, \nu_n^{\ee/6} > \pent{n^{1-\ee}} \right).
\label{DecompJ0-001}
\end{equation}
We bound the second term by Lemma \ref{concentnu}: for $2< p < \alpha$,
\begin{equation}
	\bb P_x \left( \tau_y > n \,,\, \nu_n^{\ee/6} > \pent{n^{1-\ee}} \right) \leq \bb P_x \left( \nu_n > \frac{n^{1-\ee}}{c_{\ee}} \right) \leq c_{p,\ee} \frac{(1+\abs{x})^p}{n^{p/2-c_p \ee}} = o\left(\frac{1}{\sqrt{n}}\right).
\label{BoundJ0-002}
\end{equation}
To bound the first term, we introduce more notations. Let $(B_t)_{t \geq 0}$ be the Brownian motion from Proposition \ref{majdeA_k}, $A_k$ be the event $A_k=\{ \underset{0 \leq t \leq 1}{\max} \abs{ S_{\pent{tk}} - \sigma B_{tk} } \leq k^{1/2-2\ee} \}$ where $\sigma$ is defined by (\ref{defdesigma}), and $\overline{A}_k$ be its complement. Using the Markov property, we have
\begin{align}
	\bb P_x \left( \tau_y > n \,,\, \nu_n^{\ee/6} \leq \pent{n^{1-\ee}} \right) =\;& \sum_{k=1}^{\pent{n^{1-\ee}}} \int_{\bb R \times \bb R_+^*} \bb P_{x'} \left(\tau_{y'} > n-k \,,\, \overline{A}_{n-k} \right) \bb P_x \left( X_k \in \dd x' \,,\, \right. \nonumber\\
	&\underbrace{\hspace{3cm} \left. y+S_k \in \dd y' \,,\, \tau_y > k \,,\, \nu_n^{\ee/6} =k \right)}_{=:J_1} \nonumber\\
	\label{decompositiondeT}
	+&\sum_{k=1}^{\pent{n^{1-\ee}}} \int_{\bb R \times \bb R_+^*} \bb P_{x'} \left(\tau_{y'} > n-k \,,\, A_{n-k} \right) \bb P_x \left( X_k \in \dd x' \,,\, \right.\\
	&\underbrace{\hspace{3cm} \left. y+S_k \in \dd y' \,,\, \tau_y > k \,,\, \nu_n^{\ee/6} =k \right)}_{=: J_2}. \nonumber
\end{align}

\textit{Bound of $J_1$.} Taking into account that $n-k \geq \frac{n}{c_{\ee}}$ for any $k\leq \pent{n^{1-\ee}}$, by Proposition \ref{majdeA_k} with $\ee$ small enough, we find 
	\[
	\bb P_{x'} \left(\tau_{y'} > n-k \,,\, \overline{A}_{n-k} \right) \leq \bb P_{x'} \left( \overline{A}_{n-k} \right) \leq c_{p,\ee} (1+\abs{x'} )^p n^{-2\ee}.
\]
By the Markov property and the first bound in Lemma \ref{MCM},
	\[
	J_1 \leq \bb E_x \left( \e^{-c_{p,\ee} n^{\ee/6}} \abs{X_{\nu_n}}^p + \frac{c_{p,\ee}}{n^{2\ee}} \,;\, \tau_y > \nu_n \,,\, \nu_n \leq \pent{n^{1-\ee}} \right).
\]
Since $\frac{y+S_{\nu_n}}{n^{1/2-\ee}}>1$, using Lemma \ref{lemsurE1},
\begin{equation}
	\label{BoundJ2}
	J_1 \leq \e^{-c_{p,\ee} n^{\ee/6}} \left(  1+\abs{x} \right)^p + \frac{c_{p,\ee}}{n^{1/2-\ee+2\ee}} E_1 \leq \frac{c_{p,\ee} (1+ y + \abs{x} )(1+\abs{x})^{p-1}}{n^{1/2+\ee}}.
\end{equation}

\textit{Bound of $J_2$.}
The idea is as follows.
When $y' \leq \theta_n \sqrt{n}$, with $\theta_{n} = n^{-\ee/6}$, we are going to control the probability 
$\bb P_{x'} \left(\tau_{y'} > n-k \,,\, A_{n-k} \right)$ in $J_2$
by the claim 2 of Corollary \ref{exittimeforB}. 
When $y' > \theta_n \sqrt{n}$ we shall apply Lemma \ref{lemsurE3}.
Accordingly, we split $J_2$ into two terms as follows: 
\begin{align}
	J_2 = &\sum_{k=1}^{\pent{n^{1-\ee}}} \int_{\bb R \times \bb R_+^*} \bb P_{x'} \left(\tau_{y'} > n-k \,,\, A_{n-k} \right) \bb P_x \left( X_k \in \dd x' \,,\, y+S_k \in \dd y' \,,\, \right. \nonumber\\
	&\underbrace{\hspace{5cm} \left. y+S_k > n^{1/2-\ee/6} \,,\, \tau_y > k \,,\, \nu_n^{\ee/6}=k \right)}_{=:J_3} \nonumber\\
	\label{decompositiondeJ1}
	+ &\sum_{k=1}^{\pent{n^{1-\ee}}} \int_{\bb R \times \bb R_+^*} \bb P_{x'} \left(\tau_{y'} > n-k \,,\, A_{n-k} \right) \bb P_x \left( X_k \in \dd x' \,,\, y+S_k \in \dd y' \,,\, \right. \\
	&\underbrace{\hspace{5cm} \left. y+S_k \leq n^{1/2-\ee/6} \,,\, \tau_y > k \,,\, \nu_n^{\ee/6} =k \right)}_{=:J_4}. \nonumber
\end{align}

\textit{Bound of $J_3$.} Let $\tau_y^{bm}$ be the exit time of the Brownian motion defined by (\ref{deftaubm}) and 
$y_+'=y'+(n-k)^{1/2-2\ee}$. 
Since 
\begin{equation}
\bb P_{x'} \left(\tau_{y'} > n-k \,,\, A_{n-k} \right) \leq \bb P_{x'} \left( \tau_{y_+'}^{bm} > n-k \right), 
\label{BoundJ4-000}
\end{equation}
using the claim 1 of Corollary \ref{exittimeforB} with $y_+' > 0$, we get
	\[
	J_3 \leq \sum_{k=1}^{\pent{n^{1-\ee}}} \bb E_x \left( c\frac{ y+S_k + (n-k)^{1/2-2\ee} }{\sqrt{n-k}} \,;\, y+S_k > n^{1/2-\ee/6} \,,\, \tau_y > k \,,\, \nu_n^{\ee/6} =k \right).
\]
Since $\frac{c}{\sqrt{n-k}} \leq \frac{c_{\ee}}{\sqrt{n}}$ and $y+S_k + (n-k)^{1/2-2\ee} \leq 2\left( y+S_k \right)$ on the event $\{ y+S_k > n^{1/2-\ee/6} \}$, using Lemma \ref{lemsurE3}, we have
\begin{equation}
\label{BoundJ4}
	J_3 \leq \frac{c_{\ee}}{\sqrt{n}} E_3 = o\left( \frac{1}{\sqrt{n}} \right).
\end{equation}
	
\textit{Upper bound of $J_4$.}
Since $\frac{n}{c_{\ee}} \leq n-k \leq n$, we have $y_+' \leq c_{\ee} (n-k)^{1/2-\ee/6}$ when $y' \leq n^{1/2-\ee/6}$. 
Using (\ref{BoundJ4-000}), from the claim 2 of Corollary \ref{exittimeforB} with $\theta_{m} = c_{\ee} m^{-\ee/6}$, we deduce that
\begin{align}
	J_4 &\leq \sum_{k=1}^{\pent{n^{1-\ee}}} \bb E_x \left( \frac{2}{\sqrt{2\pi \left(n-k\right) }\sigma} \left(y+S_k + \left( n-k \right)^{1/2-2\ee}\right) \left( 1+c \theta_{n-k}^2 \right) \,;\, \right. \nonumber\\
	&\hspace{4cm} \left. \phantom{\frac{2}{\sqrt{2\pi \left(n-k\right) }\sigma}} y+S_k \leq n^{1/2-\ee/6} \,,\, \tau_y > k \,,\, \nu_n^{\ee/6} =k \right).
	\label{Boutdedemo1}
\end{align}
Taking into account that $\frac{1}{\sqrt{n-k}} \leq \frac{1}{\sqrt{n}} \left( 1 + \frac{c_{\ee}}{n^{\ee}} \right)$, $\theta_{n-k} \leq \frac{c_{\ee}}{n^{\ee/6}}$ and $1 < \frac{y+S_{\nu_n}}{n^{1/2-\ee}}$, we obtain
\begin{equation}
	J_4 \leq \frac{2}{\sqrt{2\pi n}\sigma} \left( 1+\frac{c_{\ee}}{n^{\ee/3}} \right) E_2 + \frac{c_{\ee}}{n^{1/2+\ee}} E_1.
	\label{NouveauJ4}
\end{equation}
Using Lemma \ref{lemsurE1} and Lemma \ref{lemsurE2}, we get the following upper bound,
\begin{equation}
\label{UpBoundJ3}
	J_4 \leq \frac{2 V(x,y)}{\sqrt{2\pi n}\sigma} \left( 1+o(1) \right).
\end{equation}

\textit{Lower bound of $J_4$.}
In the same way as for the upper bound of $J_4$, with $y_-' = y + S_{\nu_n^{\ee/6}} - \left(n-\nu_n^{\ee/6}\right)^{1/2-2\ee} >0 $ on the event $\{ \left(n-\nu_n^{\ee/6}\right)^{1/2-2\ee} < y+S_{\nu_n^{\ee/6}} \}$, we have
\begin{align}
	J_4 \geq \; &\frac{2}{\sqrt{2\pi n}\sigma} \left( 1-\frac{c_{\ee}}{n^{\ee/6}} \right) \bb E_x \left( y_-' \,;\, (n-k)^{1/2-2\ee} < y+S_{\nu_n^{\ee/6}} \leq n^{1/2-\ee/6} \,,\, \right. \nonumber \\
	\label{Boutdedemo2}
	& \hspace{8cm} \left. \tau_y > \nu_n^{\ee/6} \,,\, \nu_n^{\ee/6} \leq \pent{n^{1-\ee}} \right) \\
	&-\sum_{k=1}^{\pent{n^{1-\ee}}} \int_{\bb R} \bb P_{x'} \left( \overline{A}_{n-k} \right) \bb P_x \left( X_k \in \dd x' \,,\, \tau_y > k \,,\, \nu_n^{\ee/6} =k \right). \nonumber
\end{align}
Using the fact that $-y_-'\geq 0$ on $\{ \left(n-\nu_n^{\ee/6}\right)^{1/2-2\ee} \geq y+S_{\nu_n^{\ee/6}} \}$, we obtain in a same way as for the upper bound of $J_1$,
\begin{align*}
	J_4 \geq\; &\frac{2}{\sqrt{2\pi n}\sigma} \left( 1-\frac{c_{\ee}}{n^{\ee/6}} \right) E_2 -\frac{2}{\sqrt{2\pi n}\sigma} \bb E_x \left( n^{1/2-2\ee} \frac{y+S_{\nu_n}}{n^{1/2-\ee}} \,;\, \tau_y > \nu_n \,,\, \nu_n \leq \pent{n^{1-\ee}} \right)\\
	&-\frac{2}{\sqrt{2\pi n}\sigma} E_3 -\frac{c_{p,\ee} (1+ y + \abs{x} )(1+\abs{x})^{p-1}}{n^{1/2+\ee}}\\
	\geq\; &\frac{2}{\sqrt{2\pi n}\sigma} \left( 1-\frac{c_{\ee}}{n^{\ee/6}} \right) E_2 -\frac{c}{n^{1/2+\ee}} E_1 -\frac{c}{\sqrt{n}} E_3 -\frac{c_{p,\ee} (1+ y + \abs{x} )(1+\abs{x})^{p-1}}{n^{1/2+\ee}}.
\end{align*}
Consequently, using the results of Lemma \ref{lemsurE2}, Lemma \ref{lemsurE1} and Lemma \ref{lemsurE3} we conclude that
\begin{equation}
\label{LoBoundJ3}
	J_4 \geq \frac{2V(x,y)}{\sqrt{2\pi n}\sigma} \left( 1-o(1) \right).
\end{equation}
Coupling the obtained lower bound with the upper bound in (\ref{UpBoundJ3}) we obtain
$J_4 \sim  \frac{2V(x,y)}{\sqrt{2\pi n}\sigma}.$
With the decomposition of $J_2$ in (\ref{decompositiondeJ1}) and the bound of $J_3$ in (\ref{BoundJ4}) we get $J_2 \sim  \frac{2V(x,y)}{\sqrt{2\pi n}\sigma}.$ 
Finally, the claim 2 of Theorem \ref{AsExTi} follows from (\ref{DecompJ0-001}), (\ref{BoundJ0-002}), (\ref{decompositiondeT})  and (\ref{BoundJ2}).

\subsection{Proof of the claim 1 of Theorem \ref{AsExTi}}

All the bounds necessary are obtained in the proofs of the previous section \ref{claim2thasexti}. We highlight how to gather it.
By (\ref{DecompJ0-001}), (\ref{BoundJ0-002}), (\ref{decompositiondeT}) and (\ref{decompositiondeJ1}), we have,
\[ \bb P_x \left( \tau_y > n \right) \leq c_{p,\ee} \frac{ \left(1+\abs{x}^p \right) }{ \sqrt{n} } +J_1+J_3+J_4. \]
Then, by (\ref{BoundJ2}), (\ref{BoundJ4}), and (\ref{NouveauJ4}),
\[ \bb P_x \left( \tau_y > n \right) \leq c_{p,\ee} \frac{ \left(1+y+\abs{x} \right)\left( 1+\abs{x} \right)^{p-1} }{ \sqrt{n} }  + \frac{c_\ee}{\sqrt{n}} E_3 + \frac{c_\ee}{\sqrt{n}}\left( E_2 +E_1 \right). \]
Now, by Lemma \ref{lemsurE1}, (\ref{decompositiondeE2}) and (\ref{belmajdeE3}),
\[ \bb P_x \left( \tau_y > n \right) \leq c_{p,\ee} \frac{ \left(1+y+\abs{x} \right)^p }{\sqrt{n}} + \frac{c_\ee}{\sqrt{n}} \left( E_{21} + E_{22} \right). \]
Finally, using (\ref{BoundE22}), (\ref{majdeE22}) and Lemmas \ref{intdeMtau} and \ref{intdeMtaucasneg} we have,
\begin{align*}
	\bb P_x \left( \tau_y > n \right) &\leq \frac{c_\ee}{\sqrt{n}} \left( z - \bb E_x \left( z+M_{\tau_y} \,;\, \tau_y \leq \nu_n^{\ee/6} \wedge \pent{n^{1-\ee}} \right) \right) + c_{p,\ee} \frac{ \left(1+y+\abs{x} \right)^p }{\sqrt{n}} \\
	&\leq \frac{c_\ee}{\sqrt{n}} \bb E_x \left( \abs{M_{\tau_y}} \right) + c_{p,\ee} \frac{ \left(1+y+\abs{x} \right)^p }{\sqrt{n}} \\
	&\leq c_{p} \frac{ \left(1+y+\abs{x} \right)^p }{\sqrt{n}}.
\end{align*}

\subsection{Proof of Corollary \ref{MoofExTi}}

By the Fubini theorem, for any $1/2 \geq p >0$,
	\[
	\bb E_x \left( \tau_y^{p} \right) = \int_{0}^{+\infty} \bb P_x \left( \tau_y > s \right) p s^{p-1} \dd s = \sum_{k=0}^{+\infty} \bb P_x \left( \tau_y > k \right) \left( (k+1)^p - k^p \right).
\]
Using Theorem \ref{AsExTi}, the sum $\sum_{k=1}^{+\infty} \frac{1}{k^{1+1/2-p}}$ is finite if and only if $1/2-p>0$.

\section{Asymptotic for conditioned Markov walk}
\label{As for cond Markov walk}

In this section we prove Theorem \ref{AsCoRaWa}. We will deduce the asymptotic of the Markov walk $\left( y+S_n \right)_{n\geq 0}$ conditioned to stay positive from the corresponding result for  the Brownian motion given by Proposition \ref{intBro}. 
As in Section \ref{As for Exit Time}, we will use the functional approximation of Proposition \ref{majdeA_k}. 
We will refer frequently to Section \ref{As for Exit Time} in order to shorten the exposition.

\textbf{Proof of Theorem \ref{AsCoRaWa}.} Introducing $\nu_n^{\ee/6} = \nu_n + \pent{n^{\ee/6}}$ and taking into account Condition \ref{PosdeX1-1}, we have
\begin{align}
	\bb P_x \left( \sachant{ y+S_n \leq t\sqrt{n}}{\tau_y > n} \right) =\; &\underbrace{\frac{\bb P_x \left( y+S_n \leq t\sqrt{n} \,,\, \tau_y > n \,,\, \nu_n^{\ee/6} > \pent{n^{1-\ee}} \right)}{\bb P_x \left( \tau_y > n \right)}}_{=:L_1} \nonumber\\
	&\qquad + \underbrace{\frac{\bb P_x \left( y+S_n \leq t\sqrt{n} \,,\, \tau_y > n \,,\, \nu_n^{\ee/6} \leq \pent{n^{1-\ee}} \right)}{\bb P_x \left( \tau_y > n \right)}}_{=:L_2}. \label{Decompositioninitiale}
\end{align}

\textit{Bound of $L_1$.} Using Lemma \ref{concentnu} and Theorem \ref{AsExTi},
\begin{equation}
  \label{Boundinitiale}
	L_1 \leq \frac{\bb P_x \left( \nu_n > \frac{n^{1-\ee}}{c_\ee} \right)}{\bb P_x \left( \tau_y > n \right)} \leq \frac{c_{p,\ee} \left(1+\abs{x}\right)^p}{n^{\frac{p}{2}-c_p \ee}\bb P_x \left( \tau_y > n \right)} \underset{n\to +\infty}{\longrightarrow} 0.
\end{equation}

\textit{Bound of $L_2$.} As in Section \ref{As for Exit Time}, setting $A_k = \left\{ \underset{0\leq t \leq 1}{\max} \abs{ S_{\pent{tk}} - \sigma B_{tk} } \leq k^{1/2-2\ee} \right\}$, by the Markov property,
\begin{align}
	\bb P_x &\left( \tau_y > n \right)L_2 \nonumber\\
	=\; &\sum_{k=1}^{\pent{n^{1-\ee}}} \int_{\bb R \times \bb R_+^*} \bb P_{x'} \left( y'+S_{n-k} \leq t\sqrt{n} \,,\, \tau_{y'} > n-k \,,\, \overline{A}_{n-k} \right) \bb P_x \left( X_k \in \dd x' \,,\, \right. \nonumber\\
	&\underbrace{\hspace{7cm} \left. y+S_k \in \dd y' \,,\, \tau_y > k \,,\, \nu_n^{\ee/6} =k \right)}_{=:\bb P_x \left( \tau_y > n \right) L_3} \nonumber\\
	\label{DecompositiondeL1}
	&+\sum_{k=1}^{\pent{n^{1-\ee}}} \int_{\bb R \times \bb R_+^*} \bb P_{x'} \left( y'+S_{n-k} \leq t\sqrt{n} \,,\, \tau_{y'} > n-k \,,\, A_{n-k} \right) \bb P_x \left( X_k \in \dd x' \,,\, \right. \\
	&\underbrace{\hspace{4cm} \left. y+S_k \in \dd y' \,,\, y+S_k > n^{1/2-\ee/6} \,,\, \tau_y > k \,,\, \nu_n^{\ee/6} =k \right)}_{=:\bb P_x \left( \tau_y > n \right) L_4} \nonumber\\
	&+\sum_{k=1}^{\pent{n^{1-\ee}}} \int_{\bb R \times \bb R_+^*} \bb P_{x'} \left( y'+S_{n-k} \leq t\sqrt{n} \,,\, \tau_{y'} > n-k \,,\, A_{n-k} \right) \bb P_x \left( X_k \in \dd x' \,,\, \right. \nonumber\\
	&\underbrace{\hspace{4cm} \left. y+S_k \in \dd y' \,,\, y+S_k \leq n^{1/2-\ee/6} \,,\, \tau_y > k \,,\, \nu_n^{\ee/6} = k \right)}_{=:\bb P_x \left( \tau_y > n \right) L_5}. \nonumber
\end{align}

\textit{Bound of $L_3$.} Using the bound of $J_1$ in (\ref{BoundJ2}) and Theorem \ref{AsExTi},
\begin{equation}
\label{BoundL5}
	L_3 \leq \frac{J_1}{\bb P_x \left( \tau_y > n \right)} \leq \frac{c_{p,\ee} (1+ y + \abs{x} )(1+\abs{x})^{p-1}}{n^{1/2+\ee} \bb P_x \left( \tau_y > n \right)} \underset{n\to +\infty}{\longrightarrow} 0.
\end{equation}

\textit{Bound of $L_4$.} 
Using the bound of $J_3$ in (\ref{BoundJ4}) and Theorem \ref{AsExTi}, we have
\begin{equation}
  \label{BoundL4}
	L_4 \leq \frac{J_3}{\bb P_x \left( \tau_y > n \right)}= o(1).
\end{equation}

\textit{Upper bound of $L_5$.}
Define $t_+ =t+\frac{2}{(n-k)^{2\ee}}$
and $y_+' = y' + (n-k)^{1/2-2\ee}$. By Proposition \ref{intBro},
\begin{align*}
	\bb P_{x'} &\left( y'+S_{n-k} \leq t\sqrt{n} \,,\, \tau_{y'} > n-k \,,\, A_{n-k} \right) \\
	&\hspace{2cm} \leq \bb P \left( y_+' + \sigma B_{n-k} \leq t_+ \sqrt{n} \,,\, \tau_{y_+'}^{bm} > n-k \right) \\
	&\hspace{2cm} = \frac{1}{\sqrt{2\pi (n-k)} \sigma} \int_0^{t_+ \sqrt{n}} \e^{-\frac{(s-y_+')^2}{2(n-k) \sigma^2}} - \e^{-\frac{(s+y_+')^2}{2(n-k) \sigma^2}} \dd s.
\end{align*}
Note that $y_+'/\sqrt{n} \leq \frac{2}{n^{\ee/6}}$ when $y' \leq n^{1/2-\ee/6}$ and that for any $k \leq \pent{n^{1-\ee}}$ we have $1-\frac{c_\ee}{n^\ee} \leq n-k \leq n$. Using these remarks with the fact that $\abs{\sh(x) - x} \leq \frac{x^3}{6} \sh(x)$, we obtain after some calculations that
	\[
	\bb P_{x'} \left( y'+S_{n-k} \leq t\sqrt{n} \,,\, \tau_{y'} > n-k \,,\, A_{n-k} \right) \leq \frac{2 y_+'}{\sqrt{2\pi n} \sigma} \left(1+\frac{c_{t,\ee}}{n^{\ee/3}}\right)  \left( 1 -\e^{-\frac{t^2}{2 \sigma^2}} \right).
\]
Consequently, using the same arguments as in the proof of Theorem \ref{AsExTi} in Section \ref{As for Exit Time} (see the developments from (\ref{Boutdedemo1}) to (\ref{UpBoundJ3})), 
we obtain
\begin{equation}
\label{UpBoundL3}
	L_5 \leq \left(1+\frac{c_{t,\ee}}{n^{\ee/3}}\right) \mathbf \Phi_{\sigma}^+(t) \frac{2V(x,y)}{\sqrt{2\pi n} \sigma \bb P_x \left( \tau_y > n \right)} \left(1+o(1)\right) = \mathbf \Phi_{\sigma}^+(t) \left(1+o(1)\right),
\end{equation}
with $\mathbf \Phi_{\sigma}^+(t) = 1-\e^{-\frac{t^2}{2\sigma^2}}$.

\textit{Lower bound of $L_5$.} In the same way as for the upper bound, with $y_-' = y' - (n-k)^{1/2-2\ee}$ and $t_- = t-\frac{2}{(n-k)^{2\ee}}$, we have
\begin{align*}
	\bb P_x \left( \tau_y > n \right)& L_5 \\
	\geq\;& \sum_{k=1}^{\pent{n^{1-\ee}}} \int_{\bb R_+^*} \bb P \left( y_-' + \sigma B_{n-k} \leq t_- \sqrt{n} \,,\, \tau_{y_-'}^{bm} > n-k \right) \bb P_x \left( y+S_{n-k} \in \dd y'  \,,\, \right. \\
	&\hspace{2cm} \left.  (n-k)^{1/2-2\ee} < y+S_{k} \leq n^{1/2-\ee/6} \,,\, \tau_y > k \,,\, \nu_n^{\ee/6} =k \right) \\
	&-\sum_{k=1}^{\pent{n^{1-\ee}}} \int_{\bb R} \bb P_{x'} \left( \overline{A}_{n-k} \right) \bb P_x \left( X_k \in \dd x' \,,\, \tau_y > k \,,\, \nu_n^{\ee/6}=k \right).
\end{align*}
Using Lemma \ref{intBro} with $y_-'$, which is positive when $(n-k)^{1/2-2\ee} < y' \leq n^{1/2-\ee/6}$, we obtain after calculation that
	\[
	\bb P \left( y_-' + \sigma B_{n-k} \leq t_- \sqrt{n} \,,\, \tau_{y_-'}^{bm} > n-k \right) \geq \frac{2y_-'}{\sqrt{2\pi n} \sigma} \left(1-\frac{c_{t,\ee}}{n^{\ee/3}}\right) \mathbf \Phi_{\sigma}^+(t). 
\]
Copying the proof of the bound of $J_1$ in (\ref{BoundJ2}) and using the same arguments as in the proof 
of Theorem \ref{AsExTi} in Section \ref{As for Exit Time} (see the developments from (\ref{Boutdedemo2}) to (\ref{LoBoundJ3})), we get
	\[
	L_5 \geq \mathbf \Phi_{\sigma}^+(t) \frac{2V(x,y)}{\sqrt{2\pi n}\sigma \bb P_x \left( \tau_y > n \right)} \left( 1-o(1) \right) = \mathbf \Phi_{\sigma}^+(t) \left( 1-o(1) \right).
\]
Coupling this with (\ref{UpBoundL3}) we obtain that
	\[
	L_5 = \mathbf \Phi_{\sigma}^+(t) \left( 1+o(1) \right).
\]
Inserting this and (\ref{BoundL5}) and (\ref{BoundL4}) into (\ref{DecompositiondeL1}), we deduce that $L_2 \underset{n\to+\infty}{\sim} \mathbf \Phi_{\sigma}^+(t)$. By (\ref{Decompositioninitiale}) and (\ref{Boundinitiale}), we finally have
	\[
	\bb P_x \left( \sachant{ y+S_n \leq t\sqrt{n}}{\tau_y > n} \right) \underset{n \to +\infty}{\longrightarrow} \mathbf \Phi_{\sigma}^+(t).
\]
Changing $t$ into $t\sigma$, this concludes the proof.

\section{The case of non-positive initial point}
\label{secproofofinitialpointneg}
In this section, we prove Theorem \ref{initialpointneg}. All over this section we assume either Conditions \ref{Mom001}, \ref{PosdeX1-1} and  $\bb E(a) \geq 0$, or Conditions \ref{Mom001} and \ref{PosdeX1-2}.

\begin{lemma}
\label{Corpartie1}
For any $(x,y) \in \mathscr{D}^-$, the random variable $M_{\tau_y}$ is integrable and the function 
$V(x,y) = -\bb E_x \left( M_{\tau_y} \right)$, is well defined on $\mathscr{D}^-$.
\end{lemma}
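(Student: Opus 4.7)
The plan is to reduce the assertion to the already treated case $y>0$ by conditioning on the first step. First I would split
\[
\bb E_x \left( \abs{M_{\tau_y}} \right) = \bb E_x \left( \abs{M_1} \,;\, \tau_y = 1 \right) + \bb E_x \left( \abs{M_{\tau_y}} \,;\, \tau_y > 1 \right),
\]
and observe that the first term is controlled directly by Lemma \ref{majmartx} with $n=1$, which gives $\bb E_x \left( \abs{M_1} \right) \leq c \left( 1+\abs{x} \right)$.

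For the second term I would write $M_{\tau_y} = M_1 + \left( M_{\tau_y} - M_1 \right)$ and use the explicit expression of $M_n$ from Section \ref{Mart Approx}: after the reindexation $X'_j = X_{j+1}$, the quantity $M_{\tau_y}-M_1$ coincides with $M'_{\tau'_{y+X_1}}$, where $M'$ and $\tau'$ are the analogues of $M$ and $\tau$ built from the shifted i.i.d.\ sequence $(a_{j+1}, b_{j+1})_{j \geq 1}$, starting from the state $X'_0 = X_1$. Because $\{\tau_y > 1\} = \{y+X_1 > 0\} \in \mathscr{F}_1$, the Markov property at time $1$ then yields
\[
\bb E_x \left( \abs{M_{\tau_y}-M_1} \,;\, \tau_y > 1 \right) = \bb E_x \left[ \mathbbm 1_{\left\{ y+X_1>0\right\}} \, \bb E_{X_1} \left( \abs{M_{\tau_{y+X_1}}} \right) \right].
\]

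On the event $\{y+X_1 > 0\}$ the starting height is positive, so I would invoke Lemma \ref{intdeMtau} if $\bb E(a) \geq 0$, or Lemma \ref{intdeMtaucasneg} if $\bb E(a) < 0$. Both lemmas supply a polynomial bound of degree $p \in (2,\alpha)$ in $X_1$ and $y+X_1$, which collapses to $c_p(1+\abs{X_1})^p$ once we use the assumption $y\leq 0$ (whence $0 < y+X_1 \leq X_1 = \abs{X_1}$). The argument then closes via Lemma \ref{MCM}, which gives $\bb E_x \left( (1+\abs{X_1})^p \right) \leq c_p(1+\abs{x})^p < +\infty$, and therefore $\bb E_x(\abs{M_{\tau_y}}) < +\infty$, so that $V(x,y)$ is well defined on $\mathscr{D}^-$.

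I do not anticipate any real difficulty in this proof: the only slightly delicate point is the rigorous identification of $M_{\tau_y}-M_1$ with the shifted martingale $M'_{\tau'_{y+X_1}}$, but this follows cleanly from the recursive form of $M_n$ and from the independence of the driving pairs $(a_i,b_i)_{i\geq 2}$ from $\mathscr{F}_1$. The whole argument mirrors a standard one-step Markov reduction and requires no new estimates beyond those already established for $y>0$.
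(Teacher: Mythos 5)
Your proof is correct and follows essentially the same route as the paper: a one-step Markov reduction on the event $\{\tau_y>1\}=\{y+X_1>0\}$ to the already-treated case of a positive starting height, with Lemma \ref{MCM} controlling the moments of $X_1$. The only (harmless) difference is in how the positive-start case is closed: you plug in the terminal bounds $\bb E_{x'}(\abs{M_{\tau_{y'}}})\le c_p(1+y'+\abs{x'})(1+\abs{x'})^{p-1}$ of Lemmas \ref{intdeMtau} and \ref{intdeMtaucasneg} directly, whereas the paper, for $\bb E(a)\ge 0$, reruns the optional-stopping/monotone-convergence argument using Lemma \ref{intofthemartcond} together with the sign property of Lemma \ref{Mtauyprop}, and for $\bb E(a)<0$ simply quotes Lemma \ref{intdeMtaucasneg}, which is already stated for all $y\in\bb R$.
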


\begin{proof}
If $\bb E(a) \geq 0$, by the Markov inequality, with $z=y+\rho x$,
\begin{align*}
	\bb E_x \left( z+M_n \,;\, \tau_y > n \right) &= \int_{\bb R \times \bb R_+^*} \bb E_{x'} \left( y'+\rho x'+M_{n-1} \,;\, \tau_{y'} > n-1 \right) \\
	&\hspace{4cm} \times \bb P_x \left( X_1 \in \dd x' \,,\, y+S_1 \in \dd y' \,,\, \tau_y > 1 \right).
\end{align*}
Since $y+S_1>0$ on $\{ \tau_y > 1 \}$, by Lemma \ref{intofthemartcond}, 
\begin{align}
	\bb E_x \left( z+M_n \,;\, \tau_y > n \right) &\leq c_p \bb E_x \left( \left( 1+y+S_1+\abs{X_1} \right) \left( 1+ \abs{X_1}\right)^{p-1} \,;\, \tau_y > 1 \right) \nonumber\\
	&\leq c_p \bb E_x \left( \left( 1+\abs{X_1} \right)^p \right) \nonumber\\
	&\leq c_p \left( 1+\abs{x} \right)^p.
	\label{majVncasyneg}
\end{align}
Moreover
\begin{align*}
	\bb E_x \left( \abs{M_{\tau_y}} \,;\, \tau_y \leq n \right) \leq \abs{z} &+ \sum_{k=2}^n \int_{\bb R \times \bb R_+^*} \bb E_{x'} \left( \abs{y'+\rho x' + M_{k-1}} \,;\, \tau_y = k-1 \right) \\
	&\hspace{2cm}\times \bb P_x \left( X_1 \in \dd x' \,,\, y+S_1 \in \dd y' \,,\, \tau_y > 1 \right) \\
	&+ \bb E_x \left( \abs{M_1} \,;\, \tau_y = 1 \right).
\end{align*}
Since $y+S_1 > 0$ on $\{\tau_y > 1\}$, by Lemma \ref{Mtauyprop},
\begin{align*}
	\bb E_x \left( \abs{M_{\tau_y}} \,;\, \tau_y \leq n \right) &\leq  c\left( 1+\abs{y}+\abs{x} \right) - \bb E_{x} \left( z + M_{\tau_y} \,;\, \tau_y \leq n \right) \\
	&\leq c\left( 1+\abs{y}+\abs{x} \right) + \bb E_{x} \left( z + M_n \,;\, \tau_y > n \right).
\end{align*}
Using (\ref{majVncasyneg}), we deduce that $\bb E_x \left( \abs{M_{\tau_y}} \,;\, \tau_y \leq n \right) \leq c_p \left( 1+\abs{y}+\abs{x}^p \right)$. Consequently, by the Lebesgue monotone convergence theorem, the assertion is proved when $\bb E(a) \geq 0$.
When $\bb E(a) <0$, the assertion follows from Lemma \ref{intdeMtaucasneg}.
\end{proof}

\begin{lemma}
\label{Corpartie2}
The function $V$ is $\textbf{Q}_+$-harmonic and positive on $\mathscr{D} = \mathscr{D}^- \cup \bb R \times \bb R_+^*$.
\end{lemma}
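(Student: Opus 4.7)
The plan is to extend the already-established results on $\bb R \times \bb R_+^*$ to the boundary set $\mathscr{D}^-$ via a one-step Markov property argument. First I would establish harmonicity on $\mathscr{D}^-$, then deduce positivity as an immediate consequence of harmonicity and the strict positivity of $V$ already known on $\bb R\times \bb R_+^*$. Fix $(x,y) \in \mathscr{D}^-$. Since $y \leq 0$ we have $\tau_y \geq 1$ $\bb P_x$-almost surely, and on $\{\tau_y > 1\}$ we have $y + S_1 > 0$, i.e.\ $(X_1, y + S_1) \in \bb R \times \bb R_+^*$. By the Markov property at time $1$, the shifted chain $(X_{1+k})_{k\geq 0}$ is, conditionally on $\mathscr{F}_1$, a copy of the chain started at $X_1$; the increment $M_{\tau_y} - M_1$ corresponds to the martingale of the shifted chain up to the exit time $\tau_{y+S_1}$ (since $\tau_y - 1$ is, on $\{\tau_y > 1\}$, the exit time of the shifted walk $(y+S_1) + (S_{1+k}-S_1)_{k\geq 0}$ from $\bb R_+^*$). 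Using $\bb E_x(M_1) = 0$ and the definition of $V$ on $\bb R \times \bb R_+^*$,
\[
\bb E_x(M_{\tau_y}) = \bb E_x(M_1) + \bb E_x(M_{\tau_y} - M_1 \,;\, \tau_y > 1) = \bb E_x\left(\bb E_{X_1}(M_{\tau_{y+S_1}}) \,;\, \tau_y > 1\right) = -\textbf{Q}_+V(x,y),
\]
so $V(x,y) = -\bb E_x(M_{\tau_y}) = \textbf{Q}_+V(x,y)$. Together with the harmonicity on $\bb R \times \bb R_+^*$ proved in Propositions \ref{HaetposdeV} and \ref{Invfunctpropcasnegpart1}, this yields the harmonicity on the whole of $\mathscr{D}$.

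The only delicate point in the above calculation is the legitimacy of the Fubini/tower step. To justify it I would invoke the explicit bounds proved earlier: by Lemma \ref{intdeMtau} (when $\bb E(a) \geq 0$) and Lemma \ref{intdeMtaucasneg} (when $\bb E(a) < 0$), for every $(x', y') \in \bb R \times \bb R_+^*$ the random variable $M_{\tau_{y'}}$ satisfies $\bb E_{x'}(\abs{M_{\tau_{y'}}}) \leq c_p(1 + y' + \abs{x'}^p)$. Substituting $(x', y') = (X_1, y + S_1)$ and taking expectation under $\bb P_x$, the upper bound is $\bb P_x$-integrable by Lemma \ref{MCM} and Condition \ref{Mom001}, which licenses the interchange.

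For the positivity, the harmonicity just established gives, for any $(x,y) \in \mathscr{D}^-$,
\[
V(x,y) = \bb E_x\bigl(V(X_1, y+S_1)\,;\, \tau_y > 1\bigr).
\]
On $\{\tau_y > 1\}$ the pair $(X_1, y + S_1)$ lies in $\bb R \times \bb R_+^*$, where $V$ is strictly positive by either Proposition \ref{HaetposdeV} (case $\bb E(a) \geq 0$) or Lemma \ref{Invfunctpropcasnegpart2} (case $\bb E(a) < 0$). Since the very definition of $\mathscr{D}^-$ forces $\bb P_x(\tau_y > 1) > 0$, the expectation is strictly positive. This proves $V > 0$ on $\mathscr{D}^-$ and, combined with the strict positivity on $\bb R \times \bb R_+^*$, on all of $\mathscr{D}$. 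The argument is short and the only (mild) obstacle is the integrability bookkeeping required for the Markov decomposition, which is entirely handled by the moment estimates already developed in Section \ref{Sec Harm Func}.
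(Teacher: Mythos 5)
Your proof is correct. The positivity half is exactly the paper's argument: the identity $V(x,y)=\bb E_x\left( V(X_1,y+S_1)\,;\,\tau_y>1\right)$ together with the strict positivity of $V$ on $\bb R\times\bb R_+^*$ (claim 2 of Proposition \ref{HaetposdeV} when $\bb E(a)\geq 0$, claim 3 of Lemma \ref{Invfunctpropcasnegpart2} when $\bb E(a)<0$) and $\bb P_x\left(\tau_y>1\right)>0$ from the definition of $\mathscr D^-$. Where you genuinely differ is the harmonicity step on $\mathscr D^-$: the paper first uses Corollary \ref{Exitfinit} and dominated convergence to extend the limit representation $V(x,y)=\lim_{n}\bb E_x\left( y+\rho x+M_n\,;\,\tau_y>n\right)$ to $(x,y)\in\mathscr D^-$, and then repeats the scheme of Proposition \ref{HaetposdeV}, i.e.\ the one-step identity for the finite-horizon quantities $V_n(x,y)=\bb E_x\left( y+S_n\,;\,\tau_y>n\right)$ followed by a passage to the limit dominated via Corollaries \ref{intofrandwalk} and \ref{intofrandwalkcasneg}; you instead apply the Markov property at time $1$ directly to the stopped martingale, writing $\bb E_x\left(M_{\tau_y}\right)=\bb E_x\left(M_1\right)+\bb E_x\left(M_{\tau_y}-M_1\,;\,\tau_y>1\right)$ and identifying the last term with $-\mathbf{Q}_+V(x,y)$, which dispenses with any limit in $n$ and only needs the integrability bounds of Lemmas \ref{intdeMtau} and \ref{intdeMtaucasneg} to license the tower step. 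Both routes are sound; the paper's recycles machinery already in place and yields the limit representation of $V$ on $\mathscr D^-$ as a by-product, while yours is shorter and more self-contained. One small imprecision worth fixing: the bound you quote, $\bb E_{x'}\left(\abs{M_{\tau_{y'}}}\right)\leq c_p\left(1+y'+\abs{x'}^p\right)$, is the one of Lemma \ref{intdeMtaucasneg}; in the case $\bb E(a)\geq 0$ Lemma \ref{intdeMtau} gives $c_p\left(1+y'+\abs{x'}\right)\left(1+\abs{x'}\right)^{p-1}$ instead, but since $y+S_1\leq\abs{X_1}$ on $\{\tau_y>1\}$ (because $y\leq 0$), either majorant evaluated at $(X_1,y+S_1)$ is bounded by $c_p\left(1+\abs{X_1}\right)^p$, which is $\bb P_x$-integrable by Lemma \ref{MCM}, so your interchange is justified in both cases.
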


\begin{proof}
Note that by Corollary \ref{Exitfinit}, we have $\bb P_x (\tau_y < +\infty)=1$, for any  $x \in \bb R$ and $y \in \bb R$.
Therefore, by the Lebesgue dominated convergence theorem,
\[ V(x,y) = -\bb E_x \left( M_{\tau_y} \right) = z - \underset{n\to \infty}{\lim} \bb E_x \left( z+M_{\tau_y} \,;\, \tau_y \leq n \right) = \underset{n\to \infty}{\lim} \bb E_x \left( z+M_n \,;\, \tau_y > n \right), \]
for any $(x,y) \in \mathscr{D}^-$. The fact that $V$ is $\textbf{Q}_+$-harmonic on $\mathscr{D}$ can be proved 
in the same way as in the proof of Proposition \ref{HaetposdeV}.
Therefore, for any $(x,y) \in \mathscr{D}^-$,
\begin{equation}
	\label{harmyneg}
	V(x,y) = \bb E_x \left( V(X_1, y+S_1) \,;\, \tau_y > 1 \right).
\end{equation}
By the claim 2 of Proposition \ref{HaetposdeV} and the claim 3 of Lemma \ref{Invfunctpropcasnegpart2}, on $\{ \tau_y > 1\}$, the random variable $V(X_1, y+S_1)$ is positive almost surely. Since by the definition of $\mathscr{D}^-$, we have 
$\bb P_x \left( \tau_y > 1 \right) >0$, we conclude that $V(x,y) >0$ for any $(x,y) \in \mathscr{D}^-$.
\end{proof}

\begin{lemma}\ 
\label{Corpartie3}
\begin{enumerate}
\item For any $(x,y) \in \mathscr{D}^-$,
\[ \sqrt{n}\bb P_x \left( \tau_y > n \right) \leq c_p \left( 1+ \abs{x} \right)^{p}. \]
\item For any $(x,y) \in \mathscr{D}^-$,
\[ \bb P_x \left( \tau_y > n \right)  \underset{n\to +\infty}{\sim} \frac{2V(x,y)}{\sqrt{2\pi n} \sigma}. \]
\end{enumerate}
\end{lemma}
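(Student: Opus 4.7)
The plan is to reduce both parts to Theorem \ref{AsExTi} by conditioning on the first step of the Markov walk, since after one step the walk lands in the positive half-line. More precisely, for $(x,y) \in \mathscr{D}^-$ and on the event $\{\tau_y > 1\}$, one has $y + S_1 = y + X_1 > 0$; combined with $y \leq 0$ this yields $X_1 > -y \geq 0$, hence $\abs{X_1} = X_1$ and
\[ 0 < y + S_1 = y + X_1 \leq X_1 = \abs{X_1}. \]
Thus, on $\{\tau_y > 1\}$, the pair $(X_1, y+S_1)$ lies in $\bb R \times \bb R_+^*$, Theorem \ref{AsExTi} is applicable to the Markov walk restarted at $(X_1, y+S_1)$, and the bound on the starting height is controlled by $\abs{X_1}$ alone.

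For the first claim, the Markov property yields
\[ \sqrt{n-1}\, \bb P_x \left( \tau_y > n \right) = \bb E_x \left( \sqrt{n-1}\, \bb P_{X_1} \left( \tau_{y+S_1} > n-1 \right) \,;\, \tau_y > 1 \right). \]
Applying claim 1 of Theorem \ref{AsExTi} together with the pointwise bound $y + S_1 + \abs{X_1} \leq 2 \abs{X_1}$ valid on $\{\tau_y > 1\}$, we obtain
\[ \sqrt{n-1}\, \bb P_x \left( \tau_y > n \right) \leq c_p\, \bb E_x \left( \left( 1 + \abs{X_1} \right)^p \,;\, \tau_y > 1 \right) \leq c_p \left( 1 + \abs{x} \right)^p, \]
the last step being a direct application of Lemma \ref{MCM}. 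Absorbing the ratio $\sqrt{n}/\sqrt{n-1}$ into the constant concludes the proof of claim 1.

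For the second claim, the same conditioning, together with claim 2 of Theorem \ref{AsExTi}, furnishes the pointwise convergence, on $\{\tau_y > 1\}$,
\[ \sqrt{n}\, \bb P_{X_1}\left( \tau_{y+S_1} > n-1 \right) \underset{n \to +\infty}{\longrightarrow} \frac{2\, V(X_1, y+S_1)}{\sqrt{2\pi}\, \sigma}. \]
The argument used for claim 1 provides the $\bb E_x$-integrable dominating function $c_p \left( 1 + \abs{X_1} \right)^p$, so the Lebesgue dominated convergence theorem yields
\[ \sqrt{n}\, \bb P_x\left( \tau_y > n \right) \underset{n\to +\infty}{\longrightarrow} \frac{2}{\sqrt{2\pi}\, \sigma}\, \bb E_x \left( V(X_1, y+S_1) \,;\, \tau_y > 1 \right) = \frac{2\, V(x,y)}{\sqrt{2\pi}\, \sigma}, \]
where the final identity is the harmonicity relation \eqref{harmyneg} established in Lemma \ref{Corpartie2}. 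The main technical point is the observation $y + S_1 \leq \abs{X_1}$ on $\{\tau_y > 1\}$ for $y \leq 0$, which removes the dependence of the bound on $y$ and makes the dominated convergence applicable uniformly in $n$.
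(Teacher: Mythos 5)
Your proof is correct and follows essentially the same route as the paper: conditioning on the first step via the Markov property, applying Theorem \ref{AsExTi} to the restarted walk (whose starting height $y+S_1$ is controlled by $\abs{X_1}$ since $y\leq 0$), and concluding claim 2 by dominated convergence together with the harmonicity relation \eqref{harmyneg}. The only difference is that you spell out the pointwise bound $y+S_1\leq \abs{X_1}$ on $\{\tau_y>1\}$, which the paper leaves implicit.
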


\begin{proof}
By the Markov property,
\[ \sqrt{n} \bb P_x \left( \tau_y >n \right) = \int_{\bb R\times \bb R_+^*} \sqrt{n} \bb P_{x'} \left( \tau_{y'} > n-1 \right) \bb P_x \left( X_1 \in \dd x' \,,\, y+S_1 \in \dd y' \,,\, \tau_y > 1 \right). \]
By  Theorem \ref{AsExTi}, for any $y'> 0$, we have $\sqrt{n} \bb P_{x'} \left( \tau_{y'} > n-1 \right) \leq c_p \left( 1+ y'+\abs{x'} \right)^{p}$ and moreover, for any $y\leq 0$,
\[ \bb E_x \left( c_p \left( 1+ y+S_1+\abs{X_1} \right)^{p} \,;\, \tau_y > 1 \right) \leq c_p \left( 1+\abs{x} \right)^p. \]
Then, we obtain the claim 1 and by the Lebesgue dominated convergence theorem and the claim 2 of Theorem \ref{AsExTi},
\[ \underset{n \to \infty}{\lim} \sqrt{n} \bb P_x \left( \tau_y >n \right) = \bb E_x \left( \frac{2 V(X_1,y+S_1) }{\sqrt{2\pi}\sigma} \,;\, \tau_y > 1 \right). \]
Using (\ref{harmyneg}) we conclude the proof.
\end{proof}

\begin{lemma}
\label{Corpartie4}
For any $(x,y) \in \mathscr{D}^-$ and $t>0$,
\[
	\bb P_x \left( \sachant{ \frac{y+S_n}{\sigma \sqrt{n}} \leq t}{\tau_y > n} \right)  \underset{n\to +\infty}{\longrightarrow} 1-\e^{-\frac{t^2}{2}}.
\]
\end{lemma}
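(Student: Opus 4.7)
The plan is to reduce the assertion, stated for $(x,y) \in \mathscr{D}^-$, to the version already proved for strictly positive initial points in Theorem \ref{AsCoRaWa}. The key observation is that on the event $\{\tau_y > 1\}$ one necessarily has $y + S_1 > 0$, so after a single step the Markov walk has moved into the positive regime where Theorem \ref{AsExTi} and Theorem \ref{AsCoRaWa} apply verbatim. I would therefore follow closely the pattern of Lemma \ref{Corpartie3}: condition on the first step, apply the known result inside the integral, and pass the limit through by dominated convergence.

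More concretely, I would first write, using the Markov property,
\begin{align*}
\sqrt{n}\,\bb P_x\left( y+S_n \leq t\sigma\sqrt{n} \,,\, \tau_y > n \right)
&= \int_{\bb R \times \bb R_+^*} \sqrt{n}\,\bb P_{x'}\left( y'+S_{n-1} \leq t\sigma\sqrt{n} \,,\, \tau_{y'} > n-1 \right) \\
&\qquad \times \bb P_x\left( X_1 \in \dd x' \,,\, y+S_1 \in \dd y' \,,\, \tau_y > 1 \right).
\end{align*}
For each fixed $(x',y')$ with $y'>0$, combining the asymptotic $\bb P_{x'}(\tau_{y'}>n-1)\sim 2V(x',y')/(\sqrt{2\pi n}\sigma)$ of Theorem \ref{AsExTi} with the conditional CLT of Theorem \ref{AsCoRaWa} would give that the inner quantity converges to $\frac{2V(x',y')}{\sqrt{2\pi}\sigma}(1-\e^{-t^2/2})$; the slight mismatch between the threshold $t\sigma\sqrt{n}$ and the natural normalization $\sigma\sqrt{n-1}$ of the shifted walk is absorbed by the continuity of the Rayleigh distribution function together with $\sqrt{n}/\sqrt{n-1}\to 1$.

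To justify interchanging limit and integration I would invoke dominated convergence: the first claim of Theorem \ref{AsExTi} supplies the uniform bound $\sqrt{n}\,\bb P_{x'}(\tau_{y'}>n-1)\leq c_p(1+y'+\abs{x'})^p$, and on $\{\tau_y>1\}$ we have $y'=y+S_1 \leq \abs{y}+\abs{X_1}$, so the integrand is dominated (up to constants depending on $x$ and $y$) by $c_p(1+\abs{X_1})^p$, which has finite $\bb P_x$-expectation by Lemma \ref{MCM}. Passing the limit inside and using the harmonicity identity (\ref{harmyneg}), namely $V(x,y)=\bb E_x\left(V(X_1,y+S_1);\tau_y>1\right)$, yields
\[
\sqrt{n}\,\bb P_x\left( y+S_n \leq t\sigma\sqrt{n} \,,\, \tau_y > n \right) \underset{n\to+\infty}{\longrightarrow} \frac{2V(x,y)}{\sqrt{2\pi}\sigma}\left(1-\e^{-t^2/2}\right).
\]
Dividing by $\sqrt{n}\,\bb P_x(\tau_y>n)$, whose limit $\frac{2V(x,y)}{\sqrt{2\pi}\sigma}$ is positive by Lemma \ref{Corpartie3} and Lemma \ref{Corpartie2}, produces the claimed convergence to $\mathbf \Phi^+(t)$.

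The only genuinely delicate step is the pointwise convergence inside the integral despite the mismatch between $\sqrt{n}$ and $\sqrt{n-1}$ in the threshold. I would handle this by a standard squeeze: for any $\epsilon>0$ and $n$ large enough, the event $\{y'+S_{n-1}\leq t\sigma\sqrt{n}\}$ is sandwiched between $\{y'+S_{n-1}\leq (t-\epsilon)\sigma\sqrt{n-1}\}$ and $\{y'+S_{n-1}\leq (t+\epsilon)\sigma\sqrt{n-1}\}$, to each of which Theorem \ref{AsCoRaWa} applies directly; letting $\epsilon\to 0$ and using continuity of $\mathbf \Phi^+$ finishes the pointwise step. Everything else is standard once the domination is in place.
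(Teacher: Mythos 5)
Your proposal is correct and follows essentially the same route as the paper: the Markov property at the first step, the pointwise limit from Theorems \ref{AsExTi} and \ref{AsCoRaWa}, domination via the bound $\sqrt{n}\,\bb P_{x'}(\tau_{y'}>n-1)\leq c_p(1+y'+\abs{x'})^p$ together with $y'\leq\abs{X_1}$ on $\{\tau_y>1\}$, and the harmonicity identity (\ref{harmyneg}) to identify the limit. Your squeeze handling of the $\sqrt{n}$ versus $\sqrt{n-1}$ normalization is in fact slightly more careful than the paper, which passes over this point silently.
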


\begin{proof}
Similarly as in the proof of Lemma \ref{Corpartie3}, we write,
\begin{align*}
	\bb P_x &\left( \sachant{ \frac{y+S_n}{\sigma \sqrt{n}} \leq t}{\tau_y > n} \right) \\
	&= \frac{1}{\bb P_x \left( \tau_y >n \right) }\int_{\bb R\times \bb R_+^*} \bb P_{x'} \left( \frac{y'+S_{n-1}}{\sigma \sqrt{n-1}} \leq t \,;\, \tau_{y'} > n-1 \right) \\
	&\hspace{4cm} \times \bb P_x \left( X_1 \in \dd x' \,,\, y+S_1 \in \dd y' \,,\, \tau_y > 1 \right) \\
	&= \frac{1}{\sqrt{n}\bb P_x \left( \tau_y >n \right) } \int_{\bb R\times \bb R_+^*} \bb P_{x'} \left( \sachant{\frac{y'+S_{n-1}}{\sigma \sqrt{n-1}} \leq t}{\tau_{y'} > n-1} \right) \sqrt{n} \bb P_{x'} \left( \tau_{y'} > n-1 \right)\\
	&\hspace{4cm}  \times\bb P_x \left( X_1 \in \dd x' \,,\, y+S_1 \in \dd y' \,,\, \tau_y > 1 \right).
\end{align*}
Since, by Lemma \ref{Corpartie3}, $\sqrt{n} \bb P_{x'} \left( \tau_{y'} > n-1 \right) \leq c_p \left( 1+\abs{x'} \right)^p$, applying the Lebesgue dominated convergence theorem, Theorem \ref{AsExTi}, Theorem \ref{AsCoRaWa} and Lemma \ref{Corpartie3}, we have
\begin{align*}
\underset{n \to \infty}{\lim} &\bb P_x \left( \sachant{ \frac{y+S_n}{\sigma \sqrt{n}} \leq t}{\tau_y > n} \right) \\
&= \frac{\sqrt{2\pi} \sigma}{2V(x,y)} \int_{\bb R\times \bb R_+^*} \left( 1-\e^{-\frac{t^2}{2}} \right) \frac{2V(x',y')}{\sqrt{2\pi} \sigma} \bb P_x \left( X_1 \in \dd x' \,,\, y+S_1 \in \dd y' \,,\, \tau_y > 1 \right).
\end{align*}
Using (\ref{harmyneg}) concludes the proof.
\end{proof}

\section{Appendix}
\label{Appendix}

\subsection{Proof of the fact Condition \ref{CSPosdeX1-2} implies Condition \ref{PosdeX1-2}}
\label{ComplementCond}

We suppose that Condition \ref{CSPosdeX1-2} holds true. Then, there exists $\delta > 0$ such that
\begin{align}
	\label{abbien001}
	\bb P \left( (a,b) \in [-1+\delta,0] \times [\delta,C] \right) > 0
\end{align}
and
\begin{align}
	\label{abbien002}
	\bb P \left( (a,b) \in [0,1-\delta] \times [\delta,C] \right) > 0.
\end{align}
For any $x \in \bb R$, set $C_x  = \max \left( \abs{x}, \frac{C}{\delta} \right)$ and
	\[
	\mathscr{A}_n = \left\{ \delta \leq X_1 \leq C_x \,,\, \delta \leq X_2 \leq C_{X_1} \,,\, \dots \,,\, \delta \leq X_n \leq C_{X_{n-1}} \right\}.
\]
Using (\ref{abbien001}) for $x < 0$ and (\ref{abbien002}) for $x \geq 0$, 
we obtain that $\bb P_x \left( \mathscr{A}_1 \right) >0$.
By the Markov property, we deduce that $\bb P_x \left( \mathscr{A}_n \right) >0$. 
Moreover, it is easy to see that, on $\mathscr{A}_n$, we have  $y+S_k \geq y + k\delta > 0$, for all $k \leq n$, and $\abs{X_n} \leq C_x$. Taking $n=n_0$ large enough, we conclude that Condition \ref{PosdeX1-2} holds under Condition \ref{CSPosdeX1-2}.

\subsection{Convergence of recursively bounded monotonic sequences}

The following lemmas give sufficient conditions for a monotonic sequence to be bounded.

\begin{lemma}
\label{lemanalyse}
Let $(u_n)_{n\geq 1}$ be a non-decreasing sequence of reals such that there exist $\ee \in (0,1)$ and $\alpha, \beta, \gamma, \delta \geq 0$ such that for any $n\geq 2$,
\begin{equation}
\label{recun}
u_n \leq \left( 1+ \frac{\alpha}{n^\ee} \right) u_{\pent{n^{1-\ee}}} + \frac{\beta}{n^\ee} + \gamma \e^{-\delta n^\ee}.
\end{equation}
Then, for any $n\geq 2$ and any integer $n_f \in \{2, \dots, n \}$,
\begin{align*} 
u_n &\leq \exp \left({ \frac{\alpha}{n_f^\ee}  \frac{2^\ee 2^{\ee^2}}{2^{\ee^2}-1} }\right) \left( u_{n_f} + \frac{\beta}{n_f^\ee} \frac{2^\ee 2^{\ee^2}}{2^{\ee^2}-1} + \gamma \frac{ \exp \left({ -\delta \frac{n_f^\ee}{2^\ee} }\right) }{ 1-\e^{ -\delta \left( 2^{\ee^2}-1 \right) } } \right) \\
&\leq \left( 1+ \frac{c_{\alpha,\ee}}{n_f^\ee} \right) u_{n_f} + \beta \frac{c_{\alpha,\ee}}{n_f^\ee} + \gamma \e^{-c_{\alpha,\delta,\ee} n_f^\ee}.
\end{align*}
In particular, choosing $n_f$ constant, it follows that $(u_n)_{n\geq 1}$ is bounded. 
\end{lemma}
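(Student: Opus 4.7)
The plan is to iterate the recurrence~(\ref{recun}) down from $n$ to $n_f$, exploiting the fact that the iterates shrink doubly exponentially so that the accumulated error is controlled by the final step.

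I would begin by defining $n_0:=n$ and $n_{k+1}:=\pent{n_k^{1-\ee}}$, and then letting $K$ be the smallest integer such that $n_K\leq n_f$. Since $n_k>n_f\geq 2$ for $0\leq k\leq K-1$, the recurrence~(\ref{recun}) applies at each of these indices; telescoping gives
\[
u_n \leq \prod_{k=0}^{K-1}\Bigl(1+\tfrac{\alpha}{n_k^\ee}\Bigr)\,u_{n_K} + \sum_{k=0}^{K-1}\Bigl(\,\prod_{j=0}^{k-1}\bigl(1+\tfrac{\alpha}{n_j^\ee}\bigr)\Bigr)\Bigl(\tfrac{\beta}{n_k^\ee}+\gamma\,\e^{-\delta n_k^\ee}\Bigr).
\]
The monotonicity of $(u_n)$ allows me to replace $u_{n_K}$ by $u_{n_f}$, and each partial product is bounded by $\exp\bigl(\alpha\Sigma_1\bigr)$ where $\Sigma_1:=\sum_{k=0}^{K-1}1/n_k^\ee$. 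Everything then reduces to estimating $\Sigma_1$ and $\Sigma_2:=\sum_{k=0}^{K-1}\e^{-\delta n_k^\ee}$.

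The heart of the argument is a reverse-induction estimate on the iterates. From $n_{k+1}\leq n_k^{1-\ee}$ I would deduce $n_{k-1}\geq n_k^{1/(1-\ee)}$, and hence $n_{K-1-j}\geq n_{K-1}^{1/(1-\ee)^j}\geq n_f^{1/(1-\ee)^j}$ for $j=0,\ldots,K-1$. Thus the terms $1/n_{K-1-j}^\ee\leq n_f^{-\ee/(1-\ee)^j}$ decrease doubly exponentially: the ratio of two successive terms is at most $n_f^{-\ee^2/(1-\ee)}\leq 2^{-\ee^2/(1-\ee)}<1$, since $n_f\geq 2$. Comparing with a geometric series---and using $\pent{n_k^{1-\ee}}\geq n_k^{1-\ee}/2$ to absorb a factor of $2^\ee$---should give $\Sigma_1\leq \bigl(2^\ee 2^{\ee^2}/(2^{\ee^2}-1)\bigr)\,n_f^{-\ee}$. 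The same reasoning applied to $\Sigma_2$ would produce $\Sigma_2 \leq \e^{-\delta n_f^\ee/2^\ee}\big/\bigl(1-\e^{-\delta(2^{\ee^2}-1)}\bigr)$.

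Substituting these two estimates into the telescoped inequality yields the first displayed bound in the statement. The second inequality, in the simplified form $(1+c_{\alpha,\ee}/n_f^\ee)u_{n_f}+\ldots$, then follows by applying $\e^x\leq 1+c_\alpha x$ to the now uniformly bounded exponential prefactor. The main obstacle is purely combinatorial---carefully tracking the floor-function corrections so as to produce the claimed explicit constants---while the conceptual content, namely that the doubly-exponential collapse of $n_k$ makes the correction terms at larger $k$ geometrically negligible compared to the final one, is elementary.
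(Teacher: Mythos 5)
Your proposal is correct and follows essentially the same route as the paper: iterate the recursion along the doubly-exponentially decreasing indices, bound the accumulated product by $\exp(\alpha\Sigma_1)$, and control $\Sigma_1=\sum_k n_k^{-\ee}$ and $\Sigma_2=\sum_k \e^{-\delta n_k^{\ee}}$ by geometric-type series whose ratio is bounded using $n_f\geq 2$. The only (cosmetic) difference is that you generate the indices by iterating $n_{k+1}=\pent{n_k^{1-\ee}}$ while the paper uses $n_j=\pent{n^{(1-\ee)^j}}$ together with monotonicity of $(u_n)$ at each step; your version even yields slightly sharper constants.
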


\begin{proof}
Fix $n\geq 2$ and $n_f \in \{2, \dots, n\}$ and consider for all $j \geq 0$,
\[ n_j = \pent{n^{(1-\ee)^j}}. \]
The sequence $(n_j)_{j\geq 0}$ starts at $n_0=n$, is non-increasing and converge to $1$. So there exists $m=m(n_f) \in \bb N$ such that $n_m \geq n_f \geq n_{m+1}$. Since $n^{(1-\ee)^j}/2 \geq n_f/2 \geq 1$, for all $j \in \{0, \dots, m\}$, we have
\begin{equation}
 \label{encadrenj} n^{(1-\ee)^j} \geq n_j \geq n^{(1-\ee)^j} - 1 \geq \frac{n^{(1-\ee)^j}}{2}.
\end{equation}
Using $(\ref{recun})$ and the fact that $(u_n)_{n\geq 2}$ is non-decreasing, we write for all $j =0,\dots, m$,
\[ u_{n_j} \leq \left( 1+ \frac{\alpha}{n_j^\ee} \right) u_{n_{j+1}} + \frac{\beta}{n_j^\ee} + \gamma \e^{-\delta n_j^\ee}
\leq \left( 1+ \frac{\alpha}{n_j^\ee} \right) \left( u_{n_{j+1}} + \frac{\beta}{n_j^\ee} + \gamma \e^{-\delta n_j^\ee} \right). \]
Iterating, we obtain that
\[ u_n \leq A_m \left( u_{n_{m+1}} + \beta B_m + \gamma C_m \right), \]
where $A_m  = \prod_{j=0}^m \left( 1+ \frac{\alpha}{n_j^\ee} \right)$, $B_m  = \sum_{j=0}^m \frac{1}{n_j^\ee}$ and $C_m = \sum_{j=0}^m \e^{-\delta n_j^\ee}$. Since $n_{m+1} \leq n_f$ and since $(u_n)_{n\geq 2}$ is non-decreasing,
\begin{equation}
	\label{recunj}
	u_n \leq A_m \left( u_{n_{n_f}} + \beta B_m + \gamma C_m \right).
\end{equation}
Now, we bound $A_m$ as follows,
\begin{equation}
	\label{majAm}
	A_m \leq \prod_{j=0}^m \e^{\frac{\alpha}{n_j^\ee}} = \e^{\alpha B_m}.
\end{equation}
Denoting $\eta_j = n^{-(1-\ee)^j\ee}$, using (\ref{encadrenj}), we have $B_m \leq 2^\ee \sum_{j=0}^m \eta_j$. Moreover, for all $j \leq m$, we note that $\frac{\eta_j}{\eta_{j+1}} = \frac{1}{n^{\ee^2 (1-\ee)^j}} \leq \frac{1}{n_f^{\ee^2}} \leq \frac{1}{2^{\ee^2}} <1$ and so
\begin{equation}
	\label{majetaj}
	\eta_j \leq \frac{\eta_m}{2^{\ee^2(m-j)}} \leq \frac{1}{n_m^\ee 2^{\ee^2(m-j)}} \leq \frac{1}{n_f^\ee 2^{\ee^2(m-j)}}.
\end{equation}
Therefore, $B_m$ is bounded as follows:
\begin{equation}
	\label{majBm}
	B_m \leq \frac{2^\ee}{n_f^\ee} \sum_{k=0}^m \left( \frac{1}{2^{\ee^2}} \right)^k \leq \frac{1}{n_f^\ee} \frac{2^\ee 2^{\ee^2}}{2^{\ee^2}-1}.
\end{equation}
Using (\ref{encadrenj}) and (\ref{majetaj}), we have
\[ C_m \leq \sum_{j=0}^m \e^{-\frac{\delta}{2^\ee \eta_j}} \leq \sum_{j=0}^m \exp \left({-\frac{\delta n_f^\ee 2^{\ee^2(m-j)}}{2^\ee}}\right). \]
Since for any $u \geq 0$ and $k \in \bb N$, we have $(1+u)^k \geq 1+ku$, it follows that
\begin{equation}
	\label{majCm}
	C_m \leq \e^{ -\frac{\delta n_f^\ee}{2^\ee} } \sum_{k=0}^m \exp\left({-{\delta k \left(2^{\ee^2}-1\right)}}\right)
	\leq \frac{\e^{ -\frac{\delta n_f^\ee}{2^\ee} }}{1-\e^{-\delta  \left(2^{\ee^2}-1\right)}}.
\end{equation}reals
Putting together (\ref{majAm}), (\ref{majBm}) and (\ref{majCm}) into (\ref{recunj}), proves the lemma.
\end{proof} 

\begin{lemma}
\label{lemanalyse2}
Let $(u_n)_{n\geq 1}$ be a non-increasing sequence of reals such that there exist $\ee \in (0,1)$ and $\beta \geq 0$ such that for any $n\geq 2$,
\[
u_n \geq u_{\pent{n^{1-\ee}}} - \frac{\beta}{n^\ee}.
\]
Then, for any $n\geq 2$ and any integer $n_f \in \{2, \dots, n \}$,
\[
u_n \geq u_{n_f} - \frac{\beta}{n_f^\ee} \frac{2^\ee 2^{\ee^2}}{2^{\ee^2}-1} = u_{n_f} - c_\ee \frac{\beta}{n_f^\ee}.
\]
In particular, choosing $n_f$ constant, it follows that  $(u_n)_{n\geq 1}$ is bounded.
\end{lemma}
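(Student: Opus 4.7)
The plan is to mimic, with small adjustments, the proof of Lemma \ref{lemanalyse}. The essential idea is iterate the hypothesis along the decreasing sequence $n_j = \pent{n^{(1-\ee)^j}}$, which interpolates between $n_0=n$ and $1$. I would first fix $n \geq 2$ and $n_f \in \{2,\dots,n\}$, and pick the unique integer $m = m(n_f)$ such that $n_m \geq n_f \geq n_{m+1}$. A short calculation, identical to the one already carried out in the proof of Lemma \ref{lemanalyse}, gives the two-sided control $n^{(1-\ee)^j}/2 \leq n_j \leq n^{(1-\ee)^j}$ for all $j \in \{0,\dots,m\}$, which is the key quantitative input.

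Next I would iterate the assumed recursion. Since $u_n \geq u_{\pent{n^{1-\ee}}} - \beta/n^\ee$ applied with $n_j$ in place of $n$ yields $u_{n_j} \geq u_{n_{j+1}} - \beta/n_j^\ee$, a telescoping sum over $j=0,\dots,m$ produces
\[
u_n \geq u_{n_{m+1}} - \beta \sum_{j=0}^{m} \frac{1}{n_j^\ee}.
\]
Because $(u_n)_{n\geq 1}$ is non-increasing and $n_{m+1} \leq n_f$, we have $u_{n_{m+1}} \geq u_{n_f}$, so
\[
u_n \geq u_{n_f} - \beta B_m, \qquad B_m = \sum_{j=0}^{m} \frac{1}{n_j^\ee}.
\]

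The only remaining step is to bound $B_m$ by the advertised constant times $1/n_f^\ee$. Setting $\eta_j = n^{-(1-\ee)^j \ee}$, the lower bound $n_j \geq n^{(1-\ee)^j}/2$ gives $B_m \leq 2^\ee \sum_{j=0}^m \eta_j$. The ratio $\eta_j/\eta_{j+1} = n^{-\ee^2(1-\ee)^j}$ is at most $n_f^{-\ee^2} \leq 2^{-\ee^2}$ for $j \leq m$, so $\eta_j \leq \eta_m/2^{\ee^2(m-j)} \leq 1/(n_f^\ee 2^{\ee^2(m-j)})$, and summing the resulting geometric series yields
\[
B_m \leq \frac{1}{n_f^\ee}\, \frac{2^\ee 2^{\ee^2}}{2^{\ee^2}-1},
\]
which is exactly the constant that appears in the conclusion. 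Substituting this into the previous display gives the desired inequality.

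There is no real obstacle here: each of the three ingredients (the control $n_j \asymp n^{(1-\ee)^j}$, the telescoping of the recursion, and the geometric summation bounding $B_m$) is already present in the proof of Lemma \ref{lemanalyse}; in fact the situation is strictly simpler since there is no multiplicative factor $1+\alpha/n^\ee$ to accumulate and no exponentially small term to track. The final statement that $(u_n)_{n\geq 1}$ is bounded follows by fixing $n_f$ constant: the inequality reads $u_n \geq u_{n_f} - c_{\ee} \beta/n_f^\ee$, which combined with the monotonicity gives both an upper bound ($u_n \leq u_2$) and a uniform lower bound.
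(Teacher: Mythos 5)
Your argument is correct, and every ingredient you use (the two-sided control $n^{(1-\ee)^j}/2 \leq n_j \leq n^{(1-\ee)^j}$, the telescoping of the recursion along $n_j=\pent{n^{(1-\ee)^j}}$, and the geometric bound $B_m \leq \frac{1}{n_f^\ee}\frac{2^\ee 2^{\ee^2}}{2^{\ee^2}-1}$ as in (\ref{majBm})) is sound; the only step you gloss over is that the hypothesis applied at $n_j$ gives $u_{n_j}\geq u_{\pent{n_j^{1-\ee}}}-\beta/n_j^\ee$ with $\pent{n_j^{1-\ee}}\leq n_{j+1}$, so passing to $u_{n_{j+1}}$ uses the monotonicity of $(u_n)$ — the same implicit use as in the paper's proof of Lemma \ref{lemanalyse}. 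The route is, however, not the paper's: the paper proves Lemma \ref{lemanalyse2} in one line by applying Lemma \ref{lemanalyse} to the sequence $-u_n$, which is non-decreasing and satisfies (\ref{recun}) with $\alpha=\gamma=0$, so the multiplicative factor $\e^{\alpha B_m}$ equals $1$ and the stated constant $\frac{2^\ee 2^{\ee^2}}{2^{\ee^2}-1}$ drops out immediately. Your direct rerun of the iteration buys a self-contained verification (and makes transparent why no exponential factor appears), at the cost of repeating the computation; the paper's sign-flip reduction is shorter but leans entirely on the already proved lemma. Both yield exactly the same bound, so either is acceptable.
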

\begin{proof} For the proof it is enough to use Lemma \ref{lemanalyse} with $u_n$ replaced by $-u_n$.
\end{proof}

\subsection{Results on the Brownian case and strong approximation}\label{Strong Approx}

Consider the standard Brownian motion $\left( B_t \right)_{t\geq 0}$ living on the probability space $\left( \Omega, \mathcal F, \pmb{\mathbf{\bb P}} \right)$. 
Define the exit time
\begin{equation}
	\label{deftaubm}
	\tau_y^{bm} = \inf \{ t\geq 0, \, y+\sigma B_t \leq 0 \},
\end{equation}
where $\sigma>0.$ The following assertions are due to Levy \cite{levy_theorie_1954}.
\begin{proposition}
\label{intBro}
For any $y>0$, $0\leq a \leq b$ and $n \geq 1$,
	\[
	\pmb{\bb P} \left( \tau_y^{bm} > n \right) = \frac{2}{\sqrt{2\pi n} \sigma} \int_{0}^{y} \e^{-\frac{s^2}{2n\sigma^2}} \dd s.
\]
and
	\[
	\pmb{\bb P} \left( \tau_y^{bm} > n \,,\, y+\sigma B_n \in [a,b] \right) 
	= \frac{1}{\sqrt{2\pi n} \sigma} \int_a^b \left( \e^{-\frac{(s-y)^2}{2n\sigma^2}} - \e^{-\frac{(s+y)^2}{2n\sigma^2}} \right) \dd s.
\]
\end{proposition}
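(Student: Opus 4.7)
The plan is to reduce both identities to the classical reflection principle for Brownian motion; since these are standard facts going back to Lévy, there is no deep obstacle, but care is needed in handling the diffusion coefficient $\sigma$. First I would rescale. Writing $W_t = y + \sigma B_t$, we have $\tau_y^{bm} = \inf\{t \ge 0,\, W_t \le 0\}$, and by the scaling property of Brownian motion the event $\{\tau_y^{bm} > n\}$ coincides with $\{\min_{0 \le t \le n}(B_t + y/\sigma) > 0\}$. This reduces both statements to computations for the standard Brownian motion starting from $x := y/\sigma$.

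For the first identity, I would use the reflection principle in the form: for every $a>0$,
\[
	\pmb{\bb P}\Big(\min_{0 \le t \le n} B_t \le -a\Big) = 2\,\pmb{\bb P}(B_n \le -a),
\]
which is obtained by coupling each path touching the level $-a$ with its reflection after the hitting time. Consequently
\[
	\pmb{\bb P}\Big(\min_{0 \le t \le n} B_t > -a\Big) = 1 - 2\,\pmb{\bb P}(B_n \le -a) = \pmb{\bb P}(|B_n| \le a).
\]
Applied with $a = y/\sigma$, this gives $\pmb{\bb P}(\tau_y^{bm} > n) = \pmb{\bb P}(|B_n| \le y/\sigma)$. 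Writing the Gaussian density of $B_n$, using symmetry around $0$, and performing the change of variables $s = \sigma u$ yields the stated integral expression.

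For the second identity I would apply the reflection principle jointly with the endpoint. The classical outcome is that, for the standard Brownian motion started at $x>0$, the sub-density of $B_n$ on the event of not hitting $0$ before time $n$, evaluated at $z > 0$, equals
\[
	p_n^+(x,z) = \frac{1}{\sqrt{2\pi n}}\Big(\e^{-(z-x)^2/(2n)} - \e^{-(z+x)^2/(2n)}\Big),
\]
the second term being the total mass of paths that do hit $0$, reconstructed by reflecting each such path after its first visit to $0$. Specializing to $x = y/\sigma$, integrating $z$ over $[a/\sigma, b/\sigma]$, and changing variables $s = \sigma z$ transforms the prefactor into $1/(\sqrt{2\pi n}\,\sigma)$ and the exponents into $(s\mp y)^2/(2n\sigma^2)$, producing the desired formula.

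The only genuinely non-routine step is the joint reflection argument that delivers $p_n^+(x,z)$; everything else is bookkeeping with the scaling $s = \sigma z$. Since the statement is attributed directly to Lévy \cite{levy_theorie_1954}, I would in fact be content to state the classical reflection identities, give the change of variables, and refer to \cite{levy_theorie_1954} (or any standard reference on Brownian motion) for the detailed justification of the reflection principle.
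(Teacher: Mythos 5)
Your proposal is correct: the reflection principle (alone for the first identity, jointly with the endpoint for the absorbed kernel) plus the scaling $s=\sigma z$ is the standard argument, and the paper itself gives no proof, simply attributing both formulas to L\'evy \cite{levy_theorie_1954}, exactly as you propose to do. Nothing further is needed.
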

From this one can deduce easily:

\begin{corollary}
\label{exittimeforB} \ 
\begin{enumerate}
	\item For any $y>0$,
		\[
		\pmb{\bb P} \left( \tau_y^{bm}>n \right) \leq c\frac{y}{\sqrt{n}}.
	\]
	\item For any sequence of real numbers $(\theta_n)_{n\geq 0}$ such that $\theta_n \underset{n\to +\infty}{\longrightarrow} 0$,
		\[
		\underset{y\in [0; \theta_n \sqrt{n}]}{\sup} \left( \frac{\pmb{\bb P} \left( \tau_y^{bm}>n \right)}{\frac{2y}{\sqrt{2\pi n}\sigma}} - 1 \right) = O(\theta_n^2).
	\]
\end{enumerate}
\end{corollary}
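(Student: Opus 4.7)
The plan is to deduce both parts directly from the explicit formula
\[
\pmb{\bb P}\!\left(\tau_y^{bm}>n\right) = \frac{2}{\sqrt{2\pi n}\,\sigma}\int_{0}^{y} \e^{-\frac{s^2}{2n\sigma^2}}\,\dd s
\]
provided by Proposition \ref{intBro}. Both claims reduce to simple analytic estimates on the integrand $\e^{-s^2/(2n\sigma^2)}$, which, for $s\in[0,y]$, lies in $[0,1]$.

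For the first claim, the argument is just the crude bound $\e^{-s^2/(2n\sigma^2)}\leq 1$. Substituting into the formula above, I would obtain
\[
\pmb{\bb P}\!\left(\tau_y^{bm}>n\right) \leq \frac{2}{\sqrt{2\pi n}\,\sigma}\cdot y = \frac{2y}{\sqrt{2\pi n}\,\sigma},
\]
which gives the inequality with the explicit constant $c=\frac{2}{\sqrt{2\pi}\,\sigma}$. This step is immediate; no obstacle here.

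For the second claim, I rewrite the normalized ratio as
\[
\frac{\pmb{\bb P}\!\left(\tau_y^{bm}>n\right)}{\tfrac{2y}{\sqrt{2\pi n}\,\sigma}} = \frac{1}{y}\int_0^y \e^{-\frac{s^2}{2n\sigma^2}}\,\dd s,
\]
which, since $\e^{-u}\leq 1$, is bounded above by $1$. For the matching lower bound, I use the elementary inequality $\e^{-u}\geq 1-u$ valid for $u\geq 0$, yielding
\[
\frac{1}{y}\int_0^y \e^{-\frac{s^2}{2n\sigma^2}}\,\dd s \;\geq\; \frac{1}{y}\int_0^y\!\left(1-\frac{s^2}{2n\sigma^2}\right)\dd s = 1 - \frac{y^2}{6n\sigma^2}.
\]
Hence, for every $y\in[0,\theta_n\sqrt{n}]$,
\[
-\frac{\theta_n^2}{6\sigma^2} \;\leq\; -\frac{y^2}{6n\sigma^2} \;\leq\; \frac{\pmb{\bb P}\!\left(\tau_y^{bm}>n\right)}{\tfrac{2y}{\sqrt{2\pi n}\,\sigma}} - 1 \;\leq\; 0,
\]
and taking the supremum over $y$ in this range gives the $O(\theta_n^2)$ bound.

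Since both steps are straightforward consequences of the explicit density formula and two standard one-line inequalities ($\e^{-u}\leq 1$ and $\e^{-u}\geq 1-u$), there is no genuine obstacle; the only care needed is to handle the trivial case $y=0$ separately (where the ratio is understood as a limit) and to keep track of $\sigma$ in the constants.
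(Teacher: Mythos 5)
Your proof is correct and follows exactly the route the paper intends: the paper states the corollary as an easy deduction from the explicit formulas of Proposition \ref{intBro}, and your two elementary bounds $\e^{-u}\leq 1$ and $\e^{-u}\geq 1-u$ applied to that formula are precisely the expected argument, with the $y=0$ degeneracy handled appropriately.
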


To transfer the results from the Brownian motion to the Markov walk, we use a functional approximation given in Theorem 3.3  from 
Grama, Le Page and Peign\'{e} \cite{ion_grama_rate_2014}.
We have to construct an adapted Banach space  $\mathcal{B}$ and verify the hypotheses $\mathbf{M1-M5}$ in \cite{ion_grama_rate_2014} which are necessary to apply Theorem 3.3. 
Fix $p \in (2,\alpha)$ and let $\ee$, $\theta$, $c_0$ and $\delta$ be positive numbers such that 
$c_0+\ee < \theta < 2c_0 < \alpha - \ee$ 
and $2<2+2\delta  < (2+2\delta) \theta \leq p$. 
Define the Banach space $\mathcal{B}=\mathcal{L}_{\ee,c_0,\theta}$ as the set of continuous function $f$ from $\bb R$ to $\bb C$ such that $\norm{f} = \abs{f}_{\theta} + \left[ f \right]_{\ee,c_0} < +\infty$, where
\[
\abs{f}_{\theta} = \underset{x\in \bb R}{\sup} \frac{\abs{f(x)}}{1+\abs{x}^{\theta}}, \qquad \left[ f \right]_{\ee,c_0} =\underset{\substack{(x,y)\in \bb R^2\\x\neq y}}{\sup} \frac{\abs{f(x)-f(y)}}{\abs{x-y}^\ee \left(1+\abs{x}^{c_0}\right) \left(1+\abs{y}^{c_0}\right)}.
\]
For example, one can take $\ee < \min(\frac{p-2}{4},\frac{1}{2})$, $c_0=1$, $\theta = 1+2\ee$ and $2+2\delta=\frac{p}{1+2\ee}$. 
Using the techniques from  
\cite{guivarch_spectral_2008}
one can verify that, under Condition \ref{Mom001}, the Banach space $\mathcal{B}$ and the perturbed operator 
$\mathbf P_t f (x) = \int_{\bb R} f(x') e^{ i t x' } \mathbf P (x,\dd x'),$ 
satisfy Hypotheses $\mathbf{M1-M5}$ in \cite{ion_grama_rate_2014}. 
The hypothesis $\mathbf{M1}$  is verified straightforwardly. In particular the norm of the Dirac measure $\delta_x$ is bounded: 
$\norm{\delta_x}_{\mathcal{B}\to \mathcal{B}} \leq 1+ \abs{x}^\theta$, for each $x \in \bb R.$
We refer to Proposition 4 and Corollary 3 of  \cite{guivarch_spectral_2008} for $\mathbf{M2-M3}$. 
For $\mathbf{M4}$, we have
\[
\mu_{\delta}(x) =\underset{k\geq 1}{\sup} \bb E_x^{1/{2+2\delta}}\left( \abs{X_n}^{2+2\delta} \right) \leq c_{\delta} \left( 1+ \abs{x} \right).
\]
Hypothesis $\mathbf{M5}$ follows from Proposition 1 of \cite{guivarch_spectral_2008} and Lemma \ref{MCM}.

With these considerations, the $C(x)=C_1(1+\mu_\delta (x) + \norm{\delta_x}  )^{2+2\delta} $ in  Theorem 3.3 established in \cite{ion_grama_rate_2014} is less than $c_p (1+ \abs{x})^{p},$ where $C_1$ is a constant.  
Therefore Theorem 3.3 can be reformulated in the case of the stochastic recursion as follows.

\begin{proposition}
\label{majdeA_k}
Assume  Condition \ref{Mom001}. 
For any $p \in (2,\alpha)$, there exists $\ee_0 >0$ such that for any $\ee \in (0,\ee_0]$, $x\in \bb R$ and $n\geq 1$, 
without loss of generality (on an extension of the initial probability space) one can reconstruct the sequence $(S_n)_{n\geq 0}$ with a continuous time Brownian motion $(B_t)_{t\in \bb R_{+} }$, such that  
	\[
	\bb P_x \left( \underset{0 \leq t \leq 1}{\sup} \abs{S_{\pent{tn}}-\sigma B_{tn}} > n^{1/2-\ee} \right) 
	\leq \frac{c_{p,\ee}}{n^\ee} (1+\abs{x} )^p,
\]
where $\sigma$ is given by (\ref{defdesigma}).
\end{proposition}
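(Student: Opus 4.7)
My strategy is to apply the functional strong-approximation theorem (Theorem 3.3) of Grama, Le Page and Peigné \cite{ion_grama_rate_2014}, which provides, on an enlargement of the probability space, a Brownian coupling of a Markov walk whose transition operator has a spectral gap and whose Fourier-type perturbations $\mathbf{P}_t$ admit a suitable spectral expansion. The affine recursion defined by (\ref{intr000}) is exactly of the type treated in \cite{guivarch_spectral_2008}, so the heavy analytic work (quasi-compactness, peripheral spectrum, perturbation expansion, non-arithmeticity) has already been carried out there. The remaining task is to package these facts on a Banach space adapted to Condition \ref{Mom001}, verify the abstract hypotheses $\mathbf{M1}$--$\mathbf{M5}$ of \cite{ion_grama_rate_2014} with explicit control of the polynomial dependence on the initial state $x$, and then read off the $(1+\abs{x})^p$ bound.

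Given $p \in (2,\alpha)$, I would fix parameters $\ee,c_0,\theta,\delta>0$ satisfying $c_0+\ee<\theta<2c_0<\alpha-\ee$ and $2<(2+2\delta)\theta\leq p$, which is possible by taking for instance $c_0=1$, $\theta=1+2\ee$ and $2+2\delta=p/(1+2\ee)$ with $\ee$ small. The working space is $\mathcal{B}=\mathcal{L}_{\ee,c_0,\theta}$ defined in the text preceding the statement. The verifications then run as follows: $\mathbf{M1}$ (continuity of evaluation and the bound $\norm{\delta_x}_{\mathcal{B}\to\mathcal{B}}\leq 1+\abs{x}^\theta$) is immediate from the definition of the norm; $\mathbf{M2}$--$\mathbf{M3}$ (quasi-compactness of $\mathbf{P}$ on $\mathcal{B}$, spectral gap and asymptotic expansion of $\mathbf{P}_t$ near $t=0$, together with the non-arithmeticity needed to rule out peripheral eigenvalues) follow from Proposition~4 and Corollary~3 of \cite{guivarch_spectral_2008}; $\mathbf{M4}$, the uniform $(2+2\delta)$-moment bound $\mu_\delta(x)=\sup_{n\geq 1}\bb E_x^{1/(2+2\delta)}(\abs{X_n}^{2+2\delta})\leq c_\delta(1+\abs{x})$, is a direct consequence of Lemma \ref{MCM}; and $\mathbf{M5}$ (a Lindeberg-type tail condition on the martingale increments of $S_n$) follows from Proposition~1 of \cite{guivarch_spectral_2008} combined once more with Lemma \ref{MCM}.

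Applying Theorem~3.3 of \cite{ion_grama_rate_2014} then furnishes a Brownian motion $(B_t)_{t\geq 0}$ on a suitable extension, together with a strong-approximation bound of the form $c_{p,\ee}\,C(x)/n^\ee$ on the event in question, where $C(x)=C_1(1+\mu_\delta(x)+\norm{\delta_x}_{\mathcal{B}\to\mathcal{B}})^{2+2\delta}$. Inserting the bounds $\mu_\delta(x)\leq c_\delta(1+\abs{x})$ and $\norm{\delta_x}_{\mathcal{B}\to\mathcal{B}}\leq 1+\abs{x}^\theta$ and using the choice $(2+2\delta)\theta\leq p$, I get $C(x)\leq c_p(1+\abs{x})^p$, which yields the stated estimate. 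The main potential obstacle is the transfer of the Dolgopyat-type spectral analysis of \cite{guivarch_spectral_2008} to the weighted-Hölder space $\mathcal{L}_{\ee,c_0,\theta}$ under the weaker Condition \ref{Mom001}; however, since Condition \ref{Mom001} still provides $\bb E(\abs{a}^\alpha)<1$ with $\alpha>2$, the kernel estimates of that paper go through essentially verbatim, so no new analytic input is required beyond the moment lemmas already established in Section \ref{Mart Approx}.
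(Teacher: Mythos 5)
Your proposal is correct and follows essentially the same route as the paper: the same weighted H\"older space $\mathcal{L}_{\ee,c_0,\theta}$ with the same choice of parameters, the same verification of hypotheses $\mathbf{M1}$--$\mathbf{M5}$ (quoting Proposition~4, Corollary~3 and Proposition~1 of \cite{guivarch_spectral_2008} together with Lemma~\ref{MCM}), and the same reading of the constant $C(x)\leq c_p(1+\abs{x})^p$ from Theorem~3.3 of \cite{ion_grama_rate_2014}. No substantive difference from the paper's argument.
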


This proposition plays the crucial role in the proof of Theorem \ref{AsExTi} and Theorem \ref{AsCoRaWa} (cf. Sections \ref{As for Exit Time} and \ref{As for cond Markov walk}).
The following straightforward consequence of  Proposition \ref{majdeA_k} is used in the proof of Lemma \ref{concentnu} in Section \ref{CMWI}.
Set $\mathbf \Phi (t)=\frac{1}{\sqrt{2\pi}}\int_{-\infty}^{t} \e^{-\frac{u^2}{2}}\dd u.$

\begin{corollary}
\label{BerEss}
Assume  Condition \ref{Mom001}. 
For any $p \in (2,\alpha)$, there exists $\ee_0 >0$ such that for any $\ee \in (0, \ee_0]$, $x\in \bb R$ and $n\geq 1$,
	\[
	\underset{u\in \bb R}{\sup} \, \abs{ \bb P_x \left( \frac{S_n}{\sqrt{n}} \leq u \right) - \mathbf \Phi \left( \frac{u}{\sigma} \right) } 
	\leq \frac{c_{p,\ee}}{n^\ee} \left(1+\abs{x}\right)^p.
\]
\end{corollary}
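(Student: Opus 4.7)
The plan is to deduce the Berry-Esseen-type estimate directly from the strong approximation of Proposition \ref{majdeA_k} by a standard coupling argument, at the cost of replacing the event $\{S_n/\sqrt{n}\leq u\}$ by $\{\sigma B_n/\sqrt{n}\leq u\pm n^{-\ee}\}$ and controlling the exceptional set where the approximation fails.

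First, fix $p \in (2,\alpha)$ and let $\ee_0$ be the constant provided by Proposition \ref{majdeA_k}. On the enlarged probability space, construct jointly $(S_k)_{k\geq 0}$ and a Brownian motion $(B_t)_{t\geq 0}$ such that
\[
\bb P_x \!\left( \Omega_n^c \right) \leq \frac{c_{p,\ee}}{n^\ee}\,(1+\abs{x})^p, \qquad \Omega_n := \Bigl\{ \abs{S_n - \sigma B_n} \leq n^{1/2-\ee} \Bigr\}.
\]
For any $u\in \bb R$, using the inclusion $\{S_n/\sqrt{n}\leq u\}\cap \Omega_n \subset \{\sigma B_n/\sqrt{n}\leq u+n^{-\ee}\}$ and its reverse counterpart, I obtain
\[
\bb P_x\!\left(\tfrac{\sigma B_n}{\sqrt n}\leq u-n^{-\ee}\right) - \bb P_x(\Omega_n^c)
\;\leq\; \bb P_x\!\left(\tfrac{S_n}{\sqrt n}\leq u\right)
\;\leq\; \bb P_x\!\left(\tfrac{\sigma B_n}{\sqrt n}\leq u+n^{-\ee}\right) + \bb P_x(\Omega_n^c).
\]
Since $B_n/\sqrt{n}$ has the standard normal law, the outer probabilities are exactly $\mathbf \Phi\!\left(\frac{u\pm n^{-\ee}}{\sigma}\right)$.

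Second, I control the mollification error using that $\mathbf \Phi$ is Lipschitz with constant $1/\sqrt{2\pi}$. This yields
\[
\abs{\mathbf \Phi\!\left(\tfrac{u\pm n^{-\ee}}{\sigma}\right) - \mathbf \Phi\!\left(\tfrac{u}{\sigma}\right)} \;\leq\; \frac{1}{\sqrt{2\pi}\,\sigma}\cdot\frac{1}{n^{\ee}}.
\]
Combining this with the two-sided bound above gives, uniformly in $u\in\bb R$,
\[
\abs{\bb P_x\!\left(\tfrac{S_n}{\sqrt{n}}\leq u\right) - \mathbf \Phi\!\left(\tfrac{u}{\sigma}\right)}
\;\leq\; \frac{1}{\sqrt{2\pi}\,\sigma\,n^{\ee}} + \frac{c_{p,\ee}}{n^{\ee}}(1+\abs{x})^{p},
\]
which is the desired inequality after absorbing the first summand into the constant $c_{p,\ee}$.

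There is no real obstacle: the key estimate has already been done in Proposition \ref{majdeA_k} (whose proof reduces the affine setting to the spectral-gap machinery of \cite{guivarch_spectral_2008} and the abstract theorem of \cite{ion_grama_rate_2014}). The only care needed is that the dependence on the initial state $x$ in the exceptional-set bound is exactly $(1+\abs{x})^p$, so that it transfers unchanged to the Berry-Esseen statement; this is precisely the form provided by Proposition \ref{majdeA_k}, which is why the corollary follows so painlessly.
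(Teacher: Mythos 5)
Your proof is correct and follows essentially the same route as the paper: both apply the strong approximation of Proposition \ref{majdeA_k} (restricted to $t=1$), sandwich $\bb P_x(S_n/\sqrt n\leq u)$ between Gaussian probabilities shifted by $n^{-\ee}$, and absorb the Lipschitz mollification error of $\mathbf\Phi$ into the constant. No gaps.
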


\begin{proof} Let $\ee \in (0,1/2)$ and $A_n = \left\{ \underset{0\leq t \leq 1}{\sup} \, \abs{S_{\pent{tn}}-\sigma B_{tn}} > n^{1/2-\ee} \right\}. $ For any $x \in \bb R$ and any $u \in \bb R,$
\[
\bb P_x \left( \frac{S_n}{\sqrt{n}} \leq u \right) \leq \bb P_x \left( A_n \right) + \bb P_x \left( \frac{\sigma B_n}{\sqrt{n}} \leq u + \frac{1}{n^\ee} \right),
\]
where the last probability does not exceed $\mathbf \Phi (\frac{u}{\sigma}) + c_{\ee}n^{-\ee}.$
Using Proposition \ref{majdeA_k}, we conclude that there exists $\ee_0 > 0$ such that for any $\ee \in (0,\ee_0]$ and $x \in \bb R$,
\[
\bb P_x \left( \frac{S_n}{\sqrt{n}} \leq  u \right) \leq \mathbf \Phi \left(\frac{u}{\sigma} \right) + \frac{c_{p,\ee}}{n^\ee} \left(1+\abs{x}\right)^p.
\]
In the same way we obtain a lower bound and the assertion follows.
\end{proof}

\subsection{Finiteness of the exit times} \label{secExitfinit}

\begin{corollary}
\label{Exitfinit}
Assume Condition \ref{Mom001}. 
For any $x\in \bb R$ and $y \in \bb R$,
	\[
\bb P_x	\left( \tau_y < +\infty  \right) =1 \qquad \text{and} \qquad   \bb P_x \left( T_y < +\infty \right) =1.
\]
\end{corollary}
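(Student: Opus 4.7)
The plan is to prove both finiteness statements by coupling the Markov walk to a Brownian motion via the strong approximation of Proposition \ref{majdeA_k}, and then invoking the fact that the running minimum of Brownian motion on $[0,1]$ has no atom at $0$.

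Fix $x,y \in \bb R$ and $p \in (2,\alpha)$, and choose $\ee \in (0,\ee_0]$ small enough so that $1-p(1/2-\ee) < 0$. For each $n$, Proposition \ref{majdeA_k} gives a reconstruction (on an enlarged space) of $(S_k)_{k \leq n}$ together with a Brownian motion $(B_t)_{t \geq 0}$ such that the event
\[
G_n = \Bigl\{ \sup_{1 \leq k \leq n} \abs{S_k - \sigma B_k} \leq n^{1/2-\ee} \Bigr\}
\]
has complementary probability at most $c_{p,\ee}(1+\abs{x})^p n^{-\ee}$. I first treat $\tau_y$. On $G_n \cap \{\tau_y > n\}$, the condition $y+S_k > 0$ for all $k=1,\dots,n$ forces $B_k > -L_n$ for $k=1,\dots,n$, where $L_n = (y+n^{1/2-\ee})/\sigma$. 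Therefore
\[
\bb P_x(\tau_y > n) \leq \bb P\Bigl( \min_{1 \leq k \leq n} B_k > -L_n \Bigr) + \frac{c_{p,\ee}(1+\abs{x})^p}{n^\ee}.
\]

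Brownian scaling rewrites the first probability as $\bb P(\min_{k \leq n} \widetilde B_{k/n} > -L_n/\sqrt n)$ for a standard Brownian motion $\widetilde B$. Since $L_n/\sqrt n = y/(\sigma \sqrt n) + n^{-\ee}/\sigma \to 0$, and the a.s.\ uniform continuity of Brownian paths implies that $\min_{1 \leq k \leq n} \widetilde B_{k/n}$ converges almost surely to $\min_{t \in [0,1]} \widetilde B_t$, which has the distribution of $-\abs{\widetilde B_1}$ (reflection principle) and so no atom at $0$, the portmanteau theorem gives
\[
\bb P\Bigl( \min_{1 \leq k \leq n} \widetilde B_{k/n} > -L_n/\sqrt n \Bigr) \xrightarrow[n \to +\infty]{} \bb P\bigl( -\abs{\widetilde B_1} \geq 0 \bigr) = 0.
\]
Combining with the vanishing of $\bb P_x(G_n^c)$ yields $\bb P_x(\tau_y > n) \to 0$, hence $\bb P_x(\tau_y < +\infty) = 1$.

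For $T_y$, I exploit the identity $y+\rho x + M_k = y + S_k + \rho X_k$ from \eqref{MfSetX} and introduce the additional good event $H_n = \{ \max_{1 \leq k \leq n} \abs{X_k} \leq n^{1/2-\ee} \}$, whose complement has probability at most $c_{p,\ee}(1+\abs{x})^p \cdot n^{1-p(1/2-\ee)}$ by Markov's inequality, a union bound and Lemma \ref{MCM}; this quantity tends to $0$ by the choice of $\ee$. On $G_n \cap H_n \cap \{T_y > n\}$, the same argument now yields $B_k > -L_n'$ for $k=1,\dots,n$ with $L_n' = (y + (1+\abs{\rho})n^{1/2-\ee})/\sigma$, and since $L_n'/\sqrt n \to 0$ as well, the Brownian-motion step above gives $\bb P_x(T_y > n) \to 0$, concluding the proof.

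The only non-routine ingredient is the identification of the limiting law of $\min_{1 \leq k \leq n} B_k/\sqrt n$; this is the main obstacle, but is handled by the a.s.\ modulus-of-continuity of Brownian paths together with the classical fact that $\min_{t \in [0,1]} B_t \stackrel{d}{=} -\abs{B_1}$ has no mass at $0$. Everything else reduces to invoking Proposition \ref{majdeA_k} and Lemma \ref{MCM}.
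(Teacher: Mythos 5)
Your proof is correct and follows the paper's strategy: couple $(S_k)_{k\le n}$ to a Brownian motion via Proposition \ref{majdeA_k}, and for $T_y$ additionally control the correction term $\rho X_k$ (your event $H_n$ is the paper's $D_n$) by the Markov inequality and Lemma \ref{MCM}. The only real difference is the final Brownian ingredient: the paper invokes the explicit bound $\bb P(\tau_z^{bm}>n)\le c\,z/\sqrt n$ from Corollary \ref{exittimeforB}, which gives the quantitative decay $\bb P_x(\tau_y>n)=O(n^{-\ee})$ with no extra work, whereas you replace it with a softer scaling argument (a.s.\ convergence of $\min_{k\le n}\widetilde B_{k/n}$ to $\min_{[0,1]}\widetilde B$, whose law $-\abs{\widetilde B_1}$ has no atom at $0$); this yields only $\bb P_x(\tau_y>n)\to 0$, but that is all the corollary requires, and it also lets you treat all $y\in\bb R$ at once instead of reducing $y\le 0$ to $y>0$ by monotonicity as the paper does.
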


\begin{proof}  
Let $y>0$ and $\ee \in (0,1/2).$ Set $A_n=\left\{  \sup_{0\leq t \leq 1} \abs{S_{[tn]} - \sigma B_{tn}}    \leq n^{1/2-\ee}  \right\}.$
Using Proposition \ref{majdeA_k}, there exists $\ee_0 > 0$ such that for any $\ee \in (0,\ee_0]$, $x\in \bb R$ and $y>0$,
\begin{align*}
	\bb P_x \left( \tau_y > n \right) &\leq \bb P_x \left( \tau_y > n,  A_n \right) + \bb P_x \left( \overline A_n \right) \\
	&\leq \bb P \left( \tau_{y+n^{1/2-\ee}}^{bm} > n \right) + \frac{c_{p,\ee}}{n^\ee} \left(1+\abs{x}\right)^p.
\end{align*}
Since, by the claim 1 of Corollary \ref{exittimeforB},
$\bb P \left( \tau_{y+n^{1/2-\ee}}^{bm} > n \right) \leq  c\frac{y+n^{1/2-\ee}}{\sqrt{n}} \leq  (1+y)\frac{c}{n^\ee},$
taking the limit as $n\to +\infty$ we conclude that $\bb P_x \left( \tau_y < +\infty \right)=1.$

Let $D_n=\left\{  \max_{1\leq k \leq n} \abs{S_{k} - M_{k}}    \leq n^{1/2-\ee}  \right\} $. 
Obviously
\begin{align*}
	\bb P_x \left( T_y > n \right) &\leq \bb P_x \left( T_y > n,  A_n, D_n \right) + \bb P_x \left( \overline A_n \right) + \bb P_x \left( \overline D_n \right) \\
	&\leq \bb P \left( \tau_{y+2n^{1/2-\ee}}^{bm} > n \right) + \frac{c_{p,\ee}}{n^\ee} \left(1+\abs{x}\right)^p + \bb P_x\left(   \max_{1\leq k \leq n} \abs{ \rho X_{k} }  > n^{1/2-\ee}  \right).
\end{align*}
Using the claim 1 of Corollary \ref{exittimeforB}, the Markov inequality and Lemma \ref{MCM},
for any $\ee \in (0,\ee_0]$, $x\in \bb R$ and $y>0$,
\begin{align*}
	\bb P_x \left( T_y > n \right) &\leq     (1+y)\frac{c}{n^\ee}  + \frac{c_{p,\ee}}{n^\ee} \left(1+\abs{x}\right)^p 
	+ c_p\frac{1+\abs{x}^p}{  n^{\frac{p-2}{2} -p\ee}  }.
\end{align*}
Choosing $\ee$ small enough and taking the limit as $n\to +\infty$ we conclude the second assertion when $y>0$.

When $y \leq 0$, the results follow since the applications $y \mapsto \tau_y$ and  $y \mapsto T_y$ are non-decreasing.
\end{proof}

\bibliographystyle{plain}
\bibliography{biblioarXiv01}

\end{document}